\theoremstyle{plain}
\newtheorem{thm}{Theorem}[section]
\theoremstyle{plain}
\newtheorem{lem}[thm]{Lemma}
\newtheorem{proposition}[thm]{Proposition}
\newtheorem{cor}[thm]{Corollary}
\theoremstyle{definition}
\newtheorem{definition}{Definition}[section]
\newtheorem{remark}{Remark}[section]
\newtheorem*{maintheorem*}{Main Theorem}
\newtheorem*{maincorollary*}{Main Corollary}
\newenvironment{Assumptions}
{
\setcounter{enumi}{0}

\begin{enumerate}}
{\end{enumerate} }
\newenvironment{Assumptions2}
{
\setcounter{enumi}{0}

\begin{enumerate}}
{\end{enumerate} }
\newcommand{\R}{\ensuremath{\mathbb{R}}}
\def\e{{\text{e}}}
\def\d#1{\mathrm{ d#1}}
\numberwithin{equation}{section} \allowdisplaybreaks
\begin{document}
	\title[Stochastic Fractional Conservation Laws]
	{Stochastic fractional conservation laws }
	\author{Abhishek Chaudhary}
	\date{\today}
	
	\maketitle
	
	\medskip
	\centerline{ Centre for Applicable Mathematics, Tata Institute of Fundamental Research}
	\centerline{P.O. Box 6503, GKVK Post Office, Bangalore 560065, India}\centerline{abhi@tifrbng.res.in}
	\medskip
	\begin{abstract}
	In this paper, we consider the Cauchy problem for the nonlinear fractional conservation laws driven by a multiplicative noise. In particular, we are concerned with the well-posedness theory and the study of the long-time behavior of solutions for such equations. We show the existence of desired kinetic solution by using the vanishing viscosity method. In fact, we establish strong convergence of the approximate viscous solutions to a kinetic solution. Moreover, under a nonlinearity-diffusivity condition, we prove the existence of an invariant measure using the well-known Krylov-Bogoliubov theorem. Finally, we show the uniqueness and ergodicity of the invariant measure.
\end{abstract}
{\textbf{Keywords:} Fractional conservation Laws; Young measures; Existence; Uniqueness; Kinetic solution; Invariant measure; Multiplicative noise; Brownian noise}
\section{Introduction} 
Nonlinear nonlocal integro-PDEs have great demand in mathematics due to their applications in several different areas such as mathematical finance \cite{mathmatical finance}, flow in porous media \cite{porous media}, radiation hydrodynamics \cite{hyd}, and overdriven gas detonations \cite{overdriven}. The addition of a Brownian noise to this type of physical model is completely natural, as it represents exterior perturbations or a lack of knowledge of certain substantial parameters. We are interested in the well-posedness theory and long-time behavior of solutions for the fractional conservation laws driven by Brownian noise. In this article,
we consider the following stochastic fractional conservation laws 
\begin{equation}\label{1.1}
\begin{cases}
	d u(x,t)+\mbox{div}(F(u(x,t)))d t +(-\Delta)^\alpha[A(u(x,t))]dt=\Psi(x,u(x,t))\,dB(t),\,\,\, &x \in \mathbb{T}^N,\,\, t \in(0,T),\\
	u(x,0)=u_0(x),\,&\,x\in\mathbb{T}^N
\end{cases}
\end{equation}
where $N\ge 1$, $B$ is a cylinderical Wiener process, $u_0$ is the given initial function, $F:\mathbb{R}\mapsto\mathbb{R}^N,$ $A:\mathbb{R}\mapsto\mathbb{R}$ are given (sufficiently smooth) functions (see Section \ref{section 2} for the complete list of assumptions). Here $(-\Delta)^\alpha$ denotes the fractional Laplacian operator of order $\alpha\in(0,1)$, defined pointwise as follows
\begin{align}\label{2.4}
(-\Delta)^{\alpha}[\kappa](x):=-P.V.\int_{\mathbb{R}^N}(\kappa(x+z)-\kappa(x))\lambda(z)\,dz,\,\,\, \forall\,\,x\in\mathbb{T}^N,
\end{align}
where $\lambda(z)=\frac{1}{|z|^{N+2\alpha}},\,\, z\,\ne\,0$, $\lambda(0)=0$, and $\kappa$ is a sufficiently regular function. Finally, the function $ \Psi : L^2(\mathbb{T}^N )\to L_2(\mathfrak{O}; L^2(\mathbb{T}^N ))$ is a $L_2(\mathfrak{O}; L^2(\mathbb{T}^N ))$-valued function, where $L_2(\mathfrak{O};L^2(\mathbb{T}^N))$ represents the collection of Hilbert-Schmidt operators from $\mathfrak{O}$ to $L^2(\mathbb{T}^N )$.
\subsection{Earlier works}
Over the last decade, there have been many contributions to the larger area of stochastic partial differential equations that are driven by Brownian noise.
The equation \eqref{1.1} could be viewed as a stochastic perturbation of the nonlocal hyperbolic equation. In the absence of nonlocal term (fractional Laplacian) along with $\Psi=0$, the equation \eqref{1.1} becomes the well-known conservation laws.

Kruzhkov \cite{Kruzhkov} first introduced the concept of entropy solution for conservation laws, and established well-posedness theory for the same in the $L^\infty$ framework. The entropy formulation of the elliptic-parabolic-hyperbolic problem has been developed by Carrillo \cite{Carrillo}. We mention also the work by Bendahmane $\&$ Karlsen \cite{Karlsen} for the anisotropic diffusion case. On the other hand, we refer to the works of Alibaud \cite{Alibaud}, Cifani et.al. \cite{Cifani, Alibaud 2} for deterministic fractional conservation laws and deterministic fractional degenerate convection-diffusion equations respectively. The concept of the kinetic solution was first introduced in the paper of Lions et. al \cite{lions} for the conservation laws. Then Chen $\&$ Perthame \cite{chen} developed a well-posedness theory for general degenerate parabolic-hyperbolic equations with non-isotropic nonlinearity.  Moreover, the well-posedness theory for the nonlocal conservation laws was recently studied by Alibaud et al. in \cite{N-2}.

In the stochastic set-up, the work of Kim \cite{Kim} first established the concept of entropy solutions for stochastic conservation laws and developed the well-posedness theory for one-dimensional conservation laws driven by additive Brownian noise. We refer to the work of Vallet $\&$ Wittbold \cite{Vallet} for the multi-dimensional Dirichlet problem. However, when the noise is multiplicative in nature, then situation differs significantly from that of additive noise case, and in this direction we mention the work of Feng $\&$ Nualart \cite{Feng} for one-dimensional balance laws. For multi-dimensional case, Debussche and Vovelle \cite{deb} extended the well-posedness theory for the stochastic scalar conservation laws. Moreover, Dotti and Vovelle \cite{sylvain} showed convergence of approximations to stochastic scalar conservation laws. For the more delicate case of degenerate parabolic equations, we refer to the works of Hofmanova \cite{hofmanova}, Debussche et al. \cite{vovelle}. For more works in this direction, we refer to \cite{BaVaWitParab,BisKoleyMaj,KMV,KMV1,ujjwal,koley2013multilevel,Koley3}. Furthermore, well-posedness theory for stochastic fractional degenerate problem has recently been studied in \cite{neeraj,Neeraj2}. On the other hand, the analysis of the long-time behavior of global solutions is the second pillar in the theory of SPDEs, after the analysis of the well-posedness.
The first paper in this direction is by E, Khanin, Mazel, Sinani \cite{EKAS} for the analysis of the invariant measure related to the periodic inviscid Burgers equation with stochastic forcing in space dimension one. We also mention the work by Bakhtin \cite{Bak}, which deals with the scalar conservation laws with Poisson random forcing on the whole line. Moreover, Debussche and Vovelle \cite{vovelle 2} established the existence of an invariant measure for a scalar first-order conservation laws with stochastic forcing under a hypothesis of non-degeneracy on the flux. Furthermore, Chen and Pang \cite{chen.2} also have proved existence of an invariant measure for the stochastic anisotropic parabolic-hyperbolic equation. A number of authors have contributed since then, and we mention the works of \cite{B-2,B-3}.

\subsection{Aim and outline of this paper} Equation \eqref{1.1} is a fractional convection-diffusion equation, and this class of equations have received considerable interest recently thanks to the wide variety of applications. Due to the nonlinearity involves in equation \eqref{1.1}, classical solutions to \eqref{1.1} are hard to find, and weak solutions must be sought. We adapt the notion of kinetic formulation for solutions to \eqref{1.1}, which is only weak in space variable but strong in time variable, as proposed in \cite{sylvain}. To sum up, we aim at developing the following results related to \eqref{1.1}:
\begin{itemize}
\item[(i)] Our first aim is to establish the well-posedness theory for solutions to the Cauchy problem \eqref{1.1}. The proof of well-posedness theory is rather classical but still quite technical, the hardest part is probably the uniqueness part (see step 3 in the proof of Theorem \ref{comparison}). The proof of existence is based on the vanishing viscosity argument, while the uniqueness proof is settled by extending Kruzkov’s doubling of variable technique in the presence of multiplicative noise. Note that the noise coefficient $\Psi$ has an explicit dependency on the spatial variable $x$ as well. In this case, the proof of contraction principle (see Theorem \ref{th3.6}) requires a change in hierarchy while computing limits with respect to various parameters involved (for details, see \eqref{final1}). Indeed, this technical hurdle forces us to analyze the general equation with values of $\alpha$ in $(0,\frac{1}{2})$. However, when the noise coefficient $\Psi=\Psi(u)$, we can extend well-posedness theory for $\alpha\in (0, 1)$ (see Theorem \ref{main result}).

\item[(ii)] The second main contribution of this paper is the analysis of the long-time behavior of the kinetic solution. In fact, we obtain existence of an invariant measure through the Krylov-Bogoliubov theorem under a nonlinearity-diffusivity condition \eqref{non} (see Theorem \ref{Invariant measure}). The proof of the existence of invariant measure itself is much more delicate than in the hyperbolic case \cite{vovelle 2}. Moreover, we show that the law of kinetic solution $u(t)$ converges to an unique invariant measure as $t\to\,\infty$ (see \eqref{convergence of law}). We also establish a well-posedness theory for initial data in $L^1(\mathbb{T}^N)$ which is needed for the framework of invariant measure (see Theorem \ref{main result 2}).
\end{itemize}
The paper is organized as follows. In Section \ref{section 2}, we give the details of assumptions and introduce the basic setting, define the notion of a kinetic solution, and state our main results, Theorem \ref{main result}, Theorem \ref{main result 2}, Theorem \ref{Invariant measure}. Section \ref{section 3} is devoted to well-posedness theory in $L^p$-setting. Subsection \ref{subsection 3.1} is devoted to the proof of uniqueness together with the proof of $L^1$-contraction principle. In Subsection \ref{subsection 3.2}, we prove the existence part of Theorem \ref{main result} which is divided into two parts. First, we prove the existence for the smooth initial condition. Second, we relax the hypothesis upon initial data and prove the existence for general initial data. In Section \ref{section 4}, we prove the existence and uniqueness of invariant measure, Theorem \ref{Invariant measure}. In Appendix \ref{A}, we briefly derive the kinetic formulation of equation \eqref{1.1}. In Appendix \ref{B}, we give the proof of Theorem \ref{main result 2}.

\section{Technical framework and statement of main result}\label{section 2}
\subsection{Hypothesis}
In this subsection, we give the precise assumptions on each of the term appearing in the equation \eqref{1.1}. We work on a finite-time interval $[0,T]$ for the well-posedness theory and consider periodic boundary conditions: $x\in\mathbb{T}^N$, where $\mathbb{T}^N $ is the N-dimensional torus. Suppose that $(\Omega,\mathcal{F},\mathbb{P},({\mathcal{F}}_t),(\beta_k(t))_{k\ge1})$ is a stochastic basis with a complete, right continuous filtration. Let $\mathcal{P}$ indicates the predictable $\sigma$-algebra on $\Omega\times[0,T]$ associated to $(\mathcal{F}_t)_{t\ge0}$. $B$ is a cylindrical Wiener process  defined as $B=\sum_{n\ge1}\beta_n l_n$ , where the coefficients $\beta_n$ are independent Brownian processes and $(l_n)_{n\ge1}$ is a complete orthonormal basis in a Hilbert space $\mathfrak{O}$. Here, we introduce the canonical space $\mathfrak{O}\subset\mathfrak{O}_0$ via
$$\mathfrak{O}_0=\bigg\{v=\sum_{n\ge1}\theta_n \mathfrak{\gamma}_n;\, \sum_{n\ge1}\frac{\theta_n^2}{n^2}\,\textless\,\infty\bigg\}$$
with the norm 
$$\| v\|_{\mathfrak{O}_0}^2=\sum_{n\ge1}\frac{\theta_n^2}{n^2},\,\,\, v=\sum_{n\ge1}\theta_n \mathfrak{\gamma}_n .$$
Notice that the embedding $\mathfrak{O}\hookrightarrow \mathfrak{O}_0$ is Hilbert-Schmidt. Moreover, $B$ has $\mathbb{P}$-almost surely trajectories in $C([0,T];\mathfrak{O}_0)$. For all $w\in L^2(\mathbb{T}^N)$ we consider a mapping $\Psi: \mathfrak{O}\to L^2(\mathbb{T}^N)$ intruduced by $\Psi(w)\mathfrak{\gamma}_n = h_n(\cdot, w(\cdot))$. Then we define
$$\Psi(x,w)=\sum_{n\ge1} h_n(x,w)\mathfrak{\gamma}_n ,$$
For details related to the It\^o integral $\int_0^t\Psi(x,u){\rm dB}(t)$, we refer to \cite{prato}. Now we separate the rest of the assumptions in two parts; first part for the well-posedness theory and second part for the existence and uniqueness of invariant measure.
\subsection*{Assumptions for the well-posedness theory} Here, we list the assumptions which are necessary to state the Theorems \ref{main result} \& \ref{main result 2}. 
\begin{Assumptions}
\item\label{A1} $F$ is a $C^2(\mathbb{R};\mathbb{R}^N)$-function with a polynomial growth of its derivative, in the following sense: there exists $r\,\ge\,1$  such that
\begin{align}\label{F.2}
	\sup_{|\zeta|\,\le\,\delta}|F'(\xi)-F'(\xi+\zeta)|\,\le\,C_F (1+|\xi|^{r-1})\,\delta.
\end{align}
\item\label{A2} $A:\mathbb{R}\to\mathbb{R}$ is a non-decreasing Lipschitz continuous function.
\item\label{A3} $h_k\in C(\mathbb{T}^N \times\mathbb{R})$ with the following bounds:
\begin{align}\label{2.2}
	H^2(x,w)=\sum_{k\ge1}|h_k(x,w)|^2 \le C_0 (1+|w|^2),
\end{align}
\begin{align}\label{2.3} \sum_{k\ge1}|h_k(x,w)-h_k(y,z)|^2\le C_{\Psi}(|x-y|^2+|w-z|g(|w-z|)),
\end{align}
where $x,y\in\mathbb{T}^N$ and $w,z\in\mathbb{R}$ and $g$ is a continuous non-decreasing function on $\mathbb{R}_+$  satisfying, $g(0)=0$, and we assume also $0\le g(\mathfrak{\zeta})\le1$ for all $\mathfrak{\zeta}\in\mathbb{R}_+$.

\item \label{A4} If noise is additive in nature, then $h_k\in C(\mathbb{T}^N)$, with the following bounds
\begin{align}\label{noise1}
	H^2(x)=\sum_{k\ge1}|h_k(x)|^2,\,\,\sum_{k\ge\,1} \|h_k\|_{C(\mathbb{T}^N)}^2 \le C_0,
\end{align}
\begin{align}\label{noise2} \sum_{k\ge1}|h_k(x)-h_k(y)|^2\,\le\,C_{\Psi}(|x-y|^2),
\end{align}
where $x,y\in\mathbb{T}^N$.
\end{Assumptions}
\subsection*{Assumptions for the proof of invariant measure} Here, we list the assumptions which are necessary to state the Theorem \ref{Invariant measure}.
\begin{Assumptions2}
\item\label{H1}  $A\in C^2(\mathbb{R};\mathbb{R})$ is a non-decreasing Lipschitz continuous function with the following bound:
\begin{align}
	|A''(\mathfrak{\zeta})|\,\le\,C\,(|\mathfrak{\zeta}|+1)\,\qquad\,\forall\mathfrak{\zeta}\in\mathbb{R}.
\end{align}
\item \label{H2} $F\in C^2(\mathbb{R};\mathbb{R}^N)$ with the following bound:
\begin{align}\label{flux existance}
	|F''(\mathfrak{\zeta})|\,\le\,C\,(|\mathfrak{\zeta}|+1).\,\,\,\qquad\, 
\end{align}
\item\label{H3} The noise is additive in nature, and $h_k\in C^2(\mathbb{T}^N)$, with the following bounds
\begin{align}\label{noise1}
	H^2(x)=\sum_{k\ge1}|h_k(x)|^2,\,\,\sum_{k\ge\,1} \|h_k\|_{C^2(\mathbb{T}^N)}^2 \le C_0,
\end{align}
\begin{align}\label{noise2} \sum_{k\ge1}|h_k(x)-h_k(y)|^2\,\le\,C_{\Psi}(|x-y|^2),
\end{align}
where $x,y\in\mathbb{T}^N$.
\item\label{H4} The functions $h_k$ satisfies the cancellation condition
\begin{align}\label{cancellation} \int_{\mathbb{T}^N}h_k(x)dx=0.
\end{align}
\end{Assumptions2}
\subsection{Definitions for $L^p$-setting:} \label{section 3.1}
Here, we introduce the formulation of kinetic solution to \eqref{1.1} as well as the basic definitions concerning the notion of kinetic solution. Let $\mathcal{M}^+\big([0,T]\times\mathbb{T}^N\times\mathbb{R}\big)$ be the collection of non-negative Radon measures over $[0,T]\times\mathbb{T}^N\times\mathbb{R}$.
\begin{definition}[\textbf{Kinetic measure}]\label{kinetic measure 1}
A mapping $\mathfrak{m}$ from $\Omega$ to $\mathcal{M}^+\big([0,T]\times\mathbb{T}^N\times\mathbb{R}\big)$ is said to be kinetic measure provided the following conditions hold: 
\begin{enumerate}
	\item[(i)] $\mathfrak{m}$ is measurable in the following sense: for each $\psi\in C_0(\mathbb{T}^N\times[0,T]\times\mathbb{R})$ the mapping $\mathfrak{m}(\psi):\Omega\to\mathbb{R}$ is measurable,
	\item[(ii)] if $\mathbb{B}_{L}^c=\{\mathfrak{\zeta}\in\mathbb{R}; |\mathfrak{\zeta}|\ge L\}$, then $\mathfrak{m}$ vanishes for large $\mathfrak{\zeta}$ in the sense: 
	$$\lim_{L\to\infty}\mathbb{E} \mathfrak{m}(\mathbb{T}^N\times[0,T]\times \mathbb{B}_{L}^c)=0.$$
\end{enumerate}	
\end{definition}
\begin{definition}[\textbf{Kinetic solution}]\label{kinetic solution in lp setting} Let $u_0\in L^p(\Omega\times\mathbb{T}^N)$ for all $p\in[1,+\infty)$. A $L^1(\mathbb{T}^N)$- valued stochastic process $(u(t))_{t\in[0,T]}$ is said to be a solution to \eqref{1.1} with initial datum $u_0$, if $(u(t))_{t\in[0,T]}$ and
$f(t):=\mathbbm{1}_{u(t)\textgreater\mathfrak{\zeta}}$ have the following properties:
\begin{enumerate}
	
	\item[1.] $u\in L_{\mathcal{P}}^p(\mathbb{T}^N\times[0,T]\times\Omega),\,\,\,\,\forall\,\, p\in[1,+\infty)$ ,
	\item[2.] for all $\varphi\in C_c(\mathbb{T}^N\times\mathbb{R}),$ $\mathbb{P}$-almost surely, $t\to \langle f(t),\varphi\rangle$ is c\'adl\'ag,
	\item[3.]for all $p\in[1,+\infty),$ there exists $C_p\ge0$ such that 
	\begin{align}\label{2.5}
		\mathbb{E}(\sup_{0\le t\le T}\| u(t)\|_{L^p(\mathbb{T}^N)}^p)\le C_p,
	\end{align}
	\item[4.] 
	Let $\mathfrak{\mathfrak{\eta}}_{1}:\Omega\to\,\mathcal{M}^{+}\big(\mathbb{T}^N\times[0,T]\times\mathbb{R}\big)$ be defined as follows:  
	$$\mathfrak{\eta}_1(x,t,\xi)=\int_{\mathbb{R}^N}|A(u(x+z,t))-A(\mathfrak{\zeta})|\mathbbm{1}_{\mbox{Conv}\{u(x,t),u(x+z,t)\}}(\mathfrak{\zeta})\lambda(z)dz.$$
	There exists a random kinetic measure $\mathfrak{m}$ in sense of Definition \ref{kinetic measure 1} such that $\mathbb{P}$-almost surely, $\mathfrak{m}\ge\mathfrak{\mathfrak{\eta}}_1$, and the pair $(f,\mathfrak{m})$ satisfies the following formulation: for all $\varphi \in C_c^2(\mathbb{T}^N\times\mathbb{R})$, $t\in[0,T]$,
	\begin{align}\label{2.6}
		\langle f(t),\varphi \rangle &= \langle f_0, \varphi \rangle + \int_0^t\langle f(s),F'(\mathfrak{\zeta})\cdot\nabla\varphi\rangle ds -\int_0^t\langle f(s), \, A'(\mathfrak{\zeta})\,(-\Delta)^\alpha[\varphi]\rangle ds\notag\\
		&\qquad+\sum_{k=1}^\infty \int_0^t \int_{\mathbb{T}^N} h_k(x,u(x,s))\varphi(x,u(x,s) dx d\beta_k(s)\notag\\
		&\qquad+\frac{1}{2}\int_0^t\int_{\mathbb{T}^N}\partial_{\mathfrak{\zeta}}\varphi(x,u(x,s)) H^2 (x,u(x,s))dxds -\mathfrak{m}(\partial_{\mathfrak{\zeta}}\varphi)([0,t]),
	\end{align}
	$\mathbb{P}$-almost surely,
	where, $f_0(x,\xi)=\mathbbm{1}_{u_0\textgreater\xi}$, $H^2(x, \xi) := \sum_{k\ge1} |h_k(x,\xi)|^2$.
\end{enumerate}
\end{definition}
\noindent
Here, we use the brackets $\langle.,.\rangle$, to indicate the duality between $C_c^\infty(\mathbb{T}^N \times\mathbb{R})$ and the space of distributions over $\mathbb{T}^N\times\mathbb{R}$, we use $\mathfrak{m}(\varphi)$ to denote the Borel measure on $[0,T]$ defined by
$$\mathfrak{m}(\varphi):B\mapsto\int_{\mathbb{T}^N\times B\times \mathbb{R}}\varphi(x,\mathfrak{\zeta})d\mathfrak{m}(x,t,\mathfrak{\zeta}),\,\,\, \varphi \in C_b(\mathbb{T}^N\times\mathbb{R})$$
for all $B$ Borel subset of $[0,T]$, and for all $c,d \in \R$
$$\text{Conv}\{c, d\} := (\text{min}\{c, d\},\text{max}\{c, d\}).$$
	\subsection{Definitions for $L^1$-setting:}
Here, we recall the definition of kinetic solution as well as the related definitions used for $L^1$-setting. This is a generalization of the concept of kinetic solution as defined in Definition \ref{kinetic solution in lp setting}. In this context, the corresponding kinetic measure is not finite and one can show only suitable decay at infinity. 
\begin{definition}\textbf{(Kinetic measure)}\label{definition kinetic measure}\label{kinetic measure 2}
	A mapping $\mathfrak{m}_1$ from $\Omega$ to $\mathcal{M}^+\big([0,T]\times\mathbb{T}^N\times\mathbb{R}\big)$ is said to be a kinetic measure provided the following conditions hold:
	\begin{enumerate}
		\item[(i)] $\mathfrak{m}_1$ is measurable in the following sense: for each $\kappa\in C_0(\mathbb{T}^N\times[0,T]\times\mathbb{R})$ the mapping $\mathfrak{m}_1(\kappa):\Omega\to\mathbb{R}$ is measurable,
		\item[(ii)] if $\widetilde{\mathbb{B}}_{L}=\{\xi\in\mathbb{R}; (L+1)\,\ge\,|\mathfrak{\zeta}|\ge L\}$, then 
		$$\lim_{L\to\infty}\mathbb{E} \mathfrak{m}_1(\mathbb{T}^N\times[0,T]\times\widetilde{\mathbb{B}}_{L})=0.$$
	\end{enumerate}		
\end{definition}
\begin{definition}\textbf{$\big($Kinetic solution$).$}\label{definition kinetic solution in l1 setting}
	Let $u_0\,\in\,L^1(\mathbb{T}^N).$ A $L^1(\mathbb{T}^N)$- valued stochastic process $(u(t))_{t\in[0,T]}$ is said to be a solution to \eqref{1.1} with initial datum $u_0$, if the following conditions are satisfied,
	\begin{enumerate}
		
		\item[1.]$u\in L_{\mathcal{P}}^1(\mathbb{T}^N\times[0,T]\times\Omega)$ and satisfying
		$$\mathbb{E}\bigg(\sup_{t\in[0,T]}\|u(t)\|_{L^1(\mathbb{T}^N)}\bigg)\,\textless\,+\infty,$$
		\item[2.] for all $\varphi\in C_c^2(\mathbb{T}^N\times\mathbb{R}),$ $\mathbb{P}$-almost surely, $t\to \langle f(t),\varphi\rangle$ is c\'adl\'ag,
		\item[3.] let $\mathfrak{\mathfrak{\eta}}_{1}:\Omega\to\,\mathcal{M}^{+}\big(\mathbb{T}^N\times[0,T]\times\mathbb{R}\big)$ be defined as follows:  
		$$\mathfrak{\mathfrak{\eta}}_1(x,t,\xi)=\int_{\mathbb{R}^N}|A(u(x+z,t))-A(\mathfrak{\zeta})|\mathbbm{1}_{\mbox{Conv}\{u(x,t),u(x+z,t)\}}(\mathfrak{\zeta})\lambda(z)dz.$$
		There exists a kinetic measure ${\mathfrak{m}}$ in sense of Definition \ref{kinetic measure 2} such that $\mathfrak{m}_1\,\ge\,\mathfrak{\mathfrak{\eta}}_1$, $\mathbb{P}$-almost surely, and the pair $\big(f,\, \mathfrak{m}_1 \big)$  satisfies the following formulation: for all $\varphi\,\in\,C_c^2(\mathbb{T}^N\times\mathbb{R})$, for all $t\in[0,T],$		
		\begin{align}\label{formulation}
			\langle {f}(t),\varphi \rangle &= \langle f_0, \varphi \rangle + \int_0^t\langle f(s),F'(\mathfrak{\zeta})\cdot\nabla\varphi\rangle ds -\int_0^t\langle f(s), A'(\mathfrak{\zeta})\,(-\Delta)^\alpha[\varphi]\rangle ds\notag\\
			&\qquad+\sum_{k=1}^\infty \int_0^t \int_{\mathbb{T}^N} h_k(x)\varphi(x,u(x,s) dx d\beta_k(s)\notag\\
			&\qquad+\frac{1}{2}\int_0^t\int_{\mathbb{T}^N}\partial_{\mathfrak{\zeta}}\varphi(x,u(x,s)) H^2 (x)dxds -\mathfrak{m}_1(\partial_{\mathfrak{\zeta}}\varphi)([0,t]),
		\end{align}
		$\mathbb{P}$-almost surely, where, $f_0(x,\mathfrak{\zeta})=\mathbbm{1}_{u_0\textgreater\mathfrak{\zeta}}$, $H^2(x) := \sum_{k\ge1} |h_k(x)|^2$.
	\end{enumerate}
\end{definition}
\subsection{Invariant measure} Suppose that noise is only additive in nature. Let $u(t,0,u_0)$ denote the solution at time $t$ from starting at time $0$ (for existence of solution, see Theorem \ref{main result 2}). Then, we can define the operator $\mathcal{Q}_t:B_b(L^1(\mathbb{T}^N))\to B_b(L^1(\mathbb{T}^N))$ by
$$\big(\mathcal{Q}_t\kappa\big)(v)=\mathbb{E}[ \kappa(u(t,0,v)) ]$$
Suppose that $u(t,t_0,v)$ denotes the solution to \eqref{1.1} starting at time $t_0$ from an $\mathcal{F}_{t_0}$-measurable initial condition $v$. By contraction principal (see Theorem \ref{main result 2}), it holds true that $\mathbb{P}$-almost surely,
$$\|u(s,t,u_1)-u(s,t,u_2)\|_{L^1(\mathbb{T}^N)}\,\le\,\|u_1-u_2\|_{L^1(\mathbb{T}^N)}$$
It implies that $\mathcal{Q}_t\kappa\,\in\,C_b(L^1(\mathbb{T}^N))$ for all $\kappa\,\in\,C_b(L^1(\mathbb{T}^N))$. The equation \eqref{1.1} defines a Markov process in the following sense:
$$\mathbb{E}[\kappa(u(t+s,0,v))|\mathcal{F}_t]=\mathcal{Q}_s\kappa\big(u(t,0,v)\big),\,\,\,\forall\,\,\kappa\,\in \,C_b(L^1(\mathbb{T}^N))\,\,\,\forall\,t,s\,\textgreater\,0,\,\,\,\mathbb{P}-\text{almost surely},$$
then the semigroup property $\mathcal{Q}_{t+s}=\mathcal{Q}_t\circ \mathcal{Q}_s$ holds. The equation $\eqref{1.1}$ defines a Feller Markov process. The semigroup $(\mathcal{Q}_t)_{t\,\ge\,0}$ is called Feller.
We now denote the law of $u(t,0,v)$ by $\lambda_{t,v}$,  then
$$\mathcal{Q}_t\kappa(v)=\mathbb{E}[\kappa(u(t,0,v))]=\int_{L^1({\mathbb{T}^N})}\kappa(y)\lambda_{t,v}(dy),$$
and $\langle\cdot,\cdot\rangle$ denotes the duality product between bounded Borel functions and probability measures, we obtain 
$$\mathcal{Q}_t\kappa(v)=\langle \kappa, \lambda_{t,v}\rangle=\langle \mathcal{Q}_t\kappa,\delta_v\rangle.$$ 
It follows that $\lambda_{t,v}=\mathcal{Q}_t^*\delta_v.$ More generally, if we consider a solution to \eqref{1.1} with initial condition $u_0$ having the initial law $\lambda$, we have $\lambda_{t,u_0}=\mathcal{Q}_t^*\lambda$.
\begin{definition}\textbf{(Invariant measure)}
	We say that a probability measure $\lambda$ on $L^1(\mathbb{T}^N)$ is an invariant measure if 
	$$\mathcal{Q}_t^*\lambda=\lambda,\,\,\,\,\,\,\text{for all}\,t\,\ge\,0.$$
\end{definition}
Now, we have all in hand to formulate the Krylov-Bogoliubov Theorem \cite[Proposition 11.3]{prato}.
\begin{thm}
	Suppose that the semigroup $(\mathcal{Q}_t)$ is Feller. Suppose that there exists a random variable $u_0$, a sequence $(T_n)$ increasing to $\infty$ and a probability measure $\lambda$ such that
	$$\frac{1}{T_n}\int_0^{T_n}\lambda_{t,u_0} dt\,\to\,\lambda\,\,\text{weak-*}. $$
	Then $\lambda$ is an invariant measure for $(\mathcal{Q}_t)$.
\end{thm}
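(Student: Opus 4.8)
The goal is to verify the stationarity identity $\mathcal{Q}_s^*\lambda=\lambda$ for every $s\ge0$. Since $\lambda$ and $\mathcal{Q}_s^*\lambda$ are both probability measures on $L^1(\mathbb{T}^N)$, it suffices to test against bounded continuous functions, i.e. to show that
$$\langle \mathcal{Q}_s\kappa,\lambda\rangle=\langle\kappa,\lambda\rangle \qquad\text{for all }\kappa\in C_b(L^1(\mathbb{T}^N)),\ s\ge0.$$
I would therefore fix $\kappa\in C_b(L^1(\mathbb{T}^N))$ and $s\ge0$ once and for all and prove this single scalar identity.

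First I would exploit the Feller property. Because $(\mathcal{Q}_t)$ is Feller, $\mathcal{Q}_s\kappa$ again belongs to $C_b(L^1(\mathbb{T}^N))$, so it is a legitimate test function for the assumed weak-$*$ convergence; applying the hypothesis to $\mathcal{Q}_s\kappa$ gives
$$\langle\mathcal{Q}_s\kappa,\lambda\rangle=\lim_{n\to\infty}\frac{1}{T_n}\int_0^{T_n}\langle\mathcal{Q}_s\kappa,\lambda_{t,u_0}\rangle\,dt.$$
Next I would use the semigroup structure. Passing to adjoints in $\mathcal{Q}_{t+s}=\mathcal{Q}_t\circ\mathcal{Q}_s$ yields $\lambda_{t+s,u_0}=\mathcal{Q}_s^*\lambda_{t,u_0}$, and hence the duality convention $\langle\kappa,\mu\rangle=\int\kappa\,d\mu$ gives $\langle\mathcal{Q}_s\kappa,\lambda_{t,u_0}\rangle=\langle\kappa,\mathcal{Q}_s^*\lambda_{t,u_0}\rangle=\langle\kappa,\lambda_{t+s,u_0}\rangle$. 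Substituting this and changing the integration variable $t\mapsto t-s$ transforms the previous display into
$$\langle\mathcal{Q}_s\kappa,\lambda\rangle=\lim_{n\to\infty}\frac{1}{T_n}\int_s^{T_n+s}\langle\kappa,\lambda_{t,u_0}\rangle\,dt.$$

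Finally I would compare this with the original time average. Subtracting, the two integrals differ only by the contributions over $[0,s]$ and $[T_n,T_n+s]$:
$$\frac{1}{T_n}\int_s^{T_n+s}\langle\kappa,\lambda_{t,u_0}\rangle\,dt-\frac{1}{T_n}\int_0^{T_n}\langle\kappa,\lambda_{t,u_0}\rangle\,dt=\frac{1}{T_n}\left(\int_{T_n}^{T_n+s}-\int_0^s\right)\langle\kappa,\lambda_{t,u_0}\rangle\,dt.$$
Since $|\langle\kappa,\lambda_{t,u_0}\rangle|\le\|\kappa\|_{\infty}$ uniformly in $t$ (each $\lambda_{t,u_0}$ being a probability measure), each of the two endpoint integrals is bounded in absolute value by $s\|\kappa\|_{\infty}$, so the right-hand side is of order $s/T_n\to0$ as $n\to\infty$. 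Consequently $\lim_{n}T_n^{-1}\int_s^{T_n+s}\langle\kappa,\lambda_{t,u_0}\rangle\,dt=\lim_{n}T_n^{-1}\int_0^{T_n}\langle\kappa,\lambda_{t,u_0}\rangle\,dt=\langle\kappa,\lambda\rangle$, and combining with the two displayed limits above yields $\langle\mathcal{Q}_s\kappa,\lambda\rangle=\langle\kappa,\lambda\rangle$. As $\kappa$ and $s$ were arbitrary, $\mathcal{Q}_s^*\lambda=\lambda$ for all $s\ge0$, i.e. $\lambda$ is invariant.

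The argument is essentially soft once these ingredients are lined up. There is no deep obstacle here; the only points demanding care are the simultaneous use of the Feller property (which alone legitimizes testing the weak-$*$ limit against $\mathcal{Q}_s\kappa$ rather than against $\kappa$ itself) and the semigroup/adjoint identity $\lambda_{t+s,u_0}=\mathcal{Q}_s^*\lambda_{t,u_0}$, together with the verification that the endpoint contributions vanish after normalization by $T_n$. The main difficulty, such as it is, is purely bookkeeping: applying the change of variable and the adjoint relation consistently with the fixed duality convention.
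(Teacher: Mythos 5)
Your proof is correct: it is precisely the standard Krylov--Bogoliubov argument (use the Feller property to test the weak-$*$ convergence against $\mathcal{Q}_s\kappa\in C_b(L^1(\mathbb{T}^N))$, rewrite $\langle\mathcal{Q}_s\kappa,\lambda_{t,u_0}\rangle=\langle\kappa,\lambda_{t+s,u_0}\rangle$ via $\lambda_{t+s,u_0}=\mathcal{Q}_s^*\lambda_{t,u_0}$, shift the time variable, and observe that the endpoint contributions are $O(s/T_n)$). The paper does not prove this theorem itself but cites it from Da Prato--Zabczyk (Proposition 11.3), whose proof proceeds exactly along the lines you wrote, so there is nothing to add.
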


\subsection{Nonlinearity-Diffusivity condition and recent developments}\label{non-linearity diffusivity condition}
Here, we will discuss some recent developments in the analysis of long-time behavior of solutions of nonlinear deterministic, stochastic PDEs and some comments about assumptions on flux function and non-linearity. Let us introduce the nonlinearity-diffusivity condition on which we will work on as follows: suppose that there exist $s\,\in\,(0,1)$, and $C \textgreater\,0$, independent of $\gamma$, such that 
\begin{align}\label{non}
	\sup_{\tau\in\mathbb{R}, k\in\,Z^n}\int_{\mathbb{R}}\frac{\gamma\big(A'(\mathfrak{\zeta})|k|^{2\alpha-1}+\gamma\big)}{(\gamma+A'(\mathfrak{\zeta})|k|^{2\alpha-1})^2+|F'(\mathfrak{\zeta})\cdot\frac{k}{|k|}+\tau|^2}d\mathfrak{\zeta} =:\mathfrak{\mathfrak{\eta}}(\gamma)\,\le\,C\,\gamma^s\,\,\to\,0\,\,\text{as}\,\gamma\,\to\,0.
\end{align}
This condition implies that no interval of $\mathfrak{\zeta}$ on which, flux function $F(\mathfrak{\zeta})$ is affine and nonlinear diffusive function $A(\mathfrak{\zeta})$ is degenerate. We can write this condition in the simple and more standard setting: for any $\tau\in\mathbb{R}\,\,\hat{k}\,\in\,\mathbb{S}^{N-1}$, we have 
$$\mathcal{L}\{\mathfrak{\zeta}\in\mathbb{R};\,F'(\mathfrak{\zeta})\cdot\hat{k}+\tau=0,\, A'(\mathfrak{\zeta})=0\}=0.$$
We mention some results under similar type of conditions on flux functions: hyperbolic conservation laws \cite{vovelle2,vovelle 2},  and diffusion matrix in anisotropic degenerate parabolic-hyperbolic equations \cite{chen.2,chen3}. These results can summarized as follows: Consider
\begin{equation}\label{hyperbolic}
	\begin{cases}
		d u(x,t)+\mbox{div}(F(u(x,t)))dt =\Psi(x)\,dB(t),\,\,\,& x \in \mathbb{T}^N,\,\, t \in(0,T),\\
		u(x,0)=u_0(x) & x\in\mathbb{T}^N.
	\end{cases}
\end{equation}
The purely hyperbolic conservation laws is special case of \eqref{1.1} with $A(\mathfrak{\zeta})=0$ and $\Psi=0$. If put $A(\mathfrak{\zeta})=0$ in condition \eqref{non}, it gives that for any $\tau\in\mathbb{R},\,\,\hat{k}\,\in\,\mathbb{S}^{N-1}$, 
\begin{align}\label{non_degeneracy}\mathcal{L}\{\mathfrak{\zeta}\in\mathbb{R};\,F'(\mathfrak{\zeta})\cdot\hat{k}+\tau=0\}=0,
\end{align}
\noindent
then all solutions of the deterministic scalar conservation laws converges to $\bar{u}_0=\int_{\mathbb{T}^N}u_0(x)dx$  (see\,\cite{vovelle2}).\\
There is also an article by Chen and Perthame \cite{chen3}, where authors studied the large time behaviour of periodic solutions in $L^\infty$ to the deterministic nonlinear anisotropic degenerate parabolic-hyperbolic equations of second order.
\begin{align}\label{aniso}
	\begin{cases}
		\partial _t u(x,t)+\mbox{div}_x(F(u(x,t))) =\nabla_x(A(u)\cdot\nabla_x u),\,\,\, &x \in \mathbb{T}^N,\,\, t \in(0,T),\\
		u(x,0)=u_0 &x\in\mathbb{T}^N.
	\end{cases}
\end{align}
\begin{thm}[\textbf{Anisotropic degenerate parabolic-hyperbolic equation}]
	Let $u\,\in\,L^\infty([0,\infty)\times\mathbb{R}^N)$ be the unique periodic entropy solution to \eqref{aniso}. Suppose that the flux function $F$ and the diffusion matrix $A$ satisfy the following non-linearity diffusivity condition: for any $\delta\,\textgreater\,0$
	\begin{align}
		\sup_{|\tau|+|k|\,\ge\,\delta}\int_{|\mathfrak{\zeta}|\,\le\,\|u_0\|_{\infty}}\frac{\gamma}{\gamma+|F'(\mathfrak{\zeta})\cdot k+\tau|^2+(k^TA(\mathfrak{\zeta})k)^2}d\mathfrak{\zeta}:=\omega_\delta(\gamma)\,\to\,0\,\,\text{as}\,\,\gamma\,\to\,0.
	\end{align}
	Then we have
	$$\|u(t)-\bar{ u}_0\|_{\infty}\,\to0\,\,\, \text{as}\,\,t\,\to\,\infty,$$
	where
	$$\bar{u}_0=\int_{\mathbb{T}^N}u_0(x)dx.$$
\end{thm}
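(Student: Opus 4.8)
The plan is to combine conservation of the spatial mean, a compactness property of the orbit coming from a velocity-averaging lemma together with the nonlinearity-diffusivity condition, and a rigidity argument that forces any long-time limit to be spatially constant.

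First I would observe that the spatial mean is conserved: integrating \eqref{aniso} over $\mathbb{T}^N$, both the flux divergence and the diffusion term vanish (each is in divergence form on the torus), so $\frac{d}{dt}\int_{\mathbb{T}^N} u(x,t)\,dx = 0$ and hence $\int_{\mathbb{T}^N} u(x,t)\,dx = \bar{u}_0$ for all $t\ge 0$. By the comparison principle for entropy solutions one has $\|u(t)\|_\infty \le \|u_0\|_\infty$, so the whole orbit $\{u(\cdot,t)\}_{t\ge 0}$ lies in a fixed ball of $L^\infty$. It therefore suffices to show that along every sequence $t_n\to\infty$ some subsequence of $u(\cdot,t_n)$ converges to the constant $\bar{u}_0$, since independence of the limit from the subsequence upgrades this to convergence of the whole family.

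Next I would set up the kinetic formulation of \eqref{aniso}: writing $\chi(x,t,\xi)=\mathbbm{1}_{0<\xi<u}-\mathbbm{1}_{u<\xi<0}$, the entropy solution satisfies a kinetic transport equation whose left-hand symbol is $i(\tau+F'(\xi)\cdot k)+k^{T}A(\xi)k$ and whose right-hand side is $\partial_\xi(m+n)$ for non-negative entropy/parabolic dissipation measures $m,n$. Consider the time-translates $u_n(x,t)=u(x,t+t_n)$ with associated kinetic functions $\chi_n$. Because the total dissipation $\iint (m+n)$ over $[0,\infty)\times\mathbb{T}^N\times\mathbb{R}$ is finite (controlled by $\|u_0\|_\infty$), the dissipation of $u_n$ on any fixed window $[0,S]$ tends to zero as $n\to\infty$. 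The velocity-averaging lemma, applied under the nonlinearity-diffusivity condition $\omega_\delta(\gamma)\to 0$, then yields uniform fractional Sobolev bounds on the averages $\int \chi_n\,\psi(\xi)\,d\xi$, in particular on $u_n$ itself, giving strong $L^p_{\mathrm{loc}}$ precompactness in $(x,t)$.

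Finally I would extract a subsequence with $u_n\to v$ strongly and $\chi_n\to\chi_v$. Passing to the limit in the kinetic formulation, and using that the limiting dissipation measures vanish, $v$ is a global entropy solution of \eqref{aniso} whose dissipation is identically zero, so $\chi_v$ solves the free kinetic equation $\partial_t\chi_v+F'(\xi)\cdot\nabla_x\chi_v-\nabla_x\cdot(A(\xi)\nabla_x\chi_v)=0$. Taking the space-time Fourier transform, the symbol $i(\tau+F'(\xi)\cdot k)+k^{T}A(\xi)k$ multiplies $\widehat{\chi}_v$; for $(\tau,k)\neq 0$ the nonlinearity-diffusivity condition says its zero set has measure zero in $\xi$, which rules out every non-constant Fourier mode and forces $v\equiv c$. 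Conservation of the mean then identifies $c=\bar{u}_0$. I expect the main obstacle to be the second step: proving the quantitative averaging estimate in the genuinely anisotropic degenerate parabolic setting, extracting strong compactness of $u_n$ from only the integrated smallness $\omega_\delta(\gamma)\to 0$ of the resolvent-type symbol while tracking that the dissipation on finite windows vanishes. Upgrading the resulting $L^p$ convergence to the claimed $L^\infty$ convergence is the other delicate point, which I would handle by exploiting the extra regularity (continuity of $v$) gained from averaging together with the uniform $L^\infty$ bound and interpolation.
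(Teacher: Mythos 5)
Before anything else, note the context: the paper does not prove this theorem — it is quoted as background from Chen and Perthame \cite{chen3} in the survey subsection on nonlinearity-diffusivity conditions, so there is no in-paper proof to compare against. Measured against the cited source, your outline reproduces its strategy faithfully: conservation of the spatial mean, the uniform $L^\infty$ bound from the comparison principle, the kinetic formulation with entropy and parabolic dissipation measures, finiteness of the total dissipation on $[0,\infty)\times\mathbb{T}^N\times\mathbb{R}$ (via the square entropy, bounded by $\tfrac12\|u_0\|_{L^2(\mathbb{T}^N)}^2$) forcing the dissipation of the time-translates $u_n$ to vanish on any fixed window, averaging-lemma compactness under the condition $\omega_\delta(\gamma)\to0$, and the Fourier rigidity argument showing that any subsequential limit satisfies the free kinetic equation and must therefore be the constant $\bar u_0$. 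Up to and including strong convergence in $L^p$, $p<\infty$, this is correct and is exactly the route of \cite{chen3}.

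The genuine gap is your final step, and you half-sensed it: the upgrade to $\|u(t)-\bar u_0\|_\infty\to0$ cannot be obtained by the tools you invoke. Interpolation between $\|u_n-\bar u_0\|_{L^p(\mathbb{T}^N)}\to0$ and the uniform bound $\|u_n\|_{L^\infty}\le\|u_0\|_\infty$ yields only $L^q$ convergence for $q<\infty$, never $L^\infty$ (think of shrinking bumps of fixed height: all $L^p$ norms vanish while the sup norm does not). Likewise, velocity averaging delivers only a uniform $W^{s,p}$ bound for some small $s>0$, which embeds in no space controlling the supremum norm; and continuity of the limit $v$ (which is anyway the constant $\bar u_0$, trivially continuous) does not transfer to uniform convergence of $u_n$ without an equicontinuity estimate that genuinely degenerate equations do not supply. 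In fact the statement as transcribed in the paper overstates the cited result: Chen--Perthame prove convergence in $L^p(\mathbb{T}^N)$ for every $p\in[1,\infty)$ (hence a.e.), the ``$L^\infty$'' in their title and hypotheses referring to the class of solutions, not the topology of convergence. So your sketch, once the averaging estimate is carried out quantitatively, proves the theorem in the form actually established in \cite{chen3}; the $L^\infty$ conclusion as literally quoted is not reachable by this argument and should be read as $L^p$, $p<\infty$.
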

However, in stochastic setting, Debussche et. al \cite{deb} used the following non-degeneracy condition on the flux function,
\begin{align}\label{non-degeneracy flux}
	i(\varepsilon)=\,\sup_{\tau\in\mathbb{R}\,\hat{k}\in\mathbb{S}^{N-1}}|\{\mathfrak{\zeta}\in\,\mathbb{R};|\tau+\hat{k}\cdot F'(\mathfrak{\zeta})|\,\textless\,\varepsilon\}|\,\le\,C_1\,\varepsilon^b
\end{align}
for some $C_1\,\textgreater\,0$ and $b\,\textgreater\,0$.
\begin{thm}[\textbf{The stochastic conservation laws}] Assumptions (H.2)-(H.4) hold. Then there exists an invariant measure for \eqref{hyperbolic} in $L^1(\mathbb{T}^N)$. If the condition \eqref{H2} is strengthened into the hypothesis that $F$ is sub-quadratic in the following sense:
	\begin{align}
		|F''(\mathfrak{\zeta})|\,\le\,C,\,\,\forall\,\,\,\mathfrak{\zeta}\in\mathbb{R},
	\end{align}
	then the invariant measure is unique.
\end{thm}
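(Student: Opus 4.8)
The plan is to derive existence from the Krylov-Bogoliubov theorem recalled above and uniqueness from a long-time $L^1$-contraction argument. For existence I would first note that the semigroup $(\mathcal{Q}_t)$ is Feller: this is a direct consequence of the $L^1$-contraction of Theorem \ref{main result 2}, which makes $v\mapsto u(t,0,v)$ a $1$-Lipschitz map on $L^1(\mathbb{T}^N)$ and hence sends $C_b(L^1(\mathbb{T}^N))$ into itself. It then remains to verify tightness of the time averages $\frac{1}{T_n}\int_0^{T_n}\lambda_{t,u_0}\,dt$ among the probability measures on $L^1(\mathbb{T}^N)$.

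Tightness rests on two a priori bounds. First, integrating \eqref{hyperbolic} over $\mathbb{T}^N$ and invoking the cancellation condition \eqref{cancellation} kills both the divergence term and the stochastic integral, so the spatial mean $\int_{\mathbb{T}^N}u(x,t)\,dx$ equals $\bar u_0$ for all $t$, $\mathbb{P}$-a.s.; this pins down the zeroth mode and prevents drift of the total mass. Second, and crucially, I would pass to the kinetic formulation and apply a stochastic velocity-averaging lemma: under the non-degeneracy condition \eqref{non-degeneracy flux} the symbol $\tau+\hat k\cdot F'(\mathfrak{\zeta})$ is non-vanishing on a large set, so the velocity average $u$ should gain fractional regularity, yielding a bound of the form
$$\frac{1}{T}\int_0^T \mathbb{E}\,\|u(t)\|_{W^{\sigma,1}(\mathbb{T}^N)}\,dt \le C$$
for some $\sigma>0$ and $C$ independent of $T$. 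Here the noise is fed into the system at a linear-in-$T$ rate (controlled by \eqref{noise1}) and is balanced, after division by $T$, by the entropy dissipation encoded in the kinetic measure. Since $W^{\sigma,1}(\mathbb{T}^N)$ embeds compactly into $L^1(\mathbb{T}^N)$, Markov's inequality turns this estimate into the tightness required by the Krylov-Bogoliubov theorem, which then produces an invariant measure.

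For uniqueness I would first observe that, since the mean is conserved, invariant measures split along the leaves of constant spatial mean, so it suffices to prove uniqueness on a fixed such leaf. Given two invariant measures $\lambda_1,\lambda_2$ with the same mean, I would couple two stationary solutions $u_1,u_2$ through the same noise; the $L^1$-contraction of Theorem \ref{main result 2} makes $t\mapsto\mathbb{E}\|u_1(t)-u_2(t)\|_{L^1(\mathbb{T}^N)}$ non-increasing, hence convergent to some $\ell\ge0$. The heart of the argument is to show $\ell=0$: I would transport the averaging estimate to the difference $u_1-u_2$ and use the non-degeneracy \eqref{non-degeneracy flux} to force the $L^1$-gap to dissipate. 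The strengthened sub-quadratic hypothesis $|F''|\le C$ enters precisely here, since it yields the linear growth $|F'(\mathfrak{\zeta})|\le C(1+|\mathfrak{\zeta}|)$ needed to control the flux contributions in the averaging estimate uniformly in time. Once $\mathbb{E}\|u_1(t)-u_2(t)\|_{L^1(\mathbb{T}^N)}\to0$, letting $t\to\infty$ in $\mathcal{Q}_t^{*}\lambda_1=\lambda_1$ and $\mathcal{Q}_t^{*}\lambda_2=\lambda_2$ forces $\lambda_1=\lambda_2$.

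I expect the main obstacle to be the uniform-in-time averaging estimate. In contrast with the deterministic setting, the kinetic formulation now carries a martingale term, an It\^o correction and the defect measure, and all three must be controlled so that the fractional-regularity bound is genuinely independent of the horizon $T$ rather than degrading as $T\to\infty$; securing this $T$-uniformity is what makes both the tightness in the existence part and the vanishing of $\ell$ in the uniqueness part go through.
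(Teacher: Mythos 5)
Your existence half follows essentially the same route as the paper (Section \ref{section 4}, which adapts \cite{vovelle 2}): Feller property from $L^1$-contraction, conservation of the (zero) spatial mean via the cancellation condition \eqref{cancellation}, a time-averaged positive-regularity bound whose compact embedding into $L^1(\mathbb{T}^N)$ plus Markov's inequality gives tightness, and then Krylov--Bogoliubov. What you leave as a black box --- the ``stochastic velocity-averaging lemma'' --- is precisely the technical core of the argument: the paper implements it by regularizing the kinetic transport operator by $\theta(-\Delta)^\beta+\delta\,\mathrm{Id}$, writing the kinetic equation in mild form, and decomposing $u=u^0+u^b+P+Q$, with Fourier estimates on $u^0,u^b$ driven by the non-degeneracy of the symbol, a regularity gain for the stochastic convolution $P$, and --- the delicate part --- an estimate on the kinetic-measure term $Q$, which needs an a priori bound on the kinetic measure, the growth $|F''(\mathfrak{\zeta})|\le C(1+|\mathfrak{\zeta}|)$, and an initial datum in $L^3(\mathbb{T}^N)$ rather than just $L^1$. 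You correctly flag this as the main obstacle, and modulo those fillable details your existence sketch is sound.

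The uniqueness half, however, has a genuine gap at the step $\ell=0$. ``Transporting the averaging estimate to the difference $u_1-u_2$'' does not work: the difference of two kinetic solutions solves no conservation law, and $f_1-f_2$ satisfies a kinetic equation with source terms (the difference of the two kinetic measures plus the stochastic terms) for which non-degeneracy of $\tau+\hat k\cdot F'(\mathfrak{\zeta})$ yields regularity of each solution but no dissipation inequality for $\|u_1-u_2\|_{L^1(\mathbb{T}^N)}$; a non-increasing quantity can perfectly well converge to a positive limit, since the noise keeps re-exciting both solutions, and nothing in your outline excludes this. The paper's mechanism (Proposition \ref{smallness} combined with the stopping-time construction of Section \ref{section 4}) is instead probabilistic: whenever both solutions have entered a fixed $L^1$-ball and the noise increment stays small in $W^{2,\infty}(\mathbb{T}^N)$ over a window $[\tau_l,\tau_l+T]$, each solution \emph{individually} becomes small in time-averaged $L^1$ --- this is where $|F''|\le C$ is actually used, in the smallness estimates, not in an averaging bound for the difference; the a priori regularity estimate shows the solutions re-enter the ball at a.s. finite stopping times $\tau_l$, the small-noise event has probability $c>0$ uniformly in $l$ by the strong Markov property, and iterating conditional expectations gives $\mathbb{P}\big(\cap_{l} A_l^c\big)=0$. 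Finally, to upgrade ``small at some time in some window, on an event'' to ``small for all later times'', one needs the \emph{pathwise} $L^1$-contraction of Theorem \ref{main result 2} (itself requiring $F$ sub-quadratic); contraction in expectation, which is all your coupling invokes, cannot be localized to the events $A_l$. As written, your uniqueness argument does not close.
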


The existence of invariant measure is also established for the stochastic anisotropic parabolic-hyperbolic equation in \cite{chen.2}.
It is clear from the above discussion that our assumption \eqref{non} on the flux $F$ and the diffusive function $A$ is in line with the assumptions in recently developed works.
\subsection{Some basic facts about Young measure}\label{section 3.3}
Roughly speaking a Young measure is a parametrized family of probability measures where the parameters are drawn from a finite measure space. Let $\mathcal{P}(\mathbb{R})$ be the space of probability measure on $\mathbb{R}$. 
\begin{definition}[\textbf{Young measure}] Suppose that $(\mathcal{X},\lambda)$ is a finite measure space. A mapping $\mathfrak{\mathcal{V}}$ from $\mathcal{X}$ to $\mathcal{P}(\mathbb{R})$ is said to be a Young measure if, for all $\kappa \in C_b(\mathbb{R})$, the map $w\to\mathfrak{\mathcal{V}}_w(\kappa)$ from $\mathcal{X}$ into $\mathbb{R}$ is measurable. We say that a Young measure $\mathfrak{\mathcal{V}}$ vanishes at infinity if , for all $p\ge1$,
	$$\int_\mathcal{X} \int_{\mathbb{R}}|\mathfrak{\zeta}|^p d\mathfrak{\mathcal{V}}_w(\mathfrak{\zeta})d\lambda(w)\textless \infty.$$
\end{definition}
\begin{definition}[\textbf{Kinetic function}]Suppose that $(\mathcal{X}$,$\lambda$) is a finite measure space. A measurable function $f:\mathcal{X}\times\mathbb{R}\to[0,1]$ is said to be a kinetic function  if there exists a Young measure $\mathfrak{\mathcal{V}}$ on $\mathcal{X}$ vanishing at infinity such that, for $\lambda$-a.e. $w\in \mathcal{X}$, for all $\mathfrak{\zeta}\in\mathbb{R}$,
	$$f(w,\mathfrak{\zeta})=\mathfrak{\mathcal{V}}_w(\mathfrak{\zeta},\infty).$$
\end{definition}
\begin{definition}[\textbf{Equilibrium }]
	A measurable function $f:\mathcal{X}\times\mathbb{R}\to[0,1]$ is said to be an equilibrium if there exists a measurable function $v:\mathcal{X}\to\mathbb{R}$ such that $f(w,\mathfrak{\zeta})=\mathbbm{1}_{v(w)\textgreater\mathfrak{\zeta}}$ almost every $w\in \mathcal{X}$.
\end{definition}
\begin{definition}[\textbf{Convergence of Young measure}]
	Suppose that $(\mathcal{X},\lambda)$ is a finite measure space. A sequence of Young measures $\mathcal{V}^n$ on $\mathcal{X}$ said to conveges to a Young measure $\mathcal{V}$ on $\mathcal{X}$ provided the following convergence holds: for all $h\in L^1(\mathcal{X})$, for all $g\in C_b(\mathbb{R})$,
	\begin{align}\label{2.12}
		\lim_{n\to +\infty}\int_{\mathcal{X}} h(y)\int_{\mathbb{R}}g(\zeta)d\mathcal{V}_z^n(\zeta)d\lambda(y)&=\int_{\mathcal{X}} h(y)\int_{\mathbb{R}}g(\zeta)d\mathcal{V}_y(\zeta)d\lambda(y).
	\end{align}
\end{definition}
\begin{remark}[\textbf{Kinetic formulation in terms of Young measure}]Suppose that u is a kinetic solution of \eqref{1.1}. If we define  $f(x,t,\zeta)=\mathbbm{1}_{u(x,t)\textgreater\mathfrak{\zeta}}$, then we have $\partial_{\mathfrak{\zeta}}f(x,t,\zeta)=-\delta_{u(x,t)=\mathfrak{\zeta}}$, where $\mathfrak{\mathcal{V}}=\delta_{u=\mathfrak{\zeta}}$ is a Young measure on $\Omega\times[0,T]\times\mathbb{T}^N $, therefore we can write \eqref{2.6} as follows:
	for all $\varphi \in C_c^2(\mathbb{T}^N\times\mathbb{R})$, $\mathbb{P}$-almost surely, for all $t\in[0,T]$
	\begin{align}\label{2.10}
		\langle f(t),\varphi \rangle &= \langle f_0, \varphi \rangle + \int_0^t\langle f(s),F'(\mathfrak{\zeta})\cdot\nabla\varphi\rangle ds -\int_0^t\langle f(s),A'(\mathfrak{\zeta}) (-\Delta)^\alpha[\varphi]\rangle ds\notag\\
		&\qquad+\sum_{k=1}^\infty \int_0^t \int_{\mathbb{T}^N}\int_{\mathbb{R}} h_k(x,\mathfrak{\zeta})\varphi(x,\mathfrak{\zeta})d\mathfrak{\mathcal{V}}_{s,x}(\mathfrak{\zeta})dx d\beta_k(s)\notag\\
		&\qquad+\frac{1}{2}\int_0^t\int_{\mathbb{T}^N}\int_{\mathbb{R}}\partial_{\mathfrak{\zeta}}\varphi(x,\mathfrak{\zeta}) H^2 (x,\mathfrak{\zeta})d\mathfrak{\mathcal{V}}_{s,x}(\mathfrak{\zeta})dxds -\mathfrak{m}(\partial_{\mathfrak{\zeta}}\varphi)([0,t]).
	\end{align}
\end{remark} 
\subsection{The main results.}\label{section 3.4}
To conclude this section, we state our main results.
\begin{thm}[\textbf{Well-posedness in $L^p$-setting}]\label{th2.10}\label{main result}\label{main result 1}
	Let the assumptions \eqref{A1}-\eqref{A3}  be true. Let $\alpha\in (0, \frac{1}{2})$ and $u_0\in L^p(\Omega\times\mathbb{T}^N)$ for all $p\in[1,\infty)$. Then there exists a unique kinetic solution to \eqref{1.1} with initial data in the sense of Definition \ref{kinetic solution in lp setting}  and it has almost surely continuous trajectories in $L^p(\mathbb{T}^N)$, for all $p\in[1,\infty)$. Let $u_1, u_2$ be kinetic solutions to \eqref{1.1} with initial data $u_{1,0}$ and $u_{2,0}$, respectively, then for all $t\in[0,T]$,
	\begin{align}
		\mathbb{E}\left\Vert u_1(t)-u_2(t)\right\Vert_{L^1(\mathbb{T}^N)}\le\mathbb{E}\left\Vert u_{1,0}-u_{2,0}\right\Vert_{L^1(\mathbb{T}^N)}.\notag
	\end{align}
	Moreover, if assume that noise is only multiplicative, $\Psi=\Psi(u)$, then the same result also holds for $\alpha\in (0, 1)$.
\end{thm}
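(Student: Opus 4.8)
The statement bundles together three assertions---an $L^1$-contraction estimate, existence of a kinetic solution, and almost-sure continuity of trajectories in $L^p$---and the natural strategy is to prove the contraction first, so that uniqueness follows at once on taking $u_{1,0}=u_{2,0}$, then to build existence by vanishing viscosity, and finally to extract continuity from the kinetic formulation. I would treat the contraction inequality as the heart of the matter.

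For the contraction the plan is a doubling-of-variables argument in the spirit of Kruzhkov, carried out at the kinetic level on formulation \eqref{2.6}. Writing $f_1(x,t,\xi)=\mathbbm{1}_{u_1(x,t)>\xi}$ and $\bar f_2(y,t,\zeta)=1-\mathbbm{1}_{u_2(y,t)>\zeta}$, I would apply the It\^o product rule to $f_1(x,t,\xi)\,\bar f_2(y,t,\zeta)$ and test against $\rho_\delta(x-y)\,\psi_\epsilon(\xi-\zeta)$, with $\rho_\delta,\psi_\epsilon$ standard mollifiers in the spatial and velocity variables; the mollified, doubled quantity $\int_{\mathbb{T}^N\times\mathbb{T}^N}\int_{\mathbb{R}\times\mathbb{R}} f_1\,\bar f_2\,\rho_\delta\,\psi_\epsilon$ recovers $\mathbb{E}\|(u_1-u_2)^+(t)\|_{L^1(\mathbb{T}^N)}$ in the limit $\delta,\epsilon\to0$. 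This produces four groups of terms. The transport terms carrying $F'$ are handled by a commutator estimate using the polynomial-growth bound \eqref{F.2}; the kinetic-measure contributions come with the correct sign because $\mathfrak m\ge\eta_1\ge0$; the nonlocal terms carrying $A'$ and the fractional kernel $\lambda$ are reorganised through the dissipation measure $\eta_1$ so that the doubled fractional operator yields a controllable error; and the stochastic and It\^o-correction terms, modulo the martingale part that vanishes in expectation, combine into a quantity governed by $\sum_k|h_k(x,\xi)-h_k(y,\zeta)|^2$ and hence by \eqref{2.3}. The $|x-y|^2$ part of \eqref{2.3} is of order $\delta^2$ and disappears as $\delta\to0$, while the $|\xi-\zeta|\,g(|\xi-\zeta|)$ part is absorbed using the modulus-of-continuity structure of $g$ after the $\epsilon$-limit; sending $\epsilon\to0$ and then $\delta\to0$ gives the one-sided estimate, and symmetrising yields the full contraction.

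For existence I would use the vanishing-viscosity method: regularise \eqref{1.1} by adding $\epsilon\Delta u^\epsilon$, solve the resulting nondegenerate stochastic parabolic problem (first for smooth $u_0$), and derive $\epsilon$-uniform bounds of the form \eqref{2.5} via It\^o's formula together with the growth bound \eqref{2.2} and the Burkholder--Davis--Gundy inequality. The family $\{u^\epsilon\}$ generates Young measures; passing to a subsequential limit yields a Young-measure-valued (generalised kinetic) solution of \eqref{2.6}, and the contraction estimate above, in its generalised form, forces the limiting Young measure to be a Dirac mass. This Young-measure reduction simultaneously produces a genuine kinetic solution and upgrades weak convergence to strong convergence of the viscous approximations; a further approximation then relaxes the smoothness hypothesis to $u_0\in L^p(\Omega\times\mathbb{T}^N)$. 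Continuity of $t\mapsto u(t)$ in $L^p$ follows from the c\`adl\`ag property built into the definition, the time-regularity of the kinetic formulation, and the contraction, which precludes jumps.

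The main obstacle is the simultaneous presence of the spatially dependent multiplicative noise and the nonlocal fractional term in the doubling argument, and specifically the order in which $\delta$ and $\epsilon$ are sent to zero. Because $h_k$ depends explicitly on $x$, the noise error through \eqref{2.3} is only small once $\delta\to0$, whereas the doubled fractional operator, estimated through the singular kernel $\lambda(z)=|z|^{-N-2\alpha}$ and the measure $\eta_1$, imposes a competing constraint relating the spatial scale $\delta$ to $\alpha$. Reconciling the two requires the hierarchy of limits to be arranged so that the fractional-term error and the noise error are controlled together, and this balance closes only when $\alpha\in(0,\tfrac12)$. When instead $\Psi=\Psi(u)$ carries no explicit $x$-dependence, the $|x-y|^2$ contribution in \eqref{2.3} is absent, the spatial doubling decouples from the noise estimate, and the constraint on the fractional term relaxes, so the whole argument runs for $\alpha\in(0,1)$.
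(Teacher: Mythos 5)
Your proposal matches the paper's proof in all essentials: contraction via doubling of variables at the kinetic level using the It\^o product rule and mollified test functions (this is Proposition \ref{th3.5} and Theorem \ref{th3.6} of the paper), existence by vanishing viscosity with uniform $L^p$ bounds, Young-measure limits and weak-* compactness of the kinetic measures, continuity from the c\`adl\`ag structure combined with the pathwise contraction, and you correctly diagnose that the explicit $x$-dependence of $\Psi$ forces the mollifier scales to be coupled and produces the restriction $\alpha\in(0,\tfrac12)$, while $\Psi=\Psi(u)$ allows $\alpha\in(0,1)$. The one caveat is your intermediate claim that the $|x-y|^2$ part of \eqref{2.3} is $O(\delta^2)$ and that one may send the velocity parameter $\epsilon\to 0$ before the spatial one: integrating against the product of Young measures makes that term cost $\delta^2\epsilon^{-1}$ (in your notation), so the sequential order you state works only in the $\Psi(u)$ case, and in the general case the paper instead introduces a cut-off $r$ of the fractional kernel at $|z|=r$ (producing the errors $r^{-2\alpha}\delta$ and $r^{2-2\alpha}\varepsilon^{-2}$, which you leave implicit) and sets all scales as powers of a single parameter, $\varepsilon=\delta^\beta$, $r=\delta^\gamma$ with $\tfrac12<\beta<\min\{1,\tfrac{1-\alpha}{2\alpha}\}$ and $\tfrac{\beta}{1-\alpha}<\gamma<\tfrac{1}{2\alpha}$, taking one joint limit $\delta\to 0$ --- exactly the balance your final paragraph anticipates, and the source of the condition $\alpha<\tfrac12$.
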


In the next theorem, we also establish well-posedness theory for initial data in $L^1(\mathbb{T}^N)$ which is required for the framework of invariant measure.
\begin{thm}[\textbf{Well-posedness in $L^1$-setting}]\label{existance and uniqueness}\label{main result 2} Let the assumptions \eqref{A1}-\eqref{A2} $\&$ \eqref{A4} be true. Let $\alpha\in (0, \frac{1}{2})$ and $u_0\in L^1(\mathbb{T}^N)$. Then there exists a solution $u$ to \eqref{1.1} with initial data $u_0$ in the sense of Definiton \ref{definition kinetic solution in l1 setting}. Besides, u has almost surely continuous trajectories in $L^1(\mathbb{T}^N)$. Suppose that $F$ is sub-quadratic in the following sense:
	$|F''(\mathfrak{\zeta})|\,\le\,C,\,\forall\mathfrak{\zeta}\in\mathbb{R}.$
	Let $u_1, u_2$ be kinetic solutions to \eqref{1.1} with initial data $u_0^1$ and $u_0^2\,\in\,L^1(\mathbb{T}^N)$, respectively. Then the following holds: $\mathbb{P}$-almost surely, for all $t\in[0,T]$
	\begin{align}\label{contraction principal}
		\|u_1(t)-u_2(t)\|_{L^1(\mathbb{T}^N)}\,\le\,\|u_0^1-u_0^2\|_{L^1(\mathbb{T}^N)}.
	\end{align}
\end{thm}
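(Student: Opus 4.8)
The plan is to bootstrap the entire $L^1$-theory from the $L^p$-well-posedness already obtained in Theorem \ref{main result}, using a density argument for the existence and a kinetic doubling-of-variables computation, tailored to the additive noise \eqref{A4}, for the contraction. The decisive simplification compared with the multiplicative setting is that, since $h_k = h_k(x)$ does not depend on the solution, the stochastic integrals and the It\^o corrections produced by two solutions driven by the same Brownian motions cancel exactly; this is what upgrades the expectation-level contraction of Theorem \ref{main result} to the pathwise estimate \eqref{contraction principal}.

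For existence, I would truncate the datum by setting $u_0^n := (u_0 \wedge n)\vee(-n)$, so that $u_0^n \in L^\infty(\mathbb{T}^N) \subset L^p(\Omega\times\mathbb{T}^N)$ for every finite $p$ and $u_0^n \to u_0$ in $L^1(\mathbb{T}^N)$. Theorem \ref{main result} then produces, for each $n$, a unique kinetic solution $u^n$ in the sense of Definition \ref{kinetic solution in lp setting}, enjoying continuous trajectories and all $L^p$-moments; in particular $F'(u^n)$ is integrable, so the additive-noise cancellation (see the next paragraph) already applies to the pair $(u^n,u^m)$ and yields, $\mathbb{P}$-almost surely,
\begin{align*}
\sup_{t\in[0,T]}\|u^n(t)-u^m(t)\|_{L^1(\mathbb{T}^N)} \le \|u_0^n-u_0^m\|_{L^1(\mathbb{T}^N)}.
\end{align*}
Since $(u_0^n)$ is Cauchy in $L^1(\mathbb{T}^N)$, the sequence $(u^n)$ is Cauchy in $C([0,T];L^1(\mathbb{T}^N))$ almost surely, and its limit $u$ inherits continuous $L^1$-trajectories together with the bound $\mathbb{E}\sup_{t}\|u(t)\|_{L^1(\mathbb{T}^N)} < \infty$. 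It remains to pass to the limit in the kinetic formulation: the flux and fractional-diffusion terms converge because the test function $\varphi\in C_c^2(\mathbb{T}^N\times\mathbb{R})$ localizes $\zeta$ to a compact set on which $F'$ and $A'$ are bounded, while the additive stochastic term and the $H^2$-correction pass to the limit directly since their coefficients are $\zeta$-independent. The kinetic measures $\mathfrak{m}^n$ must be shown to converge, up to a subsequence and weakly-$*$, to a limit $\mathfrak{m}_1$ satisfying the decay condition (ii) of Definition \ref{kinetic measure 2} and the lower bound $\mathfrak{m}_1 \ge \eta_1$.

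For the contraction \eqref{contraction principal} between two genuine $L^1$-solutions, I would run the doubling of variables at the kinetic level: writing the equations \eqref{formulation} for $f_1 = \mathbbm{1}_{u_1>\zeta}$ and $\bar f_2 = 1-\mathbbm{1}_{u_2>\zeta}$ against a common approximation of the identity in $(x,\zeta)$, applying the It\^o product rule, and then collapsing the two spatial and two velocity variables onto the diagonal. Because the noise is additive, the two martingale contributions carry the identical integrand $\sum_k h_k(x)\,\mathrm{d}\beta_k$ and combine with the two $H^2$-corrections to cancel pointwise, removing any need to take expectations and delivering a bound valid $\mathbb{P}$-almost surely for every $t$. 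The sub-quadratic hypothesis $|F''|\le C$ enters precisely when the velocity test function is sent to the constant $1$ in order to recover $\|u_1(t)-u_2(t)\|_{L^1(\mathbb{T}^N)}$: linear growth of $F'$ guarantees $F'(u_i)\in L^1(\mathbb{T}^N)$ for $L^1$-data, so the convection term survives the removal of the $\zeta$-cutoff, whereas the monotonicity of $A$ and the sign of $\eta_1$ ensure the fractional-diffusion and kinetic-measure contributions have the favorable sign.

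The main obstacle will be the analysis of the kinetic measures in the limit. In the $L^1$-setting the measure is not finite — only the decay of Definition \ref{kinetic measure 2}(ii) is available — so one cannot simply extract a weak-$*$ limit in the space of finite measures; instead I would derive uniform-in-$n$ tail estimates for $\mathbb{E}\,\mathfrak{m}^n(\mathbb{T}^N\times[0,T]\times\widetilde{\mathbb{B}}_L)$ that are stable under truncation of the datum, and use them both to justify the passage to the limit and to verify that $\mathfrak{m}_1$ is an admissible kinetic measure. A secondary difficulty, which the additive structure fortunately defuses, is controlling the interaction of the noise with the fractional term during the doubling argument; here the cancellation described above means the only terms requiring genuine care are the deterministic convection and diffusion contributions, exactly as in the deterministic fractional theory.
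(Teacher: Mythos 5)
Your proposal is correct and follows essentially the same route as the paper's Appendix~\ref{B}: truncate $u_0$ to bounded data, solve via the $L^p$-theory of Theorem~\ref{main result}, use the $L^1$-contraction to obtain a Cauchy sequence, pass to the limit in the kinetic formulation, control the (only locally finite) kinetic measures, and derive \eqref{contraction principal} from the doubling of variables, where the additive structure of the noise removes the need to take expectations.

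Three implementation points are worth comparing. First, the paper only uses the contraction \emph{in expectation} to conclude that $(u^k)$ is Cauchy in $L^1_{\mathcal P}(\Omega\times\mathbb{T}^N\times[0,T])$, and recovers almost sure continuity of trajectories afterwards from the equi-integrability estimate of Proposition~\ref{22} combined with \cite[Lemma 17]{vovelle 2}; your pathwise contraction between the approximations (legitimate here, since they possess all $L^p$-moments, so the pathwise estimates \eqref{estimate 1}--\eqref{estimate 5} apply with random constants, and no sub-quadraticity of $F$ is needed at that stage) gives almost sure Cauchyness in $C([0,T];L^1(\mathbb{T}^N))$ and hence continuity directly --- a slightly cleaner route to the same conclusion. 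Second, for the measures the paper does not prove tail estimates uniformly in $n$: it establishes uniform second-moment bounds on the compact slabs $\mathcal B_R=\mathbb{T}^N\times[0,T]\times[-R,R]$ (Lemma~\ref{bound on measure}), extracts weak-* limits in $L^2_w(\Omega;\mathcal M(\mathcal B_R))$ by a diagonal argument, and only \emph{afterwards} verifies the decay condition (ii) of Definition~\ref{definition kinetic measure} for the limit measure, via the $\kappa_R$-test-function iteration of Proposition~\ref{22}, whose right-hand side depends only on the tails of $u_0$; your plan of uniform-in-$n$ tail bounds is workable but harder than necessary. Third, one caution on wording: the martingale terms and It\^o corrections do not cancel ``exactly'' at fixed mollification parameters. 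The martingale contributions cancel only after the collapse onto the diagonal, because $\int_{\mathbb{R}}\bar f_2\,\delta_{u_1=\xi}\,d\xi=\mathbbm{1}_{u_2\le u_1}=\int_{\mathbb{R}} f_1\,\delta_{u_2=\xi}\,d\xi$ almost everywhere, while the It\^o corrections combine into the error $\mathcal E_\kappa\le C_\Psi\,t\,\delta^{-1}\varepsilon^2$ appearing in \eqref{final1} (the $g(\delta)$ term being absent for additive noise); it is precisely this surviving term, caused by the $x$-dependence of $h_k$, that forces the hierarchy $\varepsilon=\delta^\beta$ with $\beta>\tfrac12$ and the restriction $\alpha\in(0,\tfrac12)$ in the statement. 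Read as ``cancel in the limit'', your mechanism is the paper's; as written it is a presentational overstatement, not a gap.
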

\begin{thm}[\textbf{Invariant measure}]\label{Invariant measure} Let initial data $u_0\in L^1(\mathbb{T}^N)$ such that $\int_{\mathbb{T}^N}u_0(x)dx=0$. Let the assumptions \eqref{H1}-\eqref{H4}  be true. Let $\alpha\in (0, \frac{1}{2})$.
	Then there exists an invariant measure $\lambda$ for \eqref{1.1} in $L^1(\mathbb{T}^N)$.
	Let $\lambda_{t,u_0}$ be law of solution $u(t)$ with initial data $u_0$. Suppose that, if the Asumptions \eqref{H1}-\eqref{H2} are strengthened into the hypothesis that $F$ and $A$ are sub-quadratic in the following sense:
	\begin{align}\label{flux uniqueness}
		|F''(\mathfrak{\zeta})|\,\le\,C,\,\,\,\qquad\,\,|A''(\mathfrak{\zeta})|\,\le\,C,\,\,\,\,\qquad\forall\mathfrak{\zeta}\in\mathbb{R},
	\end{align}
	then the invariant measure $\lambda$ is unique and $\lambda_{t,u_0}$ converges to an unique invariant measure $\lambda$ as $t\to\infty$.
\end{thm}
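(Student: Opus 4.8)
The plan is to establish existence of an invariant measure through the Krylov--Bogoliubov theorem recalled above, and then to promote this to uniqueness together with convergence of the laws by coupling two solutions and exploiting the dissipation encoded in the kinetic measure. The Feller property of $(\mathcal{Q}_t)$ is already available: it is a direct consequence of the $L^1$-contraction of Theorem \ref{main result 2}, as noted in the discussion preceding the Krylov--Bogoliubov theorem. It therefore suffices to fix the (zero-mean) initial datum $u_0$ and to exhibit a sequence $T_n\to\infty$ along which the Ces\`aro averages $\frac{1}{T_n}\int_0^{T_n}\lambda_{t,u_0}\,dt$ converge weak-$*$ on $L^1(\mathbb{T}^N)$. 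I would obtain such a subsequence by proving tightness of the family $\big\{\frac{1}{T}\int_0^{T}\lambda_{t,u_0}\,dt\big\}_{T>0}$, for which the natural target is a uniform-in-$T$ bound on the time-averaged fractional Sobolev norm $\frac{1}{T}\int_0^T\mathbb{E}\|u(t)\|_{W^{\sigma,1}(\mathbb{T}^N)}\,dt$ with some $\sigma>0$, since $W^{\sigma,1}(\mathbb{T}^N)$ embeds compactly into $L^1(\mathbb{T}^N)$.

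Before the regularity gain I would record two preliminary uniform estimates. Integrating \eqref{1.1} over $\mathbb{T}^N$ and using that both the divergence term and the fractional-Laplacian term have zero spatial average while the cancellation condition \eqref{cancellation} annihilates the mean of the stochastic integral, the spatial mean is frozen: $\int_{\mathbb{T}^N}u(t)\,dx=0$ for all $t$, which prevents any drift of the law to infinity. Next, applying the It\^o formula to $\|u(t)\|_{L^2(\mathbb{T}^N)}^2$, the convection term drops out by skew-symmetry, the monotone diffusion $A$ furnishes a nonnegative dissipation bounded below by the total mass of the kinetic measure, and the It\^o correction is controlled by the bounded additive noise of assumption \ref{H3}; although the instantaneous second moment may grow, this identity yields a uniform bound on the time-averaged dissipation $\frac{1}{T}\mathbb{E}\,\mathfrak{m}_1([0,T]\times\mathbb{T}^N\times\mathbb{R})$ and on the time-averaged moments, which are exactly the quantities the averaging lemma consumes.

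The heart of the argument is the gain of regularity. I would feed the kinetic formulation \eqref{formulation} into a velocity-averaging lemma whose regularizing exponent is dictated by the nonlinearity--diffusivity function $\eta(\gamma)$ of \eqref{non}; here the symbol is precisely $\gamma+A'(\zeta)|k|^{2\alpha-1}$ together with $F'(\zeta)\cdot\frac{k}{|k|}+\tau$, so that the transport part $F'(\zeta)\cdot\nabla$ and the nonlocal part $A'(\zeta)(-\Delta)^\alpha$ of the equation cooperate to smooth the velocity averages. Combined with the time-averaged bounds above, this delivers $\frac{1}{T}\int_0^T\mathbb{E}\|u(t)\|_{W^{\sigma,1}(\mathbb{T}^N)}\,dt\le C$ uniformly in $T$, hence tightness of the Ces\`aro averages and, through Krylov--Bogoliubov, an invariant measure $\lambda$. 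This step is where I expect the main obstacle: the averaging lemma must simultaneously exploit the flux non-degeneracy and the fractional dissipation carried by the symbol $A'(\zeta)|k|^{2\alpha-1}$, and the resulting estimate has to be uniform over the whole half-line rather than on a finite window, which is precisely what makes the present situation more delicate than the purely hyperbolic case of \cite{vovelle 2} and underlies the restriction $\alpha\in(0,\tfrac12)$.

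For uniqueness and convergence I would assume the sub-quadratic hypotheses \eqref{flux uniqueness} and couple two solutions $u_1,u_2$ driven by the same cylindrical Wiener process, with zero-mean initial data. By the $L^1$-contraction \eqref{contraction principal} the map $t\mapsto\mathbb{E}\|u_1(t)-u_2(t)\|_{L^1(\mathbb{T}^N)}$ is nonincreasing, and the task reduces to showing that it tends to $0$. Writing the doubled kinetic formulation for the pair, I would read off that the decay of this quantity is governed by the dissipation measures $\mathfrak{m}_1,\mathfrak{m}_2$ together with the non-degeneracy \eqref{non}: if the distance stayed bounded away from $0$, the accumulated dissipation on $[0,\infty)$ would diverge, contradicting the uniform time-averaged bounds, whereas \eqref{non} forces any weak limit of the shifted pair at which the dissipation vanishes to be a single equilibrium, i.e. $u_1=u_2$. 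Coupling a stationary solution of law $\lambda$ with the solution issued from $u_0$ then gives $\lambda_{t,u_0}\to\lambda$ weak-$*$, and coupling two stationary solutions of laws $\lambda,\lambda'$ gives $\lambda=\lambda'$, so the invariant measure is unique. The sub-quadratic bounds \eqref{flux uniqueness} enter exactly here, to keep the flux and diffusion contributions in the doubling-of-variables estimates controlled uniformly in time, which is the technical refinement demanded beyond mere existence.
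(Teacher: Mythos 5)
Your existence half is, in substance, the paper's own strategy: the paper implements the ``velocity averaging'' you invoke not by citing an averaging lemma but by an explicit mild/semigroup decomposition of the kinetic formulation. It regularizes the generator by $\theta(-\Delta)^\beta+\delta\,\mathrm{Id}$, writes $u=u^0+u^b+P+Q$, estimates $u^0,u^b$ in Fourier variables using precisely the quantity $\mathfrak{\eta}(\gamma)$ of \eqref{non} (this is where the symbol $A'(\mathfrak{\zeta})|k|^{2\alpha-1}+\omega_k$ appears), treats the stochastic term $P$ via the semigroup smoothing, and absorbs the kinetic-measure term $Q$ by choosing $\theta,\delta$ so that its contribution is $\le\frac14\mathbb{E}\|u\|_{L^1}$; the resulting bound $\mathbb{E}\|u\|_{L^1(0,T;W^{r,q})}\le C(1+T+\mathbb{E}\|u_0\|_{L^3}^3)$ is linear in $T$, which after division by $T$ gives exactly the tightness of Ces\`aro averages you want, and Krylov--Bogoliubov concludes. (The paper starts from $u_0\in L^3$, not merely the time-averaged $L^2$ dissipation identity you sketch, because the bound on the kinetic measure against the weights needed for $Q$ consumes $\int\Theta(u_0)$ with $\Theta$ growing cubically.) So far, so good.

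The uniqueness half, however, has a genuine gap, and it is the central one. Your contradiction argument --- ``if the distance stayed bounded away from $0$, the accumulated dissipation on $[0,\infty)$ would diverge, contradicting the uniform time-averaged bounds'' --- does not work: the available bounds on $\mathbb{E}\,\mathfrak{m}([0,T]\times\mathbb{T}^N\times\mathbb{R})$ grow linearly in $T$, and in the stationary regime the dissipation \emph{must} grow linearly, since it balances the constant energy injection of the additive noise; divergence of the accumulated dissipation is therefore no contradiction at all. Moreover, the doubled kinetic formulation yields only that $t\mapsto\|u_1(t)-u_2(t)\|_{L^1(\mathbb{T}^N)}$ is nonincreasing --- the measures $\mathfrak{m}_1,\mathfrak{m}_2$ enter with a sign that allows them to be discarded, not with a lower bound quantified by the distance --- and \eqref{non} constrains each solution, not the difference of two coupled solutions, so no ``vanishing dissipation forces $u_1=u_2$'' step is available. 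The paper's mechanism (following \cite{vovelle 2}) is different and cannot be bypassed: one first proves a \emph{smallness} result (Proposition \ref{smallness}) --- if $\|u(0)\|_{L^1}\le 2\kappa_1$ and the noise path is flat, $\sup_{t\in[0,T]}\|\Psi W(t)\|_{W^{2,\infty}(\mathbb{T}^N)}\le\mathfrak{\eta}$, then $\frac1T\int_0^T\|u(s)\|_{L^1}\,ds\le\frac{\varepsilon}{2}$ --- by running the semigroup decomposition on the shifted equation for $v=\tilde u-\Psi W$; it is exactly here, in the estimates on $v^{\#},P^W,P^{\mathfrak{m}}$ involving $F''$ and $A''$, that the sub-quadratic hypotheses \eqref{flux uniqueness} are used, not in a doubling-of-variables estimate as you suggest. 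One then shows the pair of solutions enters a fixed $L^1$-ball at a.s.\ finite stopping times $\tau_l$, and by the strong Markov property each window $[\tau_l,\tau_l+T]$ carries a fixed positive probability $c$ of the small-noise event; since all windows fail with probability at most $(1-c)^k\to0$, a.s.\ some window succeeds, both solutions become simultaneously small hence $\varepsilon$-close, and the pathwise monotonicity from \eqref{contraction principal} propagates this closeness for all later times, giving $\|u^1(t)-u^2(t)\|_{L^1}\to0$ a.s., whence uniqueness of $\lambda$ and $\lambda_{t,u_0}\to\lambda$. Without this positive-probability small-noise mechanism your coupling argument does not close.
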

\textbf{Comment:}
In Section \ref{section 3}, one remembers that we will work with two cases. In first case, we have general noise and $\alpha\in (0, \frac{1}{2})$. In the second case, we have only multiplicative noise and $\alpha \in (0,1)$. A large amount of proof are same for both the cases. However, we mention and provide separate proofs when it is different. 
\section{Well-posednees in $L^p$-setting: Proof of Theorem \ref{main result 1}}\label{section 3}
\subsection{Comparison principle:}\label{subsection 3.1}
Here, we follow the approach of \cite{sylvain} and obtain a property of kinetic solution, which will help in the proof of continuity of trajectories of solutions. In the next proposition, we see that $\mathbb{P}$-almost surely c\'adl\'ag property is independent of test function $\varphi$ and that the limit from the left at any time $t\textgreater\,0$ is defined by a kinetic function. 
\begin{proposition}\label{Proposition 3.1} Suppose that if $(u(t))_{t\in[0,T]}$ is a solution to \eqref{1.1} with initial data $u_0$, then we have the following two properties,
	\begin{enumerate}
		\item[1.] there exists a measurable subset $\Omega_1\subset\Omega$ of full probability such that, for all $\omega\in\Omega_1$, for all $\varphi\in C_c^2(\mathbb{T}^N\times\mathbb{R})$, $t\mapsto\langle f(\omega,t),\varphi\rangle$ is c\'adl\'ag.
		\item[2.] there exists an $L^\infty(\mathbb{T}^N\times\mathbb{R};[0,1])$-valued process $(f^{-}(t))_{t\in(0,T]}$ such that: for each $t\in(0,T]$, for all $\omega\in\Omega_1$, for all $\varphi\in C_c^2(\mathbb{T}^N\times\mathbb{R})$, $f^{-}(t)$ is a kinetic function on $\mathbb{T}^N$ which defines the left limit of $s\mapsto\langle f(s),\varphi \rangle$ at t as follows:
		\begin{align}\label{3.1}
			\langle f^{-}(t),\varphi\rangle=\lim_{s_n\to t^{-}}\langle f(s_n),\varphi\rangle .
		\end{align}
	\end{enumerate}
	\begin{proof}
		For a proof, we refer to \cite[Proposition 2.10]{sylvain}.
	\end{proof}
\end{proposition}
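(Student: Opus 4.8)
The plan is to read the time regularity of $t\mapsto\langle f(t),\varphi\rangle$ directly off the kinetic formulation \eqref{2.6}, and then to remove the dependence of the exceptional null set on $\varphi$ by a separability argument, using the uniform moment bound \eqref{2.5} to identify the left limits with kinetic functions. Fix $\varphi\in C_c^2(\mathbb{T}^N\times\mathbb{R})$ and inspect the right-hand side of \eqref{2.6} as a function of $t$. The two deterministic integrals $\int_0^t\langle f(s),F'(\zeta)\cdot\nabla\varphi\rangle\,ds$ and $\int_0^t\langle f(s),A'(\zeta)(-\Delta)^\alpha[\varphi]\rangle\,ds$ are continuous in $t$, since their integrands are bounded (using $0\le f\le1$ and the compact support of $\varphi$); the It\^o correction term $\tfrac12\int_0^t\int_{\mathbb{T}^N}\partial_\zeta\varphi\,H^2\,dx\,ds$ is continuous for the same reason, because $\partial_\zeta\varphi(x,u)$ is supported where $u$ lies in a compact set, on which $H^2$ is bounded by \eqref{2.2}. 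The stochastic term $\sum_k\int_0^t\int_{\mathbb{T}^N}h_k\varphi\,dx\,d\beta_k(s)$ is a continuous square-integrable martingale, its quadratic variation being finite by \eqref{2.2}, and thus admits an a.s.\ continuous version in $t$. The only term that is merely c\`adl\`ag is $t\mapsto\mathfrak{m}(\partial_\zeta\varphi)([0,t])$: it is the distribution function of the finite signed measure $\mathfrak{m}(\partial_\zeta\varphi)$ on $[0,T]$ (finite because $\partial_\zeta\varphi$ is compactly supported and $\mathfrak{m}$ is finite on bounded $\zeta$-ranges), and such distribution functions are right-continuous with left limits. Hence, for each fixed $\varphi$, $t\mapsto\langle f(t),\varphi\rangle$ has an a.s.\ c\`adl\`ag modification.

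Next I would pass to a single full-probability event. Choose a countable family $\mathcal{D}\subset C_c^2(\mathbb{T}^N\times\mathbb{R})$ dense in the $L^1(\mathbb{T}^N\times\mathbb{R})$-norm, and let $\Omega_1$ be the intersection of the full-probability sets on which $t\mapsto\langle f(t),\varphi\rangle$ is c\`adl\`ag for each $\varphi\in\mathcal{D}$, so $\mathbb{P}(\Omega_1)=1$. For a general $\varphi\in C_c^2$, pick $\varphi_n\in\mathcal{D}$ with $\varphi_n\to\varphi$ in $L^1$; since $0\le f\le1$ we have $\sup_{t\in[0,T]}|\langle f(t),\varphi-\varphi_n\rangle|\le\|\varphi-\varphi_n\|_{L^1(\mathbb{T}^N\times\mathbb{R})}\to0$, so $\langle f(\cdot),\varphi_n\rangle\to\langle f(\cdot),\varphi\rangle$ uniformly on $[0,T]$ for every $\omega\in\Omega_1$. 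As a uniform limit of c\`adl\`ag functions is c\`adl\`ag, assertion (1) follows.

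For assertion (2), fix $t\in(0,T]$ and $\omega\in\Omega_1$. The c\`adl\`ag property gives the existence of the left limit $L_t(\varphi):=\lim_{s\uparrow t}\langle f(s),\varphi\rangle$ for every $\varphi\in C_c^2$, and $\varphi\mapsto L_t(\varphi)$ is linear with $|L_t(\varphi)|\le\|\varphi\|_{L^1(\mathbb{T}^N\times\mathbb{R})}$; by density it is represented by a unique $f^-(t)\in L^\infty(\mathbb{T}^N\times\mathbb{R})$ with $0\le f^-(t)\le1$, which is exactly \eqref{3.1}. It then remains to verify that $f^-(t)$ is a kinetic function. Monotonicity in $\zeta$ is inherited from the approximants, since each $f(s,x,\cdot)=\mathbbm{1}_{u(s,x)>\zeta}$ is non-increasing and this is stable under the weak limit, so $-\partial_\zeta f^-(t)$ is a non-negative measure in $\zeta$ for a.e.\ $x$. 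To see that this measure has unit mass---equivalently that $f^-(t,x,\zeta)\to1$ as $\zeta\to-\infty$ and $f^-(t,x,\zeta)\to0$ as $\zeta\to+\infty$, so that the associated Young measure vanishes at infinity---I would invoke \eqref{2.5}, which provides uniform integrability of $\zeta$ against the Young measures $\delta_{u(s)=\zeta}$ and thereby forbids any loss of mass at infinity in the limit $s\uparrow t$. This identifies $f^-(t)$ as a kinetic function on $\mathbb{T}^N$, and measurability of $\omega\mapsto f^-(t)$ is automatic from the construction through the countable set $\mathcal{D}$.

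The main obstacle is precisely this last identification. The c\`adl\`ag structure and the weak representation of the left limit are essentially soft, but showing that the limiting object is a genuine kinetic function---rather than a generic $[0,1]$-valued profile that has shed mass to $\zeta=\pm\infty$---requires the quantitative estimate \eqref{2.5} to enforce tightness of the underlying Young measures uniformly in $s$. Everything else reduces to the elementary observation that each non-measure term on the right of \eqref{2.6} is continuous in time while the measure term is a c\`adl\`ag distribution function.
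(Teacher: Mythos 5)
Your proof is correct and is essentially the argument the paper relies on: the paper itself gives no proof but defers to \cite{sylvain}, and the proof there proceeds exactly along your lines --- reading off \eqref{2.6} that every term except $t\mapsto\mathfrak{m}(\partial_{\mathfrak{\zeta}}\varphi)([0,t])$ is time-continuous while the measure term is the c\'adl\'ag distribution function of a finite measure, passing to a countable dense family of test functions to fix a single null set $\Omega_1$, representing the left limits by weak-$*$ duality, and using the moment bound \eqref{2.5} (in its pathwise form \eqref{3.3}, obtained by Fatou) to prevent mass escaping to $\mathfrak{\zeta}=\pm\infty$, so the limit is a genuine kinetic function. The one step you state loosely, the no-mass-loss claim, is closed by the Chebyshev bound $\int_{\mathbb{T}^N}\bigl(1-f(s,x,\mathfrak{\zeta})\bigr)\,dx\le R^{-p}\|u(s)\|_{L^p(\mathbb{T}^N)}^p\le C_p(\omega)R^{-p}$ for $\mathfrak{\zeta}\le-R$ (and symmetrically $\int_{\mathbb{T}^N}f(s,x,\mathfrak{\zeta})\,dx\le C_p(\omega)R^{-p}$ for $\mathfrak{\zeta}\ge R$), which survives the limit $s\uparrow t$ and forces $f^{-}(t,x,\cdot)$ to run from $1$ to $0$, as required.
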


\begin{remark}[Left and right limits] In Proposition \ref{Proposition 3.1} we prove something more than what in state. Indeed, for $\omega\in{\Omega}_1$, we have $f(s_n)\to f^-(t)$ in $L^\infty(\mathbb{T}^N\times\mathbb{R})$ for the weak-* topology, when $s_n\uparrow t$, which implies \eqref{3.1}. By similar arguments, we can show that $f(s_n)\to f(t)$ in $L^\infty(\mathbb{T}^N\times\mathbb{R})$ weak-* when $s_n\downarrow t.$
\end{remark}
\begin{remark}By using Fatou's lemma we obtain the following bounds: for all $\omega\in\Omega_1$,
	\begin{align}\label{3.3}\sup_{t\in[0,T]}\int_{\mathbb{T}^N}\int_{\mathbb{R}}|\mathfrak{\zeta}|^p d\mathfrak{\mathcal{V}}_{x,t}^{-}(\mathfrak{\zeta})dx\le C_p(\omega),\,\,\,\mathbb{E}(\sup_{t\in[0,T]}\int_{\mathbb{T}^N}\int_{\mathbb{R}}|\mathfrak{\zeta}|^p d\mathfrak{\mathcal{V}}_{x,t}^{-}(\mathfrak{\zeta})dx)\le C_p.
	\end{align}
\end{remark}
\begin{remark}[Equation for $f^{-}$] Passing to the limit in \eqref{2.6} for an increasing sequence $t_n$ to $t$, we obtain the following equation on $f^{-}$:
	\begin{align}\label{3.4}
		\langle f^{-}(t),\varphi \rangle &= \langle f(0),\varphi\rangle +\int_0^t\langle f(s),F'(\mathfrak{\zeta})\cdot\nabla_x\varphi\rangle ds-\int_0^t\langle f(s),A'(\mathfrak{\zeta}) (-\Delta)^\alpha[\varphi]\rangle ds\notag\\
		&\qquad+\sum_{k\ge 1}\int_0^t \int_{\mathbb{T}^N}\int_{\mathbb{R}}h_k(x,\mathfrak{\zeta})\varphi(x,\mathfrak{\zeta})d\mathfrak{\mathcal{V}}_{x,s}(\mathfrak{\zeta})dxd\beta_k(s)\notag\\
		&\qquad+\frac{1}{2}\int_0^t\int_{\mathbb{T}^N}\int_{\mathbb{R}}H^2(x,\mathfrak{\zeta})\partial_{\mathfrak{\zeta}}\varphi(x,\mathfrak{\zeta})d\mathfrak{\mathcal{V}}_{x,s}(\mathfrak{\zeta})dx ds-\mathfrak{m}(\partial_{\mathfrak{\zeta}}\varphi)([0,t)).
	\end{align}
	In particular, we obtain
	\begin{align}\label{3.5}\langle f(t)-f^{-}(t), \varphi\rangle=-\mathfrak{m}(\partial_\mathfrak{\zeta} \varphi)(\{t\}).
	\end{align}
	Outside the set of atomic points (at most countable) of $A\mapsto \mathfrak{m}(\partial_{\mathfrak{\zeta}}\varphi)(A)$, we have$\langle f(t),\varphi\rangle=\langle f^{-}(t),\varphi\rangle.$ It implies that $\mathbb{P}$ almost surely, $f(t)=f^{-}(t)$ almost every $t\in[0,T]$. It is clear that equation \eqref{3.4} gives us the following equation for $f^{-}(t)$: $\mathbb{P}$-almost surely, for all $t\in[0,T]$
	\begin{align}\label{3.6}
		\langle f^{-}(t),\varphi \rangle &= \langle f(0),\varphi\rangle +\int_0^t\langle f^{-}(s),F'(\mathfrak{\zeta})\cdot\nabla_x\varphi\rangle ds-\int_0^t\langle f^-(s),A'(\mathfrak{\zeta})(-\Delta)^\alpha[\varphi]\rangle ds\notag\\&\qquad+\sum_{k\ge 1}\int_0^t \int_{\mathbb{T}^N}\int_{\mathbb{R}}h_k(x,\mathfrak{\zeta})\varphi(x,\mathfrak{\zeta})d\mathfrak{\mathcal{V}}_{x,s}^{-}(\mathfrak{\zeta})dxd\beta_k(s)\notag\\
		&\qquad+\frac{1}{2}\int_0^t\int_{\mathbb{T}^N}\int_{\mathbb{R}}H^2(x,\mathfrak{\zeta})\partial_{\mathfrak{\zeta}}\varphi(x,\mathfrak{\zeta})d\mathfrak{\mathcal{V}}_{x,s}^{-}(\mathfrak{\zeta})dx ds-\mathfrak{m}(\partial_{\mathfrak{\zeta}}\varphi)([0,t)).
	\end{align}
\end{remark}
\begin{remark}
	We will use the following notation: If $f:X\times\mathbb{R}\to[0,1]$ is kinetic function, we define the conjugate function $\bar{f}$ of $f$ as $\bar{f}(x,t,\zeta)=1-f(x,t,\zeta)$. We also define $f^+$ by $f^+:=f$. We can take any of them in integral with respect to time or in a stochastic integral. 
\end{remark}
\noindent
\textbf{Doubling of variables}: The proof of the contraction principle follows a line argument that suitably Kruzkov's method of doubling the variables to the stochastic case. In this context, we approximate $\|\big(u_1(t)-u_2(t)\big)^+\|_{L^1(\mathbb{T}^N)}$ by $\int_{(\mathbb{T}^N)^2}\int_{\mathbb{R}^2}\Upsilon(x-y)\kappa(\xi-\mathfrak{\zeta})f_1(x,t,\xi)\bar {f}_2(y,t,\mathfrak{\zeta})d\xi d\mathfrak{\zeta} dx dy$, where test functions is a suitable smooth approximation. The main idea of the proof is to analyze the value of $\|u_1(t)-u_2(t)\|_{L^1(\mathbb{T}^N)}$ as a random variable, and then  we arrive at the conclusion that $\mathbb{E}\|u_1(t)-u_2(t)\|_{L^1(\mathbb{T}^N)}$ is decreasing fucntion of time. Here, we follow similar lines as proposed in the proof of  \cite[Proposition 3.1]{sylvain} for the proof of following proposition. We give the details of the proof for the sake of completeness.

\begin{proposition}[\textbf{Doubling of variables}]\label{th3.5}
	Let $(u_1(t))_{t\in[0,T]}$ and $(u_2(t))_{t\in[0,T]}$ be solution solution to \eqref{1.1} with initial data $u_{1,0}$ and $u_{2,0}$ respectively. Let us denote $f_1(t)=\mathbbm{1}_{u_1(t)\textgreater\,\xi},\,\& \, f_2(t)= \mathbbm{1}_{u_2(t)\,\textgreater\,\xi}$. Then, for all \,\,$ 0\le t\leq T$ and non-negative test functions $\Upsilon\in{C}^\infty(\mathbb{T}^N)$,\, $\kappa\in{C_{c}^\infty(\mathbb{R})}$ we have 
	\begin{align}\label{tech}
	\mathbb{E}\bigg[\int_{(\mathbb{T}^N)^2}&\int_{\mathbb{R}^2}\Upsilon(x-y)\kappa(\xi-\mathfrak{\zeta})f_1^{\pm}(x,t,\xi)\bar {f}_2^{\pm}(y,t,\mathfrak{\zeta})d\xi d\mathfrak{\zeta} dx dy\bigg]\notag\\
	&\le\mathbb{E}\bigg[ \int_{(\mathbb{T}^N)}\int_{\mathbb{R}^2}\Upsilon(x-y)\kappa(\xi-\mathfrak{\zeta}) f_{1,0} (x,\xi)\bar f_{2,0} (y,\mathfrak{\zeta})d\xi d\mathfrak{\zeta} dx dy+ \mathcal{E}_{\Upsilon}+ \mathcal{E}_{\kappa}+J\bigg], 
	\end{align}
	where
	\begin{align}
	\mathcal{E}_{\Upsilon}\notag=\int_0^t\int_{(\mathbb{T}^N)^2}\int_{\mathbb{R}^2}f_1(x,s,\xi)\bar{f}_2(y,s,\mathfrak{\zeta})(F'(\xi)-F'(\mathfrak{\zeta}))&\kappa(\xi-\mathfrak{\zeta})d\xi d\mathfrak{\zeta}\cdot\nabla\Upsilon(x-y)dx dy ds,\notag
	\end{align}
	\begin{align}
	\mathcal{E}_{\kappa}\notag=\frac{1}{2}\int_{(\mathbb{T}^N)^2}\Upsilon(x-y)\int_0^t\int_{\mathbb{R}^2}\kappa(\xi-\mathfrak{\zeta})\sum_{k\ge1}|h_k(x,\xi)-&h_k(y,\mathfrak{\zeta})|^2d\mathfrak{\mathcal{V}}_{x,s}^{1}\oplus\mathfrak{\mathcal{V}}_{y,s}^{2}(\xi,\mathfrak{\zeta})dx dy ds,\notag
	\end{align}	
	and
	\begin{align}
	J&=-\int_0^t\int_{(\mathbb{T}^N)^2}\int_{\mathbb{R}^2}f_1 (x,s,\xi)\bar {f}_2(y,s,\mathfrak{\zeta})\kappa(\xi-\mathfrak{\zeta})A'(\xi)(-\Delta)_x^\alpha(\Upsilon(x-y)))d\xi d\mathfrak{\zeta} dx dy ds\notag\\
	&\qquad-\int_0^t\int_{(\mathbb{T}^N)^2}\int_{\mathbb{R}^2}f_1 (x,s,\xi)\bar {f}_2(y,s,\mathfrak{\zeta})\kappa(\xi-\mathfrak{\zeta})A'(\mathfrak{\zeta})(-\Delta)_y^\alpha(\Upsilon(x-y)))d\xi d\mathfrak{\zeta} dx dy ds\notag\\
	&\qquad-\int_0^t\int_{(\mathbb{T}^N)^2}\int_{\mathbb{R}^2}f_1(x,s,\xi)\partial_{\xi}\kappa(\xi-\mathfrak{\zeta})\Upsilon(x-y)d\mathfrak{\mathfrak{\eta}}_{2,2}(y,s,\mathfrak{\zeta})dxd\xi \notag\\
	&\qquad+\int_0^t\int_{(\mathbb{T}^N)^2}\int_{\mathbb{R}^2}\bar{f}_2(y,s,\mathfrak{\zeta})\partial_{\mathfrak{\zeta}}\kappa(\xi-\mathfrak{\zeta})\Upsilon(x-y)d\mathfrak{\mathfrak{\eta}}_{1,2}(x,s,\xi)dy d\mathfrak{\zeta}.\notag
	\end{align}
\end{proposition}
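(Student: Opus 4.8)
The plan is to carry out Kruzhkov's doubling of variables directly at the kinetic level, combining it with the It\^o product rule, following the scheme of \cite[Proposition 3.1]{sylvain} but keeping track of the nonlocal operator $(-\Delta)^\alpha$ and the explicit $x$-dependence of the noise coefficients $h_k$. First I would freeze $(y,\zeta)$ and apply the Young-measure formulation \eqref{2.10} to $f_1$ with the test function $(x,\xi)\mapsto\Upsilon(x-y)\kappa(\xi-\zeta)$, which belongs to $C_c^2(\mathbb{T}^N\times\mathbb{R})$ in $(x,\xi)$; this exhibits $t\mapsto G_1(t):=\langle f_1(t),\Upsilon(\cdot-y)\kappa(\cdot-\zeta)\rangle$ as a real-valued semimartingale. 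Symmetrically, after writing $\bar f_2=1-f_2$ and noticing that the sign of the $\partial_\zeta$-terms flips, I would freeze $(x,\xi)$ and apply the conjugate formulation to $\bar f_2$ with $(y,\zeta)\mapsto\Upsilon(x-y)\kappa(\xi-\zeta)$, producing a second semimartingale $t\mapsto G_2(t):=\langle\bar f_2(t),\Upsilon(x-\cdot)\kappa(\xi-\cdot)\rangle$.

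Next I would apply the It\^o product formula $G_1(t)G_2(t)=G_1(0)G_2(0)+\int_0^t G_1\,dG_2+\int_0^t G_2\,dG_1+[G_1,G_2]_t$, integrate the resulting identity against $dx\,d\xi\,dy\,d\zeta$, and take expectations. The compact $\xi,\zeta$-support of $\kappa$, compactness of $\mathbb{T}^N$, and the moment bound \eqref{2.5} justify the use of Fubini and the vanishing, in expectation, of the two stochastic integrals. The product rule then leaves three families of drift terms plus the quadratic covariation, which I match to the right-hand side of \eqref{tech}. The transport contributions $\int_0^t\langle f_1,F'(\xi)\cdot\nabla_x(\cdots)\rangle$ and their $\bar f_2$-counterpart combine, via $\nabla_x\Upsilon(x-y)=-\nabla_y\Upsilon(x-y)$, into $\mathcal{E}_{\Upsilon}$ carrying the factor $F'(\xi)-F'(\zeta)$; the two nonlocal drifts $-\int_0^t\langle f_i,A'(\cdot)(-\Delta)^\alpha[\cdots]\rangle$ produce the first two lines of $J$, with $(-\Delta)^\alpha$ now acting on the smooth $\Upsilon$ in $x$ and $y$ respectively.

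The decisive algebraic step is the noise bookkeeping: the two It\^o corrections $\tfrac12\int H^2(x,\xi)\,\partial_\xi(\cdots)\,d\mathcal{V}^1$ and $\tfrac12\int H^2(y,\zeta)\,\partial_\zeta(\cdots)\,d\mathcal{V}^2$, together with the covariation $[G_1,G_2]_t=\sum_k\int\!\!\int h_k(x,\xi)h_k(y,\zeta)(\cdots)$ arising from the shared Brownian motions $\beta_k$, collapse through $|h_k(x,\xi)|^2+|h_k(y,\zeta)|^2-2h_k(x,\xi)h_k(y,\zeta)=|h_k(x,\xi)-h_k(y,\zeta)|^2$ into $\mathcal{E}_{\kappa}$; here the identity $\partial_\xi\kappa(\xi-\zeta)=-\partial_\zeta\kappa(\xi-\zeta)$ is what lets the mixed $\partial_\xi,\partial_\zeta$ derivatives assemble the square against the product measure $\mathcal{V}^1_{x,s}\oplus\mathcal{V}^2_{y,s}$. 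Finally, the kinetic-measure terms $-\mathfrak{m}_i(\partial\varphi)$ are replaced by their lower bounds via $\mathfrak{m}_i\ge\mathfrak{\eta}_i$: retaining the nonlocal dissipation measures $\mathfrak{\eta}_{1,2},\mathfrak{\eta}_{2,2}$ produces the last two lines of $J$, while the discarded excess $(\mathfrak{m}_i-\mathfrak{\eta}_i)(\cdots)\ge0$ carries the correct sign (since $f_1\,\partial_\xi\kappa$ and $\bar f_2\,\partial_\zeta\kappa$ pair nonnegatively against the nonnegative measures) and is exactly what upgrades the It\^o equality to the claimed inequality.

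I expect the main obstacle to be the rigorous justification of the It\^o product computation for two distribution-valued semimartingales: one must first regularize, mollifying $f_i$ in $(x,\xi)$ or testing against smooth approximations of the indicator, so that the product $f_1(x,t,\xi)\bar f_2(y,t,\zeta)$ and its quadratic variation are meaningful, and then pass to the limit in every term, most delicately in the measure terms and in the covariation yielding $\mathcal{E}_{\kappa}$. A secondary difficulty peculiar to the nonlocal setting is controlling $(-\Delta)^\alpha[\Upsilon]$ so that the $\Upsilon$-terms in $J$ are well defined and survive the limit; at the level of this identity smoothness of $\Upsilon$ suffices, and the genuine restriction on $\alpha$ will surface only later, when these terms are estimated in Theorem \ref{th3.6}.
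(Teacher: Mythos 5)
Your overall architecture coincides with the paper's proof of Proposition \ref{th3.5} (which follows \cite[Proposition 3.1]{sylvain}): semimartingale decompositions of the two tested kinetic formulations, It\^o product plus Stieltjes integration by parts, expectation to remove the stochastic integrals, the square completion $|h_k(x,\xi)|^2+|h_k(y,\zeta)|^2-2h_k(x,\xi)h_k(y,\zeta)=|h_k(x,\xi)-h_k(y,\zeta)|^2$ assembling $\mathcal{E}_\kappa$, and the decomposition $\mathfrak{m}_i=\mathfrak{m}_{i,1}+\mathfrak{\eta}_{i,2}$ so that the retained nonlocal dissipation measures give the last two lines of $J$ while the discarded excess has a sign. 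However, your opening move, as written, does not produce the left-hand side of \eqref{tech}. Your $G_1(t)$ is a function of the frozen pair $(y,\zeta)$ and $G_2(t)$ a function of $(x,\xi)$, so integrating the product over all four variables factorizes as $\bigl(\int G_1\,dy\,d\zeta\bigr)\bigl(\int G_2\,dx\,d\xi\bigr)$ — a product of two separately mollified averages — and not the desired quantity $\int\Upsilon(x-y)\kappa(\xi-\zeta)f_1(x,t,\xi)\bar f_2(y,t,\zeta)\,dx\,d\xi\,dy\,d\zeta$, in which the kernel appears once against the tensor product $f_1\otimes\bar f_2$. The necessary repair is the paper's route: take generic product test functions $\varphi_1(x,\xi)\varphi_2(y,\zeta)$, derive the four-variable identity \eqref{un} for $\mathbb{E}\langle\langle f_1^+\bar f_2^+,\theta\rangle\rangle$ with $\theta=\varphi_1\varphi_2$, then extend by density and truncation to kernels $\theta\in C_b^\infty$ supported near the diagonal, and only at the end specialize $\theta=\Upsilon(x-y)\kappa(\xi-\zeta)$. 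You gesture at testing against smooth approximations as your "main obstacle," but this density step is not a technical afterthought — it \emph{is} the construction; the freeze-and-multiply version cannot be fixed directly because $\bar f_2(y,t,\zeta)$ is not a semimartingale pointwise in $(y,\zeta)$.

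A secondary omission: the product formula you invoke, $G_1G_2=G_1(0)G_2(0)+\int G_1\,dG_2+\int G_2\,dG_1+[G_1,G_2]$, is the continuous-semimartingale version. Here the finite-variation parts may jump, since the kinetic measures $\mathfrak{m}_i$ can charge individual times; one needs the c\'adl\'ag integration-by-parts formula of \cite[Chapter 0]{Rev}, with left limits in the cross integrals. This is exactly why the measure terms in \eqref{un} carry $\bar f_2^{+}(s)$ against $d\mathfrak{m}_1$ and $f_1^{-}(s)$ against $d\mathfrak{m}_2$, and why the conclusion \eqref{tech} is stated for both choices $f^{\pm}$. Your sign argument for the discarded excess is correct in mechanism — integrating $f_1\,\partial_\xi\kappa$ in $\xi$ and $\bar f_2\,\partial_\zeta\kappa$ in $\zeta$ against the Young measures turns the mixed terms into $-\int\theta\,d\mathcal{V}^{1,-}\,d\mathfrak{m}_{2,1}-\int\theta\,d\mathcal{V}^{2,+}\,d\mathfrak{m}_{1,1}\le 0$, which is the paper's term $K$ — but this computation only goes through once the left/right-limit bookkeeping is in place.
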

\begin{remark}
	Here, we  need to fix some notations for kinetic measures corresponding to kinetic solutions $(u_1(t))_{t\in[0,T]}$ and $(u_2(t))_{t\in[0,T]}$. Suppose that $\mathfrak{m}_1$ and $\mathfrak{m}_2$ are kinetic measures for $(u_1(t))_{t\in[0,T]}$ and $(u_2(t))_{t\in[0,T]}$ respectively, satisfying $\mathbb{P}$-almost surely, $\mathfrak{m}_1 \ge \mathfrak{\mathfrak{\eta}}_{1,2}$ and $\mathfrak{m}_2\ge \mathfrak{\eta}_{2,2}$, where 
	$$\mathfrak{\eta}_{1,2}(x,t,\xi)=\int_{\mathbb{R}^N}|A(u_1(x+z))-A(\xi)|\mathbbm{1}_{\mbox{Conv}\{u_1(x,t),u_1(x+z)\}}(\xi)\mu(z)dz$$
	and 
	$$\mathfrak{\eta}_{2,2}(y,t,\mathfrak{\zeta})=\int_{\mathbb{R}^N}|A(u_2(y+z))-A(\mathfrak{\zeta})|\mathbbm{1}_{\mbox{Conv}\{u_2(x,t),u_2(x+z)\}}(\mathfrak{\zeta})\mu(z)dz.$$
	 These measures can be  written as $\mathfrak{m}_1=\mathfrak{m}_{1,1}+\mathfrak{\eta}_{1,2}$ and $\mathfrak{m}_2=\mathfrak{m}_{2,1}+\mathfrak{\eta}_{2,2}$ for some non negative measure $\mathfrak{m}_{1,1}$ and $\mathfrak{m}_{2,1}$ respectively.
	\end{remark}
\begin{proof}
	 Le us define $H_1^2(x,\xi)= \sum_{k\ge1}|h_k(x,\xi)|^2$ and $H_2^2(y,\mathfrak{\zeta})=\sum_{k\ge1}|h_k(y,\mathfrak{\zeta})|^2$. Let $\varphi_1 \in C_c^\infty(\mathbb{T}_x^N\times\mathbb{R}_{\xi})$ and $\varphi_2\in C_c^\infty(\mathbb{T}_y^N\times\mathbb{R}_{\mathfrak{\zeta}})$.  By equation \eqref{2.10} for $f_1=f_1^+$ we have
	 $$\langle f_1^+(t),\varphi \rangle = \langle \Lambda_1^*, \partial_\xi \varphi_1 \rangle+\mathcal{M}_1(t)$$
	 with
	 $$\mathcal{M}_1(t)=\sum_{k\ge1} \int_0^t\int_{\mathbb{T}^N}\int_{\mathbb{R}} h_k(x,\xi)\varphi_1(x,\xi) d\mathfrak{\mathcal{V}}_{x,s}^1(\xi) dx d\beta_k (s)$$
	 and 
	 \begin{align*}
	 \langle \Lambda_1^*, \varphi _1 \rangle ([0,t])& =\langle f_{1,0}, ,\varphi_1 \rangle \delta_0([0,t])+\int_0^t \langle f_1, F'\cdot\nabla \varphi_1 \rangle ds-\int_0^t \langle f_1 , A'(\xi)(-\Delta)_x^\alpha[\varphi_1]\rangle ds\\
	 &\qquad+\frac{1}{2} \int_0^t \int_{\mathbb{T}^N}\int_{\mathbb{R}} \partial_{\xi} \varphi_1 H_1^2(x,\xi) d\mathfrak{\mathcal{V}}_{x,s}^1(\xi) dx ds -\mathfrak{m}_1(\partial_{\xi} \varphi_1)([0,t]).
	 \end{align*}
	 Notice that $\mathfrak{m}_1(\partial_{\xi}\varphi_1)(\{0\})=0$ and the value of $\langle \Lambda_1^* ,\partial_{\xi} \varphi_1\rangle (\{0\})$ is $\langle f_{1,0}, \varphi_1 \rangle.$ Similarly 
	 $$\langle \bar{f}_2^+(t),\varphi_2 \rangle=\langle \bar{\Lambda}_2^*, \partial_{\mathfrak{\zeta}} \varphi_2 \rangle([0,t])+\bar{\mathcal{M}}_2(t)$$
	 with 
	 $$\bar{\mathcal{M}}_2(t)=\sum_{k\ge1} \int_0^t\int_{\mathbb{T}^N}\int_{\mathbb{R}} h_k(y,\mathfrak{\zeta})\varphi_2(y,\mathfrak{\zeta}) d\mathfrak{\mathcal{V}}_{y,s}^1(\mathfrak{\zeta}) dx d\beta_k (s)$$
	 and
	 \begin{align*}
	 \langle \bar{\Lambda}_2^*, \varphi _2 \rangle ([0,t])& =\langle \bar{f}_{2,0}, ,\varphi_1 \rangle \delta_0([0,t])+\int_0^t \langle\bar{ f}_2, F'\cdot\nabla \varphi_2 \rangle ds-\int_0^t \langle \bar{f}_2 , A'(\mathfrak{\zeta}) (-\Delta)_y^\alpha[\varphi_2]\rangle ds\\
	 &\qquad-\frac{1}{2} \int_0^t \int_{\mathbb{T}^N}\int_{\mathbb{R}} \partial_{\mathfrak{\zeta}} \varphi_2 H_2^2 d\mathfrak{\mathcal{V}}_{y,s}^2(\mathfrak{\zeta}) dx ds +\mathfrak{m}_2(\partial_{\mathfrak{\zeta}} \varphi_2)([0,t]),	
	 \end{align*}
	 where $\langle \bar{\Lambda}_2^* ,\partial_{\mathfrak{\zeta}}\varphi_2 \rangle (\{0\})=\langle \bar{f}_2, \varphi_2 \rangle$. Let $\theta(x,\xi, y, \mathfrak{\zeta})= \varphi_1(x,\xi)\varphi_2(y,\mathfrak{\zeta}).$ Making use of It\^o formula for $\mathcal{M}_1(t)\bar{\mathcal{M}}_2(t)$, and integration by parts formula for functions of finite variation, $\langle \Lambda_1^* , \partial_{\xi} \varphi_1 \rangle\,\langle \bar{\Lambda}_2^*,\partial_{\mathfrak{\zeta}} \varphi_2 \rangle ([0,t])$, (see \cite[Chapter 0]{Rev}), we conclude that
	 \begin{align*}
	\langle \Lambda_1^*, \partial_{\xi} \varphi_1 ([0,t])\rangle\,&\langle \bar{\Lambda}_2^* ,\partial_{\mathfrak{\zeta}}\varphi_2 \rangle ([0,t])=\langle \Lambda_1^* , \partial_{\xi} \varphi_1 \rangle (\{0\})\,\langle \bar{\Lambda}_2^*, \partial_{\mathfrak{\zeta}} \varphi_2 \rangle (\{0\})\\&+\int_{(0,t]} \langle \mathfrak{m}_1^* ,\partial_{\xi} \varphi_1 \rangle([0,s)) d \langle \bar{\Lambda}_2^*, \partial_{\mathfrak{\zeta}}\varphi_2 \rangle (s) +\int_{(0,t]}\langle \bar{\Lambda}_2^* , \partial_{\mathfrak{\zeta}} \varphi_2 ([0,s]) d\langle \Lambda_1^*, \partial_{\xi} \varphi_1 \rangle (s).
	 \end{align*}
	 We also have the following identity
	 \begin{align*}
	 \langle \Lambda_1^* , \partial_{\xi} \varphi_1 \rangle ([0,t]) \bar{\mathcal{M}}_2(t)=\int_0^t \langle \Lambda_1^*, \partial_{\xi} \varphi_1\rangle ([0,s]) d\bar{\mathcal{M}}_2(s) + \int_0^t \bar{\mathcal{M}}_2(s) \langle \Lambda_1^* , \partial_{\xi} \varphi_1 \rangle (ds).
	 \end{align*}
	 It is easy to obtain since $\bar{\mathcal{M}}_2$ is continuous and a similar formula for $\langle \bar{\Lambda}_2^* ,\partial_{\mathfrak{\zeta}}\varphi_2 \rangle {\mathcal{M}}_1(t)$. These identities give 
	 $$\langle f_1^+(t), \varphi_1 \rangle\,\langle \bar{f}_2^+(t),\varphi_2 \rangle =\langle \langle f_1^{+}(t)\,\bar{f}_2^{+}(t), \alpha \rangle \rangle$$ 
	 It implies that
	  \begin{align}\label{un}
	  \mathbb{E}\langle \langle f_1^+(t)\,\bar{f}_2^+(t), \theta \rangle\rangle&= \mathbb{E}\langle \langle f_{1,0} \bar{f}_{2,0}, \theta\rangle\rangle\notag\\
	  &+\mathbb{E}\int_0^t\int_{(\mathbb{T}^N)^2}\int_{\mathbb{R}^2} f_1 \bar{f}_2 (F'(\xi)\cdot\nabla_x + F'(\mathfrak{\zeta})\cdot\nabla_y)\theta d\xi d\mathfrak{\zeta} dx dy ds\notag\\
	  &-\mathbb{E}\int_0^t \int_{(\mathbb{T}^N)^2}\int_{\mathbb{R}^2} f_1 \bar{f}_2(A'(\xi)(-\Delta)_x^\alpha+A'(\mathfrak{\zeta}) (-\Delta)_y^\alpha) [\theta]\, d\xi d\mathfrak{\zeta} dx dy ds\notag\\
	  &+\frac{1}{2}\mathbb{E}\int_0^t\int_{(\mathbb{T}^N)^2}\int_{\mathbb{R}^2}\partial_{\xi} \theta \bar{f}_2(s) H_1^2\,d\mathfrak{\mathcal{V}}_{x,s}^1(\xi) d\mathfrak{\zeta} dx dy ds\notag\\
	  &-\frac{1}{2} \mathbb{E} \int_0^t \int_{(\mathbb{T}^N)^2}\int_{\mathbb{R}^2}\partial_{\mathfrak{\zeta}} \theta f_1(s) H_2^2\, d\mathfrak{\mathcal{V}}_{y,s}^2(\mathfrak{\zeta})\,d\xi dy dx ds\notag\\
	  &-\mathbb{E}\int_0^t \int_{(\mathbb{T}^N)^2}\int_{\mathbb{R}^2} H_{1,2}\theta\,d\mathfrak{\mathcal{V}}_{x,s}^1(\xi)\,d\mathfrak{\mathcal{V}}_{y,s}^2(\mathfrak{\zeta}) dx dy ds\notag\\
	  &-\mathbb{E}\int_{(0,t]}\int_{(\mathbb{T}^N)^2}\int_{\mathbb{R}^2} \bar{ f}_2^{+}(s)\partial_{\xi} \theta d\mathfrak{m}_1(x,s,\xi) d\mathfrak{\zeta} dy\notag\\
	  &+\mathbb{E}\int_{(0,t]}\int_{(\mathbb{T}^N)^2}\int_{\mathbb{R}^2}f_1^-(s) \partial_{\mathfrak{\zeta}} \theta  d\mathfrak{m}_2(y,s,\mathfrak{\zeta}) d\xi dx	  
	  \end{align}
	  where $H_{1,2}(x,y;\xi,\mathfrak{\zeta}):=\sum_{k\ge1 } h_k(x,\xi)h_k(y,\mathfrak{\zeta})$ and $\langle\langle\cdot,\cdot \rangle\rangle$ refer to the duality distribution over $\mathbb{T}_x^N\times\mathbb{R}_{\xi}\times\mathbb{T}_y^N\times\mathbb{R}_{\mathfrak{\zeta}}$. By a density argument of approximation, we can easily prove that \eqref{un} is valid for any test function $\theta \in C_c^\infty(\mathbb{T}_x^N\times\mathbb{R}_{\xi}\times\mathbb{T}_y^N\times\mathbb{R}_{\mathfrak{\zeta}})$. The assumption that $\theta$ is compactly supported can be relaxed by the help of the condition at infinity on $\mathfrak{m}_j$ and $\mathfrak{\mathcal{V}}^j$, $j=1,2$. By help of truncation and approximation argument for $\theta$, we can prove that \eqref{un} is also valid if $\theta \in C_b^\infty (\mathbb{T}_x^N\times\mathbb{R}_{\xi}\times\mathbb{T}_y^N\times\mathbb{R}_{\mathfrak{\zeta}})$ is compactly supported in a neighbourhood of the set
	  $\big\{(x,\mathfrak{\zeta},x,\mathfrak{\zeta}); x\in\mathbb{T}^N, \mathfrak{\zeta} \in\mathbb{R}\big\}$,
	  then we can take $\theta(x,\xi, y,\mathfrak{\zeta})=\Upsilon(x-y) \kappa(\xi-\mathfrak{\zeta})$ where $\Upsilon\in C^{\infty}(\mathbb{T}^N), \kappa\in C^{\infty}_c(\mathbb{R})$.
	  the following identities,
	  $(\nabla_x+\nabla_y)\theta=0,\,\,\,\,\, (\partial_{\xi}+\partial_\mathfrak{\zeta})\theta=0,$ gives that
	  \begin{align*}
	  &\mathbbm{E}\bigg[\int_{(\mathbbm{T}^N)^2}\int_{\mathbb{R}^2}\Upsilon(x-y)\kappa(\xi-\mathfrak{\zeta})f_1^{+}(x,s,\xi)\bar{f}_2^{+}(y,t,\mathfrak{\zeta})d\xi d\mathfrak{\zeta} dx dy\bigg]\\
	  &=\mathbb{E}\bigg[\int_{(\mathbb{T}^N)^2}\int_{\mathbb{R}^2}\Upsilon(x-y)\kappa(\xi-\mathfrak{\zeta}) f_{1,0}(x,\xi)\bar{f}_{2,0}(y,\mathfrak{\zeta}) d\xi d\mathfrak{\zeta} dx dy+J+K+\mathcal{E}_{\Upsilon}+\mathcal{E}_{\kappa}\bigg],
	  \end{align*} 	  
	  where
	  \begin{align*}
	  K&=\int_{(0,t]}\int_{\mathbb{T}^N}\int_{\mathbb{R}}f_1^{-}(x,s,\xi)\partial_\mathfrak{\zeta}\theta\,\,\, dm_{2,1}(y,s,\mathfrak{\zeta})-\int_{(0,t]}\int_{\mathbb{T}^N}\int_{\mathbb{R}}\bar{f}_2^{+}(y,s,\mathfrak{\zeta})\,\,\partial_\xi \theta\,\,\, d\mathfrak{m}_{1,1}(x,s,\xi)\\
	  &=-\int_{(0,t]}\int_{\mathbb{T}^N}\int_{\mathbb{R}}f^{-}_1(x,s,\xi)\partial_{\xi} \theta d\mathfrak{m}_{2,1}(y,s,\mathfrak{\zeta})d\xi dx-\int_{(0,t]}\int_{\mathbb{T}^N}\int_{\mathbb{R}}f_2(y,s,\mathfrak{\zeta})\partial_{\mathfrak{\zeta}} \theta d\mathfrak{m}_{1,1}(x,s,\xi)dy d\mathfrak{\zeta}\\
	  &=-\int_{(0,t]}\int_{\mathbb{T}^N}\int_{\mathbb{R}}\theta d\mathfrak{\mathcal{V}}_{x,s}^{1,-}(\xi) d\mathfrak{m}_{2,1}(y,s,\mathfrak{\zeta}) dx-\int_{(0,t]}\int_{\mathbb{T}^N}\int_{\mathbb{R}}\theta d\mathfrak{\mathcal{V}}_{y,s}^{2,+}(\mathfrak{\zeta}) d\mathfrak{m}_{1,1}(x,s,\xi)dy\\
	  &\le\,0.
	  \end{align*}
	  Consequently we deduce the following estimate, $\mathbb{P}$-almost surely, for all $t\in[0,T]$
	  \begin{align}
	  \mathbb{E}\bigg[\int_{(\mathbb{T}^N)^2}\int_{(\mathbb{R})^2}&\Upsilon(x-y)\kappa(\xi-\mathfrak{\zeta})f_1^{\pm}(x,t,\xi)\bar {f}_2^{\pm}(y,t,\mathfrak{\zeta})d\xi d\mathfrak{\zeta} dx dy\bigg]\notag\\
	 &\qquad\leq\mathbb{E}\bigg[\int_{(\mathbb{T}^N)}\int_{(\mathbb{R}^2)}\Upsilon(x-y)\kappa(\xi-\mathfrak{\zeta})f_{1,0}(x,\xi)\bar f_{2,0} (y,\mathfrak{\zeta})d\xi d\mathfrak{\zeta} dx dy+ \mathcal{E}_{\Upsilon}+\mathcal{E}_{\kappa}+J\bigg].
	  \end{align}
 \end{proof}
\begin{remark}
	We can easily conclude that, if $f_1^\pm=f_2^\pm$, then inequality \eqref{tech} holds pathwise, i.e.,
	\begin{align}\label{app inequality1}
		\begin{aligned}
			\int_{(\mathbb{T}^N)^2}&\int_{\mathbb{R}^2}\Upsilon(x-y)\kappa(\xi-\mathfrak{\zeta})f_1^{\pm}(x,t,\xi)\bar {f}_2^{\pm}(y,t,\mathfrak{\zeta})d\xi d\mathfrak{\zeta} dx dy\\
			&\le\int_{(\mathbb{T}^N)}\int_{\mathbb{R}^2}\Upsilon(x-y)\kappa(\xi-\mathfrak{\zeta}) f_{1,0} (x,\xi)\bar f_{2,0} (y,\mathfrak{\zeta})d\xi d\mathfrak{\zeta} dx dy+ \mathcal{E}_{\Upsilon}+ \mathcal{E}_{\kappa}+J,
		\end{aligned}
	\end{align}
for all $t\in[0,T]$, $\mathbb{P}$-almost surely, where $\mathcal{E}_{\Upsilon}, \mathcal{E}_{\kappa},$ and $J$ are introduced as in previous Proposition \ref{th3.5}. For the proof of it, there is no need to take expectation after use of It\^o and integration by part formula in the proof of Proposition \ref{th3.5}, since after the approximation of test functions $\Upsilon(x-y)$, and $\kappa(\xi-\mathfrak{\zeta})$, the contributed martingale terms cancel to each other. 
\end{remark}
\begin{thm}[\textbf{Contraction principle}]\label{th3.6}\label{comparison} Let $(u(t))_{t\in[0,T]}$ be a kinetic solution to \eqref{1.1}, then there exists a $L^1(\mathbb{T}^N)$-valued process $(u^{-}(t))_{t\in[0,T]}$ such that $\mathbb{P}$-almost surely, for all $t\in[0,T]$, $f^{-}(t)=\mathbbm{1}_{u^{-}(t)\textgreater\xi}$. Moreover, if $(u_1(t))_{t\in[0,T]}$,\,\,$(u_2(t))_{t\in[0,T]}$\,are kinetic solutions to \eqref{1.1} with initial data $u_{1,0} $ and $u_{2,0}$ respectively, then we have for all $t\in[0,T]$,
	\begin{align}\label{contraction}\mathbb{E}\left\Vert u_1(t)-u_2(t)\right\Vert_{L^{1}(\mathbb{T}^N)}\le\mathbb{E}\left\Vert u_{1,0}-u_{2,0}\right\Vert_{\mathbb{L}^1(\mathbb{T}^N)}.
	\end{align}
\end{thm}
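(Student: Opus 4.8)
The plan is to run the kinetic doubling-of-variables machinery already prepared in Proposition \ref{th3.5}, treating the smoothed quantity
$$\int_{(\mathbb{T}^N)^2}\int_{\mathbb{R}^2}\Upsilon(x-y)\kappa(\xi-\mathfrak{\zeta})f_1^{\pm}(x,t,\xi)\bar{f}_2^{\pm}(y,t,\mathfrak{\zeta})\,d\xi\,d\mathfrak{\zeta}\,dx\,dy$$
as a microscopic surrogate for $\|(u_1(t)-u_2(t))^{+}\|_{L^1(\mathbb{T}^N)}$, and then letting the spatial mollifier $\Upsilon=\Upsilon_{\varrho}$ concentrate on the diagonal of $(\mathbb{T}^N)^2$ and the velocity mollifier $\kappa=\kappa_{\delta}$ concentrate at the origin. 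First I would dispose of the representation claim $f^{-}(t)=\mathbbm{1}_{u^{-}(t)>\xi}$ by running the same estimate on the diagonal $u_1=u_2=u$, and then derive the contraction bound for $u_1\neq u_2$.

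\textbf{Existence of $u^{-}$.} By Proposition \ref{Proposition 3.1}, $f^{-}(t)$ is already a kinetic function for every $t$, with associated Young measure $\mathfrak{\mathcal{V}}_{x,t}^{-}$; it remains to prove it is an equilibrium. I would apply the pathwise inequality \eqref{app inequality1} with $u_1=u_2=u$ (so that $f_1=f_2=f$), in its left-limit ($-$) version, and pass to the diagonal limit $\varrho,\delta\to0$. The left-hand side converges to
$$\int_{\mathbb{T}^N}\int_{\mathbb{R}}f^{-}(x,t,\xi)\big(1-f^{-}(x,t,\xi)\big)\,d\xi\,dx=\int_{\mathbb{T}^N}\int_{\mathbb{R}}\mathfrak{\mathcal{V}}_{x,t}^{-}(\xi,\infty)\big(1-\mathfrak{\mathcal{V}}_{x,t}^{-}(\xi,\infty)\big)\,d\xi\,dx,$$
while the initial integral vanishes because $f_0=\mathbbm{1}_{u_0>\xi}$ is an equilibrium, and the error terms $\mathcal{E}_{\Upsilon},\mathcal{E}_{\kappa},J$ tend to $0$ (respectively stay $\le0$) by the estimates described below. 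Hence $f^{-}(1-f^{-})=0$ almost everywhere, so $f^{-}\in\{0,1\}$; being non-increasing in $\xi$ it is the indicator $\mathbbm{1}_{u^{-}(t)>\xi}$ of a measurable $u^{-}(t)$, which lies in $L^1(\mathbb{T}^N)$ by \eqref{3.3}.

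\textbf{Contraction estimate.} Returning to \eqref{tech} with $f_1^{+},\bar{f}_2^{+}$, I would pass to the limit term by term. The left-hand integral tends to $\mathbb{E}\int_{\mathbb{T}^N}\int_{\mathbb{R}}f_1\bar{f}_2\,d\xi\,dx=\mathbb{E}\|(u_1(t)-u_2(t))^{+}\|_{L^1(\mathbb{T}^N)}$ and the initial term to $\mathbb{E}\|(u_{1,0}-u_{2,0})^{+}\|_{L^1(\mathbb{T}^N)}$. The convective error $\mathcal{E}_{\Upsilon}$ vanishes once $\kappa_{\delta}$ concentrates, since $F'(\xi)-F'(\mathfrak{\zeta})\to0$ on $\supp\kappa_{\delta}$ and the growth bound \eqref{F.2} together with the moment estimates \eqref{2.5} furnish the integrable domination. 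In the noise defect $\mathcal{E}_{\kappa}$ I would split \eqref{2.3}: the velocity part $|\xi-\mathfrak{\zeta}|g(|\xi-\mathfrak{\zeta}|)$ is annihilated as $\kappa_{\delta}$ concentrates, thanks to $g(0)=0$, whereas the spatial part $|x-y|^2$ is absorbed by $\Upsilon_{\varrho}$. In $J$, the two macroscopic fractional terms carrying $(-\Delta)_x^\alpha\Upsilon$ and $(-\Delta)_y^\alpha\Upsilon$ must be combined with the two nonlocal kinetic-measure integrals against $\mathfrak{\eta}_{1,2}$ and $\mathfrak{\eta}_{2,2}$; using the pointwise formula \eqref{2.4} and the definition of $\mathfrak{\eta}_1$, these pair into a non-positive dissipative contribution modulo a vanishing error, which is the nonlocal analogue of the parabolic-defect cancellation. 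Finally, adding the symmetric estimate for the negative part (obtained by interchanging $u_1\leftrightarrow u_2$) yields $\mathbb{E}\|u_1(t)-u_2(t)\|_{L^1(\mathbb{T}^N)}\le\mathbb{E}\|u_{1,0}-u_{2,0}\|_{L^1(\mathbb{T}^N)}$.

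\textbf{Main obstacle.} The delicate point, as flagged in \eqref{final1}, is the interaction between $\mathcal{E}_{\kappa}$ and the fractional part of $J$. Concentrating $\Upsilon_{\varrho}$ is exactly what removes the spatial noise defect $|x-y|^2$, but the same concentration renders $(-\Delta)^\alpha\Upsilon_{\varrho}$ singular against the kernel $\lambda(z)=|z|^{-N-2\alpha}$, so the two limits cannot be taken in the naive order $\kappa$-then-$\Upsilon$ used in the purely hyperbolic case. Reconciling them requires a change of hierarchy, in which the concentrations in $\xi$ and in $x$ are coupled, and the resulting nonlocal integral converges only when $2\alpha<1$, i.e. $\alpha\in(0,\tfrac12)$. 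When the noise is of the form $\Psi=\Psi(u)$, the $|x-y|^2$ term is absent, $\mathcal{E}_{\kappa}$ decouples from $\Upsilon_{\varrho}$, and the argument closes for every $\alpha\in(0,1)$.
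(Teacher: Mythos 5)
Your proposal follows essentially the same route as the paper's proof of Theorem \ref{th3.6}: the doubling-of-variables inequality \eqref{tech} (and its pathwise form \eqref{app inequality1} on the diagonal $u_1=u_2$, which yields $f^{-}(1-f^{-})=0$ and hence the representation $f^{-}(t)=\mathbbm{1}_{u^{-}(t)>\xi}$), the estimates on $\mathcal{E}_{\Upsilon}$ and $\mathcal{E}_{\kappa}$, the exact cancellation of the macroscopic fractional terms against the nonlocal kinetic-measure terms with a kernel truncation controlling the remainder, and the coupled scaling of the mollification parameters as in \eqref{final1}, which forces $\alpha\in(0,\tfrac12)$ for $x$-dependent noise while letting the limits decouple and cover $\alpha\in(0,1)$ when $\Psi=\Psi(u)$. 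This matches the paper's Steps 1--4 up to the harmless reordering of establishing the equilibrium form of $f^{-}$ before, rather than after, the contraction estimate.
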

\begin{proof} We will first show contraction principle \eqref{contraction} in the following several steps.
	
	\noindent
	\textbf{Step 1:}
	Suppose that $(\Upsilon_{\varepsilon}),\,(\kappa_{\delta})$ are the approximations to the identity on $\mathbb{T}^N$ and $\mathbb{R}$, respectively, that is, $\Upsilon\in C^\infty(\mathbb{T}^N)$ and $\kappa\in C_c^\infty (\mathbb{R})$ be symmetric nonnegative functions such that $\int_{\mathbb{T}^N} \Upsilon(x) dx = 1 ,\,\, \int_{\mathbb{R}} \kappa(\xi) d\xi =1$ and supp$\kappa\subset(-1,1)$ with
	$\Upsilon_{\varepsilon}=\frac{1}{\varepsilon^N}\Upsilon(\frac{x}{\varepsilon}),$ and
	$\kappa_{\delta}(\xi)=\frac{1}{\delta}\kappa(\frac{\xi}{\delta}).$
	Let put 
	$\kappa=\kappa_{\delta}$ ,  and $\Upsilon= \Upsilon_\varepsilon $ in inequality \eqref{tech}. Then, we follow \cite[Equation 3.16]{sylvain} to conclude for all $t\in[0,T]$
	\begin{align}
		&\mathbb{E}\int_{\mathbb{T}^N}\int_{\mathbb{R}}f_1(x,t,\xi)\bar{f}_2(x,t,\xi)d\xi dx=\mathbb{E}\int_{(\mathbb{T}^N)^2}\int_{\mathbb{R}^2}\Upsilon_{\varepsilon}(x-y)\kappa_{\delta}(\xi-\mathfrak{\zeta})f_1(x,t,\xi)\bar{f}_2(y,t,\mathfrak{\zeta})d\xi d\mathfrak{\zeta} dxdy+\mathfrak{\eta}_{t}(\varepsilon,\delta)
	\end{align}
	where $ \lim_{\varepsilon,\delta\to 0}\mathfrak{\eta}_t(\varepsilon,\delta)=0.$\\
	\textbf{Step 2:} We follow \cite{deb} to estimate $\mathcal{E}_\Upsilon$ and $\mathcal{E}_\kappa$. We have $\mathbb{P}$-almost surely, for all $t\in[0,T]$
	\begin{align}
		|\mathcal{E}_{\kappa}|&\le  C_{\Psi}\,t\,(\delta^{-1}\varepsilon^2 + g(\delta))\label{estimate 1}\\
		|\mathcal{E}_\Upsilon|&\leq C(\omega) \varepsilon^{-1}\delta,\label{estimate 2}
	\end{align}
	\textbf{Step 3:}
	Let $\mathfrak{\eta}_\delta(\xi)=\delta\,\,\mathfrak{\eta}_(\frac{x}{\delta})$, be convex approximation of absolute value function, where $\mathfrak{\eta}$  be a $C^\infty(R)$ function satisfying,
	$\mathfrak{\eta}(0)=0$,\,\,\,\,\, $\mathfrak{\eta}(r)=\mathfrak{\eta}(-r)$,\,\,\,\, $\mathfrak{\eta}'(r)=-\mathfrak{\eta}'(-r)$, $\mathfrak{\eta}''_\delta=\kappa_\delta,$ and
	\[\mathfrak{\eta}'(r)=\begin{cases}
		-1, & r\, \le\,-1\\
		\in[-1,1] ,& |r|\,\textless\,1\\
		1, & r\,\ge\,1
	\end{cases}
	\]
	In order to estimate $J$, we proceed as follow
	\begin{align*}
		J&=-\int_0^t\int_{(\mathbb{T}^N)^2}\int_{\mathbb{R}^2}f_1(x,s,\xi)\bar{ f}_2(y,s,\mathfrak{\zeta})\kappa_{\delta}(\xi-\mathfrak{\zeta})A'(\xi)(-\Delta)_x^\alpha(\Upsilon_{\varepsilon}(x-y)))d\xi d\mathfrak{\zeta} dx dy \notag\\
		&\quad-\int_0^t\int_{(\mathbb{T}^N)^2}\int_{\mathbb{R}^2}f_1(x,s,\xi)\bar{f}_2(y,s,\mathfrak{\zeta})\kappa_{\delta}(\xi-\mathfrak{\zeta})A'(\mathfrak{\zeta})(-\Delta)_y^\alpha(\Upsilon_{\varepsilon}(x-y)))d\xi d\mathfrak{\zeta} dx dy ds\notag\\
		&\quad+\int_0^t\int_{(\mathbb{T}^N)^2}\int_{\mathbb{R}^2}\partial_\mathfrak{\zeta}\kappa_{\delta}(\xi-\mathfrak{\zeta})\Upsilon_{\varepsilon}(x-y)f_1(x,s,\xi)d\mathfrak{\eta}_{2}(y,s,\mathfrak{\zeta})dx d\xi \notag\\
		&\quad-\int_0^t\int_{(\mathbb{T}^N)^2}\int_{\mathbb{R}^2}\partial_\xi\kappa_{\delta}(\xi-\mathfrak{\zeta})\Upsilon_{\varepsilon}(x-y)\bar{f}_2(y,s,\mathfrak{\zeta})d\mathfrak{\eta}_{1}(x,s,\xi)dy d\mathfrak{\zeta}\\
		&:=J_1+J_2,
	\end{align*}
	where
	\noindent
	\begin{align*}
		J_1&=-\int_0^t\int_{(\mathbb{T}^N)^2}\int_{\mathbb{R}^2}f_1(x,s,\xi)\bar{ f}_2(y,s,\mathfrak{\zeta})\kappa_{\delta}(\xi-\mathfrak{\zeta})A'(\xi)(-\Delta)_x^\alpha(\Upsilon_{\varepsilon}(x-y)))d\xi d\mathfrak{\zeta} dx dy \notag\\
		&\quad-\int_0^t\int_{(\mathbb{T}^N)^2}\int_{\mathbb{R}^2}f_1(x,s,\xi)\bar{ f}_2(y,s,\mathfrak{\zeta})\kappa_{\delta}(\xi-\mathfrak{\zeta})A'(\mathfrak{\zeta})(-\Delta)_y^\alpha(\Upsilon_{\varepsilon}(x-y)))d\xi d\mathfrak{\zeta} dx dy ds,\notag\\
		J_2&=\int_0^t\int_{(\mathbb{T}^N)^2}\int_{\mathbb{R}^2}\partial_\mathfrak{\zeta}\kappa_{\delta}(\xi-\mathfrak{\zeta})\Upsilon_{\varepsilon}(x-y)f_1(x,s,\xi)d\mathfrak{\eta}_{2}(y,s,\mathfrak{\zeta})dx d\xi\\
		&\quad-\int_0^t\int_{(\mathbb{R}^N)^2}\int_{\mathbb{R}^2}\partial_\xi\kappa_{\delta}(\xi-\mathfrak{\zeta})\Upsilon_{\varepsilon}(x-y)\bar{f}_2(y,s,\mathfrak{\zeta})d\mathfrak{\eta}_{1}(x,s,\xi)dy d\mathfrak{\zeta}.
	\end{align*}
	Now we will try to write $J_2$ in terms of $-J_1$ and plus some small term as follows:
	\begin{align*}
		&\int\limits_0^t\int\limits_{(\mathbb{T}^N)^2}\int\limits_{\mathbb{R}^2}\partial_\xi\kappa_{\delta}(\xi-\mathfrak{\zeta})\Upsilon_{\varepsilon}(x-y)\bar{f}_2(y,s,\mathfrak{\zeta})d\mathfrak{\eta}_{1}(x,s,\xi)dy d\mathfrak{\zeta}\\
		&\quad=\int_0^t\int\limits_{({\mathbb{T}^N})^2}\int\limits_{\mathbb{R}^2}\int\limits_{\mathbb{R}^N}|\tau_z A(u_1(x,s))-A(\xi)|\mathbbm{1}_{Con\{u_1(x,s),\tau_z u_1(x,s)\}}(\xi) \partial_{\xi} \kappa_\delta(\xi-\mathfrak{\zeta}) \\
		&\qquad\qquad\qquad\qquad\qquad\qquad\qquad\qquad\qquad\qquad\qquad\Upsilon_{\varepsilon}(x-y)\bar{f}_2(y,s,\mathfrak{\zeta}) \mu(z) dz d\xi d\mathfrak{\zeta} dx dy ds\\
		&\quad=\int\limits_0^t\int\limits_{(\mathbb{T}^N)^2}\int\limits_{\mathbb{R}^{N+1}} \bigg\{\int\limits_{u_1(x,s)}^{\tau_z u_1(x,s)}(\tau_z A (u_1(x,s))-A(\xi)) \partial_{\xi} \kappa_\delta(\xi-\mathfrak{\zeta}) d\xi \\
		&\qquad+\int\limits_{\tau_z u_1(x,s)}^{u_1(x,s)}(A(\xi)-\tau_zA(u_1(x,s)))\partial_{\xi} \kappa_\delta(\xi-\mathfrak{\zeta}) d\xi \bigg\}\Upsilon_\varepsilon(x-y)\bar{f}_2(y,s,\mathfrak{\zeta}) \mu(z) dz d\mathfrak{\zeta} dx dy ds\\
		&\quad=\int\limits_ 0^t\int\limits_{\mathbb{R}^{N+1}}\int\limits_{\mathbb{T}^N}\Bigg\{ \int\limits_{\mathbb{T}^N\,\cap\,\{u_1(x,s)\,\le\,\tau_z u_1(x,s)\} }\bigg\{\big[(\tau_z A (u_1(x,s))- A(\xi))\kappa_\delta(\xi-\mathfrak{\zeta})\big]_{u_1(x,s)}^{\tau_z u_1(x,s)}\\&\qquad+\int\limits_{u_1(x,s)}^{\tau_z u_1(x,s)}\kappa_\delta(\xi-\mathfrak{\zeta})A'(\xi) d\xi \bigg\}+\int\limits_{\mathbb{T}^N\cap\{\tau_zu_1(x,s)\le u_1(x,s)\}}\bigg\{\big[A(\xi)-\tau_zA(u_1(x,s)) \kappa_\delta(\xi-\mathfrak{\zeta})\big]_{\tau_z u_1(x,s)}^{u_1(x,s)}\\&\qquad-\int\limits_{\tau_z u_1(x,s)}^{u_1(x,s)} \kappa_\delta (\xi-\mathfrak{\zeta})A'(\xi) d\xi \bigg\}\Bigg\} \Upsilon_\varepsilon(x-y)\bar{f}_2(y,s,\mathfrak{\zeta}) \mu(z) dz d\mathfrak{\zeta} dx dy ds\\
		&\quad=-\int\limits_0^t\int\limits_{({\mathbb{T}^N})^2}\int\limits_{\mathbb{R}^{N+1}}\bigg\{(\tau_z A(u_1(x,s))- A(u_1(x,s)))\kappa_{\delta}(u_1(x,s)-\mathfrak{\zeta})+\int\limits_{u_1(x,s)}^{\tau_z u_1(x,s)}\kappa_{\delta}(\xi-\mathfrak{\zeta})A'(\xi) d\xi \bigg\}\\
		&\qquad\qquad\qquad\qquad\qquad\qquad\qquad\qquad\qquad\qquad\qquad\qquad\qquad\qquad\Upsilon_\varepsilon(x-y)\bar{f}_2(y,s,\mathfrak{\zeta}) \mu(z) dz d\mathfrak{\zeta} dx dy ds\\
		&\quad=-\int\limits_0^t\int_{(\mathbb{T}^N)^2}\int\limits_{\mathbb{R}^{N+1}}(\tau_z A(u_1(x,s))-A(u_1(x,s)))\kappa_{\delta}(u_1(x,s)-\mathfrak{\zeta})\Upsilon_\varepsilon(x-y)\bar{ f}_2(y,s,\mathfrak{\zeta})\mu(z) dz d\mathfrak{\zeta} dx dy ds\\
		&\quad\quad+\int\limits_0^t\int\limits_{(\mathbb{T}^N)^2}\int\limits_{\mathbb{R}^{N+1}}\int\limits_{\mathbb{R}}\bigg\{\int\limits_{-\infty}^{\tau_z u_1(x,s)}\kappa_{\delta}(\xi-\mathfrak{\zeta})A'(\xi)d\xi -\int_{-\infty}^{u_1(x,s)}\kappa_{\delta}(\xi-\mathfrak{\zeta})A'(\xi) d\xi \bigg\}\\
		&\qquad\qquad\qquad\qquad\qquad\qquad\qquad\qquad\qquad\qquad\qquad\qquad\qquad\qquad\Upsilon_\varepsilon(x-y)\bar{ f}_2(y,s,\mathfrak{\zeta}) \mu(z) dz d\mathfrak{\zeta} dx dy ds\\
		&\quad=-\int\limits_0^t\int\limits_{(\mathbb{T}^N)^2}\int\limits_{\mathbb{R}^N}\int\limits_{\mathbb{R}}(\tau_z A(u_1(x,s))-A(u_1(x,s)))\kappa_{\delta}(u_1(x,s)-\mathfrak{\zeta})\Upsilon_\varepsilon(x-y)\bar{ f}_2(y,s,\mathfrak{\zeta})\mu(z) dz d\mathfrak{\zeta} dx dy ds\\
		&\qquad+\int\limits_0^t\int\limits_{(\mathbb{T}^N)^2}\int\limits_{\mathbb{R}^N}\int\limits_{\mathbb{R}}\bigg(\int_{-\infty}^{ u_1(x,s)}\kappa_{\delta}(\xi-\mathfrak{\zeta})A'(\xi)d\xi\bigg) (\tau_z(\Upsilon_\varepsilon(x-y)-\Upsilon_\varepsilon(x-y)\bar{ f}_2(y,s,\mathfrak{\zeta}) \mu(z) dz d\mathfrak{\zeta} dx dy ds\\
		&\quad=-\int\limits_0^t\int\limits_{(\mathbb{T}^N)^2}\int\limits_{\mathbb{R}^N}\int\limits_{\mathbb{R}}(\tau_z A(u_1(x,s))-A(u_1(x,s)))\kappa_{\delta}(u_1(x,s)-\mathfrak{\zeta})\Upsilon_\varepsilon(x-y)\bar{ f}_2(y,s,\mathfrak{\zeta})\mu(z) dz d\mathfrak{\zeta} dx dy ds\\
		&\qquad-\int\limits_0^t\int\limits_{(\mathbb{T}^N)^2}\int\limits_{\mathbb{R}^2} f_1(x,s,\xi)\kappa_{\delta}(\xi-\mathfrak{\zeta})A'(\xi) (-\Delta)_x^\alpha(\Upsilon_\varepsilon)(x-y) \bar{ f}_2(y,s,\mathfrak{\zeta})\mu(z) dz d\xi d\mathfrak{\zeta} dx dy ds.
	\end{align*}
	Similarly, we can compute for remaining part of $J_2$ as follows:
	\begin{align*}
		&\int\limits_0^t\int\limits_{({\mathbb{T}^N})^2}\int\limits_{\mathbb{R}^2} \partial_{\mathfrak{\zeta}} \kappa_{\delta}(\xi-\mathfrak{\zeta}) \Upsilon_\varepsilon(x-y)f_1(x,s,\xi) d\mathfrak{\eta}_{2}(y,s,\mathfrak{\zeta}) dx d\xi\\
		&\qquad=-\int\limits_0^t\int\limits_{(\mathbb{T}^N)^2}\int\limits_{\mathbb{R}^N}\int\limits_{\mathbb{R}}(\tau_z A(u_2(y,s))-A(u_2(y,s)))\kappa_{\delta}(\xi-u_2(x,s))\Upsilon_\varepsilon(x-y) f_1(x,s,\xi)\mu(z) dz d\xi dx dy ds\\
		&\qquad\quad+\int\limits_0^t\int\limits_{(\mathbb{T}^N)^2}\int\limits_{\mathbb{R}^2} f_1(x,s,\xi)\kappa_{\delta}(\xi-\mathfrak{\zeta}) (-\Delta)_x^\alpha(\Upsilon_\varepsilon(x-y)) \bar{ f}_2(y,s,\mathfrak{\zeta}) \mu(z) dz d\xi d\mathfrak{\zeta} dx dy ds.
	\end{align*}
	\begin{align*}
		&J_2=-\int_0^t\int\limits_{(\mathbb{T}^N)^2}\int\limits_{\mathbb{R}}\int\limits_{\mathbb{R}^N}\big(\tau_z A(u_2(y,s))-A(u_2(y,s)))\big)(\kappa_{\delta} (\xi-u_2(y,s))\Upsilon_{\varepsilon}(x-y)) f_1(x,s,\xi)\mu(z)dz dx dy d\xi ds\notag\\
		&\qquad\qquad+\int_0^t\int\limits_{(\mathbb{T}^N)^2}\int\limits_{\mathbb{R}^2}\bar{f}_2(y,s,\mathfrak{\zeta}) \kappa_{\delta}(\xi-\mathfrak{\zeta})(-\Delta)_x^\alpha(\Upsilon_{\varepsilon}(x-y))f_1(x,s,\xi)\mu(z)dz dx dy d\mathfrak{\zeta} d\xi ds
		\\&\qquad\qquad+\int\limits_0^t\int\limits_{(\mathbb{T}^N)^2}\int\limits_{\mathbb{R}}\int\limits_{\mathbb{R}^N}\big(\tau_z A(u_1(x,t))-A(u_1(x,s)))\big)\kappa_{\delta} (u_1(x,s)-\mathfrak{\zeta})\Upsilon_{\varepsilon}(x-y)\bar{f}_2(y,t,\mathfrak{\zeta})\mu(z)dz dx dy d\mathfrak{\zeta}\notag\\
		&\qquad\qquad+\int\limits_0^t\int\limits_{(\mathbb{T}^N)^2}\int\limits_{\mathbb{R}}\int\limits_{\mathbb{R}^N}f_1(x,s,\xi)\kappa_{\delta}(\xi-\mathfrak{\zeta})A'(\mathfrak{\zeta}) (-\Delta)_y^\alpha(\Upsilon_\varepsilon(x-y))\bar{f}_2(y,s,\mathfrak{\zeta})\mu(z)dz dx dy d\mathfrak{\zeta} d\xi ds \\
		&:=-J_1+I_1
	\end{align*}
	where
	\begin{align*}
		I_1&=-\int\limits_0^t\int\limits_{(\mathbb{T}^N)^2}\int\limits_{\mathbb{R}}\int\limits_{\mathbb{R}^N}\big(\tau_z A(u_2(y,s))-A(u_2(y,s)))\big)(\kappa_{\delta} (\xi-u_2(y,s))\Upsilon_{\varepsilon}(x-y)) f_1(x,s,\xi)\mu(z)dz dx dy d\xi ds\notag\\
		&+\int\limits_0^t\int\limits_{(\mathbb{T}^N)^2}\int\limits_{\mathbb{R}}\int\limits_{\mathbb{R}^N}\big(\tau_z A(u_1(x,t))-A(u_1(x,s)))\big)\kappa_{\delta} (u_1(x,s)-\mathfrak{\zeta})\Upsilon_{\varepsilon}(x-y)\bar{f}_2(y,t,\mathfrak{\zeta})\mu(z)dz dx dy d\mathfrak{\zeta} ds\notag\\
		&=\int\limits_0^t\int\limits_{(\mathbb{T}^N)^2}\int\limits_{\mathbb{R}}\int_{\mathbb{R}^N}(\tau_z A(u_1(x,s))-A(u_1(x,s)))\kappa_{\delta}(u_1(x,s)-\mathfrak{\zeta})\Upsilon_{\varepsilon}(x-y)\bar{f}_2(y,s,\mathfrak{\zeta})\mu(z)dz d\mathfrak{\zeta} dx dy ds\notag\\
		&-\int\limits_0^t\int\limits_{(\mathbb{T}^N)^2}\int\limits_{\mathbb{R}}\int\limits_{\mathbb{R}^N}(\tau_z A(u_2(y,s))-A(u_2(y,s)))\kappa_{\delta}(\xi-u_2(y,s))\Upsilon_{\varepsilon}(x-y f_1(x,s,\xi)\mu(z) dz d\xi dx dy ds\notag\\
		&=\int\limits_0^t\int\limits_{(\mathbb{T}^N)^2}\int\limits_{\mathbb{R}^N}(\int\limits_{u_2(y,s)}^{+\infty}\kappa_{\delta}(u_1(x,s)-\mathfrak{\zeta})d\mathfrak{\zeta})(\tau_z A(u_1(x,s))-A(u_1(x,s)))\Upsilon_{\varepsilon}(x-y)\mu(z)dz dx dy ds\notag\\
		&-\int\limits_0^t\int\limits_{(\mathbb{T}^N)^2}\int\limits_{\mathbb{R}^N}(\int\limits_{-\infty}^{u_1(x,s)}\kappa_{\delta}(\xi-u_2(y,s))d\xi)(\tau_z A(u_2(y,s))-A(u_2(y,s)))\Upsilon_{\varepsilon}(x-y)\mu(z) dz dx dy ds\notag\\
		&=\int\limits_0^t\int\limits_{(\mathbb{T}^N)^2}\int\limits_{\mathbb{R}^N}\mathfrak{\eta}_{\delta}'(u_1(x,s)-u_2(y,s))[(\tau_z(A(u_1(x,s))\\
		&\qquad\qquad\qquad\qquad\qquad\qquad-A(u_2(y,s)))-(A(u_1(x,s))-A(u_2(y,s)))]\Upsilon_{\varepsilon}(x-y)\mu(z)dz dx dy ds\notag\\
		&=:I_1^r+I_r^1
	\end{align*}
	where
	\begin{align*}
		I_1^r&=\int_0^t\int\limits_{(\mathbb{T}^N)^2}\int\limits_{|z|\textgreater\,r}\mathfrak{\eta}_{\delta}'(u_1(x,s)-u_2(y,s))[(\tau_z(A(u_1(x,s))\\
		&\qquad\qquad\qquad\qquad\qquad-A(u_2(y,s)))-(A(u_1(x,s))-A(u_2(y,s)))]\Upsilon_{\varepsilon}(x-y)\mu(z)dz dx dy ds,\\
		I_r^1&=\int\limits_0^t\int\limits_{(\mathbb{T}^N)^2}\int\limits_{|z|\le\,r}\mathfrak{\eta}_{\delta}'(u_1(x,s)-u_2(y,s))[(\tau_z(A(u_1(x,s))\\
		&\qquad\qquad\qquad\qquad\qquad\qquad-A(u_2(y,s)))-(A(u_1(x,s))-A(u_2(y,s)))]\Upsilon_{\varepsilon}(x-y)\mu(z)dz dx dy ds,\\
	\end{align*}
	We will estimate $I_1^r$ by the help of the Lipschitz continuity property of nonlinear function $A$ as follows:
	\begin{align*}
		I_1^r&=-\frac{1}{2}\int\limits_0^t\int\limits_{(\mathbb{T}^N)^2}\int\limits_{|z|\textgreater\,r}\big[\mathfrak{\eta}_\delta'(\tau_z(u_1(x,s)-u_2(y,s)))-\mathfrak{\eta}_\delta'(u_1(x,s)-u_2(y,s))\big]\\
		&\qquad\qquad\times\big[\tau_z(A(u_1(x,s))-A(u_2(y,s)))-(A(u_1(x,s))-A(u_2(y,s)))\big]\Upsilon_\varepsilon(x-y)\mu(z)dz dx dy ds.
	\end{align*}We observe that
	\begin{align}
		H_1&=\int\limits_0^t\int\limits_{(\mathbb{T}^N)^2}\int\limits_{|z|\textgreater\,r}\mathfrak{\eta}_\delta'(u_1(x,s)-u_2(y,s))[(\tau_z(A(u_1(x,s))-A(u_2(y,s)))\notag\\
		&\qquad\qquad\qquad-sgn(u_1(x,s)-u_2(y,s))(A(u_1(x,s))-A(u_2(y,s)))]\Upsilon_{\varepsilon}(x-y)\mu(z)dz dx dy\notag\\
		&\le\,\int\limits_0^t\int\limits_{(\mathbb{T}^N)^2}\int\limits_{|z|\,\textgreater\,r}\bigg(|\tau_z(A(u_1(x,s))-A(u_2(y,s)))|-|A(u_1(x,s))-A(u_2(y,s))|\bigg)\Upsilon_\varepsilon(x-y)\mu(z)dz dx dy ds\notag\\
		&=0,\notag\\
	\end{align}
	With the help of above inequality we conclude that
	\begin{align}
		H_2&=I_1^r-H_1\notag\\
		&=\int\limits_0^t\int\limits_{(\mathbb{T}^N)^2}\int\limits_{|z|\le\,r}[sgn(u_1(x,s)-u_2(y,s))-\mathfrak{\eta}_{\delta}'(u_1(x,s)-u_2(y,s))](A(u_1(x,s))-A(u_2(y,s))\Upsilon_{\varepsilon}(x-y)\notag\\&\qquad\qquad\qquad\qquad\qquad\qquad\qquad\qquad\qquad\qquad\qquad\qquad\qquad\qquad\qquad\mu(z)dz dx dy ds\notag\\
		&\le\,C(\omega)\,r^{-2\alpha}\,\delta.\label{estimate 3}
	\end{align}
	We will estimate $I_r^r$ by the help of non-decreasing property of nonlinear function $A$. Since $A$ is non decreasing Lipschitz continuous function, so we have the following inequality, for $a, b, k \in \mathbb{R}$,
	\begin{align*}
		\mathfrak{\eta}_{\delta}'(b-k)[A(a)-A(b)]\,\le\,\int_{k}^a\mathfrak{\eta}_{\delta}'(\sigma-k)A'(\sigma)d\sigma-\int_{k}^b\mathfrak{\eta}_{\delta}'(\sigma-k) A'(\sigma) d\sigma.
	\end{align*}
	By using above inequality, we estimate $I_r^1$ as follows:
	\begin{align*}
		&I_r^1\\&\le\,\int\limits_0^t\int\limits_{(\mathbb{T}^N)^2}\int\limits_{|z|\le\,r}\Bigg\{\int\limits_{u_2(y,s)}^{u_1(x+z)}\mathfrak{\eta}_\delta'(\sigma-u_2(y,s))A'(\sigma) d\sigma -\int\limits_{u_2(y)}^{u_1(x)}\mathfrak{\eta}_\delta' (\sigma -u_2(y,s))A'(\sigma) d\sigma\Bigg\}\Upsilon_\varepsilon(x-y)\mu(z) dz dx dy \\&+\int\limits_0^t\int\limits_{(\mathbb{T}^N)^2}\int\limits_{|z|\le\,r}\bigg\{ \int\limits_{u_1(x)}^{u_2(y+z)} \mathfrak{\eta}_\delta'(\sigma-u_1(x,s))A'(\sigma)d\sigma-\int\limits_{u_1(x,s)}^{u_2(y,s)}\mathfrak{\eta}_\delta'(\sigma-u_1(x,s))A'(\sigma) d\sigma\bigg\}\Upsilon_\varepsilon(x-y)\mu(z)dz dx dy \\
		&:=L_1+L_2
	\end{align*}
	where 
	\begin{align}
		L_1&=\int\limits_0^t\int_{(\mathbb{T}^N)^2}\int\limits_{|z|\le\,r}\bigg(\int\limits_{u_2(y,s)}^{u_1(x,s)}\mathfrak{\eta}_\delta'(\sigma-u_2(y,s))A'(\sigma)d\sigma\bigg)\bigg(\Upsilon_\varepsilon(x+z-y)-\Upsilon_\varepsilon(x-y)\bigg)\mu(z)dz dx dy ds\notag\\
		&\,\le\, \int\limits_0^t\int\limits_{(\mathbb{T}^N)^2}\int\limits_{|z|\le\,r}|u_1(x,s)-u_2(y,s)|\int\limits_0^1(1-\tau)\,D^2\,(\Upsilon_{\varepsilon}(x-y+\tau (z))) z^2d\tau|\mu(z)dz dx dy ds\notag\\
		&\le\,C(\omega)\,\big(\frac{r^{2-2\alpha}}{\varepsilon^2}\big),\label{estimate 4}\\
		L_2&=\int\limits_0^t\int\limits_{(\mathbb{T}^N)^2}\int_{|z|\le\,r}\bigg(\int_{u_1(x,s)}^{u_2(y,s)}\mathfrak{\eta}_\delta'(\sigma-u_1(x,s))A'(\sigma)d\sigma\bigg)\big(\Upsilon_\varepsilon(x-y+z)-\Upsilon_{\varepsilon}(x)\big)\mu(z)dz dy dx ds\notag\\
		&\le\,C(\omega)\int\limits_0^t\int\limits_{(\mathbb{T}^N)^2}\int\limits_{|z|\le\,r}|u_1(x,s)-u_2(y,s)|\,|\int\limits_0^1(1-\tau)D^2(\Upsilon_\varepsilon(x-y+\tau z)) z^2 d\tau\,|\mu(z)dz dx dy ds\notag\\
		&\le\,C(\omega)\,\frac{r^{2-2\alpha}}{\varepsilon^2}.\label{estimate 5}
	\end{align}
	
	\noindent
	\textbf{Step 4:} As a consequence of previous estimates, we deduce for all $t\in[0,T]$,
	\begin{align}\label{final1}
		\mathbb{E}\int_{\mathbb{T}^N}\int_{\mathbb{R}}f_{1}(x,t,\xi)f_2(x,t,\xi)d\xi dx&\le\mathbb{E}\int_{(\mathbb{T}^N)^2}\int_{\mathbb{R}^2}\Upsilon_{\varepsilon}(x-y)\kappa_{\delta}(\xi-\mathfrak{\zeta})f_{1,0}\bar{f}_{2,0}(y,\mathfrak{\zeta})d\xi d\mathfrak{\zeta} dx dy\notag\\
		&\qquad +C_T\big(\delta\,\varepsilon^{-1}+ \varepsilon^2\,\delta^{-1}+g(\delta) + r^{-2\alpha}\delta+ r^{2-2\alpha}\varepsilon^{-2}+\mathfrak{\eta}_t(\varepsilon,\delta)\big).
	\end{align}
	\noindent
	\textbf{Case 1:} If noise is genral, $\Psi=\Psi(x,u)$ and $\alpha\in(0,\frac{1}{2})$.
	We set $\varepsilon= \delta^\beta,$ $r=\delta^{\gamma}$, and choose $\beta,\,\gamma $ such that $$\min\big\{\frac{1-\alpha}{2\alpha},1 \big\}\,\textgreater\beta\,\textgreater\, \frac{1}{2},\,\,$$
	$$\frac{1}{2\alpha}\,\textgreater\,\gamma\,\textgreater\,\frac{\beta}{1-\alpha}.$$ Then after taking $\delta\to0$, we have for all $t\in[0,T]$
	$$\mathbb{E}\int_{\mathbb{T}^N}\int_{\mathbb{R}}f_1(t)\bar{f}_2(t)d\xi d\mathfrak{\zeta}\le\mathbb{E}\int_{\mathbb{T}^N}\int_{\mathbb{R}}f_{1,0}\bar{f}_{2,0}d\xi dx.$$
	\textbf{Case 2:} If noise is only multiplicative $\Psi=\Psi(u)$ and $\alpha\in(0,1)$, then $\varepsilon^2\delta^{-1}$ does not present in right hand side of inequality \eqref{final1}. So first letting $\delta\,\to\,0$ and after that taking $r\,\to\,0$, $\varepsilon\,\to\,0$, yields
	$$\mathbb{E}\int_{\mathbb{T}^N}\int_{\mathbb{R}}f_1(t)\bar{f}_2(t)d\xi d\mathfrak{\zeta}\le\mathbb{E}\int_{\mathbb{T}^N}\int_{\mathbb{R}}f_{1,0}\bar{f}_{2,0}d\xi dx.$$
	From previous both cases it is clear that for all $t\in[0,T]$
	$$\mathbb{E}\|u_1(t)-u_2(t)\|_{L^1(\mathbb{T}^N)}\le\,\mathbb{E}\|u_{1,0}-u_{2,0}\|.$$
	Let us now prove remaing part of the this result. To conclude this we proceed as follows, 
	making use of inequality \eqref{app inequality1} and pathwise estimates \eqref{estimate 1}-\eqref{estimate 5}, we can similarly conclude  that $\mathbb{P}$-almost surely, for all $t\in[0,T]$,
	\begin{align}\label{contraction principle 2}
		\int_{\mathbb{T}^N}\int_{\mathbb{R}}f^{-}(x,t,\xi)\bar{f}^{-}(x,t,\xi)d\xi dx\,\le\,\int_{\mathbb{T}^N}\int_{\mathbb{R}}f_{0}(x,\xi)\bar{f}_{0}(x,\xi)d\xi dx.
	\end{align}
	Since $f_0(x,\xi)\bar{f}_0(x,\xi)=0$, therefore $\mathbb{P}$-almost surely, for all $t\in[0,T]$, $f^{-}(x,t,\xi)\big(1-f^{-}(x,t,\xi)\big)=0$ almost every $(x,\xi)\in \mathbb{T}^N\times\mathbb{R}$, then Fubini's theorem imply that, $\mathbb{P}$-almost surely, for all $t\in[0,T]$, there exists a set $S\subset\mathbb{T}^N$ of full measure such that, for $x\in S,f^{-}(x,t,\xi)\in\{0,1\}$ for almost every $\xi\in\mathbb{R}$. Therefore by using the fact that $f^{-}$ is a kinetic function, we conclude that for all $t\in [0,T]$, there exists $u^{-}(t):\Omega\to L^1(\mathbb{T}^N)$ such that $\mathbb{P}$-almost surely, for all $t\in[0,T]$ $f^{-}(t)=\mathbbm{1}_{u^{-}(t)\textgreater\xi}$. 
	
\end{proof}
\begin{cor}\label{th3.7} Let $u_0\in L^p(\Omega;L^p(\mathbb{T}^N))$. Then, for all $p\in[1,+\infty)$, the solution u to \eqref{1.1} has almost surely continuous trajectories in $L^p(\mathbb{T}^N)$.
\end{cor}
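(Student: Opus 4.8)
The plan is to upgrade the weak-$*$ c\`adl\`ag structure furnished by Proposition \ref{Proposition 3.1} and the contraction principle of Theorem \ref{th3.6} into strong continuity in $L^p(\mathbb{T}^N)$. Two ingredients are required: first, promoting the weak-$*$ convergence of the kinetic functions $f(t_n)$ at a given time to strong $L^p$-convergence of the underlying process; and second, ruling out jumps, i.e. showing $u(t)=u^{-}(t)$ for \emph{every} $t$, not merely for a.e. $t$. I would fix $\omega\in\Omega_1$ (the full-probability set of Proposition \ref{Proposition 3.1}) and argue pathwise, exploiting the a.s. uniform moment bound $\sup_{t}\int_{\mathbb{T}^N}\int_{\mathbb{R}}|\mathfrak{\zeta}|^p\,d\mathfrak{\mathcal{V}}_{x,t}^{-}(\mathfrak{\zeta})\,dx\le C_p(\omega)$ from \eqref{3.3} together with \eqref{2.5}.

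\textbf{Step 1 (weak-$*$ to strong).} For $t_n\downarrow t$ the Remark following Proposition \ref{Proposition 3.1} gives $f(t_n)\to f(t)$ weak-$*$, while for $t_n\uparrow t$ it gives $f(t_n)\to f^{-}(t)$ weak-$*$. By Theorem \ref{th3.6} both limits are equilibria, $f(t)=\mathbbm{1}_{u(t)>\mathfrak{\zeta}}$ and $f^{-}(t)=\mathbbm{1}_{u^{-}(t)>\mathfrak{\zeta}}$, so the associated Young measures $\mathfrak{\mathcal{V}}_{x,t_n}=\delta_{u(t_n)(x)}$ converge to the Dirac Young measures $\delta_{u(t)(x)}$, respectively $\delta_{u^{-}(t)(x)}$. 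Since a sequence of Young measures converging to a Dirac mass converges in measure, and the moment bound \eqref{3.3} supplies $L^p$-equi-integrability, I would conclude that $u(t_n)\to u(t)$, respectively $u(t_n)\to u^{-}(t)$, strongly in $L^q(\mathbb{T}^N)$ for every $q\in[1,\infty)$. Concretely, testing the weak-$*$ convergence against $\mathfrak{\zeta}\mapsto\mathfrak{\zeta}$ and $\mathfrak{\zeta}\mapsto\mathfrak{\zeta}^2$ yields $u(t_n)\rightharpoonup u(t)$ in $L^2$ together with $\|u(t_n)\|_{L^2}\to\|u(t)\|_{L^2}$, hence strong $L^2$-convergence, which upgrades to all $L^q$ by interpolation with the $L^p$-bounds (with $p$ arbitrary). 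This makes $u$ right-continuous and endows it with strong left limits $u^{-}(t)$ in each $L^p(\mathbb{T}^N)$.

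\textbf{Step 2 (no jumps).} It remains to show $u(t)=u^{-}(t)$ for all $t$. From the jump relation \eqref{3.5}, $\langle f(t)-f^{-}(t),\varphi\rangle=-\mathfrak{m}(\partial_{\mathfrak{\zeta}}\varphi)(\{t\})$. Testing with $\varphi(x,\mathfrak{\zeta})=\psi(x)\Theta(\mathfrak{\zeta})$, with $\psi\ge0$ and $\Theta$ nonnegative and nondecreasing, the nonnegativity of $\mathfrak{m}$ forces $\langle f(t)-f^{-}(t),\varphi\rangle\le0$; letting $\psi,\Theta$ vary this gives the one-sided bound $u(t)\le u^{-}(t)$ a.e. on $\mathbb{T}^N$. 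On the other hand, the spatial mean is continuous in time: testing \eqref{2.6} with $\varphi=\phi_R(\mathfrak{\zeta})$, a smooth truncation of $1$, the convection and fractional-diffusion terms drop out (they act in $x$, and $\phi_R$ is $x$-independent), while $\partial_{\mathfrak{\zeta}}\phi_R=\phi_R'\to0$ makes both the It\^o-correction term and, by the decay at infinity in Definition \ref{kinetic measure 1}(ii), the measure term $\mathfrak{m}(\phi_R')$ vanish as $R\to\infty$, leaving
\begin{align*}
\int_{\mathbb{T}^N}u(t)\,dx=\int_{\mathbb{T}^N}u_0\,dx+\sum_{k\ge1}\int_0^t\int_{\mathbb{T}^N}h_k(x,u(x,s))\,dx\,d\beta_k(s),
\end{align*}
which is a continuous process in $t$.

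\textbf{Step 3 (conclusion).} Combining the strong left-convergence of Step 1 with the continuity of the mean gives $\int_{\mathbb{T}^N}u^{-}(t)\,dx=\lim_{s\uparrow t}\int_{\mathbb{T}^N}u(s)\,dx=\int_{\mathbb{T}^N}u(t)\,dx$. Together with $u(t)\le u^{-}(t)$ a.e. this forces $u(t)=u^{-}(t)$ a.e. on $\mathbb{T}^N$ for every $t$, so $u$ has no jumps. Right-continuity and the existence of the (now coinciding) strong left limits then yield $u\in C([0,T];L^p(\mathbb{T}^N))$ a.s., for every $p\in[1,\infty)$. I expect the passage from weak-$*$ to strong convergence in Step 1, via convergence of Young measures to a Dirac mass, to be the technical crux, while the no-jump argument of Step 2 is the more delicate conceptual point, since it rests on combining the sign of the kinetic measure $\mathfrak{m}$ with the conservation of the mean.
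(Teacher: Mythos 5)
Your argument is correct and is essentially a reconstruction of the proof the paper itself defers to (it cites \cite[Corollary 3.3]{sylvain} verbatim): the same three ingredients appear there, namely that $f(t)$ and $f^{-}(t)$ are both equilibria (Proposition \ref{Proposition 3.1} plus Theorem \ref{th3.6}), that weak-$*$ convergence of equilibrium kinetic functions upgrades to strong $L^q$-convergence via the moment bounds \eqref{2.5} and \eqref{3.3}, and that the jump identity \eqref{3.5} together with the sign and decay of $\mathfrak{m}$ rules out atoms. One remark: your Steps 2--3 can be collapsed, since testing \eqref{3.5} with $\varphi=\psi(x)\phi_R(\mathfrak{\zeta})$ (a cutoff with $|\phi_R'|\le C$ supported in $R\le|\mathfrak{\zeta}|\le R+1$) and using that the slice of $\mathfrak{m}$ at $\{t\}$ is $\mathbb{P}$-almost surely a finite measure gives $\mathfrak{m}(\psi\phi_R')(\{t\})\to0$ and hence $\int_{\mathbb{T}^N}\psi\,(u(t)-u^{-}(t))\,dx=0$ directly, so the detour through the one-sided inequality and the continuity of the spatial mean, while valid, is unnecessary.
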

\begin{proof}
	We refer to the proof of \cite[Corollary 3.3]{sylvain} for a proof.
\end{proof}
\subsection{Existence:}\label{subsection 3.2}
In this section, we show the existence part of the kinetic solution. We divide proof of existence into two-parts. First, we show the existence of a solution for smooth initial data. Then with the help of the contraction principle, we show the existence of kinetic solution for general initial data.
\subsection*{Existence for smooth initial data}
We prove the existence part of Theorem \ref{th2.10} for the initial data $u_0\in L^p(\Omega;C^\infty (\mathbb{T}^N)).$
Here we use the vanishing viscosity method. Consider a truncation $(\mathcal{X}_\tau)$ on $\mathbb{R}$ and approximations $(\kappa_\tau)$, ($\psi_\tau$) to identity on $\mathbb{T}^N$ and $\mathbb{R}$, respectively. 
Then smooth approximations of $F$ and $A$ are defined as follows:
$$F_k^\tau(\mathfrak{\zeta})= \big((F_k * \kappa_\tau)\mathcal{X}_\tau\big)(\mathfrak{\zeta})\,\,\,\, k=1,...,N,$$
$$A^\tau(\mathfrak{\zeta})=(A*\psi_\tau)(\mathfrak{\zeta}).$$
Consequently, we define $F^\tau=(F_1^\tau,...,F_N^\tau)$. Note that $F^\tau$ is of class $C^\infty(\mathbb{R};\mathbb{R}^N)$ with the compact support, therefore it is Lipschitz continuous.  Clearly, the polynomial growth of $F$ remains also valid for $F^\tau$ and holds uniformly in $\tau$. We consider the following equation to approximate original equation \eqref{1.1}:
\begin{align}\label{4.1}
	d\mathfrak{u^\tau}(x,t)+\mbox{div}(F^\tau(\mathfrak{u^\tau}(x,t)))d t +(-\Delta)^\alpha[A^\tau(\mathfrak{u^\tau}(x,t))]dt 
	&=\tau\Delta \mathfrak{u^\tau}+ \Psi(x,\mathfrak{u^\tau}(x,t))dW(t)\,\, x \in \mathbb{T}^N,\,\, t \in(0,T),\notag\\
	\mathfrak{u^\tau}(0)&=u_0.
\end{align} 
There exists a unique weak solution $\mathfrak{u^\tau}$ to \eqref{4.1} such that
$$\mathfrak{u^\tau}\in L^2(\Omega;C([0,T];L^2(\mathbb{T}^N)))\cap L^2(\Omega;L^2(0,T;H^1(\mathbb{T}^N))).$$
For the existence of viscous solutions, we refer to \cite[Section 6]{Neeraj2}. The viscous equations \eqref{4.1} have weak solutions and consequent passage to the limit proves the existence of a kinetic solution to \eqref{1.1}. Nevertheless, this approximation method is quite technical and has to be done in many following steps.
\subsection*{Strong convergence}\label{section 5.2}
	Our aim of this subsection is to show that strong convergence holds true. Towards this end, we make use of the ideas developed in the subsection \ref{subsection 3.1}.
\begin{thm}\label{L1 convergence}\label{convergence} Let $\mathfrak{u^{\tau}}$ be viscous solution to \eqref{4.1} with initial data $u_0$, then there exists $u\in L^1_{\mathcal{P}}(\Omega\times[0,T]; L^1(\mathbb{T}^N))$ such that
	$$\mathfrak{u^\tau}\,\to\,u\,\,\,\text{in}\,\,\,L^1_{\mathcal{P}}(\Omega\times[0,T]; L^1(\mathbb{T}^N)).$$ 
	\begin{proof} Proof of this result is based on the computations of Theorem \ref{comparison}.
		
		\noindent
		\textbf{Step 1:} Based on the proof of Theorem \ref{comparison}, we can easily conclude that for all $t\in[0,T]$
		\begin{align*}
		&\mathbb{E}\,\int_{(\mathbb{T}^N)^2}\int_{\mathbb{R}}\Upsilon_\varepsilon(x-y) f^\tau(x,t,\mathfrak{\zeta})\bar{f}^\tau(y,t,\mathfrak{\zeta})d\mathfrak{\zeta} dx dy\\
		&\qquad\le\,\mathbb{E}\int_{(\mathbb{T}^N)^2}\int_{(\mathbb{R})^2}\Upsilon_\varepsilon(x-y)\kappa_\delta(\mathfrak{\zeta}-\mathfrak{\zeta})f^\tau (x,t,\mathfrak{\zeta})\bar{f}^\tau(y,t,\mathfrak{\zeta}) d\mathfrak{\zeta} d\mathfrak{\zeta} dx dy + \delta\\
		&\qquad\le\,\mathbb{E}\int_{(\mathbb{T}^N)^2}\int_{(\mathbb{R}^2)}\Upsilon_\varepsilon(x-y)\kappa_\delta(\mathfrak{\zeta}-\mathfrak{\zeta}) f_0(x,\mathfrak{\zeta}) \bar{ f}_0(y,\mathfrak{\zeta})d\mathfrak{\zeta} d\mathfrak{\zeta} dx dy + \delta + \mathcal{E}_\Upsilon+ \mathcal{E}_\kappa+ \mathcal{J}^\tau+J\\
		&\qquad\le\,\mathbb{E}\int_{(\mathbb{T}^N)^2}\int_{(\mathbb{R})}\Upsilon_\varepsilon(x-y) f_0(x,\mathfrak{\zeta})\bar{ f}_0(y,\mathfrak{\zeta}) d\mathfrak{\zeta} dx dy+ 2\delta+ \mathcal{E}_\Upsilon + \mathcal{E}_\kappa+ \mathcal{J}^\tau+J,
		\end{align*}
		where $\mathcal{E}_\Upsilon, \mathcal{E}_{\kappa}, J$ are introduced similarly to Theorem \ref{th3.6}, $\mathcal{J}^\tau$ is regarding to the second order term $\tau\Delta \mathfrak{u^\tau}$:
		\begin{align*}
		\mathcal{J}^\tau&=2\tau\mathbb{E}\int_0^t\int_{(\mathbb{T}^N)^2}\int_{\mathbb{R}^2} f^\tau \bar{ f}^\tau \Delta_x\Upsilon_\varepsilon(x-y)\kappa_\delta(\mathfrak{\zeta}-\mathfrak{\zeta})d\mathfrak{\zeta} d\mathfrak{\zeta} dx dy ds\\
		&\qquad-\mathbb{E}\int_0^t\int_{(\mathbb{T}^N)^2}\int_{\mathbb{R}^2}\Upsilon_\varepsilon(x-y)\kappa_\delta(\mathfrak{\zeta}-\mathfrak{\zeta}) d\mathfrak{\mathcal{V}}_{x,s}^\tau(\mathfrak{\zeta}) dx d\mathfrak{\eta}_2^\tau(y,s,\mathfrak{\zeta})\\
		&\qquad-\mathbb{E}\int_0^t\int_{(\mathbb{T}^N)^2}\int_{\mathbb{R}^2}\Upsilon_\varepsilon(x-y) \kappa_\delta(\mathfrak{\zeta}-\mathfrak{\zeta})d\mathfrak{\mathcal{V}}_{y,s}^\tau(\mathfrak{\zeta}) dy d\mathfrak{\eta}_2^\tau(x,s,\mathfrak{\zeta})\\
		&=-\tau\,\mathbb{E}\int_0^t\int_{(\mathbb{T}^N)^2}\Upsilon_\varepsilon(x-y)\kappa_\delta(\mathfrak{u^\tau}(x)-\mathfrak{u^\tau}(y))|\nabla_x \mathfrak{u^\tau}-\nabla_y \mathfrak{u^\tau}|^2 dx dy\,\le\,0.
		\end{align*}
		It shows that for all $t\in[0,T]$ 
		\begin{align}\label{final21}
		\mathbb{E}\int_{(\mathbb{T}^N)^2}\Upsilon_\varepsilon(x-y)|\mathfrak{u^\tau}(x,t)-\mathfrak{u^\tau}(y,t)|dx dy &\le\,\mathbb{E}\int_{(\mathbb{T}^N)^2}\Upsilon_\varepsilon(x-y)|u_0(x)-u_0(y)|dx dy\,\notag\\&\qquad+\,C_T( \delta+\delta\,\varepsilon^{-1}+\varepsilon^2\delta^{-1} + g(\delta)+r^{-2\alpha}\delta+r^{2-2\alpha}\varepsilon^{-2})	
		\end{align}
		\textbf{Step 2:}
		By similar techniques as in the proofs of Theorem \ref{comparison}, we obtain for any two viscous solution $\mathfrak{u^\tau},\,\,\mathfrak{u^\sigma}$
		\begin{align}\label{heading back}
		&\mathbb{E}\,\int_{\mathbb{T}}\big(\mathfrak{u^\tau}(t)-\mathfrak{u^\sigma}(t)\big)^+dx=\mathbb{E}\int_{\mathbb{T}^N}\int_{\mathbb{R}}f^\tau(x, t, \mathfrak{\zeta}) \bar{ f}^\sigma(x,t,\mathfrak{\zeta}) d\mathfrak{\zeta} dx\notag\\
		&\qquad=\mathbb{E}\int_{(\mathbb{T}^N)^2}\int_{\mathbb{R}^2}\Upsilon_\varepsilon(x-y)\kappa_{\delta}(\mathfrak{\zeta}-\mathfrak{\zeta}) f^\tau(x,t,\mathfrak{\zeta})\bar{ f}^\sigma(y,t,\mathfrak{\zeta}) d\mathfrak{\zeta} d\mathfrak{\zeta} dx dy +\mathfrak{\eta}_{1}(\tau, \sigma,\varepsilon,\delta).
		\end{align}
		We want to show that the error term $\mathfrak{\eta}_{t}(\tau,\sigma, \varepsilon, \delta)$ is independent of $\tau,\sigma$ as follows:
		\begin{align*}
		\mathfrak{\eta}_t(\tau,\sigma,\varepsilon,\delta)&=\mathbb{E}\int_{\mathbb{T}^N}\int_{\mathbb{R}}f^\tau(x,t,\mathfrak{\zeta})\bar{ f}^\sigma(x,t,\mathfrak{\zeta}) d\mathfrak{\zeta} dx\\&\qquad\qquad\qquad\qquad-\mathbb{E}\int_{(\mathbb{T}^N)^2}\int_{\mathbb{R}^2}\Upsilon_{\varepsilon}(x-y)\kappa_{\delta}(\mathfrak{\zeta}-\mathfrak{\zeta}) f^\tau(x,t,\mathfrak{\zeta}) \bar{ f}^\sigma(y,t,\mathfrak{\zeta}) d\mathfrak{\zeta} d\mathfrak{\zeta} dx dy\\
		&=\bigg(\mathbb{E}\int_{\mathbb{T}^N}\int_{\mathbb{R}}f^\tau(x,t,\mathfrak{\zeta})\bar{ f}^\sigma(x,t,\mathfrak{\zeta})d\mathfrak{\zeta} dx-\mathbb{E}\int_{\int_{()\mathbb{T}^N)^2}}\int_\mathbb{R}\Upsilon_\varepsilon(x-y)f^\tau(x,t,\mathfrak{\zeta})\bar{ f}^\sigma(y,t,\mathfrak{\zeta}) d\mathfrak{\zeta} dx dy\bigg)\\
			&\qquad+\bigg(\mathbb{E}\int_{(\mathbb{T}^N)^2}\int_{\mathbb{R}}\Upsilon_\varepsilon(x-y)f^\tau(x,t,\mathfrak{\zeta})\bar{ f}^\sigma(y,t,\mathfrak{\zeta}) d\mathfrak{\zeta} dx dy\\&\qquad\qquad\qquad\qquad-\mathbb{E}\int_{(\mathbb{T}^N)^2}\int_{\mathbb{R}^2}\Upsilon_\varepsilon(x-y)\kappa_\delta(\mathfrak{\zeta}-\mathfrak{\zeta}) f^\tau(x,t,\mathfrak{\zeta})\bar{ f}^\sigma(y,t,\mathfrak{\zeta})d\mathfrak{\zeta} d\mathfrak{\zeta} dx dy\bigg)\\
			=H_1+H_2,
		\end{align*}
		where
		\begin{align*}
		|H_1|&=\bigg|\mathbb{E}\int_{(\mathbb{T}^N)^2}\Upsilon_\varepsilon(x-y)\int_{\mathbb{R}}\mathbbm{1}_{\mathfrak{u^\tau}(x)\,\textgreater\,\mathfrak{\zeta}}\big[\mathbbm{1}_{\mathfrak{u^\sigma}(x)\,\le\,\mathfrak{\zeta}}-\mathbbm{1}_{\mathfrak{u^\sigma}(y)\,\le\,\mathfrak{\zeta}}\big]d\mathfrak{\zeta} dx dy\bigg|\\
		&=\bigg|\mathbb{E}\int_{(\mathbb{T}^N)^2}\Upsilon_\varepsilon(x-y)\big(\mathfrak{u^\sigma}(y)-\mathfrak{u^\sigma}(x)\big)dx dy\bigg|\\
		&\le\,\mathbb{E}\int_{(\mathbb{T}^N)^2}\Upsilon_\varepsilon(x-y)|u_0(x)-u_0(y)|dx dy\,+\,C_T( \delta + \delta\,\varepsilon^{-1}+\varepsilon^2\delta^{-1}\notag\\
		&\qquad+g(\delta)+r^{-2\alpha}\delta+r^{2-2\alpha}\varepsilon^{-2})
		\end{align*}
		 due to \eqref{final21} and $|H_2|\,\le\,\delta$. Come back to \eqref{heading back} and using the same computations as in Theorem \ref{comparison}, we obtain
		 \begin{align*}\mathbb{E}\int_{\mathbb{T}^N}\bigg(\mathfrak{u^\tau}(t)-\mathfrak{u^\sigma}(t)\bigg)^+dx&\le\,\mathbb{E}\int_{(\mathbb{T}^N)^2}\Upsilon_\varepsilon(x-y)|u_0(x)-u_0(y)|dx dy\,+\,C_T( \delta+\delta\,\varepsilon^{-1}+\varepsilon^2\delta^{-1}\notag\\
		 &\qquad+g(\delta)+r^{-2\alpha}\delta+r^{2-2\alpha}\varepsilon^{-2})+2\delta+ \mathcal{E}_\Upsilon+ \mathcal{E}_\kappa+ J^\#+J.
		 \end{align*}
		 The terms $\mathcal{E}_\Upsilon, \mathcal{E}_\kappa$, and $J$ are defined and can be dealt with similarly as in Theorem \ref{comparison}. The term $J^\#$ is defined as
		 \begin{align*}
		  J^\#&=(\tau+\sigma) \mathbb{E}\int_0^t\int_{(\mathbb{T}^N)^2}\int_{\mathbb{R}^2} f^\tau \bar{ f}^\sigma \Delta_x\Upsilon_{\varepsilon}(x-y)\kappa_{\delta}(\mathfrak{\zeta}-\mathfrak{\zeta}) d\mathfrak{\zeta} d\mathfrak{\zeta} dx dy ds\\
		 &\qquad-\mathbb{E}\int_0^t\int_{(\mathbb{T}^N)^2}\int_{\mathbb{R}^2}\Upsilon_\varepsilon(x-y)\kappa_{\delta}(\mathfrak{\zeta}-\mathfrak{\zeta}) d\mathfrak{\mathcal{V}}_{x,s}^\tau(\mathfrak{\zeta}) dx d\mathfrak{\eta}_{2}^\sigma(y,s,\mathfrak{\zeta}) \\
		 &\qquad-\mathbb{E}\int_0^t\int_{(\mathbb{T}^N)^2}\int_{\mathbb{R}^2}\Upsilon_{\varepsilon}(x-y)\kappa_{\delta}(\mathfrak{\zeta}-\mathfrak{\zeta}) d\mathfrak{\mathcal{V}}_{y,s}^\sigma(\mathfrak{\zeta}) dy d\mathfrak{\eta}_2(x,s,\mathfrak{\zeta}).
		 \end{align*}
		We follow \cite[Section 6]{vovelle} to estimate $J^\#$ as follows:
		 \begin{align*}
		 |J^\#|&\,\le\,C\,(\sqrt{\tau}-\sqrt{\sigma})^2\,\varepsilon^{-2},
		 \end{align*}
		  Consequently, we see that for all $t\in[0,T]$
		 \begin{align}\label{final31}
		 \mathbb{E}\int_{\mathbb{T}^N} \big(\mathfrak{u^\tau}(t)-\mathfrak{u^\sigma}(t))\big)^+dx dt\,&\le\,\bigg(\mathbb{E}\int_{(\mathbb{T}^N)^2}\Upsilon_\varepsilon(x-y)|u_0(x)-u_0(y)|dx dy\,+C_T\big(\varepsilon^\mathfrak{\zeta}+ 2\delta\notag\\&\qquad+ \varepsilon^2\delta^{-1}+g(\delta)+r^{-2\alpha}\delta+r^{2-2\alpha}\varepsilon^{-2}\big)\bigg)+C_T\big(\tau+\sigma\big)\varepsilon^{-2}
		 \end{align}
	\textbf{Case 1:} If noise is general, $\Psi=\Psi(x,u)$ and $0\,\textless\,\alpha\,\textless\frac{1}{2}$, and we 
	set $\varepsilon= \delta^\beta, r=\delta^{\gamma}$, where $\beta,\,\gamma $ such that $$\min\big\{\frac{1-\alpha}{2\alpha},1 \big\}\,\textgreater\beta\,\textgreater\, \frac{1}{2},\,\,$$
	$$\frac{1}{2\alpha}\,\textgreater\,\gamma\,\textgreater\,\frac{\beta}{1-\alpha}.$$
	Then, given $\varpropto\,\textgreater\,0$ We can fix $\delta$ small enough so that the first term on the right hand side is bounded by $\frac{\varpropto}{2}$ and then we can find $\varkappa\,\textgreater\,0$ such that the second term is also bounded by $\frac{\varpropto}{2}$ for any $\tau, \sigma\,\textless\,\varkappa$. It shows that the sequence of viscous solutions $\{\mathfrak{u^\tau}\}$ is Cauchy sequence in $L^1_{\mathcal{P}}(\Omega\times[0,T] ; L^1(\mathbb{T}^N))$, as $\tau\,\to\,0$.
	
	\noindent
	\textbf{Case 2:} If noise is only multiplicative, $\Psi=\Psi(u)$ and $0\,\textless\,\alpha\,\textless\,1$, then $\varepsilon^2\,\delta^{-1}$ will not present in \eqref{final31}. Letting $\delta\,\to\,0$, then for all $t\in[0,T]$
	\begin{align}\label{final3}
	\mathbb{E}\int_{\mathbb{T}^N} \big(\mathfrak{u^\tau}(t)-\mathfrak{u^\sigma}(t))\big)^+dx dt\le\mathbb{E}\int_{(\mathbb{T}^N)^2}\Upsilon_\varepsilon(x-y)|u_0(x)-u_0(y)|dx dy+ C_T\big(\varepsilon^\mathfrak{\zeta}+r^{2-2\alpha}\varepsilon^{-2} + \tau+\sigma\big)\varepsilon^{-2}.
	\end{align}
	Set $r=\varepsilon^a$ with $a\textgreater\frac{2}{2-2\alpha}$. Then, as previous case we can conclude sequence of viscous solutions $\{\mathfrak{u^{\tau}}\}$ is Cauchy sequence in $L^1_{\mathcal{P}}(\Omega\times[0,T] ; L^1(\mathbb{T}^N))$, as $\tau\,\to\,0$. It implies that there exists $u\in L^1_{\mathcal{P}}(\Omega\times[0,T]; L^1(\mathbb{T}^N))$ such that
	$\mathfrak{u^{\tau}}\,\to\,u\,\,\,\text{in}\,\,\,L^1_{\mathcal{P}}(\Omega\times[0,T]; L^1(\mathbb{T}^N))$ as $\tau\,\to\,0.$
    \end{proof}	\end{thm}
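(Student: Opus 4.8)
The plan is to show that the family $\{\mathfrak{u^\tau}\}_{\tau>0}$ is Cauchy in $L^1_{\mathcal{P}}(\Omega\times[0,T];L^1(\mathbb{T}^N))$; since this space is complete, the existence of a strong limit $u$ then follows at once. To control $\mathbb{E}\int_{\mathbb{T}^N}|\mathfrak{u^\tau}(t)-\mathfrak{u^\sigma}(t)|\,dx$ for two viscosity parameters $\tau,\sigma>0$, I would rerun the doubling-of-variables computation of Proposition \ref{th3.5} and Theorem \ref{comparison}, applied now to the pair of \emph{viscous} kinetic solutions rather than to two kinetic solutions of \eqref{1.1}. The only structural new feature is the parabolic regularization $\tau\Delta\mathfrak{u^\tau}$ in \eqref{4.1}, which, after testing against $\Upsilon_\varepsilon(x-y)\kappa_\delta(\xi-\zeta)$, produces one extra family of terms: $\mathcal{J}^\tau$ when a solution is compared with itself, and $J^\#$ when $\mathfrak{u^\tau}$ is compared with $\mathfrak{u^\sigma}$.

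First I would record the single-solution estimate. The viscous contribution $\mathcal{J}^\tau$ is, after an integration by parts, a pure dissipation term $-\tau\,\mathbb{E}\int_0^t\int_{(\mathbb{T}^N)^2}\Upsilon_\varepsilon(x-y)\kappa_\delta(\mathfrak{u^\tau}(x)-\mathfrak{u^\tau}(y))|\nabla_x\mathfrak{u^\tau}-\nabla_y\mathfrak{u^\tau}|^2\,dx\,dy\le0$, so it can simply be discarded. Retaining the $A$-diffusion and noise errors exactly as in Theorem \ref{comparison}, this yields the spatial equicontinuity bound \eqref{final21}, controlling $\mathbb{E}\int_{(\mathbb{T}^N)^2}\Upsilon_\varepsilon(x-y)|\mathfrak{u^\tau}(x,t)-\mathfrak{u^\tau}(y,t)|\,dx\,dy$ by the analogous quantity for $u_0$ plus the familiar error $C_T(\delta+\delta\varepsilon^{-1}+\varepsilon^2\delta^{-1}+g(\delta)+r^{-2\alpha}\delta+r^{2-2\alpha}\varepsilon^{-2})$. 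This modulus of continuity is uniform in $\tau$, which is precisely the device that later lets me trade $\mathfrak{u^\sigma}(y)$ for $\mathfrak{u^\sigma}(x)$ at a cost independent of $\tau,\sigma$.

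Next I would carry out the cross-comparison, writing $\mathbb{E}\int_{\mathbb{T}^N}(\mathfrak{u^\tau}(t)-\mathfrak{u^\sigma}(t))^+\,dx$ through the doubled functional as in \eqref{heading back} and splitting the smoothing error into $H_1+H_2$; here $H_1$ is bounded by the equicontinuity estimate just obtained and $|H_2|\le\delta$, so the total error stays independent of $\tau,\sigma$. The genuinely new object is the viscous cross term $J^\#$. Because $\mathfrak{u^\tau}$ and $\mathfrak{u^\sigma}$ carry different diffusion constants, the dissipation no longer closes into a perfect negative square; following the estimate of \cite{vovelle} one expects $|J^\#|\le C(\sqrt\tau-\sqrt\sigma)^2\varepsilon^{-2}$, which is small once $\tau,\sigma$ are small compared with $\varepsilon^2$. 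Assembling these pieces produces \eqref{final31}.

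The main obstacle is the simultaneous balancing of the parameters $\tau,\sigma,\varepsilon,\delta,r$. In the general-noise regime $\alpha\in(0,\tfrac12)$ the term $\varepsilon^2\delta^{-1}$, coming from $\mathcal{E}_\kappa$ and the genuine $x$-dependence of the $h_k$, survives and dictates the scaling $\varepsilon=\delta^\beta$, $r=\delta^\gamma$ with $\min\{\tfrac{1-\alpha}{2\alpha},1\}>\beta>\tfrac12$ and $\tfrac{1}{2\alpha}>\gamma>\tfrac{\beta}{1-\alpha}$, exactly as in Theorem \ref{comparison}. Given $\varpropto>0$, one first fixes $\delta$ small so that the $u_0$-modulus together with the $\delta$-dependent errors lies below $\tfrac{\varpropto}{2}$, and only then sends $\tau,\sigma\to0$ so that $C_T(\tau+\sigma)\varepsilon^{-2}<\tfrac{\varpropto}{2}$; this gives the Cauchy property. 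In the purely multiplicative case $\Psi=\Psi(u)$ the offending $\varepsilon^2\delta^{-1}$ is absent, so one may instead send $\delta\to0$ first, then take $r=\varepsilon^a$ with $a>\tfrac{2}{2-2\alpha}$ and let $\varepsilon,\tau,\sigma\to0$, which is what extends the conclusion to the full range $\alpha\in(0,1)$. Either way $\{\mathfrak{u^\tau}\}$ is Cauchy, and completeness delivers the limit $u$.
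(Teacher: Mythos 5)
Your proposal is correct and follows essentially the same route as the paper: the same doubling-of-variables computation yielding the dissipative term $\mathcal{J}^\tau\le 0$ and the equicontinuity bound \eqref{final21}, the same $H_1+H_2$ splitting with the cross term $J^\#$ bounded by $C(\sqrt{\tau}-\sqrt{\sigma})^2\varepsilon^{-2}$ as in \cite{vovelle}, and the identical parameter balancing in the two noise regimes. No substantive differences to report.
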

\subsection*{Uniform $L^p$-estimate} In the following proposition we show that viscous solutions $\mathfrak{u^\tau}$ are uniformly bounded in $L^p(\Omega\times[0,T];L^p(\mathbb{T}^N))$ and kinetic measure $m^\tau$ corresponding these viscous solutions have uniform decay in $\mathfrak{\zeta}$.
\begin{proposition}\label{p estimate}
	For all $p\in [2,\infty)$, the viscous solutions $\mathfrak{u^\tau}$ to \eqref{4.1} satisfy the following estimates:
	\begin{align}\label{lp estimate}
	\mathbb{E}\sup_{0\le t\le T}\|\mathfrak{u^\tau}(t)\|_{L^p(\mathbb{T}^N)}^p\le C (1+\mathbb{E}\|u_0\|_{L^p(\mathbb{T}^N)}^p),
	\end{align}
	\begin{align}\label{4.7}
	\mathbb{E}\sup_{0\le t\le T}\|\mathfrak{u^\tau}(t)\|_{L^p(\mathbb{T}^N)}^p+\int_0^T\int_{\mathbb{T}^N}\int_{\mathbb{R}} |\mathfrak{\zeta}|^{p-2} &d\mathfrak{m}^{\tau}(x,s,\mathfrak{\zeta})\le C (1+\mathbb{E}\|u_0\|_{L^p(\mathbb{T}^N)}^p).
	\end{align}
	Here, the constant C does not depend on $\tau$ .
\end{proposition}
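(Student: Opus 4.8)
The plan is to apply the Itô formula to the functional $u\mapsto\int_{\mathbb{T}^N}|u|^p\,dx$ along the viscous solution $\mathfrak{u^\tau}$ of \eqref{4.1}, to identify the sign of each resulting contribution, and to close the estimate by Grönwall together with a Burkholder--Davis--Gundy (BDG) absorption. Since $\mathfrak{u^\tau}\in L^2(\Omega;C([0,T];L^2(\mathbb{T}^N)))\cap L^2(\Omega;L^2(0,T;H^1(\mathbb{T}^N)))$ is a genuine weak solution (see \cite{Neeraj2}) and, for $p\ge 2$, $\phi(\zeta)=|\zeta|^p$ is $C^2$ with $\phi''(\zeta)=p(p-1)|\zeta|^{p-2}$ of polynomial growth, I would first justify the Itô formula (by a mild localization/truncation controlled by the a priori integrability), obtaining
\begin{align*}
\|\mathfrak{u^\tau}(t)\|_{L^p(\mathbb{T}^N)}^p
&= \|u_0\|_{L^p(\mathbb{T}^N)}^p
- \int_0^t\int_{\mathbb{T}^N}\phi'(\mathfrak{u^\tau})\,\mathrm{div}\big(F^\tau(\mathfrak{u^\tau})\big)\,dx\,ds \\
&\quad - \int_0^t\int_{\mathbb{T}^N}\phi'(\mathfrak{u^\tau})(-\Delta)^\alpha[A^\tau(\mathfrak{u^\tau})]\,dx\,ds
+ \tau\int_0^t\int_{\mathbb{T}^N}\phi'(\mathfrak{u^\tau})\Delta\mathfrak{u^\tau}\,dx\,ds \\
&\quad + \frac12\int_0^t\int_{\mathbb{T}^N}\phi''(\mathfrak{u^\tau})H^2(x,\mathfrak{u^\tau})\,dx\,ds + M^\tau(t),
\end{align*}
where $M^\tau(t)=\sum_{k}\int_0^t\int_{\mathbb{T}^N}\phi'(\mathfrak{u^\tau})h_k(x,\mathfrak{u^\tau})\,dx\,d\beta_k(s)$ is the stochastic integral.

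Next I would treat the terms. The convection term vanishes: introducing the entropy flux $\Phi$ with $\Phi'(\zeta)=\phi'(\zeta)(F^\tau)'(\zeta)$ one has $\phi'(\mathfrak{u^\tau})\,\mathrm{div}(F^\tau(\mathfrak{u^\tau}))=\mathrm{div}(\Phi(\mathfrak{u^\tau}))$, whose integral over $\mathbb{T}^N$ is zero by periodicity. The viscous term is dissipative, $\tau\int_{\mathbb{T}^N}\phi'(\mathfrak{u^\tau})\Delta\mathfrak{u^\tau}\,dx=-\tau\,p(p-1)\int_{\mathbb{T}^N}|\mathfrak{u^\tau}|^{p-2}|\nabla\mathfrak{u^\tau}|^2\,dx\le 0$. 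For the nonlocal term I would symmetrize via \eqref{2.4} and the evenness of $\lambda$ to get
\begin{align*}
-\int_{\mathbb{T}^N}\phi'(\mathfrak{u^\tau})(-\Delta)^\alpha[A^\tau(\mathfrak{u^\tau})]\,dx
&= -\frac12\int_{\mathbb{T}^N}\int_{\mathbb{R}^N}\big(\phi'(\mathfrak{u^\tau}(x+z))-\phi'(\mathfrak{u^\tau}(x))\big)\\
&\qquad\times\big(A^\tau(\mathfrak{u^\tau}(x+z))-A^\tau(\mathfrak{u^\tau}(x))\big)\lambda(z)\,dz\,dx \le 0,
\end{align*}
which is non-positive since $\phi'$ and $A^\tau$ are non-decreasing (a Stroock--Varopoulos-type inequality, $A^\tau$ non-decreasing by assumption \ref{A2}). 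Up to a positive constant, these two nonnegative dissipation terms are exactly the quantities building the kinetic measure $\mathfrak{m}^\tau$, so moving them to the left furnishes the term $\int_0^t\int_{\mathbb{T}^N}\int_{\mathbb{R}}|\zeta|^{p-2}\,d\mathfrak{m}^\tau$. Finally, the Itô correction is controlled by \eqref{2.2}: since $H^2(x,\mathfrak{u^\tau})\le C_0(1+|\mathfrak{u^\tau}|^2)$, Young's inequality yields $\frac12\int_{\mathbb{T}^N}\phi''(\mathfrak{u^\tau})H^2\,dx\le C\big(1+\|\mathfrak{u^\tau}\|_{L^p(\mathbb{T}^N)}^p\big)$ with $C$ independent of $\tau$.

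Taking expectations annihilates $M^\tau$ and, keeping the dissipation on the left, gives
$$\mathbb{E}\|\mathfrak{u^\tau}(t)\|_{L^p(\mathbb{T}^N)}^p + \mathbb{E}\int_0^t\int_{\mathbb{T}^N}\int_{\mathbb{R}}|\zeta|^{p-2}\,d\mathfrak{m}^\tau \le \mathbb{E}\|u_0\|_{L^p(\mathbb{T}^N)}^p + C\int_0^t\big(1+\mathbb{E}\|\mathfrak{u^\tau}(s)\|_{L^p(\mathbb{T}^N)}^p\big)\,ds,$$
so Grönwall's lemma produces the bound without the supremum, and evaluating at $t=T$ yields the measure estimate in \eqref{4.7}. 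For the supremum I would return to the pathwise identity, drop the dissipation, and apply BDG to $M^\tau$: using Cauchy--Schwarz in $x$ and \eqref{2.2},
\begin{align*}
\mathbb{E}\sup_{0\le t\le T}|M^\tau(t)|
&\le C\,\mathbb{E}\bigg(\int_0^T\sum_{k}\Big(\int_{\mathbb{T}^N}|\mathfrak{u^\tau}|^{p-1}|h_k(x,\mathfrak{u^\tau})|\,dx\Big)^2 ds\bigg)^{1/2}\\
&\le C\,\mathbb{E}\bigg(\sup_{0\le t\le T}\|\mathfrak{u^\tau}(t)\|_{L^p(\mathbb{T}^N)}^p\int_0^T\big(1+\|\mathfrak{u^\tau}(s)\|_{L^p(\mathbb{T}^N)}^p\big)\,ds\bigg)^{1/2};
\end{align*}
Young's inequality then absorbs $\frac12\mathbb{E}\sup_t\|\mathfrak{u^\tau}(t)\|_{L^p(\mathbb{T}^N)}^p$ into the left-hand side, and the already-established expectation bound controls the remaining integral, giving \eqref{lp estimate} with constants depending only on $p,T,C_0,C_F$, hence uniform in $\tau$.

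The main obstacle I anticipate is twofold: rigorously justifying the Itô formula for the $L^p$-functional applied to the merely $H^1$-regular viscous solution (handled, for $p\ge 2$, by localizing the growth of $\phi$ and passing to the limit, so that the dissipative terms retain their sign), and correctly identifying the nonlocal dissipation with the measure integrand $|\zeta|^{p-2}\,d\mathfrak{m}^\tau$ through the monotonicity of $\phi'$ and $A^\tau$. The absorption step in the BDG estimate is precisely what forces $p\ge 2$, since one needs $\sum_k\big(\int|\mathfrak{u^\tau}|^{p-1}|h_k|\big)^2\le \|\mathfrak{u^\tau}\|_{L^p}^p\int|\mathfrak{u^\tau}|^{p-2}H^2\,dx$, which is consistent with the hypothesis of the proposition.
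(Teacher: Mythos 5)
Your proposal is correct and follows essentially the same route as the paper: the paper also applies the generalized It\^o formula to (quadratic-growth truncations $\kappa_k$ of) $|\mathfrak{\zeta}|^p$, kills the convection term by periodicity, exploits the nonpositivity of the viscous term and of the fractional term (which the paper identifies with $\int\kappa_k''\,d\mathfrak{\eta}^\tau$ via the Taylor identity of Appendix \ref{A} and a pathwise mollification $\mathfrak{u}^{\tau,\delta}$, rather than your symmetrization, precisely so that the dissipation matches the kinetic measure exactly), and then concludes by the Gr\"onwall--BDG absorption argument that it delegates to \cite[Proposition 4.3]{vovelle} and that you write out explicitly. Your only soft spot --- the ``up to a positive constant'' identification of the symmetrized nonlocal dissipation with $|\mathfrak{\zeta}|^{p-2}\,d\mathfrak{m}^\tau$ --- is exactly the step the paper makes rigorous through that Taylor-identity limit, so it is a presentational gap rather than a mathematical one.
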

\begin{proof}Proof of this proposition is an application of the generalized It\^o formula \cite[Appendix A]{vovelle}. We can not apply the generalized It\^o formula directly to $\kappa(\mathfrak{\zeta})=|\mathfrak{\zeta}|^p$, $p\in [2,\infty)$, and $\varphi(x)=1$. 
	Here, we define functions $\kappa_k\in C^2(\mathbb{R})$ that approximate $\kappa$ and have quadratic growth at infinity as necessary to apply generalized It\^o formula. We define $\kappa_k$ as
	\[\kappa_k(\mathfrak{\zeta})=\begin{cases}
	|\mathfrak{\zeta}|^p, & |\mathfrak{\zeta}|\,\le\, k,\\
	k^{p-2}\bigg[\frac{p(p-1)}{2}\mathfrak{\zeta}^2-p(p-2)k|\mathfrak{\zeta}|+\frac{(p-1)(p-2)}{2}k^2\bigg], & |\mathfrak{\zeta}|\,\textgreater \,k.
	\end{cases}
	\]
It is clear that
	\begin{align*}|\mathfrak{\zeta} \kappa_k'(\mathfrak{\zeta})| \,&\le \, \kappa_k (\mathfrak{\zeta}),\\
	|\kappa_k'(\mathfrak{\zeta})| \, &\le p(1+\kappa_k(\mathfrak{\zeta})),\\
	|\kappa_k'(\mathfrak{\zeta})| \, &\le |\mathfrak{\zeta}|\kappa''(\mathfrak{\zeta}),\\
	\mathfrak{\zeta}^2 \kappa_k ''(\mathfrak{\zeta})\, &\le p(p-1) \kappa_k (\mathfrak{\zeta}),\\
	\kappa_k''(\mathfrak{\zeta}) \,&\le p(p-1)(1+\kappa_k(\mathfrak{\zeta}))
	\end{align*}
	holds true for all $\mathfrak{\zeta}\in\mathbb{R}$, $k\in\mathbb{N}$, $p\in[2,\infty)$. Then, by generalized It\^o formula \cite[Appendix A]{vovelle}, $\mathbb{P}$-almost surely, for all $t\in[0,T]$ 
	\begin{align*}
	\int_{\mathbb{T}^N} \kappa_k(\mathfrak{u^\tau}(t)) dx &= \int_{\mathbb{T}^N} \kappa_k(u_0) dx -\int_0^t \big\langle \kappa_k'(\mathfrak{u^\tau}), \mbox{div}(F^\tau(\mathfrak{u^\tau}))\big\rangle ds-\int_0^t\big\langle \kappa_k'(\mathfrak{u^\tau}), (-\Delta)_x^\alpha(A^\tau(\mathfrak{u^{\tau}}))\big\rangle ds\\
	&\qquad+\int_0^t\big\langle \kappa_k'(\mathfrak{u^{\tau}}), \tau \Delta \mathfrak{u^{\tau}}\big\rangle ds+\sum_{k\ge1}\int_0^t\big\langle \kappa_k'(\mathfrak{u^{\tau}}), h_k(x,\mathfrak{u^{\tau}})\big\rangle d\beta_k(s)\\
	&\qquad+\frac{1}{2}\int_0^t\big\langle\kappa_k''(\mathfrak{u^{\tau}}), H^2(\mathfrak{u^{\tau}})\big\rangle ds.  
	\end{align*}
	If we define $H^\tau(\mathfrak{\zeta})=\int_0^\mathfrak{\zeta} \kappa_k''(\mathfrak{\zeta}) F^\tau(\mathfrak{\zeta}) d\mathfrak{\zeta}$, then it can be seen that the second term on the right-hand side vanishes due to the boundary conditions. The third and fourth terms are nonpositive as follows:
	$$-\int_0^t\big\langle\kappa_k'(\mathfrak{u^{\tau}}), (-\Delta)_x^\alpha(A^\tau(\mathfrak{u^{\tau}}))\big\rangle ds=-\lim_{\delta\to0}\int_0^t\int_{\mathbb{T}^N}\int_{\mathbb{R}}\kappa_k''(\mathfrak{\zeta})\mathfrak{\eta}_\delta^\tau(x,\mathfrak{\zeta},s)$$
	$$\int_0^t\big\langle\kappa_k'(\mathfrak{u^{\tau}}), \tau \Delta \mathfrak{u^{\tau}} \big\rangle ds= -\int_0^t\int_{\mathbb{T}^N}\kappa_k''(\mathfrak{u^{\tau}}) \tau |\nabla \mathfrak{u^{\tau}}|^2 dx ds,$$
	where $\mathfrak{\eta}^\delta=\int_{\mathbb{R}^N} |A^\tau(u^\delta(x+z,t))-A^\tau(\xi)|\mathbbm{1}_{\mbox{Conv}\{\mathfrak{u}^{\tau,\delta}(x+z,t),\mathfrak{u}^{\tau,\delta}(x,t)\}}(\xi)\mu(z)dz$ and $\mathfrak{u}^{\tau,\delta}$ is pathwise molification of $\mathfrak{u}^\tau$ in space variable. For discussion on this argument we refer to Appendix \ref{A}.
	After this, we can follow the proof of \cite[Proposition 4.3]{vovelle} to conclude the result.
	\end{proof}
\begin{remark}[\textbf{Approximate solutions}] 
Here, we use the computations of Appendix \ref{A} to derive the kinetic formulation to \eqref{4.1} that satisfied by $f^\tau(t)=\mathbbm{1}_{\mathfrak{u^{\tau}}(t)\,\textgreater\,\mathfrak{\zeta}}$ in the sense of $D'(\mathbb{T}^N\times\mathbb{R})$. We read as follows: $\mathbb{P}$-almost surely, for all $t\in[0,T]$
	\begin{align*}
	df^\tau(t)+{F^{\,\tau}}'\cdot\nabla f^\tau(t) dt - \tau \Delta f^\tau(t) dt+{A^\tau}'(\mathfrak{\zeta}) (-\Delta)_x^\alpha(f^\tau(t))dt&= \delta_{\mathfrak{u^{\tau}}(t)=\mathfrak{\zeta}} \varphi\, dB(t) + \partial_{\mathfrak{\zeta}}(\mathfrak{m}^\tau -\frac{1}{2}H^{2} \delta_{\mathfrak{u^{\tau}}(t)=\mathfrak{\zeta}}) dt, 
	\end{align*}
	where $d\mathfrak{m}^\tau(x,t,\mathfrak{\zeta})\ge d\mathfrak{m}_1^\tau(x,t,\mathfrak{\zeta})+d\mathfrak{\eta}_1^\tau(x,t,\mathfrak{\zeta}),$ 
	$d\mathfrak{m}_1^\tau(x,t,\mathfrak{\zeta})=\tau |\nabla \mathfrak{u^{\tau}}|^2\delta_{\mathfrak{u^{\tau}}=\mathfrak{\zeta}}dx dt,$ and
	$$d\mathfrak{\eta}_1^\tau(x,t,\mathfrak{\zeta})=\int_{\mathbb{R}^N}|A^\tau(\mathfrak{u^{\tau}}(x+z))-A^\tau(\mathfrak{\zeta})|\mathbbm{1}_{\mbox{Conv}\{\mathfrak{u^{\tau}}(x),\mathfrak{u^{\tau}}(x+z)\}}\mu(z)dz.$$
	It shows that for all $\varphi\in C_c^2(\mathbb{T}^N\times\mathbb{R})$, $\mathbb{P}$-almost surely, for all $t\in[0,T]$, 
	\begin{align}\label{approximate formulation}
	\langle f^\tau(t),\varphi\rangle &= \langle f_0^\tau,\varphi \rangle +\int_0^t \langle f^\tau(s), {F^{\,\tau}}'(\mathfrak{\zeta})\cdot\nabla_x \varphi \rangle ds -\int_0^t \langle f^\tau(s), {A^\tau}'(\mathfrak{\zeta}) (-\Delta)^\alpha(\varphi)\rangle ds\notag\\
	&\qquad+ \int_0^t \int_{\mathbb{T}^N}\int_{\mathbb{R}} h_k(x,\mathfrak{\zeta}) \varphi(x,\mathfrak{\zeta}) d\mathfrak{\mathcal{V}}_{x,s}^\tau(\mathfrak{\zeta}) dx d\beta_k(s)+ \tau\int_0^t \langle f^\tau(s),\Delta \varphi \rangle ds.\notag \\
	& \qquad+\frac{1}{2}\int_0^t\int_{\mathbb{T}^N}\int_{\mathbb{R}} H^{2}(x,\mathfrak{\zeta}) \partial_{\mathfrak{\zeta}}\varphi(x,\mathfrak{\zeta}) d\mathfrak{\mathcal{V}}_{x,s}^\tau(\mathfrak{\zeta})dx ds- \mathfrak{m}^\tau(\partial_{\mathfrak{\zeta}}\varphi)([0,t]).
	\end{align}
 \end{remark}
\subsection*{Convergence of approximate kinetic functions}\label{section 5.3}
Here, we will use the following notations: we say that a sequence $(\mathfrak{\mathcal{V}}^{\mathfrak{\tau_n}})$ of Young measures converges to $\mathfrak{\mathcal{V}}$ in $\mathfrak{\mathcal{Y}}^1$ if \eqref{2.12} is satisfied. By definition, a random Young measure is a $\mathfrak{\mathcal{Y}}^1$- valued random variable. We define Young measures on $\mathbb{T}^N\times[0,T]$ as
$\mathfrak{\mathcal{V}}^{\mathfrak{\tau_n}}=\delta_{\mathfrak{u^{\mathfrak{\tau_n}}}=\mathfrak{\zeta}},\,\, \& \,\,{\mathfrak{\mathcal{V}}}=\delta_{{u}=\mathfrak{\zeta}}$ and following uniform bound holds,
\begin{align}\label{4.10}\mathbb{E}\Bigg[\sup_{t\in[0,T]}\int_{\mathbb{T}^N}\int_{\mathbb{R}}|\mathfrak{\zeta}|^p d\mathfrak{\mathcal{V}}_{x,t}^{\mathfrak{\tau_n}}(\mathfrak{\zeta}) dx\Bigg]\le\,C_p
\end{align}
\begin{proposition}\label{D}
	It holds true $($up to subsequences$)$
	\begin{enumerate}
		\item[1.] ${\mathbb{P}}$-almost surely, ${\mathfrak{\mathcal{V}}}^{\mathfrak{\tau_n}}$ converges ${\mathfrak{\mathcal{V}}}$ in $\mathfrak{\mathcal{Y}}^1$.
		\item[2.] ${\mathfrak{\mathcal{V}}}$ satisfies the following bound:
		\begin{align}\label{4.11}
		{\mathbb{E}}\Bigg(\sup_{I\subset[0,T]}\frac{1}{|I|}\int_{I}\int_{\mathbb{T}^N}\int_{\mathbb{R}}|\mathfrak{\zeta}|^p d{\mathfrak{\mathcal{V}}}_{x,t}(\mathfrak{\zeta})dx dt\Bigg)&\le\, C_p.
		\end{align}
		Here, the supremum in \eqref{4.11} is a supremum over all open intervals $I\subset[0,T]$ with rational end points which are countable. 
		\item[3.] Suppose that ${f}^{\mathfrak{\tau_n}}, {f}:\Omega\times\mathbb{T}^N\times[0,T]\times\mathbb{R}\to [0,1]$ are defined by
		$${f}^{\mathfrak{\tau_n}}(x,t,\mathfrak{\zeta})={\mathfrak{\mathcal{V}}}_{x,t}^{\mathfrak{\tau_n}}(\mathfrak{\zeta},+\infty),\,\,\,\, {f}(x,t,\mathfrak{\zeta})={\mathfrak{\mathcal{V}}}_{x,t}(\mathfrak{\zeta},+\infty),$$
		then ${f}^{\mathfrak{\tau_n}}\to{f}$ in $L^{\infty}(\mathbb{T}^N\times[0,T]\times\mathbb{R})$-weak-* ${\mathbb{P}}$-almost surely.
		\item[4.] There exists a full measure suset $\mathcal{S}$ of $[0,T]$, containing $0$  such that for all $t\in \mathcal{S}$
		$${f}^{\mathfrak{\tau_n}}\to {f}\,\,\, in\,\,\,\,\,\,L^\infty (\Omega\times\mathbb{T}^N\times\mathbb{R})-weak^*.$$
		$$$$
	\end{enumerate}
\end{proposition}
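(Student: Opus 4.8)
The plan is to derive all four assertions from the strong convergence $\mathfrak{u}^{\tau_n}\to u$ in $L^1_{\mathcal P}(\Omega\times[0,T];L^1(\mathbb T^N))$ furnished by Theorem~\ref{convergence}, together with the uniform moment bound~\eqref{4.10}. First I would extract a (non-relabelled) subsequence along which $\mathfrak u^{\tau_n}\to u$ almost everywhere on $\Omega\times[0,T]\times\mathbb T^N$, so that by Fubini there is a set of full probability on which $\mathfrak u^{\tau_n}(\omega,\cdot,\cdot)\to u(\omega,\cdot,\cdot)$ for a.e. $(t,x)$. For part~(1) I fix such an $\omega$: for every $g\in C_b(\mathbb R)$ one has $\int_{\mathbb R}g\,d\mathcal V^{\tau_n}_{x,t}=g(\mathfrak u^{\tau_n}(x,t))\to g(u(x,t))=\int_{\mathbb R}g\,d\mathcal V_{x,t}$ pointwise, and, after multiplying by $h\in L^1([0,T]\times\mathbb T^N)$ and using dominated convergence, this is exactly the defining convergence~\eqref{2.12}, i.e. $\mathcal V^{\tau_n}\to\mathcal V$ in $\mathcal Y^1$, $\mathbb P$-a.s.

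For part~(2) I would use a truncation. With $\phi_M(\zeta)=\min\{|\zeta|^p,M\}\in C_b(\mathbb R)$, part~(1) gives, for each interval $I$ with rational endpoints, $\frac1{|I|}\int_I\int_{\mathbb T^N}\int_{\mathbb R}\phi_M\,d\mathcal V_{x,t}\,dx\,dt=\lim_n\frac1{|I|}\int_I\int_{\mathbb T^N}\int_{\mathbb R}\phi_M\,d\mathcal V^{\tau_n}_{x,t}\,dx\,dt\le\liminf_n\sup_t\int_{\mathbb T^N}\int_{\mathbb R}|\zeta|^p\,d\mathcal V^{\tau_n}_{x,t}\,dx$. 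Taking the supremum over the countable family of such intervals, letting $M\to\infty$ by monotone convergence, and finally taking expectations and applying Fatou's lemma against~\eqref{4.10} produces~\eqref{4.11}. Part~(3) rests on the elementary identity $\int_{\mathbb R}f^{\tau_n}(x,t,\zeta)\phi(\zeta)\,d\zeta=\int_{\mathbb R}\Phi(\zeta')\,d\mathcal V^{\tau_n}_{x,t}(\zeta')$ valid for $\phi\in C_c(\mathbb R)$, where $\Phi(\zeta')=\int_{-\infty}^{\zeta'}\phi\in C_b(\mathbb R)$; testing against $h(x,t)\phi(\zeta)$ and invoking part~(1) gives convergence on product test functions, which upgrades to weak-$*$ convergence $f^{\tau_n}\to f$ in $L^\infty(\mathbb T^N\times[0,T]\times\mathbb R)$ because $\|f^{\tau_n}\|_{L^\infty}\le1$ and such products are dense in $L^1(\mathbb T^N\times[0,T]\times\mathbb R)$.

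Part~(4) is the crux, and the real obstacle. Integrating part~(3) over $\Omega$ by dominated convergence first upgrades it to weak-$*$ convergence in $L^\infty(\Omega\times\mathbb T^N\times[0,T]\times\mathbb R)$, but this does not by itself control the individual time-slices. To produce the exceptional set I would combine two ingredients. The moment bound~\eqref{4.10} shows that for each fixed $t$ the family $\{f^{\tau_n}(t)\}_n$ is bounded by $1$ with uniformly small tails in $\zeta$, hence weak-$*$ relatively compact in $L^\infty(\Omega\times\mathbb T^N\times\mathbb R)$, whose predual is separable so that this topology is metrizable on the unit ball. The time-regularity needed to pin down the slices comes from the approximate kinetic formulation~\eqref{approximate formulation}: for a fixed $\varphi\in C_c^2(\mathbb T^N\times\mathbb R)$ the map $t\mapsto\langle f^{\tau_n}(t),\varphi\rangle$ decomposes as a part of variation bounded uniformly in $n$ (the flux, fractional-diffusion, viscous and kinetic-measure terms, controlled through Proposition~\ref{p estimate} and~\eqref{4.7}) plus a stochastic integral, and the It\^o isometry together with the $\mathcal Y^1$-convergence of part~(1) shows the latter converges in $L^2(\Omega)$, indeed uniformly in $t$ in probability by Doob's inequality.

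Running a Helly-type selection along a countable weak-$*$ determining family of test functions, combined with a diagonal argument and the uniform-in-$t$ convergence of the martingale parts, then yields a full-measure set $\mathcal S\subset[0,T]$ — the common Lebesgue points of the limiting bounded-variation functions — on which $f^{\tau_n}(t)$ converges weak-$*$; its limit is identified with $f(t)$ by comparing with the averages $\frac1{|I|}\int_I$ of part~(3) at Lebesgue points and invoking the no-concentration bound~\eqref{4.11}. Since $f^{\tau_n}(0)=\mathbbm 1_{u_0^{\tau_n}>\zeta}$ with $u_0^{\tau_n}\to u_0$, the convergence holds directly at $t=0$, so $0$ may be adjoined to $\mathcal S$. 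The main difficulty I anticipate is precisely this passage from space--time to fixed-time convergence in an $\omega$-uniform way; the separability of the predual, the uniform variation estimate inherited from~\eqref{4.7}, and the martingale control are what make the selection of $\mathcal S$ independent of $\omega$, exactly along the lines of the corresponding arguments in~\cite{deb,sylvain,vovelle}.
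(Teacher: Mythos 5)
Your proposal is correct and follows essentially the same route as the paper, which simply defers to \cite[Proposition 5.3]{Chaudhary}, itself modelled on the Young-measure arguments of \cite{sylvain,deb}: a.e.\ convergence from the strong $L^1$ convergence of Theorem~\ref{convergence} for parts (1)--(3), truncation plus Fatou against \eqref{4.10} for \eqref{4.11}, and for part (4) the standard decomposition of $t\mapsto\langle f^{\tau_n}(t),\varphi\rangle$ into a uniformly bounded-variation part and a martingale converging uniformly in $t$, followed by a Helly-type diagonal selection over a countable determining family and identification of the fixed-time limits with $f(t)$ at Lebesgue points. The details you flag as delicate (metrizability via a separable predual, $\omega$-independence of $\mathcal{S}$, adjoining $t=0$) are handled in the cited proof exactly as you describe.
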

\begin{proof}
	We refer to \cite[Propostion 5.3]{Chaudhary} for a proof.
\end{proof}
\subsection*{Compactness of the Random Measures}\label{section 5.4}
 We know the following duality holds for $q$, $q^*\in(1,\infty)$ being conjugate exponents 
$$L_w^q({\Omega};\mathcal{M}_b(\mathbb{T}^N\times[0,T]\times\mathbb{R})\simeq(L^{q^*}({\Omega};C_0(\mathbb{T}^N\times[0,T]\times\mathbb{R}))^*,$$
where $\mathcal{M}_b(\mathbb{T}^N\times[0,T]\times{R})$ denote the space of bounded Borel measures on $\mathbb{T}^N\times[0,T]\times\mathbb{R}$ whose norm is given by the total variation of measures. It is the dual space to $C_0(\mathbb{T}^N\times[0,T]\times\mathbb{R})$, the collection of all continuous functions vanishing at infinity equipped with the supremum norm. $L_w^q({\Omega};\mathcal{M}_b(\mathbb{T}^N\times[0,T]\times\mathbb{R})$ contains all $weak^*$-measurable functions $\mathfrak{\eta}:{\Omega}\to\mathcal{M}_b(\mathbb{T}^N\times[0,T]\times\mathbb{R})$ such that
$${\mathbb{E}}\left\Vert \mathfrak{\eta}\right\Vert_{\mathcal{M}_b}^q\textless\,\infty.$$ 
Let us define measures
$$d{\mathfrak{m}}_1^{\mathfrak{\tau_n}}(x,t,\mathfrak{\zeta})=\mathfrak{\tau_n}|\nabla \mathfrak{u^{{\tau_n}}}|^2d\delta_{\mathfrak{u^{\tau_n}}=\mathfrak{\zeta}}(\mathfrak{\zeta})\,dx\,dt,$$
$$d{\mathfrak{\eta}}_1^{\tau_n}(x,t,\mathfrak{\zeta})=\int_{\mathbb{R}^{N}}|A^{\tau_n}(\mathfrak{u^{\tau_n}}(x+z))-A^{\tau_n}(\mathfrak{\zeta})|\mathbbm{1}_{\mbox{Conv}\{{\mathfrak{u^{\tau_n}}}(x+z),{\mathfrak{u^{\tau_n}}}(x)\}}(\mathfrak{\zeta})\mu(z)dz\,d\mathfrak{\zeta}\,dx\,dt.$$
To obtain the convergence of these measures to a kinetic measure, we prove the following lemma.
\begin{lem}\label{m}
	There exists a kinetic measure ${\mathfrak{m}}$ such that
	\[ {\mathfrak{m}}^{\tau_n}\overset{w^*}{\to}{\mathfrak{m}}\,\,\qquad in\,\,\qquad\qquad L_w^2({\Omega}; \mathcal{M}_b(\mathbb{T}^N\times[0,T]\times\mathbb{R}))-weak^* .\]
	 Here, ${\mathfrak{m}}$ can be written as ${\mathfrak{\eta}}_1+{\mathfrak{m}}_1$, where
$$d{\mathfrak{\eta}}_1(x,t,\mathfrak{\zeta})=\int_{\mathbb{R}^N}|A({u}(x+z,t))-A(\mathfrak{\zeta})|\mathbbm{1}_{\mbox{Conv}\{{u}(x+z,t),{u}(x,t)\}}\mu(z)dz dx d\mathfrak{\zeta} dt$$
	and $\mathbb{P}$-almost surely, $\mathfrak{m}_1$ is a nonnegative measure over $\mathbb{T}^N\times[0,T]\times\mathbb{R}$.
	
\end{lem}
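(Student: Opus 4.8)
The plan is to realize $\mathfrak{m}$ as a weak-$*$ cluster point of $(\mathfrak{m}^{\tau_n})$ in the dual space $L^2_w(\Omega;\mathcal{M}_b(\mathbb{T}^N\times[0,T]\times\mathbb{R}))$, and then to read off its structure from the strong convergence $\mathfrak{u}^{\tau_n}\to u$ established in Theorem \ref{L1 convergence}. First I would produce a uniform bound $\mathbb{E}\|\mathfrak{m}^{\tau_n}\|_{\mathcal{M}_b}^2\le C$. Since each $\mathfrak{m}^{\tau_n}$ is nonnegative, its total variation equals $\mathfrak{m}^{\tau_n}(\mathbb{T}^N\times[0,T]\times\mathbb{R})$, and the generalized It\^o formula applied to $\tfrac12\|\mathfrak{u}^{\tau_n}(t)\|_{L^2}^2$ (as in the proof of Proposition \ref{p estimate}) gives the energy balance, so that this total mass is controlled by $\tfrac12\|u_0\|_{L^2}^2$, a stochastic integral, and $\tfrac12\int_0^T\|H\|^2$. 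Squaring, taking expectation, and using the Burkholder--Davis--Gundy inequality together with the estimate \eqref{lp estimate} for $p=4$ yields $\mathbb{E}\|\mathfrak{m}^{\tau_n}\|_{\mathcal{M}_b}^2\le C(1+\mathbb{E}\|u_0\|_{L^4}^4)$, uniformly in $n$. By the stated duality $L^2_w(\Omega;\mathcal{M}_b)\simeq(L^2(\Omega;C_0))^*$ and the Banach--Alaoglu theorem, a subsequence (not relabelled) converges weak-$*$ to some $\mathfrak{m}\in L^2_w(\Omega;\mathcal{M}_b)$.

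Next I would verify that $\mathfrak{m}$ is a kinetic measure in the sense of Definition \ref{kinetic measure 1}. Nonnegativity is stable under the limit: for nonnegative $\phi\in C_0(\mathbb{T}^N\times[0,T]\times\mathbb{R})$ and nonnegative $g\in L^2(\Omega)$ one has $\mathbb{E}[g\,\mathfrak{m}(\phi)]=\lim_n\mathbb{E}[g\,\mathfrak{m}^{\tau_n}(\phi)]\ge0$, so $\mathfrak{m}\ge0$ $\mathbb{P}$-almost surely, while measurability is part of the dual-space membership. For the decay at infinity I would exploit the lower semicontinuity of the nonnegative weight $\zeta\mapsto|\zeta|^{p-2}$ under weak-$*$ convergence: approximating it from below by compactly supported continuous functions and invoking \eqref{4.7} gives $\mathbb{E}\int|\zeta|^{p-2}\,d\mathfrak{m}\le C$ for some $p>2$; a Chebyshev argument then yields $\mathbb{E}\,\mathfrak{m}(\mathbb{T}^N\times[0,T]\times\mathbb{B}_L^c)\le CL^{-(p-2)}\to0$ as $L\to\infty$.

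The decisive step is the decomposition $\mathfrak{m}=\mathfrak{\eta}_1+\mathfrak{m}_1$. Writing $\mathfrak{m}^{\tau_n}=\mathfrak{m}_1^{\tau_n}+\mathfrak{\eta}_1^{\tau_n}$ with both summands nonnegative, it is enough to prove the lower bound $\mathfrak{m}\ge\mathfrak{\eta}_1$, since then $\mathfrak{m}_1:=\mathfrak{m}-\mathfrak{\eta}_1$ is automatically a nonnegative measure. Passing to a further subsequence so that $\mathfrak{u}^{\tau_n}\to u$ almost everywhere on $\Omega\times[0,T]\times\mathbb{T}^N$ (whence, by Fubini, the translates $\mathfrak{u}^{\tau_n}(\cdot+z)$ converge almost everywhere in $(\omega,t,x,z)$), and using that $A^{\tau_n}\to A$ locally uniformly, the integrand defining $\mathfrak{\eta}_1^{\tau_n}$ converges almost everywhere to the one defining $\mathfrak{\eta}_1$; the only discontinuity, carried by the indicator $\mathbbm{1}_{\mathrm{Conv}\{\cdot,\cdot\}}$, sits on a $\zeta$-null set and is harmless. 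As every quantity involved is nonnegative, Fatou's lemma gives $\liminf_n\mathfrak{\eta}_1^{\tau_n}(g\phi)\ge\mathfrak{\eta}_1(g\phi)$ for nonnegative $g\in L^2(\Omega)$ and $\phi\in C_0$. Combining this with the pathwise inequality $\mathfrak{m}^{\tau_n}\ge\mathfrak{\eta}_1^{\tau_n}$ and the weak-$*$ convergence $\mathbb{E}[g\,\mathfrak{m}^{\tau_n}(\phi)]\to\mathbb{E}[g\,\mathfrak{m}(\phi)]$ produces $\mathbb{E}[g\,\mathfrak{m}(\phi)]\ge\mathbb{E}[g\,\mathfrak{\eta}_1(\phi)]$, i.e. $\mathfrak{m}\ge\mathfrak{\eta}_1$.

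I expect the main obstacle to lie in this last identification, namely in justifying the limit in the nonlocal functional $\mathfrak{\eta}_1^{\tau_n}$ against the singular kernel $\mu(z)=|z|^{-N-2\alpha}$: one must ensure that the almost-everywhere convergence of the integrand, including the jump of the indicator at coincidence points and the behaviour near $z=0$, is compatible with integration against $\mu(z)\,dz$, and that the resulting Fatou lower bound indeed reproduces $\mathfrak{\eta}_1$ built from the limit $u$. The uniform second-moment bound on the total mass, which underlies the weak-$*$ compactness, is the other technically delicate point and is what forces the use of the higher-order estimate \eqref{lp estimate}.
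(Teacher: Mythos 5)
Your proposal is correct and follows essentially the same route as the paper: a uniform second-moment bound on the total mass obtained from the It\^o energy identity together with the $L^p$ estimates \eqref{lp estimate}--\eqref{4.7}, Banach--Alaoglu compactness in $L_w^2({\Omega};\mathcal{M}_b(\mathbb{T}^N\times[0,T]\times\mathbb{R}))$, a truncation/lower-semicontinuity argument for the decay in $\mathfrak{\zeta}$, and almost-everywhere convergence of $\mathfrak{u^{\tau_n}}$ (including its translates, for fixed $z$) combined with Fatou's lemma to get $\mathfrak{m}\ge\mathfrak{\eta}_1$. The only cosmetic differences are that you deduce $\mathfrak{m}\ge\mathfrak{\eta}_1$ directly by testing against nonnegative $g\,\phi$ and then set $\mathfrak{m}_1:=\mathfrak{m}-\mathfrak{\eta}_1$, whereas the paper first extracts separate weak-$*$ limits $\lambda_1,\lambda_2$ of $\mathfrak{m}_1^{\tau_n}$ and $\mathfrak{\eta}_1^{\tau_n}$ and applies Fatou to $\lambda_2$, and that you invoke Burkholder--Davis--Gundy where the paper uses the It\^o isometry --- neither change is substantive.
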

\begin{proof}
	 Due to the computations used in the proof of Proposition \ref{p estimate}, we can conclude that $\mathbb{P}$-almost surely, for all $t\in[0,T]$
	\begin{align*}
	\int_0^T\int_{\mathbb{T}^N}\int_{\mathbb{R}}\mathfrak{m}^{\tau_n}(x,t,\mathfrak{\zeta})\,d\mathfrak{\zeta}\,dx\,dt\le\,&\left\Vert u_0\right\Vert_{L^2(\mathbb{T}^N)}^2
	+\sum_{k\ge1}\int_0^T\int_{\mathbb{T}^N}\,\mathfrak{u^{\tau_n}}\,h_k(\mathfrak{u^{\tau_n}})\,dx\,d\beta_k(t)\\&\qquad+\frac{1}{2}\int_0^T\int_{\mathbb{T}^N}\, H^2(\mathfrak{u^{\tau_n}})\,dx ds.
	\end{align*}
	Taking square and expectation and finally by the Ito isometry, we get that
	\begin{align*}
	\mathbb{E}|\mathfrak{m^{\tau_n}}([0,T]\times\mathbb{T}^N\times\mathbb{R})|^2
	\le\,C.
	\end{align*}
Hence, sequence $m^{\tau_n}$ is bounded in $L_w^2({\Omega};\mathcal{M}_b(\mathbb{T}^N\times[0,T]\times\mathbb{R})$, and As a consequence of the Banach-Alaoglu theorem, there exists a $weak^*$ convergent subsequence, still denoted by $\{{\mathfrak{m}}^{\tau_n}\;n\in\mathbb{N}\}$, that is, $\mathfrak{m}^{\tau_n}$ converge to ${\mathfrak{m}}$ in $L_w^2({\Omega};\mathcal{M}_b(\mathbb{T}^N\times[0,T]\times\mathbb{R})$-weak-*.
Now, it is only remaining to show that $weak^*$ limit ${\mathfrak{m}}$ is a kinetic measure. The first point of Definition \ref{kinetic measure 1} of kinetic measure is direct. To prove the behavior for large $\mathfrak{\zeta}$, we need $L^p$-estimate. From \eqref{4.7} we conclude that 
	\begin{align}\label{4.13}
	\mathbb{E}\sup_{t\in[0,T]}\left\Vert \mathfrak{u^{\tau_n}}\right\Vert_{L^p(\mathbb{T}^N)^p}^p+\mathbb{E}\int_0^T\int_{\mathbb{T}^N}\int_{\mathbb{R}} |\mathfrak{\zeta}|^{p-2}&d\mathfrak{m}^{\tau_n}(x,t,\mathfrak{\zeta})\le\,C(1+\mathbb{E}\left\Vert u_0\right\Vert_{L^p(\mathbb{T}^N)}^p).
	\end{align}
	Suppose that $(\chi_{\delta})$ be a truncation on $\mathbb{R}$, then it implies, for $p\in[2,\infty),$
	\begin{align*}
	&\mathbb{E}\int_{[0,T]\times\mathbb{T}^N\times{R}}|\mathfrak{\zeta}|^{p-2}d{\mathfrak{m}}(x,t,\mathfrak{\zeta})\le\,\liminf_{\delta\to0}\mathbb{E}\int_{[0,T]\times\mathbb{T}^N\times\mathbb{R}}|\mathfrak{\zeta}|^{p-2}\chi_{\delta}(\mathfrak{\zeta}) d{\mathfrak{m}}(t,x,\mathfrak{\zeta})\\
	&=\liminf_{\delta\to 0}\liminf_{\tau_n\to\,0}\mathbb{E}\int_{[0,T]\times\mathbb{T}^N\times{R}}|\mathfrak{\zeta}|^{p-2}\chi_{\delta}(\mathfrak{\zeta})d{\mathfrak{m}}^{\tau_n}(t,x,\mathfrak{\zeta})\le\,C.
	\end{align*}
	where the last inequality holds from \eqref{4.13}. It shows that ${\mathfrak{m}}$ vanishes for large $\mathfrak{\zeta}$. Finally, we can deduce that there exist kinetic meaures $\lambda_1,$ and $\lambda_2$ such that
	$${\mathfrak{m}}_1^{\tau_n}\overset{w^*}\to \lambda_1,\,\,\,\,\,{\mathfrak{\eta}}_1^n\overset{w^*}\to\,\lambda_2\,\,\,\,\, in\,\,L_w^2({\Omega};\mathcal{M}_b(\mathbb{T}^N\times[0,T]\times\mathbb{R}) $$
	We observe that $\mathbb{P}$-almost surely, there exists a negligible $\mathcal{S}\subset\,\mathbb{T}^N\times[0,T]$, such that upto subsequence,  $\mathfrak{u^{\tau_n}}(x,t)\to{u}(x,t)$, for all $(t,x)\notin \mathcal{S}$. If we fixed  $z\in\mathbb{R}^N$, then we have also $\mathfrak{u^{\tau_n}}(x+z,t)\to{u}(x+z, t)$ for any $(x,t)$ not in some negligible $\mathcal{S}_z$.
	Hence we have
	$$|A^{\tau_n}(\mathfrak{u^{\tau_n}}(x+z,t))-A^{\tau_n}(\mathfrak{\zeta})|\mathbbm{1}_{\mbox{Conv}\{\mathfrak{u^{\tau_n}}(x,t),\mathfrak{u^{\tau_n}}(x+z,t)\}}(\mathfrak{\zeta})\to|A({u}(x+z,t))-A(\mathfrak{\zeta})|\mathbbm{1}_{\mbox{Conv}\{{u}(x,t),{u}(x+z,t)\}}(\mathfrak{\zeta})\,\,\, $$
	$as\,\,\, n\to \infty,$
	for any $(x,t,\mathfrak{\zeta})\in[0,T]\times\mathbb{T}^N\times\mathbb{R}$ such that $(t,x)\notin \mathcal{S}\cup \mathcal{S}_z$ and $\mathfrak{\zeta}\ne {u}(t,x)$. We can then use Fatou's lemma to conclude that $\mathbb{P}$-almost surely, ${\mathfrak{\eta}}_1\,\le\,\lambda_2$.
	Hence, $\mathbb{P}$-almost surely, ${\mathfrak{m}}\ge\lambda_1+\lambda_2\ge \,{\mathfrak{\eta}}_1.$ 
	\end{proof}
	\subsection*{Kinetic Solution}\label{section 5.5}
As a direct consequence of convergence results, stated in previous subsections and \cite[Lemma 2.1, Proposition 4.9]{sylvain}, we can pass to the limit in all terms of \eqref{approximate formulation}. The convergence of the It\^o integral can easily verify using uniform $L^p$-bound  and strong convergence of $\mathfrak{u^{\tau}}$.\\
For all $\varphi\in C_c^2(\mathbb{T}^N\times\mathbb{R}),$ for ${\mathbb{P}}$-almost surely, there exists a subset $\mathcal{N}_0\subset[0,T]$ of measure zero such that for all $ t\in[0,T]\backslash\mathcal{N}_0$,
	\begin{align}\label{4.17}
	\langle {f}(t),\varphi \rangle &= \langle {f}_0, \varphi \rangle + \int _0^t \langle {f}(s), F'(\mathfrak{\zeta})\cdot\nabla\varphi \rangle ds - \int_0^t \langle {f}(s), A'(\mathfrak{\zeta}) (-\Delta)^\alpha[\varphi] \rangle ds\notag\\
	&\qquad+\sum_{k=1}^\infty \int_0^t\int_{\mathbb{T}^N}\int_{\mathbb{R}} h_k(x,\mathfrak{\zeta}) \varphi (x,\mathfrak{\zeta}) d{\mathfrak{\mathcal{V}}}_{x,s}(\mathfrak{\zeta}) dx ds\notag\\
	&\qquad +\frac{1}{2} \int_0^t \int_{\mathbb{T}^N}\int_{\mathbb{R}} \partial_{\mathfrak{\zeta}} \varphi( x,\mathfrak{\zeta}) H^2(x,\mathfrak{\zeta}) d{\mathfrak{\mathcal{V}}}_{x,s}(\mathfrak{\zeta}) dx ds  - {m}(\partial_{\mathfrak{\zeta}}\varphi)([0,t]),
	\end{align}
	We state the following proposition which help to extend kinetic formulation for all $t\in[0,T]$. 
\begin{proposition}\label{th4.15}
	There exists a measurable subset $\Omega_2$ of ${\Omega}$ of probability 1, and a random Young measure ${\mathfrak{\mathcal{V}}}^+$ on $\mathbb{T}^N\times(0, T)$ such that the following properties holds:
	\begin{enumerate}
		\item[1.] for all ${\omega}\in\Omega_2$, for almost every $(x,t)\in\mathbb{T}^N\times(0,T)$, the probability measures ${\mathfrak{\mathcal{V}}}_{x,t}^+$  and ${\mathfrak{\mathcal{V}}}_{x,t}$ are equal.
		\item[2.] the kinetic function ${f}^+(x,t,\mathfrak{\zeta}):= {\mathfrak{\mathcal{V}}}_{x,t}^+(\mathfrak{\zeta},+\infty)$ satisfies: for all ${\omega}\in\Omega_2$, for all $\varphi\in C_c(\mathbb{T}^N\times(0,T))$, $t\mapsto \langle {f}^+(t), \varphi \rangle$ is c\'adl\'ag.
		\item[3.] The random Young measure ${\mathfrak{\mathcal{V}}}^+ $ satisfies
		$${\mathbb{E}}\sup_{t\in[0,T]}\int_{\mathbb{T}^N}\int_{\mathbb{R}}|\mathfrak{\zeta}|^p d{\mathfrak{\mathcal{V}}}_{x,t}^+(\mathfrak{\zeta}) dx\,\,\le\,\, C_p.$$
	\end{enumerate}
\begin{proof}
	For a proof, we refer to \cite{sylvain} in which \eqref{4.11} is helpful to get this result.
\end{proof}
\end{proposition}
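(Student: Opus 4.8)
The plan is to promote the kinetic identity \eqref{4.17}, which so far is known only for $t$ outside the null set $\mathcal{N}_0$, to an identity valid for \emph{every} $t\in(0,T)$, by exhibiting a representative that is right-continuous with left limits and organizing the associated right limits into a bona fide random Young measure. First I would fix $\varphi\in C_c^2(\mathbb{T}^N\times\mathbb{R})$ and analyze the $t$-dependence of the right-hand side of \eqref{4.17}. The two drift integrals and the $H^2$-correction integral are continuous in $t$, and the stochastic (It\^o) term possesses a continuous-in-$t$ modification; the only term carrying possible jumps is $t\mapsto \mathfrak{m}(\partial_\zeta\varphi)([0,t])$, which is of bounded variation since it is the distribution function of the finite signed measure $\partial_\zeta\varphi\, d\mathfrak{m}$ on $[0,T]$. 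Hence, for each fixed $\varphi$, the map $t\mapsto\langle f(t),\varphi\rangle$ agrees for a.e.\ $t$ with a function admitting left and right limits at every point and having at most countably many discontinuities.

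I would then perform the construction simultaneously over a countable dense family. Pick $\{\varphi_k\}_{k\ge1}$ dense in $C_c(\mathbb{T}^N\times\mathbb{R})$ and let $\mathcal{S}\subset[0,T]$ be the full-measure set on which \eqref{4.17} holds for each $\varphi_k$. On a full-probability event $\Omega_2$ I would set, for all $t\in(0,T)$,
$$\langle f^+(t),\varphi_k\rangle := \lim_{s\downarrow t,\, s\in\mathcal{S}} \langle f(s),\varphi_k\rangle,$$
the limits existing by the first step. Density of $\{\varphi_k\}$ together with the uniform moment bound \eqref{4.11}, which provides the tightness preventing any escape of mass to $\zeta=\pm\infty$, lets this functional extend to all $\varphi\in C_c(\mathbb{T}^N\times\mathbb{R})$ and be realized as $f^+(x,t,\zeta)=\mathfrak{\mathcal{V}}^+_{x,t}(\zeta,+\infty)$ for a probability measure $\mathfrak{\mathcal{V}}^+_{x,t}$ vanishing at infinity, exactly the mechanism used for the left limit in Proposition \ref{Proposition 3.1}. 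By construction $t\mapsto\langle f^+(t),\varphi\rangle$ is c\'adl\'ag, which is property 2; and property 3 follows by restricting \eqref{4.11} to intervals shrinking down to $t$ and combining right-continuity with the weak-$*$ lower semicontinuity of the moment $\mathfrak{\mathcal{V}}\mapsto\int|\zeta|^p\,d\mathfrak{\mathcal{V}}$.

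It remains to identify $\mathfrak{\mathcal{V}}^+$ with $\mathfrak{\mathcal{V}}$. For fixed $\varphi$ the equality $\langle f(t),\varphi\rangle=\langle f^+(t),\varphi\rangle$ holds at every continuity point of $t\mapsto\mathfrak{m}(\partial_\zeta\varphi)([0,t])$, and the atoms form a countable, hence null, set; running this over the countable family and invoking Fubini gives $f(t)=f^+(t)$ in $L^\infty(\mathbb{T}^N\times\mathbb{R})$ for a.e.\ $t$, whence $\mathfrak{\mathcal{V}}^+_{x,t}=\mathfrak{\mathcal{V}}_{x,t}$ for a.e.\ $(x,t)$, which is property 1. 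The hard part will be the middle step: verifying that the right limits taken along $\{\varphi_k\}$ actually patch together, at every $t$ and on a single exceptional event, into an honest Young measure rather than a defective sub-probability one. This is exactly where \eqref{4.11} is indispensable, since its uniform-in-$t$ control of the $p$-th moment rules out the loss of mass at infinity that would otherwise violate $\mathfrak{\mathcal{V}}^+_{x,t}(\mathbb{R})=1$.
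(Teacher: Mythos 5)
Your proposal is correct and follows essentially the same route as the proof the paper delegates to \cite{sylvain}: one takes right limits of $t\mapsto\langle f(t),\varphi\rangle$ along a countable dense family of test functions (the only possible discontinuities coming from the bounded-variation term $t\mapsto\mathfrak{m}(\partial_{\mathfrak{\zeta}}\varphi)([0,t])$, mirroring the left-limit construction of Proposition \ref{Proposition 3.1}), uses the interval-averaged moment bound \eqref{4.11} precisely to supply the tightness that keeps ${\mathfrak{\mathcal{V}}}^+_{x,t}$ a genuine probability Young measure rather than a defective one, and identifies ${\mathfrak{\mathcal{V}}}^+$ with ${\mathfrak{\mathcal{V}}}$ off the countable set of atoms. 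You correctly isolate the role of \eqref{4.11}, which is exactly the ingredient the paper singles out as the key to the cited argument.
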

\noindent
Here, we consider only the c\'adl\'ag version. We have to replace ${\mathfrak{\mathcal{V}}}$ by ${\mathfrak{\mathcal{V}}}^+$ and ${f}$ by ${f}^+$ (by help of It\^o isometry for It\^o integral) in \eqref{4.17} to conclude that the kinetic formulation is true for all t. That is, $\mathbb{P}$-almost surely, 
for all $t\in[0,T]$, for all $\varphi\in C_c^2(\mathbb{T}^N\times\mathbb{R})$ 
\begin{align}\label{4.26}
\langle {f}^+(t),\varphi \rangle &= \langle {f}_0, \varphi \rangle + \int _0^t \langle {f}^+(s), F'(\mathfrak{\zeta})\cdot\nabla\varphi \rangle - \int_0^t \langle {f}^+(s), A'(\mathfrak{\zeta}) (-\Delta)^\alpha[\varphi] \rangle ds\notag\\
&\qquad +\sum_{k=1}^\infty \int_0^t\int_{\mathbb{R}}\int_{\mathbb{T}^N} h_k(x,\mathfrak{\zeta}) \varphi (x,\mathfrak{\zeta}) d{\mathfrak{\mathcal{V}}}^+(\mathfrak{\zeta}) dx ds\notag\\
&\qquad +\frac{1}{2} \int_0^t\int_{\mathbb{R}} \int_{\mathbb{T}^N} \partial_{\mathfrak{\zeta}} \varphi( x,\mathfrak{\zeta} ) H^2(x,\mathfrak{\zeta}) d{\mathfrak{\mathcal{V}}}^+(\mathfrak{\zeta}) dx ds  - {m}(\partial_{\mathfrak{\zeta}}\varphi)([0,t])
\end{align}
$\mathbb{P}$-almost surely.

\noindent
Here, we do not want formulation of kinetic solution in the form in which ${f}^+$ is only kinetic function. But $f^+$ should be equilibrium for all $t\in[0,T]$. We have only  $\mathbb{P}$-almost surely ${f}^+(x,t,\mathfrak{\zeta})=\mathbbm{1}_{{u}(x,t)\textgreater\mathfrak{\zeta}}$ for all $t\in[0,T]\backslash \mathcal{N}_0$, almost $(x,\mathfrak{\zeta})\in\mathbb{T}^N\times\mathbb{R}$. To show that ${f}^+$ has equilibrium form for all $t\in[0,T]$, we can repeat the proof of Theorem \ref{th3.6} with ${f}^+$ and kinetic measure ${\mathfrak{m}}$. Then we get, ${\mathbb{P}}$-almost surely, for all $t\in[0,T]$,
$${f}^+(x,t,\mathfrak{\zeta})=\mathbbm{1}_{\tilde{u}(x,t)\textgreater\mathfrak{\zeta}}$$
where 
$$\tilde{u}(x,t)=\int_{\mathbb{R}} ({f}^+(x,t,\mathfrak{\zeta})-\mathbbm{1}_{0\textgreater\mathfrak{\zeta}})\,\,d\mathfrak{\zeta}.$$
Since ${\mathbb{P}}$-almost surely, $u(t)=\tilde{u}(t)$ almost $t\in [0,T]$. We can modify $u$ as $\mathbb{P}$-almost surely, ${u}(t)=\tilde{u}(t)$ for all $t\in[0,T]$.
Finally, ${(u(t))_{t\in[0,T]}}$ and $f(t)=\mathbbm{1}_{u(t)\textgreater\mathfrak{\zeta}}$ (after redefine) satisfies the all points of definition of kinetic solution. It shows that ${(u(t))_{t\in[0,T]}}$ is kinetic solution.
\subsection*{Existence for general initial data}\label{section 6}
Here, we provide an existence proof for the general case, for all $p\in[1,+\infty)$, $u_0\in L^p(\Omega; L^p(\mathbb{T}^N))$. Let $u^n_0$ be a sequence in $L^p(\Omega; C^\infty(\mathbb{T}^N))$ such that sequence $u^n_0$ converges to $u_0$ in $L^1(\Omega\times\mathbb{T}^N)$. That is, the initial conditions $u_0^n$ can be defined as a pathwise molification of $u_0$ then the following bound holds:
\begin{align}\label{4.27}
\|u_0^n\|_{L^p(\Omega; L^p(\mathbb{T}^N))}\,\le\,\|u_0\|_{L^p(\Omega; L^p(\mathbb{T}^N))}
\end{align}
It is clear from the previous case that for each $n\in\mathbb{N}$, there exists a kinetic solution $u^n$ to \eqref{1.1} with initial data $u_0^n$. From the contraction principle, we have for all $t\in[0,T],$
\[\mathbb{E}\|u^{n_1}(t)-u^{n_2}(t)\|_{L^1(\mathbb{T}^N)}\,\le\,\mathbb{E}\|u_0^{n_1}-u_0^{n_2}\|_{L^1(\mathbb{T}^N)},\,\,\text{for all}\,n_1, n_2 \in\mathbb{N}.\]
By \eqref{4.27} and \eqref{lp estimate}, we have the following uniform estimates, $p\in[1,+\infty)$, 
\begin{align}\label{4.28}
\mathbb{E}\,\big[\sup_{0\le t\le T}\|u^n(t)\|_{L^p(\mathbb{T}^N)}^p\big]\,\le\,C_{T}\mathbb{E}\big[\|u_0\|_{L^p(\mathbb{T}^N)}^p\big]
\end{align}
and
\begin{align*}
\mathbb{E}|m^n(\mathbb{T}^N\times[0,T]\times\mathbb{R})|^2\,\le\,C_{T,u_0}.
\end{align*}
Thus, from the observations of previous subsections, we can show that there exists a subsequence $u^{n_k}$ such that
\begin{enumerate}
	\item[A.] there exists $u\in L^p_{\mathcal{P}}(\Omega\times[0,T];L^1(\mathbb{T}^N))$ such that $u^{n_k}\,\to u$ in $ L^p_{\mathcal{P}}(\Omega\times[0,T];L^1(\mathbb{T}^N))$ as $n_k\,\to\,\infty$.
	\item[B.] $f^{n_k}\,\overset{w^*}{\to}f=\mathbbm{1}_{u \textgreater\mathfrak{\zeta}}$\,\,in\,\, $L^\infty(\Omega\times\mathbb{T}^N\times[0,T]\times\mathbb{R})$-weak-star,
	\item[C.] there exists a kinetic measure $\mathfrak{m}$ such that
	\[m^{n_k}\overset{w^*}{\to}\,\mathfrak{m}\qquad\,in\qquad L_w^2(\Omega; \mathcal{M}_b(\mathbb{T}^N\times[0,T]\times\mathbb{R}))-weak^*\]
	and $\mathfrak{m}\ge\mathfrak{\eta}_1$, $\mathbb{P}$-almost surely.
\end{enumerate}
With these information, we can pass the limit in \eqref{2.6} and conclude that $u$ is the kinetic solution to \eqref{1.1}. This finishes the proof of Theorem \ref{th2.10}.

\section{invariant measure: Proof of Theorem \ref{Invariant measure}}\label{section 4}
In this section, we prove existence and uniquenes of invariant measure. In case of existence and uniqueness of an invariant measure, we focus only for additive noise and $\alpha\in (0, \frac{1}{2})$.
\subsection{Existence of invariant measure} \label{subsection 4.1}
In this subsection, we prove the existence of invariant measures as an application of the Krylov-Bogoliubov theorem. There are many ways to prove the existence of invariant measures. One of them is the Krylov-Bogoliubov approach, as we discussed in this article. 

\noindent
\textbf{Existence:}
Let $u_0\in L^3(\mathbb{T}^N)$ and $u$ be solution to \eqref{1.1} starting from $u_0$.
Let $\beta,\,\,\delta,\,\,\theta\,\,\textgreater\,0$ be some positive parameters.  On $L^2(\mathbb{T}^N)$, we consider the following regularization of the operators: Consider the kinetic formulation of equation \eqref{1.1}, $\mathbb{P}$-almost surely,
\begin{align}\label{fomulation in distribution sense}
\partial_t f +\,\big(F'(\mathfrak{\zeta})\cdot\nabla + A'(\mathfrak{\zeta})\big(-\Delta\big)^\alpha\big)f =\partial_{\mathfrak{\zeta}}\big(\mathfrak{m}-\frac{1}{2}H^2 \delta_{u=\mathfrak{\zeta}}\big)+\varphi(x)\delta_{u=\mathfrak{\zeta}} \partial_t W
\end{align}
where $f=\mathbbm{1}_{u\textgreater\mathfrak{\zeta}}-\mathbbm{1}_{0\textgreater\,\mathfrak{\zeta}}$.
In order to handle the two measures: $\mu_1:=\mathfrak{m}-\frac{1}{2}H^2\delta_{u=\mathfrak{\zeta}}$ and $\mu_2:= \varphi(x)\delta_{u=\mathfrak{\zeta}}$$\partial_t{W}$, we need to regularize the operators. We define
$$A_\theta:= -F'(\mathfrak{\zeta})\cdot\nabla-B_\theta,\,\,\,A_{\theta,\delta}=A_\theta-\delta Id,\,\,\,\,B_\theta=C_\theta+ A'(\mathfrak{\zeta})\big(-\Delta\big)^\alpha,\,\,\, C_\theta=\theta\big(-\Delta\big)^\beta.$$
Let $S_{A_\theta}(t)$ and $S_{A_{\theta,\delta}}(t)$ be also the associated semi-groups on $L^2(\mathbb{T}^N)$:
$$S_{A_\theta}(t)\kappa(x)=\big(\e ^{-tB_{\theta}}\kappa(x)\big)(x-F'(\mathfrak{\zeta})t),\,\,\,\,\,\,\,S_{A_{\theta,\delta}}(t)\kappa(x)= \e^{-\delta t}\big(\e^{t\,B_\theta}\kappa\big)(x-F'(\mathfrak{\zeta})t).$$
Adding $\theta(-\Delta)^\beta+\delta Id$ to each side to \eqref{fomulation in distribution sense}:
\begin{align}
(\partial_t - A_{\theta,\delta})f =\big(C_\theta+\delta Id\big)f+\partial_{\mathfrak{\zeta}}\big(\mu_1\big)+\mu_2.
\end{align}
Here, we adapt the semi group approach(cf.\cite[Proposition 6.3 ]{prato}). We can express to the kinetic formulation in the mild formulation as $\mathbb{P}$-almost surely,
\begin{align}
f(x,t,\mathfrak{\zeta})=&S_{A_{\theta,\delta}}(t) f_0(x,\mathfrak{\zeta})+\int_0^t S_{A_{\theta,\delta}}(s)\big(C_\theta f(x,s,\mathfrak{\zeta})+\delta f(x,s,\mathfrak{\zeta}) \big)f(x,\mathfrak{\zeta},t-s )ds\notag\\&\qquad+\int_0^t S_{A_{\theta,\delta}}(t-s)\partial_{\mathfrak{\zeta}}\mu_1(x,s,\mathfrak{\zeta}) ds
+\int_0^t S_{A_{\theta,\delta}}(t-s)\varphi(x) \delta_{u=\mathfrak{\zeta}} \,dW(s). 
\end{align}
We deduce the  following decompsition of the solution:
\begin{align}\label{u}
u=u^0+u^b+P+Q
\end{align}
where
\begin{align}
u^0(x,t)&=\int_{\mathbb{R}} S_{A_{\theta,\delta}}(t)f(\mathfrak{\zeta},0)d\mathfrak{\zeta},\\
u^b(x,t)&=\int_{\mathbb{R}}\int_0^t S_{A_{\theta,\delta}}(s)(C_\theta f+\delta f)(\mathfrak{\zeta},t-s) ds d\mathfrak{\zeta},\\
\langle P(t),\varphi\rangle&=\sum_{k\ge\,1}\int_{\mathbb{T}^N}\int_0^t \bigg(S^*_{A_{\theta,\delta}}(t-s)\varphi\bigg)(x,u(x,s))h_k(x)d\beta_k(s)dx.\\
&\text{and}\\
\langle Q(t), \varphi \rangle&=\int_0^t\int_{\mathbb{T}^N}\langle \partial_\mathfrak{\zeta} \mu_1(x,\cdot,t-s),S^*_{A_{\theta,\delta}}(s)\varphi\rangle_{\mathfrak{\zeta}}dx ds.
\end{align}
where $\varphi\in C(\mathbb{T}^N)$ and $S^*_{A_{\theta,\delta}}(t)$ is the dual operator of the semigroup operator $S_{A_{\theta\,\delta}}(t).$
We will estimate each term separately.  
\subsection*{ Estimate on $u^0$:}
Here we use the Fourier transform with respect for $x\in\mathbb{T}^N$, that is,
$$\hat{v}(j)=\int_{\mathbb{T}^N}v(x)\,e^{-2\pi i j\cdot x}dx,\,\,\,\,\,\,j\in\mathbb{Z}^N,\,\,\,\,v\in L^1(\mathbb{T}^N).$$
After Fourier transform, for all $j\in\mathbb{Z}^N,\,\,j\neq0,$
\begin{align*}\hat{v}(j,t)&=\int_{\mathbb{R}}\e^{-\big(iF'(\mathfrak{\zeta})\cdot j+A'(\mathfrak{\zeta})|j|^{2\alpha}+\theta|j|^{2\beta}+\delta\big)t} \hat{f}_0(j,\mathfrak{\zeta})\, d\mathfrak{\zeta}\\
&=\int_{\mathbb{R}}\e^{-\big(iF'(\mathfrak{\zeta})\cdot \hat{j}\,|j|+A'(\mathfrak{\zeta})|j|^{2\alpha-1}|j|+\omega_j|j|\big)t}\hat{f}_0(j,\mathfrak{\zeta})\,d\mathfrak{\zeta},
\end{align*}
where $\omega_j=\theta|j|^{2\beta-1}+\delta|j|^{-1}$ and $\hat{j}=\frac{j}{|j|}$.\\
Then, for $T\,\ge\,0,\,\,\,j\neq0,$ we get 
\begin{align*}
\int_0^T|\hat{u}^0(j,t)|^2 dt&=\int_0^T \bigg|\int_{\mathbb{R}}\e^{-\big(iF'(\mathfrak{\zeta})\cdot \hat{j}\,|j|+A'(\mathfrak{\zeta})|j|^{2\alpha-1}|j|+\omega_j|j|\big)t}\hat{f}_0(j,\mathfrak{\zeta})\,d\mathfrak{\zeta}\bigg|^2 dt\\
&\le\int_{\mathbb{R}^+} \bigg|\int_{\mathbb{R}}\e^{-\big(iF'(\mathfrak{\zeta})\cdot \hat{j}\,|j|+A'(\mathfrak{\zeta})|j|^{2\alpha-1}|j|+\omega_j|j|\big)t}\hat{f}_0(j,\mathfrak{\zeta})\,d\mathfrak{\zeta}\bigg|^2 dt\\
&\le\frac{1}{|j|}\int_{\mathbb{R}^+} \bigg|\int_{\mathbb{R}}\e^{-\big(iF'(\mathfrak{\zeta})\cdot \hat{j}\,+A'(\mathfrak{\zeta})|j|^{2\alpha-1}+\omega_j\big)s}\hat{f}_0(j,\mathfrak{\zeta})\,d\mathfrak{\zeta}\bigg|^2 ds.
\end{align*}
Here we thank to the change of variable $s=|j|t$ for last inequality.
To estimate right hand side, we want to use Fourier transformation in time variable as follows: 
\begin{align}
\int_0^T|\hat{u}^0(j,t)|^2 dt&\le\frac{1}{|j|}\int_{\mathbb{R}^{+}} \bigg|\int_{\mathbb{R}}\e^{-iF'(\mathfrak{\zeta})\cdot \hat{j}s}\,\e^{-\big(A'(\mathfrak{\zeta})|j|^{2\alpha-1}+\omega_j\big)|s|}\hat{f}_0(j,\mathfrak{\zeta})\,d\mathfrak{\zeta}\bigg|^2 ds\notag\\
&\le\frac{1}{|j|}\int_{\mathbb{R}}\bigg|\int_{\mathbb{R}}\e^{-iF'(\mathfrak{\zeta})\cdot \hat{j}s}\,\e^{-\big(A'(\mathfrak{\zeta})|j|^{2\alpha-1}+\omega_j\big)|s|}\hat{f}_0(j,\mathfrak{\zeta})\,d\mathfrak{\zeta}\bigg|^2 ds\notag\\
&=\frac{1}{|j|}\int_{\mathbb{R}}\bigg|\mathcal{F}\bigg\{\int_{\mathbb{R}}\e^{-iF'(\mathfrak{\zeta})\cdot \hat{j}s}\,\e^{-\big(A'(\mathfrak{\zeta})|j|^{2\alpha-1}+\omega_j\big)|s|}\hat{f}_0(j,\mathfrak{\zeta})\,d\mathfrak{\zeta}\bigg\}(r)\bigg|^2dr
\end{align}
We also know that
\begin{align*}\mathcal{F}\bigg\{\e^{-iF'(\mathfrak{\zeta})\cdot \hat{j}s}\,\e^{-\big(A'(\mathfrak{\zeta})|j|^{2\alpha-1}+\omega_j\big)|s|}\bigg\}(r)&=\int_{\mathbb{R}}\e^{-iF'(\mathfrak{\zeta})\cdot \hat{j}s}\,\e^{-\big(A'(\mathfrak{\zeta})|j|^{2\alpha-1}+\omega_j\big)|s|}\e^{-irs}ds\\
&=\int_{\mathbb{R}}\e^{-\big(A'(\mathfrak{\zeta})|j|^{2\alpha-1}+\omega_j\big)|s|}\e^{-i\big(F'(\mathfrak{\zeta})\cdot\hat{j}+r\big)s}ds\\
&=\mathcal{F}\bigg\{e^{-\big(A'(\mathfrak{\zeta})|j|^{2\alpha-1}+\omega_j\big)|s|}\bigg\}(F'(\mathfrak{\zeta})\cdot\hat{j}+r)\\
&=\frac{2\big(A'(\mathfrak{\zeta})|j|^{2\alpha-1}+\omega_j\big)}{\big(A'(\mathfrak{\zeta})|j|^{2\alpha-1}+\omega_j\big)^2+|F'(\mathfrak{\zeta})\cdot\hat{j}+r|^2}.
\end{align*}
We use the above identity to conclude that
\begin{align*}
\int_0^T|\hat{u}^0(j,t)|^2 dt&\le\frac{1}{|j|}\int_{\mathbb{R}}\bigg|\int_{\mathbb{R}}\frac{2\big(A'(\mathfrak{\zeta})|j|^{2\alpha-1}+\omega_j\big)}{\big(A'(\mathfrak{\zeta})|j|^{2\alpha-1}+\omega_j\big)^2+|F'(\mathfrak{\zeta})\cdot\hat{j}+r|^2} \hat{f}_0(\mathfrak{\zeta},j)d\mathfrak{\zeta}\bigg|^2 dr\\
&\le\,\frac{4}{|j|}\int_{\mathbb{R}}\bigg(\int_{\mathbb{R}_\mathfrak{\zeta}}|\hat{f}_0(\mathfrak{\zeta},j)|^2\frac{\big(A'(\mathfrak{\zeta})|j|^{2\alpha-1}+\omega_j\big)}{\big(A'(\mathfrak{\zeta})|j|^{2\alpha-1}+\omega_j\big)^2+|F'(\mathfrak{\zeta})\cdot\hat{j}+r|^2}d\mathfrak{\zeta}\bigg)\\
&\qquad\qquad\qquad\qquad\qquad\times\bigg(\int_{\mathbb{R}_\mathfrak{\zeta}}\frac{\big(A'(\mathfrak{\zeta})|j|^{2\alpha-1}+\omega_j\big)}{\big(A'(\mathfrak{\zeta})|j|^{2\alpha-1}+\omega_j\big)^2+|F'(\mathfrak{\zeta})\cdot\hat{j}+r|^2}d\mathfrak{\zeta}\bigg)dr\\
&\le\frac{4}{|j|\omega_j}\sup_r\int_\mathbb{R}\frac{\omega_j (A'(\mathfrak{\zeta})|j|^{2\alpha-1}+\omega_j)}{(A'(\mathfrak{\zeta})|j|^{2\alpha-1}+\omega_j)^2+|F'(\mathfrak{\zeta})\cdot\hat{j}+r|^2}\,d\mathfrak{\zeta} \\
&\qquad\qquad\times\int_{\mathbb{R}}|\hat{f}_0(\mathfrak{\zeta},j)|^2\bigg(\int_\mathbb{R}\frac{(A'(\mathfrak{\zeta})|j|^{2\alpha-1}+\omega_j)}{(A'(\mathfrak{\zeta})|j|^{2\alpha-1}+\omega_j)^2+|F'(\mathfrak{\zeta})\cdot\hat{j}+r|^2}dr\bigg)d\mathfrak{\zeta}\\
&\le\,\frac{4\mathfrak{\eta}(\omega_j)}{|j|\omega_j}\times\pi\times\int_{\mathbb{R}}|\hat{f}_0(\mathfrak{\zeta},j)|^2 d\mathfrak{\zeta}\\
&\le\,\frac{4\pi}{|j|\omega_j^{1-s}}\int_{\mathbb{R}}|\hat{f}_0(\mathfrak{\zeta},j)|^2 d\mathfrak{\zeta},
\end{align*}
where second last inequality due to \eqref{non}. It implies that
\begin{align*}
|j|\omega_j^{1-s}\int_0^T|\hat{u}^0(j,t)|^2 dt&\le 4\pi \int_{\mathbb{R}}|\hat{f}_0(\mathfrak{\zeta},j)|^2 d\mathfrak{\zeta}.
\end{align*}
It is easy to see that
$$|j|\omega_j^{1-s}\,\ge\,\theta^{1-s}|j|^{(2\beta-1)(1-s)+1}$$
It implies that
\begin{align}\label{summing}
\int_0^T|j|^{2\{(\beta-\frac{1}{2})(1-s)+\frac{1}{2}\}}|\hat{u}^0(j,t)|^2 dt \le 4\pi \theta^{s-1}\int_\mathbb{R}|\hat{f}_0(\mathfrak{\zeta},k)|^2 d\mathfrak{\zeta}
\end{align}
Observe that
\begin{align*}
\hat{u}^0(0,t)&=\int_{\mathbb{T}^N} u^0(t,x)dx=\int_{\mathbb{R}}\int_{\mathbb{T}^N}\e^{-\delta t}f(0,x,\mathfrak{\zeta})d\mathfrak{\zeta} dx=\int_{\mathbb{T}^N}\e^{-\delta t}u_0(x) dx=0,
\end{align*}
Summing over $j\in\mathbb{Z}^N$ in \eqref{summing}, we have
\begin{align}
\int_0^T\|u^0(t)\|_{H^{\beta+\big(\frac{1}{2}-\beta\big)s}}dt\,\le\,4\pi\,\theta^{s-1}\|f_0\|_{L_{x,\mathfrak{\zeta}}^2}^2=\,4\pi\theta^{s-1}\|u_0\|_{L_x^1}.
\end{align}
\subsection*{Estimate on $u^b$:}
Recall that
$$u^b(x,t)=\int_{\mathbb{R}}\int_0^t S_{A_{\theta,\delta}}(s)(C_\theta f+\delta f)(\mathfrak{\zeta},t-s) ds d\mathfrak{\zeta}$$
Here, we use the same argument as above to get estimate on the Fourier transform of $u^b$ for $j\neq0$ as follows:
\begin{align*}
\hat{u}^b(j,t)&=\int_0^t\int_{\mathbb{R}}\big(\theta|j|^{2\alpha}+\delta\big)\e^{-s\big(iF'(\mathfrak{\zeta})\cdot \hat{j}\,|j|+A'(\mathfrak{\zeta})|j|^{2\alpha-1}|j|+\omega_j|j|\big)}\hat{f}(j,\mathfrak{\zeta},t-s)d\mathfrak{\zeta} ds\\
&=\int_0^t\int_{\mathbb{R}}|j|\omega_j \e^{-s|j|\big(iF'(\mathfrak{\zeta})\cdot\hat{j}+A'(\mathfrak{\zeta})|j|^{2\alpha-1} +\,\omega_j\big)}\hat{f}(x,\mathfrak{\zeta},t-s) d\mathfrak{\zeta} ds.
\end{align*}
It implies that
\begin{align*}
\int_0^T|\hat{u}^b(x&,j)^2 dt=\int\limits_0^T\bigg|\int\limits_0^t\int\limits_{\mathbb{R}}|j|\omega_j \e^{-s|j|\big(iF'(\mathfrak{\zeta})\cdot\hat{j}+A'(\mathfrak{\zeta})|j|^{2\alpha-1} +\,\omega_j\big)}\hat{f}(x,\mathfrak{\zeta},t-s) d\mathfrak{\zeta} ds\bigg|^2dt\\
&=\,\int\limits_0^T\bigg|\int\limits_0^t\int\limits_{R}\big(\sqrt{|j|\omega_j}\e^{\frac{-s|j|\omega_j}{2}}\big)\big(\sqrt{|j|\omega_j}\e^{-s|j|\big(iF'(\mathfrak{\zeta})\cdot\hat{j}+A'(\mathfrak{\zeta})|j|^{2\alpha-1} +\,\frac{\omega_j}{2}\big)}\big)\hat{f}(x,\mathfrak{\zeta},t-s)d\mathfrak{\zeta} ds\bigg|^2 dt\\
&=\,\int\limits_0^T\bigg|\int\limits_0^t\big(\sqrt{|j|\omega_j}\e^{\frac{-s|j|\omega_j}{2}}\big)\bigg(\int_{R}\sqrt{|j|\omega_j}\e^{-s|j|\big(iF'(\mathfrak{\zeta})\cdot\hat{j}+A'(\mathfrak{\zeta})|j|^{2\alpha-1} +\,\frac{\omega_j}{2}\big)}\hat{f}(x,\mathfrak{\zeta},t-s)d\mathfrak{\zeta}\bigg) ds\bigg|^2 dt\\
&\le\,\int\limits_0^t|j|\omega_j e^{-s|j|\omega_js} ds\int\limits_0^T\int_0^t\bigg|\int\limits_{R}\sqrt{|j|\omega_j}\e^{-s|j|\big(iF'(\mathfrak{\zeta})\cdot\hat{j}+A'(\mathfrak{\zeta})|j|^{2\alpha-1} +\,\frac{\omega_j}{2}\big)}d\mathfrak{\zeta}\bigg)\hat{f}(x,\mathfrak{\zeta}, t-s) \bigg|^2 dsdt\\
&=\,\int\limits_0^\infty|j|\omega_j e^{-s|j|\omega_js} ds\int\limits_0^T\int\limits_0^T\bigg|\int_{R}\sqrt{|j|\omega_j}\e^{-s|j|\big(iF'(\mathfrak{\zeta})\cdot\hat{j}+A'(\mathfrak{\zeta})|j|^{2\alpha-1} +\,\frac{\omega_j}{2}\big)}d\mathfrak{\zeta}\bigg)\hat{f}(x,\mathfrak{\zeta},t) \bigg|^2 dsdt
\end{align*}
Now,
$$\int_0^\infty|j|\omega_j e^{-s|j|\omega_j} ds=\int_0^\infty\e^{-s}ds=1,$$
and by similar computations as the previous subsection, we obtain
\begin{align*}
\int_0^T\bigg|\int_{R}\sqrt{|j|\omega_j}\e^{-s|j|\big(iF'(\mathfrak{\zeta})\cdot\hat{j}+A'(\mathfrak{\zeta})|j|^{2\alpha-1} +\,\frac{\omega_j}{2}\big)}d\mathfrak{\zeta}\bigg)\hat{f}(x,\mathfrak{\zeta},t) \bigg|^2 ds\le\frac{4\pi \omega_j^s}{2^s}\|u(t)\|_{L_x^1}
\end{align*}
\begin{align}
\int_0^T\omega_j^s|\hat{u}^b(x,j)|^2 dt\le\frac{4\pi }{2^s}\int_0^T\|u(t)\|_{L_x^1}\notag
\end{align}
\begin{align}
\int_0^T |j|^{(1-2\beta)s}|\hat{u}^b(x,j)|^2 dt\le\frac{4\pi \theta^s}{2^s}\int_0^T\|u(t)\|_{L_x^1}\notag
\end{align}
\begin{align}
\int_0^T\|u^b(t)\|^2_{H^{\big(\frac{1}{2}-\beta\big)s}}dt\le\frac{4\pi\theta^s}{2^s}\int_0^T\|u(t)\|_{L_x^1}dt.
\end{align}
\subsection*{Estimate on P:}
We recall that $P$ is random measure on $\mathbb{T}^N$ given by:
$$\langle P(t),\varphi\rangle=\sum_{k\ge\,1}\int_{\mathbb{T}^N}\int_0^t \bigg(S^*_{A_{\theta,\delta}}(t-s)\varphi\bigg)(x,u(x,s))h_k(x)d\beta_k(s)dx,\,\,\,\,\,\varphi\in C(\mathbb{T}^N).$$
Note that, for each $k\in\mathbb{N}$,
\begin{align*}
\int_{\mathbb{T}^N}\int_0^t \bigg(S^*_{A_{\theta,\delta}}(t-s)&\varphi\bigg)(x,u(x,s))h_k(x)d\beta_k(s)dx\\
&=\int_{\mathbb{T}^N}\int_0^t\, \e^{-\delta(t-s)}\big(\e^{-B_\theta(t-s)}\varphi(x)\big)(x+F'(u(x,s)))h_k(x) d\beta_k(s) dx.\\
&=\int_{\mathbb{T}^N} \int_0^t\e^{-\delta(t-s)}\varphi(x)\e^{-B_\theta(t-s)}h_k(x-F'(u(x,s)))d\beta_k(s)dx
\end{align*}
 Let set $p(x,\mathfrak{\zeta})=\delta_{u=\mathfrak{\zeta}} dx,$
then we define the measure
\begin{align*}
\langle P'(s), \varphi\rangle&=\int_{\mathbb{T}^N\times{\mathbb{R}}}\e^{-\delta(t-s)}\varphi(x)\e^{-tB_\theta(t-s)}h_k(x-F'(\mathfrak{\zeta})) dp(x,\mathfrak{\zeta})
\end{align*}
where $\varphi\in L^2(\mathbb{T}^N)$. 
By \eqref{noise1}, the mapping $x\mapsto h_k(x)$ is a bounded function. therefore by \cite[Lemma 9]{vovelle 2}, we can write for $\lambda\,\ge\,0$:
\begin{align*}
\|\big(-\Delta\big)^\frac{\lambda}{2} e^{-tB_\theta(t-s)}h_k(\cdot-F'(\mathfrak{\zeta}))\|_{L^\infty(\mathbb{T}^N)}&=\|\big(-\Delta\big)^{\frac{\lambda}{2}}\e^{-tC_\theta-tA'(\mathfrak{\zeta})\big(-\Delta\big)^\alpha }\big(h_k(\cdot-F'(\mathfrak{\zeta})(t-s))\big)\|_{L^\infty(\mathbb{T}^N)}\\
&=\|\big(-\Delta\big)^{\frac{\lambda}{2}}\e^{-tC_\theta}\bigg( \e^{-tA'(\mathfrak{\zeta})\big(-\Delta\big)^\alpha }\big(h_k(\cdot-F'(\mathfrak{\zeta})(t-s))\big)\bigg)\|\\
&\le\,c\big(\theta(t-s)\big)^{\frac{-\lambda}{2\beta}}\|\e^{-tA'(\mathfrak{\zeta})\big(-\Delta\big)^\alpha }\big(h_k(\cdot-F'(\mathfrak{\zeta})(t-s))\big)\|_{L^\infty(\mathbb{T}^N)}\\
&\le\,c\big(\theta(t-s)\big)^{\frac{-\lambda}{2\beta}}\times C_1 \|h_k\|_{C(\mathbb{T}^N)}\\
&\le\,C\big(\theta(t-s)\big)^{\frac{-\lambda}{2\beta}}\e^{-\delta(t-s)}\|h_k\|_{C(\mathbb{T}^N)}.
\end{align*}
\begin{align*}
\langle \big(-\Delta\big)^{\lambda}P'(s),\varphi\rangle&=\int_{\mathbb{T}^N\times{\mathbb{R}}}\e^{-\delta(t-s)}\varphi(x)\big(-\Delta\big)^\lambda e^{-tB_\theta(t-s)}h_k(x-F'(\mathfrak{\zeta})) dp(x,\mathfrak{\zeta})\\
&\le\,\e^{-\delta(t-s)}\bigg(\int_{\mathbb{T}^N\times\mathbb{R}}|\big(-\Delta\big)^\lambda e^{-tB_\theta(t-s)}h_k(x-F'(\mathfrak{\zeta}))|^2 dp(x,\mathfrak{\zeta})\bigg)^\frac{1}{2}\,\|\varphi\|_{L^2(\mathbb{T}^N)},\\
&\le\,C\big(\theta(t-s)\big)^{\frac{-\lambda}{2\beta}}\e^{-\delta(t-s)}\|h_k\|_{C(\mathbb{T}^N)}\,\|\varphi\|_{L^2(\mathbb{T}^N)}.
\end{align*}
It implies that
\begin{align*}
\|P'(s)\|_{H^\lambda(\mathbb{T}^N)}\le\,C\,\big(\theta(t-s)\big)^{\frac{-\lambda}{2\beta}}\e^{-\delta(t-s)}\|h_k\|_{C(\mathbb{T}^N)}.
\end{align*}
We deduce that, for $\lambda\in[0,\beta)$, this defines a function in $H^\lambda(\mathbb{T}^N)$ and
\begin{align*}
\mathbb{E}\bigg(\bigg\|\int_0^t \e^{-\delta(t-s)}&\e^{-tB_\theta(t-s)}\big(h_k(\cdot,-F'(u(\cdot,s)))d\beta_k(s)\big)\bigg\|_{H^\lambda(\mathbb{T}^N)}^2\bigg)\\
&=\mathbb{E}\int_0^t \|P'(s)\|_{H^\lambda(\mathbb{T}^N)}^2 ds\\
&\le\,C\int_0^t\e^{-2\delta(t-s)}(\theta(t-s))^{\frac{-\lambda}{\beta}}\|h_k\|_{C(\mathbb{T}^N)}^2ds\\
&\le\,C\,\theta^{-\frac{\lambda}{\beta}}\delta^{\frac{\lambda}{\beta}-1}\bigg(\int_{\mathbb{R}^+}\e^{2s}\,s^{-\frac{\lambda}{\beta}}ds\bigg)\,\|h_k\|_{C(\mathbb{T}^N)}^2
\end{align*}
Since the sum over $k$ of the right hand side is finite. We conclude
\begin{align*}
\mathbb{E}\bigg(\bigg\|&\sum_{k\ge1}\int_0^t\e^{B_\theta (t-s)}\big(h_k(\cdot-F'(u(\cdot,s))(t-s))\big)d\beta (s)\bigg\|_{H^\lambda(\mathbb{T}^N)}^2\bigg)\\
&\le C\theta^{-\frac{\lambda}{\alpha}-1}(\int_0^{+\infty}\e^{-2s} s^{-\frac{\lambda}{\alpha}} ds) C_0.
\end{align*}
In fact, this shows that $P(t)$ is more regular than a measure. It is in the space $L^2(\Omega;H^\lambda(\mathbb{T}^N))$ for $\lambda\,\textless\,\alpha$ and satisfying the following estimate
\begin{align}
\mathbb{E}\big(\|P(t)\|_{H^\lambda(\mathbb{T}^N)}^2\big)\,\le\,C\,\theta^{\frac{-\lambda}{\alpha}}\delta^{\frac{\lambda}{\alpha}-1}\big(\int_0^{+\infty}\e^{-2s}\,s^{\frac{-\lambda}{\alpha}}ds\big)C_0.
\end{align}

\subsection*{Bound on the Kinetic measure:}
\begin{lem}
	Let $u:\mathbb{T}^N\times[0,T]\times\Omega\to\mathbb{R}$ be the solution to \eqref{1.1} with initial data $u_0$. Then the measure $\mu_1$ satisfies
	\begin{align}
	\mathbb{E} \int_{\mathbb{T}^N\times[0,T]\times\mathbb{R}} \theta_1(\mathfrak{\zeta}) d|\mu_1|(x,t,\mathfrak{\zeta})\,\le, C_0\, \mathbb{E}\|\theta_1(u)\|_{L^1(\mathbb{T}^N\times[0,T])}+\mathbb{E}\int_{\mathbb{T}^N}\Theta(u_0(x)) dx
	\end{align}
	for all non-negative $\theta_1\in C_c(\mathbb{R})$, where $\Theta(s)=\int_0^s\int_0^\sigma\theta_1(r) dr d\sigma.$
\end{lem}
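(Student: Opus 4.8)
The plan is to split $\mu_1=\mathfrak{m}-\tfrac12 H^2\delta_{u=\mathfrak{\zeta}}$ into its two nonnegative constituents and estimate them separately, exploiting that the total variation of a difference of nonnegative measures is dominated by their sum. Since $\mathfrak{m}\ge 0$, $\tfrac12 H^2\delta_{u=\mathfrak{\zeta}}\ge 0$ and $\theta_1\ge 0$, I would first record
\[
\int \theta_1 \, d|\mu_1| \;\le\; \int \theta_1 \, d\mathfrak{m} \;+\; \frac12\int_{\mathbb{T}^N\times[0,T]}\theta_1(u(x,t))\,H^2(x)\,dx\,dt .
\]
The second term is immediate: in the additive setting assumption (B.3) gives $H^2(x)=\sum_{k\ge1}|h_k(x)|^2\le C_0$, so it is bounded by $\tfrac{C_0}{2}\,\mathbb{E}\|\theta_1(u)\|_{L^1(\mathbb{T}^N\times[0,T])}$. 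The whole content of the lemma therefore reduces to estimating $\mathbb{E}\int\theta_1\,d\mathfrak{m}$.

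For that I would test the kinetic formulation \eqref{formulation} against the $x$-independent function $\varphi(x,\mathfrak{\zeta})=\Theta'(\mathfrak{\zeta})=\int_0^{\mathfrak{\zeta}}\theta_1(r)\,dr$, for which $\partial_{\mathfrak{\zeta}}\varphi=\Theta''=\theta_1$. Because $\varphi$ does not depend on $x$, the transport term $\langle f,F'(\mathfrak{\zeta})\cdot\nabla\varphi\rangle$ and the nonlocal term $\langle f,A'(\mathfrak{\zeta})(-\Delta)^\alpha[\varphi]\rangle$ vanish identically. The one bookkeeping point is that $\Theta'$ is bounded but not compactly supported in $\mathfrak{\zeta}$; I would handle this by working with the recentred kinetic function $\chi_u=\mathbbm{1}_{u>\mathfrak{\zeta}}-\mathbbm{1}_{0>\mathfrak{\zeta}}$ (adding this $\mathfrak{\zeta}$-constant does not affect the evolution), for which a direct computation yields $\langle\chi_{u(t)},\Theta'\rangle=\int_{\mathbb{T}^N}\Theta(u(x,t))\,dx$, and I would legitimise the test function by truncating $\Theta'$ and passing to the limit, using the decay of $\mathfrak{m}$ at infinity (Definition \ref{kinetic measure 2}) together with the boundedness of $\Theta'$ and the compact support of $\theta_1$. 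This produces the entropy balance
\[
\int_{\mathbb{T}^N}\Theta(u(t))\,dx + \mathfrak{m}(\theta_1)([0,t]) = \int_{\mathbb{T}^N}\Theta(u_0)\,dx + M(t) + \frac12\int_0^t\int_{\mathbb{T}^N}\theta_1(u)\,H^2\,dx\,ds,
\]
where $M(t)=\sum_{k\ge1}\int_0^t\int_{\mathbb{T}^N}h_k(x)\,\Theta'(u(x,s))\,dx\,d\beta_k(s)$.

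Finally I would take expectations at $t=T$. Since $\theta_1\ge 0$ is compactly supported, $\Theta$ is convex, nonnegative, with $\Theta'$ bounded; hence $\int_{\mathbb{T}^N}\Theta(u(T))\,dx\ge 0$ may be discarded, and because $\sum_{k\ge1}\|h_k\|_{C(\mathbb{T}^N)}^2\le C_0$ and $\Theta'$ is bounded, $M$ is a genuine martingale with $\mathbb{E}\,M(T)=0$. Rearranging gives
\[
\mathbb{E}\int\theta_1\,d\mathfrak{m}\;\le\; \mathbb{E}\int_{\mathbb{T}^N}\Theta(u_0)\,dx \;+\; \frac{C_0}{2}\,\mathbb{E}\|\theta_1(u)\|_{L^1(\mathbb{T}^N\times[0,T])},
\]
and combining this with the elementary bound on the $\tfrac12 H^2\delta_{u=\mathfrak{\zeta}}$ piece yields the claimed inequality with constant $C_0$. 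The only genuinely delicate point is the rigorous use of the non-compactly-supported test function $\Theta'$ — that is, the recentring via $\chi_u$ and the truncation-and-limit argument controlling the behaviour as $|\mathfrak{\zeta}|\to\infty$; the remaining steps are the standard entropy/Itô computation already used for the uniform estimates in Proposition \ref{p estimate}.
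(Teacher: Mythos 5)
Your proof is correct and follows essentially the same route as the paper, which simply defers to \cite[Lemma 11]{vovelle 2}: there the bound is obtained exactly as you do, by splitting $|\mu_1|\le \mathfrak{m}+\tfrac12 H^2\delta_{u=\mathfrak{\zeta}}$, testing the kinetic formulation against the $x$-independent function $\Theta'$ (so the transport and fractional terms drop out), and using $\mathbb{E}M(T)=0$ together with $\Theta\ge 0$ and $H^2\le C_0$. Your explicit treatment of the non-compactly-supported test function via the recentred $\chi_u$ and truncation, justified by the decay condition in Definition \ref{kinetic measure 2}, is precisely the bookkeeping the cited proof relies on, so there is no gap.
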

\begin{proof}
	One can follow similar lines as proposed in \cite[Lemma 11]{vovelle 2} for a proof.
\end{proof}
\subsection*{Estimate on $Q$:}
Recall that
\begin{align*}
\langle Q(t), \varphi \rangle&=\int_0^t\int_{\mathbb{T}^N}\langle \partial_\mathfrak{\zeta} \mu_1(x,\cdot,t-s),S^*_{A_{\theta,\delta}}(s)\varphi\rangle_{\mathfrak{\zeta}}dx ds.
\end{align*}
By using the following identity
\begin{align*}
\partial_{\mathfrak{\zeta}} \big(S_{A_{\theta,\delta}}^*(t-s)\varphi(x)\big)=-(t-s)F''(\mathfrak{\zeta})\cdot\nabla\big(S_{A_{\theta,\delta}}^*(t-s)\varphi\big)-(t-s)A''(\mathfrak{\zeta})\big(-\Delta\big)^\alpha\big(S_{A_{\theta,\delta}}(t-s)\varphi\big),
\end{align*}
then we have
\begin{align*}
\langle Q(t), \varphi \rangle&=-\int_0^t\int_{\mathbb{T}^N}\int_{\mathbb{R}}\partial_\mathfrak{\zeta}\big( S^*_{A_{\theta,\delta}}(t-s)\varphi\big) d\mu_1(x,\mathfrak{\zeta},s)\\
&=\int_0^t\int_{\mathbb{T}^N}\int_{\mathbb{R}}(t-s)F''(\mathfrak{\zeta})\cdot\nabla\big(S_{A_{\theta,\delta}}^*(t-s)\varphi\big)d\mu_1(x,s,\mathfrak{\zeta})\\&\qquad\qquad\qquad\qquad\qquad+\int_0^t\int_{\mathbb{T}^N}\int_{\mathbb{R}}A''(\mathfrak{\zeta})\big(-\Delta\big)^\alpha\big(S_{A_{\theta,\delta}}(t-s)\varphi\big) d\mu_1(x,t,\mathfrak{\zeta})\\
&:=\langle Q_1, \varphi\rangle+\langle Q_2, \varphi \rangle,
\end{align*}
where
\begin{align*}
\langle Q_1, \varphi\rangle&=\int_0^t\int_{\mathbb{T}^N}\int_{\mathbb{R}}(t-s)F''(\mathfrak{\zeta})\cdot\nabla\big(S_{A_{\theta,\delta}}^*(t-s)\varphi\big)d\mu_1(x,s,\mathfrak{\zeta}),\\
\langle Q_2, \varphi \rangle&=\int_0^t\int_{\mathbb{T}^N}\int_{\mathbb{R}}A''(\mathfrak{\zeta})\big(-\Delta\big)^\alpha\big(S_{A_{\theta,\delta}}(t-s)\varphi\big) d\mu_1(x,t,\mathfrak{\zeta}).
\end{align*}
From \cite[section 3.6]{vovelle 2}, we can deduce the following estimate on $Q_1:$
\begin{align}\label{Q-1}
\mathbb{E}\|Q_1\|_{L^1((0,T);W^{\lambda,p}(\mathbb{T}^N))}\,\le\,C_{N,\beta,\lambda,p}\theta^{-2}\big(\frac{\theta}{\delta}\big)^{\kappa}\bigg(D\,\mathbb{E}\|F''(u)\|_{L^1(\mathbb{T}^N\times(0,T))}+\mathbb{E}\int_{\mathbb{T}^N}\Theta(u_0)dx\bigg),
\end{align}
where $\Theta(s)=\int_0^s\int_0^\sigma|F'(r)|drd\sigma,$ provided $\kappa=2-\frac{N+\lambda+1}{2\beta}+\frac{N}{2\beta}\,\textgreater\,0.$\\
As similar calculation in \cite[Section 3.6]{vovelle 2}, we can get the following estimate on $Q_2:$
\begin{align}\label{Q-2}
\mathbb{E}\|Q_2\|_{L^1((0,T);W^\lambda,p(\mathbb{T}^N))}\,\le\,C_{N,\beta,\lambda, p}\,\theta^{-2}\,\big(\frac{\theta}{\delta}\big)^{\kappa_1}\bigg(D\,\mathbb{E}\|A''(u)\|_{L^1(\mathbb{T}^N\times(0,T))}+\mathbb{E}\int_{\mathbb{T}^N}\Theta(u_0)dx\bigg),
\end{align}
where $\kappa_1=2-\frac{N+\lambda+2\alpha}{2\beta}+\frac{N}{2\beta}$.
\subsection*{$W^{s,q}$- Regularity bound on $u$: }
Here, We first collect all the estimates together. For $\lambda\,\textless\,\beta$ and $\beta\,\textless\,\frac{1}{2}$, we have the following estimates
\begin{align}
\int_0^T\|u^0(t)\|_{H^{\beta+\big(\frac{1}{2}-\beta\big)s}}dt\,&\le\,\,4\pi\theta^{s-1}\|u_0\|_{L_x^1},\label{u-1}\\
\int_0^T\|u^b(t)\|^2_{H^{\big(\frac{1}{2}-\beta\big)s}}dt&\le\frac{4\pi\theta^s}{2^s}\int_0^T\|u(t)\|_{L_x^1}dt,\label{u-2}\\
\mathbb{E}\big(\|P(t)\|_{H^\lambda(\mathbb{T}^N)}^2\big)\,&\le\,C(\theta,\delta, D),
\end{align}
and
\begin{align}\label{u-4}
\mathbb{E}\|Q\|_{L^1((0,T);W^{\lambda,p}(\mathbb{T}^N))}\,\le\,&C_{N,\beta,\lambda,p}\theta^{-2}\bigg(\big(\frac{\theta}{\delta}\big)^{\kappa}+\big(\frac{\theta}{\delta}\big)^{\kappa_1}\bigg)\notag\\&\qquad\bigg(\,\mathbb{E}\|F''(u)\|_{L^1(\mathbb{T}^N\times(0,T))}+\mathbb{E}\|A''(u)\|_{L^1(\mathbb{T}^N)}+\mathbb{E}\int_{\mathbb{T}^N}\Theta(u_0)dx\bigg),
\end{align}
Since $H^{\beta+\big(\frac{1}{2}-\beta\big)s}(\mathbb{T}^N)\hookrightarrow H^{\big(\frac{1}{2}-\beta\big)s}(\mathbb{T}^N),$
then  from \eqref{u-1} we have
\begin{align}\label{u-5}
\int_0^T\|u^0(t)\|_{H^{\big(\frac{1}{2}-\beta\big)s}}dt\,\le\,\,C\,\|u_0\|_{L_x^1}
\end{align}
Let $r\,\textgreater\,0,\,\,q\,\ge\,1$ such that $r\,\le\,(\frac{1}{2}-\beta)s$ and $\frac{1}{q}\,\ge\,\frac{r}{N}+\frac{1}{2}-\frac{(1-2\beta)s}{2N}$, then
$$H^{\big(\frac{1}{2}-\beta\big)s}(\mathbb{T}^N)\hookrightarrow W^{r,q}(\mathbb{T}^N).$$
It implies that 
\begin{align}
\mathbb{E}\big(\|u^0+u^b+P\|_{L^2((0,T);W^{r,q})}^2\big)\le\,C(\theta,\delta, C_0)\big(1+\mathbb{E}\|u\|_{L^1(\mathbb{T}^N\times(0,T))}+\mathbb{E}\|u_0\|_{L^1(\mathbb{T}^N)}+T\big).
\end{align}
By the H\"older ineqality, we have
\begin{align}\label{w1}
\mathbb{E}\big(\|u^0+u^b+P\|_{L^1((0,T);W^{r,q}(\mathbb{T}^N))}^2\big)\le\,C(\theta,\delta, C_0)\,T\,\big(1+\mathbb{E}\|u\|_{L^1(\mathbb{T}^N\times(0,T))}+\mathbb{E}\|u_0\|_{L^1(\mathbb{T}^N)}+T\big),
\end{align}
By using Jensen inequality,
\begin{align}\label{w2}
\mathbb{E}\big(\|u^0+u^b+P\|_{L^1((0,T);W^{r,q}(\mathbb{T}^N))}\big)\le\,C(\theta,\delta, C_0)\,T^{1/2}\,\big(1+\mathbb{E}\|u\|_{L^1(\mathbb{T}^N\times(0,T))}+\mathbb{E}\|u_0\|_{L^1(\mathbb{T}^N)}+T\big)^{1/2}.
\end{align}
Now by using Young inequality, we obtain
\begin{align}\label{w3}
T\big(1+\mathbb{E}\|u_0\|_{L^1(\mathbb{T}^N)}\big)\,\le\,T^2+\big(1+\mathbb{E}\|u_0\|_{L^1(\mathbb{T}^N)}\big)^2/2,
\end{align}
and
\begin{align}\label{w4}
T\mathbb{E}\|u\|_{L^1((0,T)\times\mathbb{T}^N)}\le\,4\tau T^2+\frac{\mathbb{E}^2\|u\|_{L^1((0,T)\times\mathbb{T}^N)}}{16\tau}\,\,\text{for some}\,\,\tau\,\textgreater\,0.
\end{align}
From \eqref{w1}-\eqref{w4} and choose $\tau$ such that, we obtain
\begin{align}\label{uu}
\mathbb{E}\big(\|u^0+u^b+P\|_{L^1((0,T);W^{r,q}(\mathbb{T}^N))}\big)\le\,C(\theta,\delta, C_0)\,\big(1+\mathbb{E}\|u_0\|_{L^1(\mathbb{T}^N)}+T\big) +\frac{1}{4}\mathbb{E}\|u\|_{L^1((0,T)\times\mathbb{T}^N)}.
\end{align}
Under the growth hypothesis, sub-linearity of $A''$ and $F''$, the bound \eqref{u-4} gives
\begin{align}
\mathbb{E}\|Q\|_{L^1((0,T);W^{\lambda,p}(\mathbb{T}^N))}\,\le\,&C_{N,\beta,\lambda,p}\theta^{-2}\bigg(\big(\frac{\theta}{\delta}\big)^{\kappa}+\big(\frac{\theta}{\delta}\big)^{\kappa_1}\bigg)\notag\bigg(\,1+\mathbb{E}\|u\|_{L^1(\mathbb{T}^N\times(0,T))}+\mathbb{E}\|u_0\|_{L^3(\mathbb{T}^N)}^3\bigg).
\end{align}
If we choose $\theta,\delta\,\textgreater\,0,$ such that
$$C_{N,\beta,\lambda,p}\theta^{-2}\bigg(\big(\frac{\theta}{\delta}\big)^{\kappa}+\big(\frac{\theta}{\delta}\big)^{\kappa_1}\bigg)\,\le\,\frac{1}{4}$$
then we obtain
\begin{align}\label{uuu}
\mathbb{E}\|Q\|_{L^1((0,T);W^{\lambda,p}(\mathbb{T}^N))}\,\le\,&\frac{1}{4}\mathbb{E}\|u\|_{L^1(\mathbb{T}^N\times(0,T))}+C(\theta,\delta, D)\big(1+\mathbb{E}\|u_0\|_{L^3(\mathbb{T}^N)}^3\big).
\end{align}
Choose $r\,\textgreater\,0\,,\, q\,\textgreater\,1$ such that $\min\{\lambda,\frac{1}{2}-\beta\}\,\ge\,r\,$ and $\frac{1}{q}\,\ge\,\max\{\frac{r}{N}+\frac{1}{p}-\frac{\lambda}{N}\,,\,\frac{r}{N}+\frac{1}{2}-\frac{(1-2\beta)s}{2N}\}$, from \eqref{u}, \eqref{uu}, \eqref{uuu} we deduce the following estimate
\begin{align*}
\mathbb{E}\|u\|_{L^1(0,T; W^{r,q}(\mathbb{T}^N))}\,\le\,C\big(1+T+\mathbb{E}\|u_0\|_{L^3(\mathbb{T}^N)}^3\big)+\frac{1}{2}\mathbb{E}\|u\|_{L^1((0,T)\times\mathbb{T}^N)}
\end{align*}
and thus we conclude
\begin{align}\label{final}
\mathbb{E}\|u\|_{L^1(0,T; W^{r,q}(\mathbb{T}^N))}\,\le\,C(\delta,\theta,D)\big(1+T+\mathbb{E}\|u_0\|_{L^3(\mathbb{T}^N)}^3\big).
\end{align}
Since $W^{r,q}(\mathbb{T}^N)$ is compactly embedded in $L^1(\mathbb{T}^N)$, then the Krylov-Bogoliubov theorem gives the existence of an invariant measure. Indeed, let $\lambda_{t,u_0}$ be the law of solution $u(t)$ and we define $$\mathfrak{\mathcal{V}}_T:=\frac{1}{T}\int_0^T\lambda_{s,u_0}ds.$$
By Markov's inequality,
$$\mathbb{P}\bigg(\frac{1}{T}\int_0^T\|u(s)\|_{W^{r,q}(\mathbb{T}^N)}ds\,\ge\,R\bigg)\,\le\,\frac{\mathbb{E}\|u\|_{L^1([0,T];W^{r,q}(\mathbb{T}^N))}}{R\,T}\,\le\,C\,\frac{(\mathbb{E}\|u_0\|_{L^3(\mathbb{T}^N)}^3+1)}{R}.$$
It shows that $\mathfrak{\mathcal{V}}_T$ is tight. By Prokhorov theorem, there exists a sequence $(t_n)$ increase to $\infty$ such that $\mathfrak{\mathcal{V}}_{t_n}$ converges weak-* to measure $\lambda$. Krylov-Bogoliubov theorem ensures that $\lambda$ is an invariant measure.
\subsection{Uniqueness of invariant measure}\label{section 5}
In this subsection, we prove the uniqueness of invariant measures.

\noindent
\textbf{Strategy:} In the first step, we will prove that, if the initial data is in a fixed ball and the noise is small in $W^{2,\infty}(\mathbb{T}^N)$ for some time interval. Then the average of a solution is small. In the second step, we will show that the solution enters a ball of some fixed radius in a finite time almost surely. With the help of this property, we will construct an increasing sequence of stopping times. In the third step, finally, we will analyze the behavior of the solution at a large time with the help of a constructed sequence of stopping times and strong Markov property. In this subsection, for technical purposes, we refer to \cite[Section 4]{vovelle 2}.
\subsection*{Smallness of solution:} 
\begin{proposition}\label{smallness} Let initial data $u_0\in L^1(\mathbb{T}^N)$ and $u$ be solution to \eqref{1.1} corresponding initial data $u_0$ in the sense of Definition \ref{definition kinetic solution in l1 setting}. Suppose that $F$ and $A$ satisfy \eqref{flux uniqueness}. Then, for any $\varepsilon\,\textgreater\,0$, there exist $T\,\textgreater\,0$ and $\mathfrak{\eta}\,\textgreater\,0$ such that:
	$$\frac{1}{T}\int_0^T\|u(s)\|_{L^1(\mathbb{T}^N)}ds\,\le\,\frac{\varepsilon}{2},$$
	whenever  $\|u(0)\|_{L^1(\mathbb{T}^N)}\,\le\,2\kappa_1\,\text{and}\,\sup_{t\in[0,T]}\|\Psi W(t)\|_{W^{2,\infty}(\mathbb{T}^N)}\,\le\,\mathfrak{\eta}. $
\end{proposition}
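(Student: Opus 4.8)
The plan is to remove the stochastic forcing by an additive shift and to reduce the assertion to a purely deterministic relaxation-to-the-mean property for the non-degenerate fractional conservation law. Set $z(t)=\Psi W(t)=\sum_{k\ge1}h_k\beta_k(t)$. By the cancellation condition \eqref{cancellation} the spatial mean of $z(t)$ vanishes, and since the divergence and the fractional-diffusion terms integrate to zero over $\mathbb{T}^N$, the mean $\int_{\mathbb{T}^N}u(t)\,dx=\int_{\mathbb{T}^N}u_0\,dx$ is conserved; in the mean-zero framework of Theorem \ref{Invariant measure} this gives $\int_{\mathbb{T}^N}u(t)\,dx=0$ for all $t$. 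Writing $v=u-z$, one checks from the kinetic formulation \eqref{formulation} that, $\mathbb{P}$-almost surely, $v$ is a pathwise (deterministic, random-coefficient) kinetic solution of
$$\partial_t v+\mathrm{div}\big(F(v+z)\big)+(-\Delta)^\alpha\big[A(v+z)\big]=0,$$
with $\int_{\mathbb{T}^N}v(t)\,dx=0$ and, since $W(0)=0$, with $v(0)=u_0$. As the torus has measure one, $\|z(t)\|_{L^1(\mathbb{T}^N)}\le\|z(t)\|_{W^{2,\infty}(\mathbb{T}^N)}\le\eta$, so it suffices to bound the time average $\frac{1}{T}\int_0^T\|v(s)\|_{L^1(\mathbb{T}^N)}\,ds$.

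The core claim is that for every $\varepsilon>0$ there exist $T$ and $\eta$ such that every zero-mean solution $v$ of the shifted equation with $\|v(0)\|_{L^1}\le2\kappa_1$ and $\sup_{[0,T]}\|z\|_{W^{2,\infty}}\le\eta$ satisfies $\frac{1}{T}\int_0^T\|v(s)\|_{L^1}\,ds\le\varepsilon/4$. I would establish this by contradiction and compactness. Negating it and taking $T=T_n:=n$, $\eta=1/n$ produces initial data $v_0^n$ bounded in $L^1$, perturbations $z_n$ with $\sup_{[0,n]}\|z_n\|_{W^{2,\infty}}\to0$, and zero-mean solutions $v_n$ on $[0,n]$ with $\frac{1}{n}\int_0^n\|v_n(s)\|_{L^1}\,ds>\varepsilon/4$. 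The $L^1$-contraction of Theorem \ref{main result 2}, applied to the shifted flux (whose sub-quadraticity is inherited from \eqref{flux uniqueness}), keeps $\|v_n(t)\|_{L^1}$ controlled on $[0,n]$, and the higher-moment bounds supply uniform integrability.

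The decisive step is the limit passage, which I would organize through the time-averaged occupation measures $\mu_n:=\frac{1}{n}\int_0^n\delta_{v_n(s)}\,ds$ on $L^1(\mathbb{T}^N)$. The non-degeneracy condition \eqref{non} is precisely the hypothesis that makes the velocity-averaging lemma applicable to the kinetic functions $f_n=\mathbbm{1}_{v_n>\xi}$; together with the moment bounds it yields a uniform $W^{r,q}$ estimate for the time averages (the deterministic analogue of \eqref{final}), so that the $\mu_n$ are tight on $L^1(\mathbb{T}^N)$ through the compact embedding $W^{r,q}\hookrightarrow\hookrightarrow L^1$. Because $z_n\to0$ in $W^{2,\infty}$ and \eqref{flux uniqueness} holds, the symbol $iF'(\xi)\cdot k+A'(\xi)|k|^{2\alpha}$ of the perturbed equation stays non-degenerate uniformly in $n$, and any weak-$*$ limit $\mu_\infty$ of $\mu_n$ is an invariant measure, supported on zero-mean functions, for the semigroup of the autonomous unperturbed equation $\partial_t v+\mathrm{div}(F(v))+(-\Delta)^\alpha[A(v)]=0$. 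The relaxation-to-the-mean for this non-degenerate deterministic equation (the fractional counterpart of the long-time results recalled in Section \ref{non-linearity diffusivity condition}) forces $\mu_\infty=\delta_0$. On the other hand $\int_{L^1}\|v\|_{L^1}\,d\mu_n(v)=\frac{1}{n}\int_0^n\|v_n(s)\|_{L^1}\,ds>\varepsilon/4$, and passing to the limit using the uniform integrability gives $\int\|v\|_{L^1}\,d\mu_\infty\ge\varepsilon/4>0$, contradicting $\mu_\infty=\delta_0$. Combining the claim with $\frac{1}{T}\int_0^T\|u(s)\|_{L^1}\,ds\le\frac{1}{T}\int_0^T\|v(s)\|_{L^1}\,ds+\eta\le\varepsilon/4+\eta$ and shrinking $\eta$ yields the bound $\varepsilon/2$.

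The main obstacle is exactly this limit passage as $T_n\to\infty$: one must argue that the occupation measures of the perturbed, non-autonomous solutions converge to an invariant measure of the unperturbed autonomous flow, and then invoke the rigidity that the only such zero-mean equilibrium is the trivial one. This is where the non-degeneracy \eqref{non} enters in an essential way, through the averaging lemma that both provides compactness and rules out nontrivial equilibria. A secondary technical point is verifying that the averaging and $W^{r,q}$ estimates remain uniform under the non-autonomous perturbation $z_n$ and in the presence of the fractional diffusion $(-\Delta)^\alpha[A(\cdot)]$; this uniformity is guaranteed by the $W^{2,\infty}$-smallness of $z_n$ together with the sub-quadratic bounds \eqref{flux uniqueness}.
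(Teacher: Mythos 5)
Your opening reduction --- subtracting the noise, $v=u-\Psi W$, exploiting the zero spatial mean, and treating $v$ as a pathwise kinetic solution of the shifted equation --- coincides with the first step of the paper's proof of Proposition \ref{smallness}. After that, however, your compactness-and-contradiction scheme has a genuine gap at its compactness input. Your occupation measures $\mu_n=\frac{1}{n}\int_0^n\delta_{v_n(s)}\,ds$ are tight only if you have a uniform bound of the type $\frac{1}{n}\int_0^n\|v_n(s)\|_{W^{r,q}}\,ds\le C$ for solutions whose data are merely bounded in $L^1$, and you invoke ``higher-moment bounds'' for uniform integrability. In the $L^1$ setting there are no higher moments, and the averaging estimate analogous to \eqref{final} is not free: its proof runs through a bound on the total mass of the kinetic measure (the terms $Q$, respectively $P^{\mathfrak m}$), and that bound is obtained by testing the kinetic formulation against $\mathfrak{\zeta}$, which requires $L^2$ data. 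This is precisely why the paper's proof begins by replacing $u_0\in L^1(\mathbb{T}^N)$ with $\tilde u_0\in L^2(\mathbb{T}^N)$ satisfying $\|u_0-\tilde u_0\|_{L^1(\mathbb{T}^N)}\le\varepsilon/8$ and $\|\tilde u_0\|_{L^2(\mathbb{T}^N)}\le C\kappa_1\varepsilon^{-N/2}$ (using the contraction \eqref{contraction principal} to transfer the bound back to $u$), and then tracks the factor $\varepsilon^{-N}$ through the measure bound $|p|([0,T]\times\mathbb{T}^N\times\mathbb{R})\le C\,e^{\mathfrak{\eta} CT}(k_0^2\varepsilon^{-N}+1)$. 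Without this reduction your tightness claim is unsupported.

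The second unproved pivot is the rigidity step $\mu_\infty=\delta_0$. You appeal to ``relaxation to the mean'' for the autonomous deterministic fractional equation under \eqref{non}, but no such theorem is proved or cited in the paper for the fractional degenerate case: the results recalled in Subsection \ref{non-linearity diffusivity condition} concern the purely hyperbolic equation and the local anisotropic parabolic--hyperbolic equation in $L^\infty$, and proving the fractional $L^1$ analogue is essentially of the same difficulty as the proposition itself, so this step is close to circular. Likewise, your assertion that any weak-$*$ limit of occupation measures of the non-autonomous perturbed solutions is invariant for the unperturbed flow requires a stability estimate for the flow, uniform over starting times in $[0,n]$, which is delicate when $F'$ is unbounded (only $F''$ is bounded by \eqref{flux uniqueness}) and is only asserted. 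The paper avoids all of this with a purely quantitative argument: the mild decomposition $v=v^0+v^b+v^\#+P^W+P^A+P^\alpha+P^{\mathfrak m}$ under the regularized semigroup $S_{A_{\theta,\delta}}$, term-by-term estimates in which \eqref{non} enters only through the Fourier bounds on $v^0$ and $v^b$, absorption of $\int_0^T\|v(t)\|_{L^1(\mathbb{T}^N)}\,dt$ into the left-hand side, and an explicit choice of $\theta,\delta,\mathfrak{\eta}$ followed by $T$ large. To salvage your soft route you would need to (i) insert the $L^2$ approximation, (ii) prove the pathwise averaging estimate uniformly for the perturbed non-autonomous equation, and (iii) prove the deterministic relaxation theorem --- at which point the paper's direct quantitative argument is the shorter path.
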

\begin{proof}
	We first take $\tilde{u}_0\in L^2(\mathbb{T}^N)$ such that
	$$\|u_0-\tilde{u}_0\|_{L^1(\mathbb{T}^N)}\,\le\,\frac{\varepsilon}{8}\,\,\,\|\tilde{u}_0\|_{L^2(\mathbb{T}^N)}\,\le\,C\,\kappa_1\varepsilon^{-N/2}.$$
	Let $\tilde{u}$ be the kineic solution starting from $\tilde{u}_0$. By \eqref{contraction principal}, we know that 
	$$\|u(t)-\tilde{u}(t)\|_{L^1(\mathbb{T}^N)}\,\le\,\frac{\varepsilon}{8},\,\,t\,\ge\,0,$$
	If we define $v=\tilde{u}-\Psi W$, then $v$ is a kinetic solution to  the following equation
	\begin{align*}
	\partial_t v + \mbox{div}F(v+\Psi W)+(-\Delta)^\alpha(A(v+\Psi W))=0
	\end{align*}
	and $g(x,\mathfrak{\zeta})=\mathbbm{1}_{v(x,t)\,\textgreater\,\mathfrak{\zeta}}-\mathbbm{1}_{0\,\textgreater\,\mathfrak{\zeta}}$,\,$f=\mathbbm{1}_{\tilde{u}\,\textgreater\,\mathfrak{\zeta}}-\mathbbm{1}_{0\,\textgreater\,\mathfrak{\zeta}}$ satisfy the following kinetic formulation in $\mathcal{D}'(\mathbb{T}^N\times[0,T]\times\mathbb{R})$(see Appendix \ref{A}) as follows:
	\begin{align}\label{kinetic formulation for v}
	\partial_t g(x,\mathfrak{\zeta},t)&+ F'(\mathfrak{\zeta})\cdot \nabla g(x,\mathfrak{\zeta},t) +A'(\mathfrak{\zeta})(-\Delta)^\alpha g(x,\mathfrak{\zeta},t)\notag\\&=(F'(\mathfrak{\zeta})-F'(\mathfrak{\zeta}+\Psi W)).\nabla g(x,\mathfrak{\zeta},t)-F'(\mathfrak{\zeta}+\Psi W) \nabla \big(\Psi W\big)\delta_{v=\mathfrak{\zeta}}\notag\\
	&\qquad\qquad+\big(A'(\mathfrak{\zeta})(-\Delta)^\alpha g(x,\mathfrak{\zeta},t)-A'(\mathfrak{\zeta}+\Psi W)\big((-\Delta)^\alpha f\big)(x,\mathfrak{\zeta}+\Psi(x)W,t)\big)+\partial_\mathfrak{\zeta} p(x,\mathfrak{\zeta},t),
	\end{align}
	where kinetic measure satisfies: $$p(x,\mathfrak{\zeta},t)\,\ge\,\int_{\mathbb{R}^N}{|A(\tilde{u}(x+z))-A(\Psi(x)W(x)+\mathfrak{\zeta})|}\mathbbm{1}_{\mbox{Conv}\{\tilde{u}(x+z)-\Psi(x)W(x),v(x)\}}(\mathfrak{\zeta})d\mu(z).$$
	Also  since both $u$ and $h_k$ have a zero spatial average, so does $v$. We use again $B_\theta$ and $A_{\theta,\delta}$ defined as before. Then  we can rewrite \eqref{kinetic formulation for v} in $\mathcal{D}'(\mathbb{T}^N\times[0,T]\times\mathbb{R})$ as follows: 
	
	\begin{align}\label{2}
	\partial_t g + A_{\theta, \delta} g& =(B_\theta + \delta Id) g +(F'(\mathfrak{\zeta})-F'(\mathfrak{\zeta}+\Psi W))\cdot\nabla g(x,\mathfrak{\zeta},t)-F'(\mathfrak{\zeta}+\Psi W)\cdot \nabla\big(\Psi W\big)\delta_{v=\mathfrak{\zeta}}\notag\\
	&\qquad\qquad+\big(A'(\mathfrak{\zeta})(-\Delta)^\alpha g(x,\mathfrak{\zeta},t)-A'(\mathfrak{\zeta}+\Psi W)\big((-\Delta)^\alpha f\big)(x,\mathfrak{\zeta}+\Psi(x)W,t)\big)+\partial_\mathfrak{\zeta} p(x,\mathfrak{\zeta},t).
	\end{align}
	 By using semi group approach (cf. \cite[Proposition 6.3]{prato}), we can get the following decomposition of $v$:
	$$v=v^0+v^b+v^\#+P^W+P^A+P^\alpha+P^\mathfrak{m},$$
	where
	\begin{align*}
	v^0(t):&=\int_{\mathbb{R}} S_{A_{\theta,\delta}}(t) g(0,\mathfrak{\zeta}) d\mathfrak{\zeta},\\
	v^b(t):&=\int_{\mathbb{R}}\int_0^t S_{A_{\theta,\delta}}(s)(B_{\theta}+\delta Id) g(\mathfrak{\zeta}, t-s) ds d\mathfrak{\zeta},\\
	v^\#(t):&=\int_{\mathbb{R}}\int_0^t S_{A_{\theta,\delta}}(s)\big(F'(\mathfrak{\zeta})-F'(\mathfrak{\zeta}+\Psi W)\big)\cdot \nabla g(\mathfrak{\zeta}, t-s) ds d\mathfrak{\zeta},\\
	\langle P^W(t),\varphi\rangle:&=-\int_{\mathbb{T}^N\times[0,t]}\big(F'(v+\Psi W)\cdot \nabla\big(\Psi W\big)\big)(x,s) \big(S^*(t-s)\varphi\big)(x,v(x,s)) ds dx,\\
	\langle P^A(t),\varphi\rangle:&=\int_{\mathbb{T}^N}\int_{\mathbb{R}}\int_0^t A'(\mathfrak{\zeta})(-\Delta)^\alpha S_{A_{\theta,\delta}}^*(t-s)\varphi(x) g(x,\mathfrak{\zeta},s) ds d\mathfrak{\zeta} dx,\\
	\langle P^\alpha(t),\varphi \rangle:&=\int_{\mathbb{T}^N\times\mathbb{R}\times[0,t]} A'(\mathfrak{\zeta})\big((-\Delta)^\alpha S_{A_{\theta,\delta}}^*(t-s)\varphi\big)(x,\mathfrak{\zeta}-\Psi W) f(x,\mathfrak{\zeta},s) ds d\mathfrak{\zeta} dx,\\
	\langle P^\mathfrak{m}(t),\varphi\rangle:&=-\int_{\mathbb{T}^N\times\mathbb{R}\times[0,t]}\partial_{\mathfrak{\zeta}}\big(S_{A_{\theta,\delta}}^*(t-s)\varphi\big) dp(x,\mathfrak{\zeta},s).
	\end{align*}
	We estimate above terms in the following steps.
	
	\noindent
	$\textbf{Step 1:}$ \textbf{Estimates on $v_0$ and $v^b$:}
	Reproducing similar argument of previous subsections, we can deduce the following estimates:
	$$\int_0^T\|v^0(t)\|_{H^\alpha(\mathbb{T}^N})^2\,\le\,C\,\theta^{s-1}\|u_0\|_{L^1(\mathbb{T}^N)} $$
	and 
	$$\int_0^T\|v^b(t)\|_{L^2(\mathbb{T}^N)}^2 dt\,\le\,C\,\theta^s\int_0^T\|v(t)\|_{L^1(\mathbb{T}^N)} dt.$$
	These imply
	\begin{align}
	\int_0^T\|v^0(t)\|_{L^1(\mathbb{T}^N)}dt\,\le\,T^{1/2}\bigg(\int_0^T\|v_0\|_{L^2(\mathbb{T}^N)})ds\bigg)^{1/2}\,\le\,C\,T^{1/2}\big( \theta^{s-1}\kappa_1\big)^{1/2},
	\end{align}
	and
	\begin{align}
	\int_0^T\|v^b(t)\|_{L^1(\mathbb{T}^N)}dt\,\le\,C\,T^{1/2}\bigg(\theta^{s}\,\int_0^T\|v(t)\|_{L^1(\mathbb{T}^N)}\bigg)^{1/2}\le\,C\,T\,\theta^{s} + \frac{1}{12}\int_0^T\|v(t)\|_{L^1(\mathbb{T}^N)}dt.
	\end{align}
	
	\noindent
	$\textbf{Step 2:}$ \textbf{Estimate on $v^\#$:}
	\begin{align*} v^\#(t):&=\int_{\mathbb{R}}\int_0^t S_{A_{\theta,\delta}}(s)\big(F'(\mathfrak{\zeta})-F'(\mathfrak{\zeta}+\Psi W)\big)\cdot \nabla g(\mathfrak{\zeta},t-s) ds d\mathfrak{\zeta},
	\end{align*}
	By using chain rule, we have
	\begin{align*}
	(F'(\mathfrak{\zeta})-F'(\mathfrak{\zeta}+\Psi(x)W))\cdot\nabla g(x,\mathfrak{\zeta},s)=\nabla\cdot ((F'(\mathfrak{\zeta})-F'(\mathfrak{\zeta}+\Psi(x)W))g(x,\mathfrak{\zeta},s))\\\qquad-(F''(\mathfrak{\zeta}+\Psi(x)W)\cdot\Psi(x)W)g(x,\mathfrak{\zeta},s),
	\end{align*}
	then it implies that
	\begin{align*}
	\langle v^\#(t),\varphi\rangle&=\int_{\mathbb{T}^N}\int_{\mathbb{R}}\int_0^t \varphi(x) S_{A_{\theta,\delta}}(t-s) \nabla\cdot ((F'(\mathfrak{\zeta})-F'(\mathfrak{\zeta}+\Psi(x)W))g(x,\mathfrak{\zeta},s))\,ds d\mathfrak{\zeta} dx\\
	&\qquad-\int_{\mathbb{T}^N}\int_{\mathbb{R}}\int_0^t\varphi(x) S_{A_{\theta,\delta}}(t-s)((F''(\mathfrak{\zeta}+\Psi(x)W)\cdot\Psi(x)W)g(x,\mathfrak{\zeta},s))ds d\mathfrak{\zeta} dx,\\
	&=\int_{\mathbb{T}^N}\int_{\mathbb{R}}\int_0^t\nabla (S_{A_{\theta,\delta}}^*(t-s)\varphi(x)) \cdot ((F'(\mathfrak{\zeta})-F'(\mathfrak{\zeta}+\Psi(x)W))g(x,\mathfrak{\zeta},s))\,ds d\mathfrak{\zeta} dx\\
	&\qquad-\int_{\mathbb{T}^N}\int_{\mathbb{R}}\int_0^t S_{A_{\theta,\delta}}^*(t-s)\varphi(x)((F''(\mathfrak{\zeta}+\Psi(x)W)\cdot\nabla(\Psi(x)W))g(x,\mathfrak{\zeta},s))ds d\mathfrak{\zeta} dx\\
	&=:\langle w_1(t),\varphi\rangle +\langle w_2(t), \varphi \rangle.
	\end{align*}
	We have, by using \cite[Lemma 9]{vovelle 2}
	\begin{align*}
	|\langle w_1(t) , \varphi \rangle|&\le\,\int_{\mathbb{T}^N}\int_{\mathbb{R}}\int_0^t|\nabla (S_{A_{\theta,\delta}}^*(t-s)\varphi(x)) \cdot ((F'(\mathfrak{\zeta})-F'(\mathfrak{\zeta}+\Psi(x)W))g(x,\mathfrak{\zeta},s))|dsd\mathfrak{\zeta} dx\\
	&\le\,C\,\mathfrak{\eta} \|\varphi\|_{L^\infty(\mathbb{T}^N)}\,\int_{\mathbb{T}^N}\int_{\mathbb{R}}\int_0^t (\theta(t-s))^{-\frac{1}{2\beta}}\e^{-\delta (t-s)}|g(x,\mathfrak{\zeta},s)|ds d\mathfrak{\zeta} dx.
	\end{align*}
It shows that
	\begin{align}
	\int_0^T\|w_1(t)\|_{L^1(\mathbb{T}^N)}dt\,&\le\,C\,\mathfrak{\eta}\,\sup_{s\in[0,T]}\big(\int_0^T\e^{-\delta(t-s)}(\theta(t-s))^{-\frac{1}{2\beta}}dt\big)\int_0^T\|v(s)\|_{L^1(\mathbb{T}^N)}\,ds\notag\\
	&\le\,C\,\mathfrak{\eta}\,\theta^{-\frac{1}{2\beta}}\delta^{\frac{1}{2\beta}-1}\,\int_0^\infty \e^{-t} t^{-\frac{1}{2\beta}}dt\,\int_0^T\|v(s)\|_{L^1(\mathbb{T}^N)}ds.
	\end{align}
Similarly we can get
	\begin{align}
	\int_0^T\|w_2(t)\|_{L^1(\mathbb{T}^N)}dt\,\le\,C\,\mathfrak{\eta}\,\int_0^T\|v(t)\|_{L^1(\mathbb{T}^N)}ds.
	\end{align}
	
	\noindent
	$\textbf{Step 3:}$ \textbf{Estimate on $P^W$:}
	\begin{align*}
	\langle P^W(t), \varphi \rangle\,&\le\,C\,\int_0^t\big(1+\|v(s)\|_{L^1(\mathbb{T}^N)}+\|\Psi W(s)\|_{L^1(\mathbb{T}^N)}\big)\|\nabla \Psi W(s)\|_{L^\infty(\mathbb{T}^N)}\|S^*_{A_{\theta,\delta}}(t-s)\varphi\|_{L^\infty(\mathbb{T}^N)}ds\\
	&\le\,C\,\|\varphi\|_{L^\infty(\mathbb{T}^N)}\mathfrak{\eta}\,\int_0^t(1+\|v(s)\|_{L^1(\mathbb{T}^N)})\,\e^{-\delta(t-s)} ds.
	\end{align*}
	Therefore $P^W $is more regular than a measure. In particular, we have
	\begin{align}
	\int_0^T\|P^W(t)\|_{L^1(\mathbb{T}^N)}dt\,\le\,C\,\frac{\mathfrak{\eta}}{\delta}\int_0^T\big(1+\|v(s)\|_{L^1(\mathbb{T}^N)}\big) ds.
	\end{align}

\noindent
$\textbf{Step 4:}$ \textbf{Estimate on $P^A$ and $P^\alpha$}:
	\begin{align*}
	\langle P^A(t),\varphi\rangle&\le\,C\,\int_{\mathbb{T}^N}\int_{\mathbb{R}}\int_0^t \|(-\Delta)^\alpha S_{A_{\theta,\delta}}^*(t-s)\varphi(x)\|_{L^\infty(\mathbb{T}^N\times\mathbb{R)}} |g(x,\mathfrak{\zeta},s)| ds d\mathfrak{\zeta} dx\\
	&\le\,C\int_{\mathbb{T}^N}\int_{\mathbb{R}}\int_0^t \big(\theta(t-s)\big)^{-\frac{\alpha}{\beta}}\e^{-\delta(t-s)}\|\varphi\|_{L^\infty(\mathbb{T}^N)}|g(x,\mathfrak{\zeta},s)|dsd\mathfrak{\zeta} dx,\\
	\end{align*}
It implies that
	\begin{align*}
	\int_0^T\langle P^A(t),\varphi\rangle dt&\le\,C\,\|\varphi\|_{L^\infty(\mathbb{T}^N)}\int_0^T\|v(t)\|_{L^1(\mathbb{T}^N)}dt\, \sup_{s\in[0,T]}\int_0^T\big(\theta(t-s)\big)^{-\frac{\alpha}{\beta}}\e^{-\delta(t-s)}dt\\
	&\le\,C\theta^{-\frac{\alpha}{\beta}}\delta^{\frac{\alpha}{\beta}}\|\varphi\|_{L^\infty(\mathbb{T}^N)}\,\bigg(\int_0^\infty t^{-\frac{\alpha}{\beta}}\e^{-t}dt\bigg)\int_0^T\|v(t)\|_{L^1(\mathbb{T}^N)}dt,
	\end{align*}
	It shows that
	\begin{align}
	\int_0^T\|P^A(t)\|_{L^1(\mathbb{T}^N)}dt&\le\,C\theta^{-\frac{\alpha}{\beta}}\delta^{\frac{\alpha}{\beta}}\,\bigg(\int_0^\infty t^{-\frac{\alpha}{\beta}}\e^{-t}dt\bigg)\,\int_0^T\|v(t)\|_{L^1(\mathbb{T}^N)}dt.
	\end{align}
	Similarly we can conclude that
	\begin{align}
	\int_0^T\|P^\alpha(t)\|_{L^1(\mathbb{T}^N)}dt&\le\,C\theta^{-\frac{\alpha}{\beta}}\delta^{\frac{\alpha}{\beta}}\,\bigg(\int_0^\infty t^{-\frac{\alpha}{\beta}}\e^{-t}dt\bigg)\,\int_0^T\|\tilde{u}(t)\|_{L^1(\mathbb{T}^N)}dt.
	\end{align}

\noindent
	$\textbf{Step 5:}$ \textbf{Bound on kinetic measure in $L^2(\mathbb{T}^N)$ setting}:
	We test equation \eqref{kinetic formulation for v} against $\mathfrak{\zeta}$ (justified by standard argument of smooth approximation), we get
	\begin{align*}
	\langle g, \mathfrak{\zeta} \rangle&=\langle g, \mathfrak{\zeta} \rangle-\langle F'(\mathfrak{\zeta}+\Psi W)\cdot\nabla g(x,\mathfrak{\zeta},t),\mathfrak{\zeta}\rangle-\langle F'(\mathfrak{\zeta}+\Psi W)\nabla (\Psi(x)W(x)) \delta_{v=\mathfrak{\zeta}},\mathfrak{\zeta}\rangle\\&\qquad-\langle A'(\mathfrak{\zeta}+\Psi W)\big((-\Delta)^\alpha f\big)(x,\mathfrak{\zeta}+\Psi W,t),\mathfrak{\zeta}\rangle+\langle \partial_{\mathfrak{\zeta}}p(x,\mathfrak{\zeta},s),\mathfrak{\zeta}\rangle,\\
	\\
	\frac{1}{2}\|v(t)\|^2_{L^2(\mathbb{T}^N)}&=\frac{1}{2}\|\tilde{u}_0\|_{L^2(\mathbb{T}^N)}^2+\int_0^t\int_{\mathbb{T}^N}\int_{\mathbb{R}}\mathfrak{\zeta}\,F''(\mathfrak{\zeta}+\Psi W)\cdot\nabla(\Psi(x)W)g(x,\mathfrak{\zeta},s) d\mathfrak{\zeta} dx ds\\&\qquad-\int_0^t\int_{\mathbb{T}^N}\int_\mathbb{R}(\mathfrak{\zeta}-\Psi(x)W)A'(\mathfrak{\zeta})(-\Delta)^\alpha f(x,\mathfrak{\zeta},s)d\mathfrak{\zeta} dx ds\\&\qquad+\int_0^t\int_{\mathbb{T}^N}vF'(v+\Psi(x)W)\cdot\nabla (\Psi(x)W)dxds+\int_0^t\int_{\mathbb{T}^N}\int_{\mathbb{R}}dp(x,\mathfrak{\zeta},s),
	\end{align*}
It implies that
	\begin{align*}
	\frac{1}{2}\|v(t)\|^2_{L^2(\mathbb{T}^N)}+\int_0^t\int_{\mathbb{T}^N}\int_{\mathbb{R}}dp(x,\mathfrak{\zeta},s)&=\frac{1}{2}\|\tilde{u}_0\|_{L^2(\mathbb{T}^N)}^2\\&\qquad+\int_0^t\int_{\mathbb{T}^N}\int_{\mathbb{R}}\mathfrak{\zeta}\,F''(\mathfrak{\zeta}+\Psi W)\cdot\nabla(\Psi(x)W)g(x,\mathfrak{\zeta},s) d\mathfrak{\zeta} dx ds\\&\qquad+\int_0^t\int_{\mathbb{T}^N}vF'(v+\Psi(x)W)\cdot\nabla (\Psi(x)W)dxds\\&\qquad+\int_0^t\int_{\mathbb{T}^N}\int_\mathbb{R}A'(\mathfrak{\zeta})(-\Delta)^\alpha(\Psi(x)W)f(x,\mathfrak{\zeta},s)d\mathfrak{\zeta} dx ds.
	\end{align*}
We conclude that
	\begin{align*}
	\frac{1}{2}\|v(t)\|^2_{L^2(\mathbb{T}^N)}&+|p|([0,t]\times\mathbb{T}^N\times\mathbb{R})\le\,\frac{1}{2}\|\tilde{u}_0\|_{L^2(\mathbb{T}^N)}^2+C\mathfrak{\eta}\int_0^t\int_{\mathbb{T}^N}\int_{\mathbb{R}}|\mathfrak{\zeta}||g(x,\mathfrak{\zeta},s)|d\mathfrak{\zeta} dx ds\\
	&\qquad+\mathfrak{\eta}\,\int_0^t\int_{\mathbb{T}^N} |v(x)|\big(1+|v(x)|+|\Psi(x)W|\big)dxds+C\,\mathfrak{\eta}\int_0^t\int_{\mathbb{T}^N}\int_{\mathbb{R}}|f(x,\mathfrak{\zeta},s)|dx d\mathfrak{\zeta} ds,
	\end{align*}
	In last term, we used that $\|(-\Delta)^\alpha \Psi W(t)\|_{L^{\infty}(\mathbb{T}^N)}\,\,\le\,C\,\sup_{t\in[0,T]}\|\Psi W\|_{W^{2,\infty}(\mathbb{T}^N)}$. It proves that
	\begin{align*}
	\frac{1}{2}\|v(t)\|^2_{L^2(\mathbb{T}^N)}&+|p|([0,t]\times\mathbb{T}^N\times\mathbb{R})\le\frac{1}{2}\|\tilde{u}_0\|_{L^2(\mathbb{T}^N)}^2+C\mathfrak{\eta}\int_0^t\big(1+\|v(s)\|_{L^2(\mathbb{T}^N)}^2\big)ds.
	\end{align*}
	The Gronwall's inequality implies
	$$\|v(t)\|_{L^2(\mathbb{T}^N)}^2\le\,C\,\e^{\mathfrak{\eta} C t}\big(\|\tilde{u}_0\|_{L^2(\mathbb{T}^N)}^2+1\big).$$
	Hence
	$$|p|([0,T]\times\mathbb{T}^N\times\mathbb{R})\le\,C\,\e^{\mathfrak{\eta} C T}\big(\|\tilde{u}_0\|_{L^2(\mathbb{T}^N)}^2+1\big).$$
	Since $\|\tilde{u}_0\|_L^2(\mathbb{T}^N)\,\le\,C\,k_0\,\varepsilon^{-\frac{N}{2}}$, it follows that 
	\begin{align}|p|\big([0,T]\times\mathbb{T}^N\times\mathbb{R}\big)\,\le\,C\,\e^{\mathfrak{\eta} C T}\big(k_0^2\varepsilon^{-N}+1\big).
	\end{align}

\noindent
	$\textbf{Step 6:}$ \textbf{Estimate on $P^m$:} It is easy to know that
	\begin{align*}
	\partial_{\mathfrak{\zeta}}\big(S_{A_{\theta,\delta}}^*(t-s)\varphi(x)\big)=-\big((t-s)F''(\mathfrak{\zeta})\cdot\nabla(S_{A_{\theta,\delta}}^*(t-s)\varphi(x))+(t-s)A''(\mathfrak{\zeta})(-\Delta)^\alpha(S_{A_{\theta,\delta}}^*(t-s)\varphi(x))\big).
	\end{align*}
	Using above identity we have
	\begin{align*}
	\langle P^\mathfrak{m}(t),\varphi\rangle&=-\int\limits_{\mathbb{T}^N\times\mathbb{R}\times[0,t]}\big((t-s)F''(\mathfrak{\zeta})\cdot\nabla(S_{A_{\theta,\delta}}^*(t-s)\varphi(x))+A''(\mathfrak{\zeta})(-\Delta)^\alpha(S_{A_{\theta,\delta}}(t-s)\varphi(x))\big)dp(x,\mathfrak{\zeta},s)\\
	&=:\langle I_1(t),\varphi\rangle+\langle I_2(t),\varphi\rangle.
	\end{align*}
We conclude that
\begin{align*}
	|\langle I_1(t),\varphi \rangle|\,&\le\,C\,\|\varphi\|_{L^\infty(\mathbb{T}^N)}\int_{\mathbb{T}^N\times\mathbb{R}\times[0,t]}(t-s)^{1-\frac{1}{2\beta}}\e^{-\delta(t-s)}\,\theta^{-\frac{1}{2\beta}}d|p|(x,\mathfrak{\zeta},s).
	\end{align*}
It implies that
	\begin{align}
	\int_0^T\|I_1(t)\|_{L^1(\mathbb{T}^N)}dt\,&\le C\,\theta^{-\frac{1}{2\beta}}\delta^{2-\frac{1}{2\beta}}\bigg(\int_0^\infty t^{1-\frac{1}{2\beta}}\e^{-t}dt\bigg)\int_{\mathbb{T}^N\times\mathbb{R}\times[0,T]}d|p|(x,\mathfrak{\zeta},s),\notag\\
	&\le\,C\,e^{\mathfrak{\eta} C T}\big(k_0^2\varepsilon^{-N}+1\big)\theta^{-\frac{1}{2\beta}}\delta^{2-\frac{1}{2\beta}}\bigg(\int_0^\infty t^{1-\frac{1}{2\beta}}\e^{-t}dt\bigg).
	\end{align}
	Similarly we can conclude that
	\begin{align}
	\int_0^T\|I_2(t)\|_{L^1(\mathbb{T}^N)}dt\,&\le C\,\theta^{-\frac{\alpha}{\beta}}\delta^{2-\frac{\alpha}{\beta}}\bigg(\int_0^\infty t^{1-\frac{\alpha}{\beta}}\e^{-t}dt\bigg)\int_{\mathbb{T}^N\times\mathbb{R}\times[0,T]}d|p|(x,\mathfrak{\zeta},s),\notag\\
	&\le\,C\,e^{\mathfrak{\eta} C T}\big(k_0^2\varepsilon^{-N}+1\big)\theta^{-\frac{\alpha}{\beta}}\delta^{2-\frac{\alpha}{\beta}}\bigg(\int_0^\infty t^{1-\frac{\alpha}{\beta}}\e^{-t}dt\bigg).
	\end{align}
	\textbf{Step 7:} \textbf{Conclusion:}
	We collect  all the estimates obtained in previous steps and choose $\alpha\,\le\,\beta$ then we deduce:
	\begin{align*}
	\int_0^T\|v(t)\|_{L^1(\mathbb{T}^N)}dt\,&\le\,C\,\bigg(T^{1/2}\big( \theta^{s-1}\kappa_1\big)^{1/2}+T\,\big(\theta^{s} + \frac{\mathfrak{\eta}}{\delta}\big) +\mathfrak{\eta}\,\theta^{-\frac{\alpha}{\beta}}\,\delta^{\frac{\alpha}{\beta}}+ \frac{1}{12}\int_0^T\|v(t)\|_{L^1(\mathbb{T}^N)}dt\\
	&\qquad+\big(\mathfrak{\eta}\,\theta^{-\frac{1}{2\beta}}\delta^{\frac{1}{2\beta}-1}\,+\frac{\mathfrak{\eta}}{\delta}+\mathfrak{\eta}+\theta^{-\frac{\alpha}{\beta}}\delta^{\frac{\alpha}{\beta}}\big)\int_0^T\|v(s)\|_{L^1(\mathbb{T}^N)}ds\\
	&\qquad+e^{\mathfrak{\eta} C T}\big(k_0^2\varepsilon^{-N}+1\big)(\theta^{-\frac{\alpha}{\beta}}\delta^{2-\frac{\alpha}{\beta}}+\theta^{-\frac{1}{2\beta}}\delta^{2-\frac{1}{2\beta}})\bigg),
	\end{align*}
	We choose $\theta,\delta,\,\mathfrak{\eta}\,\textgreater\,0$ such that
	$$\theta^s\,\le\,\frac{\varepsilon}{32\,C},\qquad\,\,\,\,\,\,\theta^{-\frac{1}{2\beta}}\delta^{\frac{1}{2\beta}-1} + \theta^{-\frac{\alpha}{\beta}}\delta^{\frac{\alpha}{\beta}}+e^{ C T}\big(k_0^2\varepsilon^{-N}+1\big)(\theta^{-\frac{\alpha}{\beta}}\delta^{2-\frac{\alpha}{\beta}}+\theta^{-\frac{1}{2\beta}}\delta^{2-\frac{1}{2\beta}})\le\,\min\bigg\{\frac{\varepsilon}{32\,C},\,\frac{1}{3}\bigg\},$$
	and
	$$\mathfrak{\eta}+\frac{\mathfrak{\eta}}{\delta}\le\,\min\big\{\frac{\varepsilon}{32\,C},\frac{1}{3},\,\,\frac{\varepsilon}{8\delta}\big\}.$$
	Choose T sufficiently large such that
	$$T^{-1/2}\big( \theta^{s-1}\kappa_1\big)^{1/2}\,\le\,\frac{\varepsilon}{32C}.$$
	Then
	$$\frac{1}{T}\int_0^T\|v(t)\|_{L^1(\mathbb{T}^N)}dt\,\le\,\frac{\varepsilon}{4},\,\,\qquad\frac{1}{T}\int_0^T\|\tilde{u}\|_{L^1(\mathbb{T}^N)}dt\,\le\,\frac{3\varepsilon}{8},$$
	and
	$$\frac{1}{T}\int_0^T\|u(t)\|_{L^1(\mathbb{T}^N)}dt\,\textless\,\varepsilon.$$
	This finishes the proof.
\end{proof}
\subsection{Sequence of increasing stopping times:}
In this subsection, first, we prove that estimate \eqref{final} implies that any solution enters a ball of some fixed radius at a finite time. With help of this property, we construct a sequence of increasing stopping times. Let $\tilde{u}_0^1,\,\tilde{u}_0^2$ in $L^3(\mathbb{T}^N)$ and denote by $\tilde{u}^1, \tilde{u}^2$ the corresponding solutions.
We can easily generalize \eqref{final} to two solutions on interval $[t,t+T]$ for $t,T\,\ge0.$ By repeating similar procedure to estimate for conditional expections, we have
\begin{align}
\mathbb{E}\bigg(\int_t^{t+T}\|\tilde{u}^1(s)\|_{L^1(\mathbb{T}^N)}+\|\tilde{u}^2(s)\|_{L^1(\mathbb{T}^N)}ds\big|\mathcal{F}_t\bigg)\le\,k\,\big(\|\tilde{u}^1(t)\|_{L^3(\mathbb{T}^N)}^3+\|\tilde{u}^2(t)\|_{L^3(\mathbb{T}^N)}^3+1+T\big)
\end{align}
The following property can be shown in the same way as proposed in \cite[Section 4]{vovelle 2} via a Borel-Cantelli Lemma.
Now we introduce for other term the sequence of deterministic times $(t_l)_{l\ge0}$ and $(r_l)_{l\ge0}$ by 
$$t_0=0,$$
$$t_{l+1}=t_l+r_l$$
and  we choose $(r_l)_{l\ge0}$ so that the following inequality holds for all $l\,\ge\,0:$
$$\frac{1}{2r_l}\big(\|\tilde{u}_0^1\|_{L^3(\mathbb{T}^N)}^3+\|\tilde{u}_0^2\|_{L^3(\mathbb{T}^N)}^3+1\big)\,\le\,\frac{1}{8},$$
then
$$k_0=\inf\{l\ge 0|\,\inf_{s\in[t_l,t_{l+1}]}\|\tilde{u}^1(t)\|_{L^1(\mathbb{T}^N)}+\|\tilde{u}^2(t)\|_{L^1(\mathbb{T}^N)}\,\le\,2k_1\}$$
is almost surely finite (for detials see \cite[Section 4]{vovelle 2}). Then we define the stopping time
$$\tau^{u_0^1, u_0^2}=\inf\{t\ge0,| \|\tilde{u}^1(t)\|_{L^1(\mathbb{T}^N)}+\|\tilde{u}^2(t)\|_{L^1(\mathbb{T}^N)}\,\le\,2k_1\}.$$
It is clear that $\tau^{\tilde{u}_0^1, \tilde{u}_0^2}\,\le\,t_{k_0+1}$ then it implies that $\tau^{\tilde{u}_0^1, \tilde{u}_0^2}\,\textless\,\infty$ almost surely. It shows that the following stopping times are also almost surely finite: for any $T\,\textgreater\,0$,
$$\tau_l=\inf\{t\,\ge\,\tau_{l-1}+T\big|\, \|\tilde{u}^1(t)\|_{L^1(\mathbb{T}^N)}+\|\tilde{u}^2(t)\|_{L^1(\mathbb{T}^N)}\,\le\,2k_1\},\,\,\tau_0=0.$$
\subsection{Conclusion of uniqueness}
Let $\varepsilon\,\textgreater\,0$. Let $u_0^1,\, u_0^2$ be in $L^1(\mathbb{T}^N)$. We take $\tilde{u}_0^1,\,\tilde{u}_0^2$ in $L^3(\mathbb{T}^N)$ such that $\|u_0^j-\tilde{u}_0^j\|_{L^1(\mathbb{T}^N)}\,\le\,\frac{\varepsilon}{4}$.
From Proposition \ref{smallness}, there exist $T\,\textgreater\,0$ and $\mathfrak{\eta}\,\textgreater\,0$ such that
$$\frac{1}{T}\int_{\tau_l}^{\tau_l+T}\|\tilde{u}^1(s)-\tilde{u}^2(s)\|_{L^1(\mathbb{T}^N)}\,\le\,\frac{\varepsilon}{2}$$
whenever
$$\sup_{[\tau_l,\tau_l+T]}\|\Psi W(t)-\Psi W(\tau_l)\|_{W^{2,\infty}(\mathbb{T}^N)}\,\le\,\mathfrak{\eta}.$$
We define
$$A_{W,l}=\big\{\omega\in\Omega;\,\sup_{[\tau_l,\tau_l+T]}\|\Psi W(t)-\Psi W(\tau_l)\|_{W^{2,\infty}(\mathbb{T}^N)}\,\le\,\mathfrak{\eta}\big\},$$
$$\tilde{A}_l=\big\{\omega\in\Omega;\, \frac{1}{T}\int_{\tau_l}^{\tau_l+T}\|\tilde{u}^1(s)-\tilde{u}^2(s)\|_{L^1(\mathbb{T}^N)}\,\le\,\frac{\varepsilon}{2}\big\},$$
and$$A_l=\big\{\omega\in\Omega;\,\frac{1}{T}\int_{\tau_l}^{\tau_l+T}\|u^1(s)-u^2(s)\|_{L^1(\mathbb{T}^N)}\,\le\,\varepsilon\big\}.$$
By $L^1$-contraction \eqref{contraction principal}, we have following inequality,
\begin{align*}
\mathbb{E}\big(\mathbbm{1}_{A_{W,l}}\,\big|\,\mathcal{F}_{\tau_l}\big)\,\le\,\mathbb{E}\big(\mathbbm{1}_{\tilde{A}_l}\,\big|\,\mathcal{F}_{\tau_l}\big)\,\le\,\mathbb{E}\big(\mathbbm{1}_{A_l}\,\big|\,\mathcal{F}_{\tau_l}\big).
\end{align*}
By the strong Markov property, $\mathbb{E}\big(\mathbbm{1}_{A_{W,l}}\,\big|\,\mathcal{F}_{\tau_l}\big)$ is positive constant $c$, then
\begin{align*}
\mathbb{E}\bigg[\mathbb{E}\big[\mathbbm{1}_{A_l^c\cap A_{l+1}^c}\,\big|\,\mathcal{F}_{\tau_{l+1}}\big]\,\bigg|\,\mathcal{F}_{\tau_l}\bigg]=\mathbb{E}\bigg[\mathbbm{1}_{A_l^c}\,\mathbb{E}\big[ \mathbbm{1}_{A_{l+1}^c}\,\big|\,\mathcal{F}_{\tau_{l+1}}\big]\,\bigg|\,\mathcal{F}_{\tau_l}\bigg]\,\textless\,(1-c)\,\mathbb{E}\big[\mathbbm{1}_{A_l^c}\,\big|\,\mathcal{F}_{\tau_l}\big]\,\textless\,(1-c)^2.
\end{align*}
It implies that
\begin{align*}
\mathbb{P}\big(A_l^c\cap A_{l+1}^c\big)\,\textless\,(1-c)^2.
\end{align*}
Inductively, for $l_0,\,k\in \mathbb{N}$, we have
\begin{align*}
\mathbb{P}\bigg(\cap_{j=0}^k A_{{l_j}}^c\bigg)\,\textless\,(1-c)^k,
\end{align*}
and therefore
\begin{align}\label{limit}
\mathbb{P}\bigg(\cap_{j=0}^{\infty}A_{j}^c\bigg)=\mathbb{P}\bigg(\exists\,l_0\in\mathbb{N};\,\cap_{j=0}^{\infty}A_{l_j}^c\bigg)=0.
\end{align}
Notice that limit exists, by \eqref{contraction principal}, $t\,\mapsto\,\|u^1(t)-u^2(t)\|_{L^1(\mathbb{T}^N)}$ is almost surely non-increasing. This latter property is again used to conclude from \eqref{limit} that
\begin{align*}
\mathbb{P}\big(\lim_{t\to\infty}\|u^1(t)-u^2(t)\|_{L^1(\mathbb{T}^N)}\,\textgreater\,\varepsilon\big)=0.
\end{align*}
It implies that $\mathbb{P}$-almost surely,
$$\lim_{t\to\infty}\|u^1(t)-u^2(t)\|_{L^1(\mathbb{T}^N)}=0.$$
It implies that $$\mathbb{P}\big(u^1(t)\neq u^2(t)\big)\,\to\,0\,\,\,\qquad\text{as}\,\qquad t\to\infty.$$
Let $\mu_{t,u_0^1},\,\mu_{t,u_0^2}$ be law of solutions $u^1(t)$, $u^2(t)$ respectively, then
$$\|\mu_{t,u_0^1}-\mu_{t,u_0^2}\|_{TV}\,\le\,2\,\mathbb{P}\big(u^1(t)\neq u^2(t)\big),$$
It shows that 
$$\lim_{t\to\infty}\langle \mu_{t,u_0^1},\kappa \rangle=\lim_{t\to\infty}\langle\mu_{t,u_0^2},\kappa\rangle=c_{\kappa}\,\,\,\,\,\,\forall\,\kappa\in C_b(L^1(\mathbb{T}^N)).$$
It implies that  for all $u_0\in L^1(\mathbb{T}^N)$,
\begin{align*}
	\lim_{t\to\infty}\mathcal{Q}_t\kappa(u_0)=\lim_{t\to\infty}\langle \mu_{t,u_0},\kappa\rangle= c_{\kappa}.
\end{align*}
Let $\lambda$ be any invariant measure and $\kappa\in C_b(L^1(\mathbb{T}^N))$, then by dominated convergence theorem, we get
\begin{align}\label{convergence of law}
	\lim_{t\to\infty}\langle \mathcal{Q}_t^*\lambda, \kappa\rangle=\lim_{t\to\infty}\langle\lambda,\mathcal{Q}_t\kappa\rangle\implies \lim_{t\to\infty}\langle \mu_{t,u_0},\kappa\rangle=\langle \lambda, \kappa\rangle, \,\,\forall\,\,u_0\in L^1(\mathbb{T}^N).
\end{align}
It shows that invariant measure $\lambda$ is unique \cite[Proposition 11.4]{prato} and law $\mu_{t,u_0}$ of solution $u(t)$ with initial data $u_0$ converge to an unique invariant measure $\lambda$ as $t\to\infty$.
\appendix

\section{Derivation of the kinetic formulation.}\label{A}
In this Appendix, we give brief details about the derivation of kinetic formulation of equation \eqref{1.1}. Here we are assuming that flux $F$ and diffusive function $A$ are smooth and Lipschitz continuous because we are only working with approximations of equation \eqref{1.1}. We prove that if u is a weak solution to \eqref{1.1} such that 
$$ u\in L^2(\Omega ; C([0,T];L^2(\mathbb{T}^N)))\cap L^2(\Omega;L^2(0,T; H^1(\mathbb{T}^N)),\text{and}\,A(u)\in L^2(\Omega;L^2(0,T; H^\alpha(\mathbb{T}^N))$$then $f(t)=\mathbbm{1}_{u(t)\textgreater\mathfrak{\zeta}}$ satisfies
$$df(t)+ F'\cdot\nabla f(t) dt + A'(\mathfrak{\zeta})(-\Delta)^\alpha( f(t)) dt = \delta_{u(t)=\mathfrak{\zeta}} \varphi dW(t) + \partial_{\mathfrak{\zeta}} (\mathfrak{\eta}-\frac{1}{2} H^2 \delta_{u(t)=\mathfrak{\zeta}}) dt$$
in the sense of $\mathcal{D}'(\mathbb{T}^N\times\mathbb{R})$ where $\eta\,\ge\,\eta_1$ with
$$d\mathfrak{\eta}_1(x,t,\mathfrak{\zeta})=\int_{\mathbb{R}^N}|A(u(x+z))-A(\mathfrak{\zeta})|\mathbbm{1}_{\mbox{Conv}\{u(x),u(x+z)\}}(\mathfrak{\zeta}) \mu(z) dz d\mathfrak{\zeta} dx dt .$$
Indeed, it follows from generalized It\^o formula \cite[AppendixA]{vovelle}, for $\varphi\in C_b^2(\mathbb{R})$ with $\varphi(-\infty)=0$, $\kappa\in C^2(\mathbb{T}^N),$ $\mathbb{P}$-almost surely,
\begin{align*}
\langle \varphi(u(t)), \kappa \rangle &= \langle \varphi(u_0), \kappa \rangle -\int_0^t \langle \varphi'(u) \mbox{div} (F(u)), \kappa \rangle ds-\int_0^t \langle \varphi'(u)(-\Delta)^\alpha(A(u)),\kappa \rangle ds\\
&\qquad+\sum_{k \ge 1} \int_0^t \langle \varphi'(u) h_k(x,u),\kappa \rangle d\beta_k(s)+\frac{1}{2}\int_0^t\langle \varphi''(u)H^2(x,u),\kappa \rangle ds.
\end{align*}
Since $(-\Delta)^{\alpha/2}$ is continuous linear operator from $H^1(\mathbb{T}^N)$ to $L^2(\mathbb{T}^N)$, it implies that 
\begin{align*}\langle \phi'(u)) (-\Delta)^\alpha\big(A(u)\big),\kappa \rangle&=\langle (-\Delta)^{\alpha/2}\big(\phi'(u)\kappa\big),(-\Delta)^{\alpha/2}\big(A(u)\big)\rangle\\&=\lim_{\delta\to 0}\langle (-\Delta)^{\alpha/2}(\phi'(u^\delta)\kappa),(-\Delta)^{\alpha/2}\big(A(u^\delta)\big)\rangle\\&=\lim_{\delta\to 0}\langle \phi'(u^\delta)) (-\Delta)^\alpha\big(A(u^\delta)\big),\kappa \rangle,
\end{align*}
Here $u^\delta$ is pathwise molification of $u$ in space variable. By using it,  we have 
\begin{align*}\langle \varphi'(u^\delta(x,t)) (-\Delta)^\alpha&(A(u^\delta(x,t))),\kappa(x) \rangle \\
&= -\langle \varphi'(u^\delta(x,t)) \int_{\mathbb{R}^N}(A(u^\delta(x+z,t))-A(u^\delta(x,t))) \mu(z) dz ,\kappa(x) \rangle, 
\end{align*}
By making use of the Taylor's identity \cite[identity (17)]{N-2}, we get
\begin{align*}
&\langle \varphi'(u^\delta(x,t))\int_{\mathbb{R}^N}(A(u^\delta(x+z,t))-A(u^\delta(x,t)))\mu(z) dz ,\kappa \rangle \\ 
&\qquad\qquad= \int_{\mathbb{T}^N}\int_{\mathbb{R}^N} \varphi'(u^\delta(x,t))(A(u^\delta(x+z,t))-A(u^\delta(x,t)))\kappa(x) \mu(z) dz dx\\
&\qquad\qquad=\int_{\mathbb{T}^N}\int_{\mathbb{R}^N}\kappa(x)\bigg(\int_{\mathbb{R}}(\varphi'(\mathfrak{\zeta})A'(\mathfrak{\zeta})\mathbbm{1}_{u^\delta(x+z,t)\textgreater\mathfrak{\zeta}}-\varphi'(\mathfrak{\zeta})A'(\mathfrak{\zeta})\mathbbm{1}_{u^\delta(x,t)\textgreater\mathfrak{\zeta}})d\mathfrak{\zeta}\\
&\qquad\qquad\qquad-\int_{\mathbb{R}}\varphi''(\mathfrak{\zeta})|A(u^\delta(x+z,t))-A(\mathfrak{\zeta})|\mathbbm{1}_{\mbox{Conv}\{u^\delta(x+z,t),u^\delta(x,t)\}} \bigg)\mu(z)dz dx\\
&\qquad\qquad=\int_{\mathbb{T}^N}\int_{\mathbb{R}^N}\int_{\mathbb{R}}A'(\mathfrak{\zeta}) \varphi(\mathfrak{\zeta}) \mathbbm{1}_{u^\delta(x,t)\textgreater\mathfrak{\zeta}}(\kappa(x+z)-\kappa(x)))\mu(z)\,d\mathfrak{\zeta}\,dz dx\\
&\qquad\qquad\qquad-\int_{\mathbb{T}^N} \int_{\mathbb{R}}\kappa(x) \varphi''(\mathfrak{\zeta})\int_{\mathbb{R}^N} |A(u^\delta(x+z,t))-A(\mathfrak{\zeta})|\mathbbm{1}_{\mbox{Conv}\{u^\delta(x+z,t),u^\delta(x,t)\}}(\mathfrak{\zeta})\mu(z)dz d\mathfrak{\zeta} dx,
\end{align*}
It implies that
\begin{align*}
&\int_0^t\langle \varphi'(u(x,s)) (-\Delta)^\alpha(A(u))(x,s),\kappa \rangle ds\\
&=-\lim_{\delta\to0}\int_0^t\int_{\mathbb{T}^N} \int_{\mathbb{R}}A'(\mathfrak{\zeta}) \varphi'(\mathfrak{\zeta}) \mathbbm{1}_{u^\delta(x,s)\textgreater\mathfrak{\zeta}}\int_{\mathbb{R}^N}(\kappa(x+z)-\kappa(x))\mu(z)d\mathfrak{\zeta} dz dx ds\\
&\quad+\lim_{\delta\to0}\int_0^t\int_{\mathbb{T}^N} \int_{\mathbb{R}}\kappa(x) \varphi''(\mathfrak{\zeta})\int_{\mathbb{R}^N} |A(u^\delta(x+z,s))-A(\mathfrak{\zeta})|\mathbbm{1}_{\mbox{Conv}\{u^\delta(x+z,s),u^\delta(x,s)\}}(\mathfrak{\zeta})\mu(z)dz d\mathfrak{\zeta} dx ds\\
&=-\int_{\mathbb{T}^N} \int_{\mathbb{R}}A'(\mathfrak{\zeta}) \varphi'(\mathfrak{\zeta}) \mathbbm{1}_{u(x,s)\textgreater\mathfrak{\zeta}}\int_{\mathbb{R}^N}(\kappa(x+z)-\kappa(x))\mu(z) dz d\mathfrak{\zeta}dx ds +\lim_{\delta\to0}\int_0^t\int_{\mathbb{T}^N} \int_{\mathbb{R}}\kappa(x) \varphi''(\mathfrak{\zeta})d\eta^\delta\\
&=\int_0^t\int_{\mathbb{T}^N}\int_{\mathbb{R}}A'(\mathfrak{\zeta}) \varphi'(\mathfrak{\zeta}) \mathbbm{1}_{u(x,s)\textgreater\mathfrak{\zeta}}(-\Delta)^\alpha(\kappa)(x)dxd\mathfrak{\zeta} +\int_0^t\int_{\mathbb{T}^N}\int_{\mathbb{R}}\kappa(x)\varphi''(\mathfrak{\zeta})d\eta(x,\zeta,s)\\
&=\int_0^t\langle \mathbbm{1}_{u(x,s)\textgreater \mathfrak{\zeta} } \varphi'(\mathfrak{\zeta}), A'(\mathfrak{\zeta}) (-\Delta)^\alpha(\kappa)(x)\rangle_{x,\mathfrak{\zeta} } -\langle \kappa(x) \varphi'(\mathfrak{\zeta}),\partial_{\mathfrak{\zeta}}\mathfrak{\eta}\rangle[0,t]
\end{align*}
where $\eta$ is weak-* limit of $\eta^\delta=\int_{\mathbb{R}^N} |A(u^\delta(x+z,t))-\xi|\mathbbm{1}_{\mbox{Conv}\{u^\delta(x+z,t),u^\delta(x,t)\}}(\xi)\mu(z)dz$ in $\mathcal{M}^+(\mathbb{T}^N\times[0,T]\times\mathbb{R})$. Note that above convergence holds for all $t\in[0,T]$ due to $u\in L^2(\Omega;C([0,T];L^2(\mathbb{T}^N))$. As an application of Fatou's lemma we have $\mathbb{P}$-almost surely, $\eta\,\ge\,\eta_1$.
Making use of the chain rule for functions from Sobolev spaces, we obtain the following equalities that hold true in $\mathcal{D}'(\mathbb{T}^N)$.
\begin{align*}\langle \mathbbm{1}_{u(x,t)\textgreater\mathfrak{\zeta}} , \varphi'\rangle_\mathfrak{\zeta}&= \int_{\mathbb{R}}\mathbbm{1}_{u(x,t)\textgreater\mathfrak{\zeta}}\varphi'(\mathfrak{\zeta})d\mathfrak{\zeta}= \varphi(u(x,t))\\
\varphi'(u(x,t))\mbox{div}(F(u(x,t)))&=\varphi'(u(x,t))F'(u(x,t))\cdot\nabla u(x,t)\notag\\
&=\mbox{div}(\int_{-\infty}^{u(x,t)}F'(\mathfrak{\zeta})\varphi'(\mathfrak{\zeta}))= \mbox{div}(\langle F'\mathbbm{1}_{u(x,t)\textgreater\mathfrak{\zeta}},\varphi'\rangle_\mathfrak{\zeta})\\
\varphi'(u(x,t))h_k(x,u(x,t))&=\langle h_k \delta_{u(x,t)=\mathfrak{\zeta}} ,\varphi'\rangle_{\mathfrak{\zeta}}\\
\varphi''(u(x,t))H^2(x,u(x,t))&=\langle H^2 \delta_{u(x,t)=\mathfrak{\zeta}} , \varphi''\rangle_{\mathfrak{\zeta}}=-\langle \partial_{\mathfrak{\zeta}}(H^2\delta_{u(x,t)=\mathfrak{\zeta}}), \varphi' \rangle_{\mathfrak{\zeta}}.
\end{align*}
Therefore we define $\varphi=\int_{-\infty}^\mathfrak{\zeta} \Upsilon(\mathfrak{\zeta}) d\mathfrak{\zeta}$ for some $\Upsilon\in C_c^\infty(\mathbb{R})$ to deduce the kinetic formulation.
\section*{Derivation of fomulation \eqref{kinetic formulation for v}}\label{app}
Here, we are giving only details about fractional term. For other terms, derivation is exactly same as in previous formulations. Let $\varphi\in C_c^2(\mathbb{R})$. By making use of the Taylor's identity \cite[identity (17)]{N-2}, we get
\begin{align*}
\varphi'(v)(A(\tilde{u}(x+z))&-A(\tilde{u}(x)))\\&=\int\limits_{R} \big(A'(\mathfrak{\zeta})\varphi'(\mathfrak{\zeta}+\Psi(x)W(x))\mathbbm{1}_{\tilde{u}(x+z)\,\textgreater\,\mathfrak{\zeta}}-A'(\mathfrak{\zeta})\varphi'(\mathfrak{\zeta}+\Psi(x)W(x))\mathbbm{1}_{\tilde{u}(x)\,\textgreater\,\mathfrak{\zeta}}\big)\d\mathfrak{\zeta}\\
&\qquad-\int_{\mathbb{R}}\varphi''(\mathfrak{\zeta}+\Psi(x)W(x))|A(\tilde{u}(x+z))-A(\mathfrak{\zeta})|\mathbbm{1}_{\mbox{Conv}\{\tilde{u}(x),\tilde{u}(x+z)\}}(\mathfrak{\zeta})d\mathfrak{\zeta}\\
&=\int_{\mathbb{R}}A'(\mathfrak{\zeta}+\Psi(x)W(x))\varphi'(\mathfrak{\zeta})\big(\mathbbm{1}_{\tilde{u}(x+z)\,\textgreater\,\mathfrak{\zeta}+\Psi(x)W(x)}-\mathbbm{1}_{\tilde{u}(x)\,\textgreater\,\mathfrak{\zeta}+\Psi(x)W(x)}\big)d\mathfrak{\zeta}\\
&\qquad-\int_{\mathbb{R}} \varphi''(\mathfrak{\zeta})|A(\tilde{u}(x+z))-A(\mathfrak{\zeta})|\mathbbm{1}_{\mbox{Conv}\{v,\tilde{u}(x+z)-\Psi(x)W(x)\}}d\mathfrak{\zeta},
\end{align*} 
This identity is enough to derive the kinetic formulation \eqref{kinetic formulation for v}.
\label{appB}
\section{Well-posedness for $L^1$-setting: Proof of Theorem \ref{main result 2}}\label{B}
In this section we present the proof of the main well-posedness result in $L^1$-setting, Theorem \ref{existance and uniqueness}. Here we closely follow the approach of \cite{vovelle 2}. The existence part of Theorem \ref{existance and uniqueness} will be discussed in subsection \ref{Section 3.1}, the uniqueness in subsection \ref{Section 3.2}.
\subsection{Existence}\label{Section 3.1}
In this subsection we prove the existence of kinetic solution. The proof of existence is based on a approximation procedure.\\
\textbf{Existence of predictable process u:}
First, we replace the initial condition $u_0$ by bounded approximation $u_0^k\in L^\infty(\mathbb{T}^N)$ such that $u_0^k\,\to\,u_0\,$in $L^1(\mathbb{T}^N)$.  This defines a sequence of solution $(u^k)$ and kinetic measures $(m^k)$ satisfy, $\mathbb{P}$-almost surely, for all $t\in[0,T]$
\begin{align}\label{formulation-1}
\langle f^k(t),\varphi \rangle &= \langle f_0^k, \varphi \rangle + \int_0^t\langle f^k(s),F'(\mathfrak{\zeta})\cdot\nabla\varphi\rangle ds -\int_0^t\langle f^k(s), A'(\mathfrak{\zeta})\,(-\Delta)^\alpha[\varphi]\rangle ds\notag\\
&\qquad+\sum_{k=1}^\infty \int_0^t \int_{\mathbb{T}^N} h_k(x)\varphi(x,u^k(x,s) dx d\beta_k(s)\notag\\
&\qquad+\frac{1}{2}\int_0^t\int_{\mathbb{T}^N}\partial_{\mathfrak{\zeta}}\varphi(x,u^k(x,s)) H^2 (x)dxds -m^k(\partial_{\mathfrak{\zeta}}\varphi)([0,t]).
\end{align}
where $f^k(s)=\mathbbm{1}_{u^k(s)\,\textgreater\,\mathfrak{\zeta}}$ and $m^k\,\ge\,\mathfrak{\eta}^k$ $\mathbb{P}$ almost surely with
$$\mathfrak{\eta}^k(x,t,\mathfrak{\zeta})=\int_{\mathbb{R}^N}|A(u^k(x+z,t))-A(\mathfrak{\zeta})|\mathbbm{1}_{\mbox{Conv}\{u^k(x,t),u^k(x+z,t)\}}(\mathfrak{\zeta})\mu(z)dz.$$
By the contraction property in $L^1$, sequence $(u^k)$ is cauchy sequence in $L^1_{\mathcal{P}}(\Omega\times\,\mathbb{T}^N\times[0,T])$. It implies that $u^k$ converges to $u$ in $L_{\mathcal{P}}^1(\Omega\times\mathbb{T}^N\times[0,T])$. 

\noindent
\textbf{Bound on approxmate kinetic measure:}
 $m^k$ is the kinetic measure associated with $u^k$. Now we are interested to find a convergence of $m^k$ in a suitable topological space. For that, we are trying to find some type of uniformly bound in k in the following Lemma.
\begin{lem}\label{bound on measure}
	Let $u^k$ be a kinetic solution to \eqref{1.1} with initial condition $u_0^k$. Then, for all $R\,\textgreater\,0$,
	$$\mathbb{E}|m^k([0,T]\times\mathbb{T}^N\times[-R,R])|^2\,\le\,C(T,R,\|u_0\|_{L^1(\mathbb{T}^N)}^2),$$
	and
	$$\mathbb{E}\big[\sup_{t\in[0,T]}\|u^k(t)\|_{L^1(\mathbb{T^N)}}\big]\le\,C,$$
	for some $C\,\textgreater\,0$, indepenent of k.
\end{lem}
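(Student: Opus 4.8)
The plan is to test the kinetic formulation \eqref{formulation} against a function that is independent of $x$ and whose $\mathfrak{\zeta}$-derivative is a smoothing of the indicator $\mathbbm{1}_{[-R,R]}$, so that the kinetic measure term isolates exactly the quantity $m^k([0,t]\times\mathbb{T}^N\times[-R,R])$. Concretely, I would take $\varphi(x,\mathfrak{\zeta})=\psi_R(\mathfrak{\zeta})$, where $\psi_R$ is a smooth bounded approximation of the clamp $\mathfrak{\zeta}\mapsto\max\{-R,\min\{R,\mathfrak{\zeta}\}\}$, so that $\psi_R'\approx\mathbbm{1}_{[-R,R]}$ has compact support and $\|\psi_R\|_{\infty}\le R$. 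Working with the $L^1$-normalization $f^k=\mathbbm{1}_{u^k>\mathfrak{\zeta}}-\mathbbm{1}_{0>\mathfrak{\zeta}}$, one has $\langle f^k(t),\psi_R\rangle=\int_{\mathbb{T}^N}\int_0^{u^k(x,t)}\psi_R(\mathfrak{\zeta})\,d\mathfrak{\zeta}\,dx$, and the elementary inequality $\int_0^{u}\psi_R\ge R|u|-\tfrac{R^2}{2}$ shows that this is comparable to $R\|u^k(t)\|_{L^1(\mathbb{T}^N)}$ up to an additive constant depending only on $R$, which is what will produce the $L^1$ bound.

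Since $\psi_R$ does not depend on $x$, both the transport term $\langle f^k,F'(\mathfrak{\zeta})\cdot\nabla\varphi\rangle$ and the nonlocal term $\langle f^k,A'(\mathfrak{\zeta})(-\Delta)^\alpha[\varphi]\rangle$ vanish identically, and the formulation collapses to
\[
m^k([0,t]\times\mathbb{T}^N\times[-R,R]) = \langle f_0^k,\psi_R\rangle - \langle f^k(t),\psi_R\rangle + M_R^k(t) + \tfrac12\int_0^t\int_{\mathbb{T}^N}\psi_R'(u^k)\,H^2(x)\,dx\,ds,
\]
where $M_R^k(t)=\sum_{n\ge1}\int_0^t\int_{\mathbb{T}^N}h_n(x)\psi_R(u^k(x,s))\,dx\,d\beta_n(s)$ is a martingale. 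All three error terms are controlled by $R$ and $\|u_0\|_{L^1(\mathbb{T}^N)}$ only: the It\^o correction is bounded by $\tfrac12 C_0|\mathbb{T}^N|T$ via \eqref{noise1}; the martingale has quadratic variation at most $C_0 R^2|\mathbb{T}^N|^2 T$ by Cauchy--Schwarz together with $\|\psi_R\|_\infty\le R$; and $\langle f_0^k,\psi_R\rangle\le R\|u_0^k\|_{L^1(\mathbb{T}^N)}+C(R)$.

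The $L^1$ bound then follows by solving the displayed identity for $\langle f^k(t),\psi_R\rangle$, discarding the nonnegative measure term (using $m^k\ge0$) to get $R\|u^k(t)\|_{L^1(\mathbb{T}^N)}-C(R)\le\langle f^k(t),\psi_R\rangle\le R\|u_0^k\|_{L^1(\mathbb{T}^N)}+M_R^k(t)+C$, then taking the supremum in $t$ and applying Burkholder--Davis--Gundy to $M_R^k$; after dividing by $R$ the martingale contribution is bounded uniformly, yielding $\mathbb{E}\sup_{[0,T]}\|u^k(t)\|_{L^1(\mathbb{T}^N)}\le\|u_0\|_{L^1(\mathbb{T}^N)}+C$ independently of $k$. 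For the second-moment bound I would square the displayed identity at $t=T$, use $0\le\langle f^k(T),\psi_R\rangle\le\langle f_0^k,\psi_R\rangle+M_R^k(T)+C$ (again from $m^k\ge0$), and take expectations, handling $\mathbb{E}|M_R^k(T)|^2$ by the It\^o isometry; every resulting term is then bounded by $C(T,R,\|u_0\|_{L^1(\mathbb{T}^N)}^2)$, uniformly in $k$.

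The main obstacle is admissibility: $\psi_R$ is bounded but not compactly supported, so it is not directly a legitimate test function in \eqref{formulation}. The delicate part will be to justify the identity by approximation — replacing $\psi_R$ by compactly supported $C^2$ functions and passing to the limit — which uses the decay-at-infinity property of $m^k$ from Definition \ref{kinetic measure 2} together with the integrability of $u^k$ inherited from its $L^\infty(\mathbb{T}^N)$ initial datum. It is precisely the localization to $[-R,R]$, forcing $\|\psi_R\|_\infty\le R$, that keeps every constant independent of $\|u_0^k\|_{L^2(\mathbb{T}^N)}$, which may blow up as $k\to\infty$ since $u_0\in L^1(\mathbb{T}^N)$ only; this is the structural reason the bound must be stated on bounded $\mathfrak{\zeta}$-intervals. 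Throughout I would follow the scheme of \cite[Lemma 11]{vovelle 2}, the one genuinely new simplification being that the nonlocal term drops out automatically because the chosen test function is $x$-independent.
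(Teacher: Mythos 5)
Your proposal is correct and follows essentially the same route as the paper: both test the kinetic formulation against an $x$-independent function whose $\zeta$-derivative approximates $\mathbbm{1}_{[-R,R]}$ (the paper's $\kappa_R'$, your clamp $\psi_R$), so that the flux and nonlocal terms drop out, then bound the It\^o correction and the martingale via \eqref{noise1} and the It\^o isometry, and exploit the nonnegativity of $\mathfrak{m}^k$ before squaring. The only cosmetic difference is that the paper uses a second test function, $\varphi(\zeta)=\mathrm{sign}(\zeta)$, for the $L^1$ bound, whereas your single clamp $\psi_R$ (with the inequality $\int_0^u\psi_R\ge R|u|-\tfrac{R^2}{2}$ and BDG) handles both estimates at once — an equivalent and equally valid arrangement.
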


\begin{proof}
	For $R\,\textgreater\,0,$ set
	$$\kappa_R(u)=\mathbbm{1}_{[-R,R]}(u)\,\,\,\,\,\,\kappa_R(u)=\int_{-R}^u\int_{-R}^r\kappa_R(s)ds dr.$$
	We take $\varphi(x,\mathfrak{\zeta})=\kappa_R'(\mathfrak{\zeta})$ in \eqref{formulation} to obtain
	\begin{align}
	\int_{\mathbb{T}^N}\kappa_R(u^k(x,t)) dx&=\int_{\mathbb{T}^N}\kappa_R(u_0^k(x))dx + \sum_{k\ge1}\int_0^t\int_{\mathbb{T}^N}h_k(x)\kappa_R'(u^k(x,s)) dx d\beta_j(s)\notag\\
	&\qquad+\frac{1}{2}\int_0^t\int_{\mathbb{T}^N} H^2(x)\kappa_R(u^k(x,s)) dx ds-\int_{\mathcal{B}_R}d\mathfrak{m}^k(x,s,t),
	\end{align}
	where $\mathcal{B}_R=\mathbb{T}^N\times[0,t]\times\{\mathfrak{\zeta}\in\mathbb{R},R\,\le\,\mathfrak{\zeta}\,\le\,R+1\}$. 
	\begin{align}
	&\mathbb{E}\bigg|\int_{\mathbb{T}^N}\kappa_R(u^k(x,t)) dx\bigg|^2+\mathbb{E}\bigg|\int_{\mathcal{B}_R}d\mathfrak{m}^k(x,t,\mathfrak{\zeta})\bigg|^2\le\mathbb{E}\bigg|\int_{\mathbb{T}^N}\kappa_R(u_0^k))dx\bigg|^2\notag\\&\qquad+\frac{1}{2}\mathbb{E}\bigg|\int_0^t\int_{\mathbb{T}^N}H^2(x) \kappa_R(u^k(x,s))dx\bigg|^2 +\mathbb{E}\bigg|\sum_{k\ge1}\int_0^t\int_{\mathbb{T}^N}h_k(x)\kappa_R'(u^k(x,s))dx d\beta_j(s)\bigg|^2\notag
	\\
	\end{align}
	Since $0\,\le\,\kappa_R(u)\,\le\,2R(R+|u|)$ and $H^2(x)\kappa_R(u^k(x,s))\,\le\,D$, we have
	$$\mathbb{E}\bigg|\int_{\mathbb{T}^N}\kappa_R(u_0^k))dx\bigg|^2 +\frac{1}{2}\mathbb{E}\bigg|\int_0^t\int_{\mathbb{T}^N}H^2(x) \kappa_R(u^k(x,s))dx\bigg|^2\,\le\,C(R^2,T^2,D,\|u_0\|_{L^1(\mathbb{T}^N)}^2)$$
	Since $0\,\le\,\kappa_R'(u)\,\le\,2R$, we estimate the stochastic integral using It\^o isometry as follows
	\begin{align*}
	\mathbb{E}\bigg|\sum_{k\ge1}\int_0^t\int_{\mathbb{T}^N} h_k(x) \kappa_R'(u^k(x,s))dx d\beta_j(s) \bigg|^2&=\mathbb{E}\bigg(\int_0^t\sum_{k\ge1}\bigg(\int_{\mathbb{T}^N}h_k(x)\kappa_R'(u^k(x,s))dx\bigg)^2ds\bigg)\\
	&\,\le\,4R^2\mathbb{E}\bigg(\int_0^T\bigg(\int_{\mathbb{T}^N}\big(\sum_{k\ge1}|h_k(x)|^2\big)^{\frac{1}{2}}dx\bigg)^2\bigg)\\
	&\,\le\,4R^2TC_0, 		
	\end{align*}
	This finishes proof of first part. For second, we can take constant $\varphi(\mathfrak{\zeta})=sign(\mathfrak{\zeta})$ as test function in \eqref{formulation} (justified by standard approximation), then we can easily prove second part as previous calculation.
\end{proof}	
\noindent	
\textbf{Limit of kinetic measures $m^k$:} Suppose that $\mathcal{M}(\mathcal{B}_R)$ is the collection of bounded Borel measure over $\mathcal{B}_R$ with norm given by the total variation of measures. It is dual space of $C(\mathcal{B}_R)$, the collection of continuous functions on $\mathcal{B}_R$. Since $\mathcal{M}(\mathcal{B}_R)$ is separable, the space $L^2(\Omega; \mathcal{M}(\mathcal{B}_R))$ is the dual space of $L^2(\Omega; C(\mathcal{B}_R))$. Let us discuss convergence of $m^k$. By Lemma \ref{bound on measure}, we have for  $R\in\mathbb{N}$
\begin{align}\label{uniform bound on measure}
	\sup_k\mathbb{E}|m^k(\mathcal{B}_R)|^2\,\le\,C_R,
\end{align}
where $\mathcal{B}_R=\mathbb{T}^N\times[0,T]\times[-R,R]$.
Bound \eqref{uniform bound on measure} gives a uniform bound on $(m^k)$ in $L^2(\Omega,\mathcal{M}(\mathcal{B}_R))$. There exists $m_R\in L^2(\Omega;\mathcal{M}(\mathcal{B}_R))$ such that
up to subsequence, $m^k\,\to\,m_R$ in $L^2(\Omega;\mathcal{M}(\mathcal{B}_R))$-weak star. By a diagonal process, we get, for $R\in\mathbb{N}$, $m_R=m_{R+1}$ in $L^2(\Omega; \mathcal{M}(\mathcal{B}_R))$ and the convergence of a single subsequence still denoted by $(m^k)$ in all the spaces $L^2(\Omega;\mathcal{M}(\mathcal{B}_R))$- weak-*. Let us define $\mathbb{P}$-almost surely, $\mathfrak{m}=m_R$ on $\mathcal{B}_R$. The condition (i) of Definition \ref{kinetic measure 2} is direct consequence of weak convergence, therefore satisfied by ${\mathfrak{m}}$. Now, condition (ii) of Definition \ref{kinetic measure 2} remains to prove. 

\noindent
\textbf{Kinetic solution:} Now, we have all in hand to apply the method as developed in subsection \ref{subsection 3.2} and pass to limit  in \eqref{formulation-1}, we get a $L^1(\mathbb{T}^N)$-valued process $(\tilde{u}(t))_{t\in[0,T]}$ such that $(\tilde{u}(t))_{t\in[0,T]}$ and $f(t)=\mathbbm{1}_{\tilde{u}(t)\textgreater\zeta}$ satisfy $\mathbb{P}$-almost surely, for all $t\in[0,T]$
\begin{align}\label{formulation-2}
\langle f(t),\varphi \rangle &= \langle f_0, \varphi \rangle + \int_0^t\langle f(s),F'(\mathfrak{\zeta})\cdot\nabla\varphi\rangle ds -\int_0^t\langle f(s), A'(\mathfrak{\zeta})\,(-\Delta)^\alpha[\varphi]\rangle ds\notag\\
&\qquad+\sum_{k=1}^\infty \int_0^t \int_{\mathbb{T}^N} h_k(x)\varphi(x,\tilde{u}(x,s) dx d\beta_k(s)\notag\\
&\qquad+\frac{1}{2}\int_0^t\int_{\mathbb{T}^N}\partial_{\mathfrak{\zeta}}\varphi(x,\tilde{u}(x,s)) H^2 (x)dxds -\mathfrak{m}(\partial_{\mathfrak{\zeta}}\varphi)([0,t]),
\end{align}
where $m\,\ge\,\mathfrak{\eta}_1$, $\mathbb{P}$-almost surely, where $\mathfrak{\eta}_1$ defined as in Definition \ref{definition kinetic solution in l1 setting}. For all $\varphi\in C_c^2(\mathbb{T}^N\times\mathbb{R}),$ $\mathbb{P}$-almost surely, $t\to \langle f(t),\varphi\rangle$ is c\'adl\'ag.

\noindent
\textbf{Decay of kinetic measure:}
In the existence part of the result, decay of kinetic measure and continuity of solution is remaining to prove. For that, we will prove the following proposition.
\begin{proposition}\label{22}
	Let $u_0\in L^1(\mathbb{T}^N)$. Let a measurable function $u:\mathbb{T}^N\times[0,T]\times\Omega\,\to\,\mathbb{R}$ be a solution to \eqref{1.1}. Let $T\,\textgreater\,0$,  There exists a decreasing function $\varepsilon:\mathbb{R}_+\,\to\,\mathbb{R}_+$ with $\lim_{n\to\infty}\varepsilon(n)=0$ depending on T and on the functions 
	$$n\mapsto\|(u_0-n)^+\|_{L^1(\mathbb{T}^N)},\,\,\,\,\,\,\, n\mapsto\|(u_0-n)^-\|_{L^1(\mathbb{T}^N)}$$
	only such that, for all $n\,\textgreater 1$,
	\begin{align}\label{tightness}
	\mathbb{E}\big(\sup_{t\in[0,T]}\|(u(t)-n)^{\pm}\|_{L^1(\mathbb{T}^N)}\big) + \mathbb{E}\mathfrak{m}(\mathcal{B}_n)\,\le\,C(T,C_0,\|u_0\|_{L^1(\mathbb{T}^N)})(\varepsilon(n-1)+\varepsilon^{1/2}(n)),
	\end{align}
	where $\mathcal{B}_n=\mathbb{T}^N\times[0,T]\times\{\mathfrak{\zeta}\in\mathbb{R},n\,\le\,|\mathfrak{\zeta}|\,\le n+1\}$.
\end{proposition}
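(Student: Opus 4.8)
The plan is to test the kinetic formulation \eqref{formulation} against a function depending only on the velocity variable $\mathfrak{\zeta}$, which annihilates both the convective and the nonlocal terms and isolates exactly the quantities appearing in \eqref{tightness}. Concretely, for the ``$+$'' part I would take $\varphi(x,\mathfrak{\zeta})=\phi_n(\mathfrak{\zeta})$, where $\phi_n\in C^2(\mathbb{R})$ is bounded, vanishes on $(-\infty,n]$, equals $1$ on $[n+1,\infty)$, and has $\phi_n'\ge0$ a smooth approximation of $\mathbbm{1}_{[n,n+1]}$. Since $\phi_n$ is independent of $x$ we have $\nabla_x\varphi\equiv0$ and, crucially, $(-\Delta)^\alpha[\varphi]\equiv0$ (the fractional Laplacian of an $x$-constant function vanishes by \eqref{2.4}); hence the flux term $\langle f,F'\cdot\nabla\varphi\rangle$ and the diffusion term $\langle f,A'(-\Delta)^\alpha[\varphi]\rangle$ are identically zero. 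This is the structural simplification that makes the argument essentially identical to the purely hyperbolic case: the extra nonlocal operator feeds into \eqref{tightness} only through the nonnegative measure $\mathfrak{m}\ge\mathfrak{\eta}_1$, which sits on the favourable side of the inequality.

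With this choice, and using the extension of \eqref{formulation} to test functions that are bounded in $\mathfrak{\zeta}$ with $\partial_\mathfrak{\zeta}\varphi$ compactly supported (legitimate in the $L^1$ setting because $\mathfrak{m}$ is finite on bounded $\mathfrak{\zeta}$-strips and $f\in[0,1]$ with $u\in L^1$), the identity collapses to
\[
\langle f(t),\phi_n\rangle+\mathfrak{m}(\phi_n')([0,t])=\langle f_0,\phi_n\rangle+M^n_t+\tfrac12\int_0^t\!\!\int_{\mathbb{T}^N}\phi_n'(u)H^2(x)\,dx\,ds,
\]
where $M^n_t=\sum_k\int_0^t\int_{\mathbb{T}^N}h_k(x)\phi_n(u(x,s))\,dx\,d\beta_k(s)$ is a martingale. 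Because $\phi_n'\ge0$ approximates $\mathbbm{1}_{[n,n+1]}$, the left measure term dominates the mass of $\mathfrak{m}$ on the strip $\mathbb{T}^N\times[0,t]\times[n,n+1]$ (the positive half of $\mathcal{B}_n$), while $\langle f(t),\phi_n\rangle$ bounds $\|(u(t)-(n+1))^+\|_{L^1}$ from below and is $\le\|(u(t)-n)^+\|_{L^1}$. Taking $\sup_{t\in[0,T]}$ and then expectations reduces the problem to estimating the two noise contributions.

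For the deterministic Itô correction I would use that the noise is additive, so $H^2(x)=\sum_k|h_k(x)|^2\le C_0$ by \eqref{noise1}, and that $\phi_n'(u)$ is supported in $\{n\le u\le n+1\}\subset\{u\ge n\}$; this term is then bounded by $\tfrac{C_0}{2}\int_0^T|\{u(s)\ge n\}|\,ds$, and Markov's inequality $|\{u(s)\ge n\}|\le\|(u(s)-(n-1))^+\|_{L^1}$ produces the shifted quantity, hence the $\varepsilon(n-1)$ on the right of \eqref{tightness}. For the martingale I would apply the Burkholder--Davis--Gundy inequality: its bracket is $\int_0^T\sum_k\big(\int_{\{u\ge n\}}h_k\phi_n(u)\,dx\big)^2ds$, which by Cauchy--Schwarz in $x$ and $H^2\le C_0$ is controlled by $C_0\int_0^T|\{u\ge n\}|^2\,ds\le C_0\int_0^T|\{u\ge n\}|\,ds$ (using $|\mathbb{T}^N|=1$); the square root inherent in BDG is precisely what yields the $\varepsilon^{1/2}(n)$ term. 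One then defines $\varepsilon$ as a tail functional $\varepsilon(n)\sim\mathbb{E}\int_0^T|\{|u(s)|\ge n\}|\,ds$ together with the initial tails $\|(u_0-n)^+\|_{L^1}$ and $\|(u_0+n)^-\|_{L^1}$; the a priori bound $\mathbb{E}\sup_t\|u(t)\|_{L^1}<\infty$ from Lemma \ref{bound on measure} and $u_0\in L^1$ guarantee $\varepsilon(n)\downarrow0$, and the construction makes $\varepsilon$ depend only on $T$ and on the two initial-data functions. The ``$-$'' part follows verbatim after replacing $f$ by $\bar f=1-f$ and $\phi_n$ by the mirror profile supported near $-\infty$; summing the two halves recovers the full strip $\mathcal{B}_n$.

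The main obstacle is the bookkeeping that closes the estimate: the inequality naturally relates the level-$n$ quantity on the left to level-$(n-1)$ and square-root-of-level-$n$ tail quantities on the right, and one must verify that these wrap into a single decreasing $\varepsilon$, depending only on $T$ and the initial data and tending to zero. This is where I would follow the packaging argument of \cite[Lemma 11]{vovelle 2} most closely, together with the harmless unit relabelling of levels consistent with the $n-1$ shift in \eqref{tightness}. A secondary technical point is that the kinetic formulation is stated for $C_c^2$ test functions whereas $\phi_n$ is only bounded with $\phi_n'$ compactly supported, and $\mathfrak{m}$ is merely locally finite in $\mathfrak{\zeta}$ (Definition \ref{kinetic measure 2}); the admissibility of $\phi_n$ and the sharpening of $\phi_n'$ toward $\mathbbm{1}_{[n,n+1]}$ must be justified by dominated/monotone convergence against the uniform bounds rather than naively. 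Once \eqref{tightness} is established, letting $n\to\infty$ gives $\lim_n\mathbb{E}\,\mathfrak{m}(\mathcal{B}_n)=0$, which is exactly condition (ii) of Definition \ref{kinetic measure 2} and completes the verification that the limit measure $\mathfrak{m}$ is a genuine kinetic measure.
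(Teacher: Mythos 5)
Your proposal is correct and follows essentially the same route as the paper: the paper tests \eqref{formulation} with the $\mathfrak{\zeta}$-only ramp $\varphi=\kappa_R'$ (your $\phi_n$), for which both the flux and fractional terms vanish, uses the Chebyshev shift $\mathbbm{1}_{R\le u}\le (u-R+r)^+/r$ with $r=C_0+1$ and levels $R=nC_0-1$ to obtain the recursion $h_n(t)\le\frac{1}{2}\int_0^t h_{n-1}(s)\,ds+h_{n-1}(0)$, and then closes exactly as you indicate (BDG for the martingale supremum producing the $\varepsilon^{1/2}(n)$ term, and the level bookkeeping ensuring $\varepsilon$ depends only on $T$ and the initial tails) by invoking the packaging argument of Debussche--Vovelle. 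The only discrepancy is cosmetic: the packaging result is \cite[Proposition 15]{vovelle 2} rather than \cite[Lemma 11]{vovelle 2}.
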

\begin{proof}
	For $R\,\textgreater\,0$, set
	$$\kappa_R(u)=\mathbbm{1}_{R\,\textless\,u\,\textless R+1},\,\,\,\,\,\,\,\kappa_R(u)=\int_0^u\int_0^r\kappa_R(s)ds dr.$$
	We take $$\varphi(x,\mathfrak{\zeta})=\kappa_R'(\mathfrak{\zeta})$$ in \eqref{formulation} to obtain,$\mathbb{P}$-almost surely, for all $t\in[0,T]$
	\begin{align}
	\int_{\mathbb{T}^N}\kappa_R(u(x,t)) dx&=\int_{\mathbb{T}^N}\kappa_R(u_0(x))dx + \sum_{j\ge1}\int_0^t\int_{\mathbb{T}^N}g_j(x)\kappa_R'(u(x,s)) dx d\beta_j(s)\notag\\
	&\qquad+\frac{1}{2}\int_0^t\int_{\mathbb{T}^N} H^2(x)\kappa_R(u(x,s)) dx ds-\int_{\mathcal{B}_R}d\mathfrak{m}(x,s,t),
	\end{align}
	Taking  then expectation, we have, for all $t\in[0,T]$
	\begin{align}\label{m}
	\mathbb{E}\int_{\mathbb{T}^N}\kappa_R(u(x,t)) dx&=\int_{\mathbb{T}^N}\kappa_R(u_0(x))dx 
	+\frac{1}{2}\mathbb{E}\int_0^t\int_{\mathbb{T}^N} H^2(x)\kappa_R(u(x,s)) dx ds-\mathbb{E}\int_{\mathcal{B}_R}d\mathfrak{m}(x,s,t),
	\end{align}
	\begin{align}\label{r}
	\mathbb{E}\int_{\mathbb{T}^N}\kappa_R(u(x,t)) dx&\,\le\,\int_{\mathbb{T}^N}\kappa_R(u_0(x))dx 
	+\frac{1}{2}\mathbb{E}\int_0^t\int_{\mathbb{T}^N} H^2(x)\kappa_R(u(x,s)) dx ds
	\end{align}
	Note that
	$$(u-(R+1))^+\,\le\,\kappa_R(u)\,\le\,(u-R)^+,$$
	for all $R\,\ge\,0,\,u\in\mathbb{R}$. Note also
	\begin{align}\label{inequality}
	\kappa_R(u)\,\le\,\mathbbm{1}_{R\,\le\,u}\,\le\,\frac{(u-R+r)^+}{r},\,\,\,\,\,\,\, r\,\textgreater\,0.
	\end{align}
	\eqref{r} gives
	\begin{align*}
	\mathbb{E}\int_{\mathbb{T}^N}(u(x,t)-(R+1))^+dx\,&\le\,\frac{C_0}{2r}\mathbb{E}\int_0^t\int_{\mathbb{T}^N}(u(x,t)-R+r)^+ dx + \int_{\mathbb{T}^N}(u_0(x)-R)^+ dx.\\
	&\le\,\frac{C_0}{2r}\mathbb{E}\int_0^t\int_{\mathbb{T}^N}(u(x,t)-R+r)^+ dx+\int_{\mathbb{T}^N}(u_0(x)-R+r)^+ dx.
	\end{align*}
	Choose $r=C_0+1$,then
	\begin{align*}
	\mathbb{E}\int_{\mathbb{T}^N}(u(x,t)-(R+1))^+dx\,&\le\,\frac{1}{2}\mathbb{E}\int_0^t\int_{\mathbb{T}^N}(u(x,t)-R+C_0+1)^+ dx+\int_{\mathbb{T}^N}(u_0(x)-R+C_0+1)^+ dx,
	\end{align*}
	we take $R=nC_0-1$,\,\, then we have following inequality,
	\begin{align*}
	\mathbb{E}\int_{\mathbb{T}^N}(u(x,t)-nC_0)^+ dx\,\le\,\frac{1}{2}\mathbb{E}\int_0^t\int_{\mathbb{T}^N}(u(x,t)-(n-1)C_0)^+ dx+\int_{\mathbb{T}^N}(u_0(x)-(n-1)C_0)^+ dx.
	\end{align*}
	Denote by $h_n(t)$ the function
	$$h_n(t)=\mathbb{E}\int_{\mathbb{T}^N}(u(x,t)-nC_0)^+ dx,\,\,\,\forall t\in[0,T],$$
	then, we finally obtain
	$$h_n(t)\,\le\,\frac{1}{2}\int_0^t h_{n-1}(s)ds + h_{n-1}(0).$$
	Since $u^+\,\le\,(u-C_0)^+ + C_0,$ we have
	$$h_0(t)=\mathbb{E}\int_{\mathbb{T}^N}u^+(x)dx\,\le\,\mathbb{E}\int_{\mathbb{T}^N}(u(x,t)-C_0)^+ dx + C_0= h_1(t)+C_0$$
	After this, we can follow the proof of \cite[Proposition 15]{vovelle 2} to conclude the result.
\end{proof}

\noindent
\textbf{Kinetic measure:} By estimate \eqref{tightness}, it is clear that ${\mathfrak{m}}$ satiesfies decay codition $(ii)$ of Definition \ref{definition kinetic measure}. So ${\mathfrak{m}}$ is kinetic measure.
\subsection{Uniqueness:}\label{Section 3.2}
 Note that both Propositions \ref{Proposition 3.1}, \& \ref{th3.5} also hold in present $L^1$-setting. Here we state only the statement of the contraction principle. We are not giving proof of result, because proof of the uniqueness result is similar to the proof of Theorem \ref{th3.6}. 
\begin{thm}\label{new}
	Let $u_0\in L^1(\mathbb{T}^N)$. Let u be a kinetic solution to \eqref{1.1}, then there exists $L^1(\mathbb{T}^N)$-valued process $(u^{-}(t))_{t\in[0,T]}$ such that $\mathbb{P}$-almost surely, for all $t\in[0,T]$, $f^{-}(t)=\mathbbm{1}_{u^{-}(t)\textgreater\xi}$. Moreover, if $u_1$,\,\,$u_2$\, are kinetic solutions to \eqref{1.1} with initial data $u_{1,0} $ and $u_{2,0}$ respectively, then for all $t\in[0,T]$, we have $\mathbb{P}$-almost surely,
	\begin{align}
	\|u^1(t)-u^2(t)\|_{L^1(\mathbb{T}^N)}\,\le\,\|u_{1,0}-u_{2,0}\|_{L^1(\mathbb{T}^N)},
	\end{align}
\end{thm}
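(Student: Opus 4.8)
The plan is to transfer the doubling-of-variables argument behind Theorem \ref{th3.6} to the $L^1$-framework, where the kinetic measure is only asked to decay at infinity (Definition \ref{kinetic measure 2}) instead of being globally finite. Both Proposition \ref{Proposition 3.1} and Proposition \ref{th3.5} remain available, so I would first produce the $L^1$-valued left-limit process: Proposition \ref{Proposition 3.1} yields a kinetic function $f^-(t)$, and applying the pathwise doubling inequality (the $L^1$-analogue of \eqref{app inequality1}) to the pair $u_1=u_2=u$ gives $\int_{\mathbb{T}^N}\int_{\mathbb{R}} f^-(x,t,\xi)\big(1-f^-(x,t,\xi)\big)\,d\xi\,dx \le \int_{\mathbb{T}^N}\int_{\mathbb{R}} f_0(x,\xi)\big(1-f_0(x,\xi)\big)\,d\xi\,dx = 0$. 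Hence $f^-(t)\in\{0,1\}$ a.e., and since a kinetic function is nonincreasing in $\xi$ with the correct limits, $f^-(t)=\mathbbm{1}_{u^-(t)>\xi}$ with $u^-(t)=\int_{\mathbb{R}}\big(f^-(t)-\mathbbm{1}_{0>\xi}\big)\,d\xi\in L^1(\mathbb{T}^N)$ by the $p=1$ moment bound.

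For the contraction I would insert the mollifiers $\kappa=\kappa_\delta$ on $\mathbb{R}$ and $\Upsilon=\Upsilon_\varepsilon$ on $\mathbb{T}^N$ into the doubling inequality of Proposition \ref{th3.5}, generating the error terms $\mathcal{E}_\Upsilon$, $\mathcal{E}_\kappa$ and $J$ exactly as in the proof of Theorem \ref{th3.6}. Two features of the present hypotheses do the decisive work. First, assumption \eqref{A4} forces the noise to be additive, so that $\sum_k|h_k(x)-h_k(y)|^2\le C_\Psi|x-y|^2$ carries no $\zeta$-dependence; this both removes the $g(\delta)$ contribution, leaving $|\mathcal{E}_\kappa|\le C_\Psi\,t\,\varepsilon^2\delta^{-1}$, and lets the martingale contributions of the It\^o product formula cancel, so the inequality can be propagated $\mathbb{P}$-almost surely rather than only in expectation (alternatively one passes to $v_i=u_i-\Psi W$, whose kinetic equations carry no stochastic integral, cf. \eqref{kinetic formulation for v}). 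Second, the subquadratic hypothesis $|F''(\zeta)|\le C$ is what keeps $\mathcal{E}_\Upsilon$ under control: on $\supp\kappa_\delta$ one has $|F'(\xi)-F'(\zeta)|\le C\delta$, whence $|\mathcal{E}_\Upsilon|\le C\varepsilon^{-1}\delta$ with a constant free of any $L^p$-moment of the solution, a moment bound that is simply unavailable when $u_0\in L^1$ only.

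The nonlocal term $J$ I would split into $J_1+J_2$, rewrite $J_2=-J_1+I_1$, and decompose $I_1=I_1^r+I_r^1$, estimating the two pieces through the Lipschitz continuity and the monotonicity of $A$ precisely as in Step 3 of the proof of Theorem \ref{th3.6}; this produces the bounds $C(\omega)r^{-2\alpha}\delta$ and $C(\omega)r^{2-2\alpha}\varepsilon^{-2}$. Collecting everything reproduces the master inequality \eqref{final1}. Since $\alpha\in(0,\tfrac12)$, I would then set $\varepsilon=\delta^\beta$ and $r=\delta^\gamma$ with $\min\{\tfrac{1-\alpha}{2\alpha},1\}>\beta>\tfrac12$ and $\tfrac{1}{2\alpha}>\gamma>\tfrac{\beta}{1-\alpha}$, and let $\delta\to0$: the left side converges to $\int_{\mathbb{T}^N}\int_{\mathbb{R}} f_1\bar f_2\,d\xi\,dx$, and combining this with its symmetric counterpart gives $\|u_1(t)-u_2(t)\|_{L^1(\mathbb{T}^N)}=\int_{\mathbb{T}^N}\int_{\mathbb{R}}\big(f_1\bar f_2+\bar f_1 f_2\big)\,d\xi\,dx\le\|u_{1,0}-u_{2,0}\|_{L^1(\mathbb{T}^N)}$.

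The main obstacle will be justifying the integrations and limit passages when the kinetic measure is no longer finite. Because $\kappa_\delta(\xi-\zeta)$ only localizes $|\xi-\zeta|<\delta$ and leaves $\xi,\zeta$ unbounded, the pieces of $J$ built from $\mathfrak{m}_1$, $\mathfrak{m}_2$ and the defect measures $\mathfrak{\eta}_{1,2},\mathfrak{\eta}_{2,2}$, as well as the Young-measure integrals, have to be treated by truncating to $|\xi|\le R$ and then removing the truncation, using the decay condition (ii) of Definition \ref{kinetic measure 2} together with the tightness estimate \eqref{tightness} of Proposition \ref{22} to ensure the large-$|\xi|$ mass disappears uniformly. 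This localization is the only genuinely new step relative to the proof of Theorem \ref{th3.6}, where the finite kinetic measure and the moment bound \eqref{2.5} made such a step unnecessary.
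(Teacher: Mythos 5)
Your proposal is correct and follows essentially the same route as the paper, which proves Theorem \ref{new} simply by invoking Propositions \ref{Proposition 3.1} and \ref{th3.5} and declaring the argument ``similar to the proof of Theorem \ref{th3.6}.'' The three adaptations you make explicit --- additive noise removing the $g(\delta)$ term from $\mathcal{E}_\kappa$ and allowing the martingale contributions to cancel in the mollifier limit so the inequality holds pathwise, the sub-quadratic bound $|F''|\le C$ replacing the moment-dependent estimate of $\mathcal{E}_\Upsilon$, and truncation in $\xi$ via the decay condition of Definition \ref{kinetic measure 2} together with \eqref{tightness} --- are exactly the modifications the paper's abbreviated proof tacitly relies on.
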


\noindent
\textbf{Continuity in time:}
Based on the equi-integrability estimate \eqref{tightness} we can deduce the kinetic solutions have almost surely continuous paths in $L^1(\mathbb{T}^N)$. 
\begin{cor}
	Let $u_0\in L^1(\mathbb{T}^N)$ and let $u$ be a kinetic solution to $\eqref{1.1}$. Then $u$ has almost surely continuous trajectories in $L^1(\mathbb{T}^N).$
\end{cor}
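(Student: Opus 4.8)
The plan is to deduce continuity by combining the pathwise $L^1$-contraction of Theorem \ref{new} with the equi-integrability estimate \eqref{tightness}, reducing the general $L^1$ datum to the already-understood case of bounded initial data. First I would recall the approximating sequence used in the existence proof: bounded data $u_0^k\in L^\infty(\mathbb{T}^N)$ with $u_0^k\to u_0$ in $L^1(\mathbb{T}^N)$, together with the associated kinetic solutions $u^k$. Since $u_0^k\in L^p(\Omega;L^p(\mathbb{T}^N))$ for every $p\in[1,\infty)$, Corollary \ref{th3.7} already guarantees that each $u^k$ has $\mathbb{P}$-almost surely continuous trajectories in $L^1(\mathbb{T}^N)$. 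The whole point is then to show that this continuity survives the passage to the limit $k\to\infty$.

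The engine for this is that Theorem \ref{new} holds $\mathbb{P}$-almost surely for all $t\in[0,T]$ simultaneously, so the contraction is in fact uniform in time: pathwise one has
$$\sup_{t\in[0,T]}\|u^{k_1}(t)-u^{k_2}(t)\|_{L^1(\mathbb{T}^N)}\le\|u_0^{k_1}-u_0^{k_2}\|_{L^1(\mathbb{T}^N)}\longrightarrow 0.$$
Hence $(u^k)$ is $\mathbb{P}$-almost surely Cauchy in the Banach space $C([0,T];L^1(\mathbb{T}^N))$, and by completeness its limit lives in $C([0,T];L^1(\mathbb{T}^N))$ and therefore has continuous trajectories. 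The remaining task is to identify this continuous limit with the kinetic solution $u$ constructed earlier, whose convergence was a priori only established in $L^1_{\mathcal{P}}(\Omega\times[0,T]\times\mathbb{T}^N)$.

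This identification is exactly where \eqref{tightness} is essential. The estimate gives
$$\sup_{t\in[0,T]}\mathbb{E}\|(u(t)-n)^{\pm}\|_{L^1(\mathbb{T}^N)}\to 0\qquad\text{as }n\to\infty,$$
i.e.\ the family $\{u(t)\}_{t\in[0,T]}$ is uniformly integrable uniformly in time, with the same control holding along the approximants $u^k$ via Lemma \ref{bound on measure} and Proposition \ref{22}. Uniform integrability upgrades the weak-$\ast$ convergence of the kinetic functions $f^k\to f$ (Proposition \ref{Proposition 3.1}) to strong $L^1(\mathbb{T}^N)$ convergence of $u^k(t)$ to $u(t)$, with no mass escaping to large $|\xi|$; a Vitali-type argument, using that the $f$'s are $\{0,1\}$-valued, then forces the $C([0,T];L^1)$-limit and the product-space limit to coincide. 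After the usual modification on a $\mathbb{P}$-null set this shows $u\in C([0,T];L^1(\mathbb{T}^N))$ almost surely.

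The hard part will be precisely this interchange of limits: without an $L^p$ bound for some $p>1$ one cannot a priori rule out concentration at large values of $\xi$, and the kinetic solution is only known to be c\`adl\`ag, with jumps governed by the time-atoms of $\mathfrak{m}$ through $\langle f(t)-f^-(t),\varphi\rangle=-\mathfrak{m}(\partial_{\xi}\varphi)(\{t\})$. The uniform tail control from \eqref{tightness} is what simultaneously guarantees the approximation converges genuinely in $C([0,T];L^1)$ rather than merely in $L^1(\Omega\times[0,T]\times\mathbb{T}^N)$, and — via the uniform-in-time contraction — closes off any such jumps, so that $u^-(t)=u(t)$ for every $t$ and the trajectories are continuous.
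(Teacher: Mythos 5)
Your strategy --- approximate by solutions with bounded data, which are continuous in time by Corollary \ref{th3.7}, and pass the continuity to the limit through a uniform-in-time contraction --- is genuinely different from the paper's, but it hinges on a step that is not available at the stated level of generality. The almost sure Cauchy property in $C([0,T];L^1(\mathbb{T}^N))$ requires the \emph{pathwise} bound $\sup_{t\in[0,T]}\|u^{k_1}(t)-u^{k_2}(t)\|_{L^1(\mathbb{T}^N)}\le\|u_0^{k_1}-u_0^{k_2}\|_{L^1(\mathbb{T}^N)}$, $\mathbb{P}$-almost surely. In this paper that contraction (\eqref{contraction principal} in Theorem \ref{main result 2}, restated as Theorem \ref{new}) is proved only under the \emph{additional} sub-quadratic hypothesis $|F''(\zeta)|\le C$, whereas the corollary --- and the continuity assertion of Theorem \ref{main result 2} --- are stated under \eqref{A1} alone, where $F'$ may have polynomial growth and only the contraction in expectation, $\mathbb{E}\|u^{k_1}(t)-u^{k_2}(t)\|_{L^1(\mathbb{T}^N)}\le\mathbb{E}\|u_0^{k_1}-u_0^{k_2}\|_{L^1(\mathbb{T}^N)}$ for each fixed $t$, is available. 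A bound on $\sup_t\mathbb{E}(\cdot)$ does not control $\mathbb{E}\sup_t(\cdot)$, so without the pathwise estimate your sequence need not be almost surely Cauchy in $C([0,T];L^1(\mathbb{T}^N))$, and the argument fails at its first step. A second, related restriction: your scheme only yields continuity for the particular solution obtained as the limit of the $u^k$, while the corollary concerns an \emph{arbitrary} kinetic solution $u$ with datum $u_0$; identifying such a $u$ with your limit again requires a uniqueness statement that, in the $L^1$-setting, is only proved under the sub-quadratic assumption.

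For contrast, the paper's proof argues directly on $u$ and needs no approximation: Proposition \ref{Proposition 3.1} gives, almost surely, the weak-$*$ convergences $f(t+\delta_n)\to f(t)$ and $f(s_n)\to f^-(t)$ for $s_n\uparrow t$; the decay estimate \eqref{tightness} of Proposition \ref{22} gives $\mathbb{E}\sup_{t\in[0,T]}\|(u(t)-n)^{\pm}\|_{L^1(\mathbb{T}^N)}\to0$, hence almost surely along a subsequence $\sup_{t\in[0,T]}\|(u(t)-n)^{\pm}\|_{L^1(\mathbb{T}^N)}\to0$, i.e.\ equi-integrability uniform in time; then \cite[Lemma 17]{vovelle 2} upgrades the weak-$*$ convergence of the $\{0,1\}$-valued kinetic functions to strong convergence $u(t+\delta_n)\to u(t)$ in $L^1(\mathbb{T}^N)$, and likewise for the left limits, with Theorem \ref{new} supplying $f^-(t)=\mathbbm{1}_{u^-(t)>\xi}$. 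So your instinct that \eqref{tightness} is what blocks mass from escaping to large $|\xi|$ is exactly right, but in the paper it is the main engine converting weak-$*$ c\'adl\'ag regularity into strong one-sided $L^1$ continuity, not merely a device for identifying two limits of an approximating sequence. Your route does work verbatim if one adds the sub-quadratic hypothesis on $F$; to obtain the corollary in full generality you should switch to the paper's direct argument.
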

\begin{proof}
	Based on Proposition \ref{Proposition 3.1}, Theorem \ref{th3.6} and Proposition \ref{22}, we are in a position to apply \cite[Lemma 17]{vovelle 2}, which implies the continuity in $L^1(\mathbb{T}^N)$. Indeed, let us first show that $\mathbb{P}$-almost surely, $u$ is  right continuous in $L^1(\mathbb{T}^N)$. From the Proposition \ref{Proposition 3.1} , we know that $f(t+\delta_n)\to f(t)$ in $L^\infty(\mathbb{T}^N\times\mathbb{R})$ weak-* $\mathbb{P}$-almost surely as $\delta_n\to0$. Finally, \eqref{tightness} implies
$$\lim_{n\to\infty}\mathbb{E}\sup_{t\in[0,T]}\|(u(t)-n)^{\pm}\|_{L^1(\mathbb{T}^N)}=0.$$
Therefore, there exists a subsequence which converges $\mathbb{P}$-almost surely, that is, $\mathbb{P}$-almost surely,
$$\lim_{n\to\infty} \sup_{t\in[0,T]}\|(u(t)-n)^{\pm}\|_{L^1(\mathbb{T}^N)}=0.$$
Consequently, \cite[Lemma 17]{vovelle 2} applies and gives the following convergence $\mathbb{P}$-almost surely,
$$u(t+\delta_n)\,\to\,u(t)\,\,\,\,\,\text{in}\,\,\,\,L^1(\mathbb{T}^N)\,\,\,\,\text{as}\,\,\,\,\,\delta_n\,\to\,0.$$
By the same arguments we can show that $\mathbb{P}$-almost surely, $u^{-}$, constructed in Theorem \ref{new}, is left-continuous in $L^1(\mathbb{T}^N)$. 
\end{proof}
\subsection*{Acknowledgments}
The author wishes to thank Ujjwal Koley for many stimulating discussions and valuable suggestions. The author also thanks Martina Hofmanov\'a for enlightening discussions on this topic during her visit to TIFR-CAM.

\end{document}